\newtheorem{theorem}{Theorem}[section]
\newtheorem{remark}{Remark}[section]
\newtheorem{proposition}{Theorem}[section]
\newcommand{\CS}{{\mathcal{S}}}
\newcommand{\CI}{{\mathcal{I}}} 
\newcommand{\CG}{{\mathcal{G}}}
\newcommand{\CF}{{\mathcal{F}}}
\newcommand{\CP}{{\mathcal{P}}}
\newcommand{\CO}{{\mathcal{O}}}
\newcommand{\CK}{{\mathcal{K}}}
\newcommand{\CM}{{\mathcal{M}}}
\newcommand{\dg}{diagonalization }
\newcommand{\pd}{preconditioner }
\colorlet{shadecolor}{blue!20}
\newcommand{\ds}{discretization }
\definecolor{darkgreen}{rgb}{0,0.6,0.1}
\renewcommand{\vec}[1]{\mbox{\boldmath $#1$}}
\definecolor{shadecolor}{rgb}{0.92,0.92,0.92}
\title[]{Time parallelization for hyperbolic and
  parabolic problems}
\author[{M. J. Gander, S. L. Wu and T. Zhou}]{%
  Martin J. Gander\\
  {\it Department of Mathematics,}\\
  {\it University of Geneva, CP64, 1211 Geneva 4, Switzerland}\\
  {\tt martin.gander@unige.ch}\\
  \and
  Shu-Lin Wu\\
    {\rm \textbf{(corresponding author)} }\\
  {\it School of Mathematics and Statistics,}\\
  {\it Northeast Normal University,
Changchun 130024, China }\\
  {\tt wushulin84@hotmail.com}\\
    \and
    Tao Zhou\\
{\it Institute of Computational Mathematics and}\\
{\it Scientific/Engineering Computing,  Academy of
Mathematics and Systems Science,}\\
{\it Chinese Academy of Sciences, China}\\
{\tt tzhou@lsec.cc.ac.cn}  }
\begin{document}

\maketitle
\vspace{12em}

\begin{abstract}
Time parallelization, also known as PinT (Parallel-in-Time) is a new research direction for the development of algorithms
used for solving very large scale evolution problems on highly parallel computing architectures. Despite the fact that interesting
theoretical work on PinT appeared as early 1964, it was not until 2004, when processor clock speeds reached their physical limit, that research in PinT
took off. A distinctive characteristic of parallelization in time is that information flow only goes
forward in time, meaning that time evolution processes seem necessarily to be sequential. Nevertheless, many
algorithms have been developed over the last two decades to do PinT
computations, and they are often grouped into four basic classes according
to how the techniques work and are used: shooting-type methods; waveform relaxation methods based on domain
decomposition; multigrid methods in space-time; and direct time
parallel methods. However, over the past few years, it
has been recognized that highly successful PinT algorithms for
parabolic problems struggle when applied to hyperbolic problems.
We focus in this survey therefore on this important aspect, by first providing a
summary of the fundamental differences
between parabolic and hyperbolic problems for time
parallelization. We then group PinT algorithms into two basic groups: the first group contains four effective PinT techniques for
hyperbolic problems, namely Schwarz Waveform Relaxation with its
relation to Tent Pitching; Parallel Integral Deferred Correction;
ParaExp; and ParaDiag. While the methods in the first group also work well for parabolic
problems, we then present PinT methods especially designed for
parabolic problems in the second group: Parareal: the Parallel Full Approximation
Scheme in Space-Time; Multigrid Reduction in Time; and Space-Time
Multigrid. We complement our analysis with numerical illustrations
using four time-dependent PDEs: the heat equation; the
advection-diffusion equation; Burgers' equation; and the
second-order wave equation. 
\end{abstract}

\tableofcontents

\section{Introduction}

Time parallelization has been a very active field of research over the
past two decades. The reason for this is that hardware development has
reached its physical limit for clock speed, and faster computation is
only possible using more and more cores. We see this development even
in small-scale computing devices like smartphones that have become
multicore, and high-performance computers now have millions of cores.
Time parallelization methods, also referred to as Parallel in Time
(PinT) methods, are methods that allow one to use for evolution
problems more cores than when one would only parallelize the
in space. Already 60 years ago, Nievergelt proposed in a visionary
paper \cite{Nievergelt:1964:PMI} such an approach, concluding with:
\begin{quote}
``{\em The integration methods introduced in this paper are to be regarded as tentative examples of a much wider class of numerical procedures in which parallelism is introduced at the expense of redundancy of computation. As such, their merits lie not so much in their usefulness as numerical algorithms as in their potential as prototypes of better methods based on the same principle. It is believed that more general and improved versions of these methods will be of great importance when computers capable of executing many computations in parallel become available.}''
\end{quote}
Several new methods like these were then developed over the decades
that followed Nievergelt, until PinT methods were brought to the
forefront of research with the advent of the Parareal algorithm
\cite{Lions:2001:PTD}; see the historical review \cite{gander50years},
the review focusing on PinT applications \cite{ong2020applications},
and also the recent research monograph \cite{Gander:TPTI:2024}. 

Parallelization in time for evolution problems may, at first glance,
seem impossible due to the causality principle: solutions at later
times are determined by solutions at earlier times, and not vice
versa. Evolution problems thus have an inherent sequential
nature. This becomes clear when we consider a simple ordinary
differential equation as our evolution problem, along with its forward
Euler discretization,
\begin{equation}
  \partial_t u=f(u), \quad u(0)=u_0,\qquad
  u_{n+1}=u_n+\Delta tf(u_n).
\end{equation}
The recurrence formula of Forward Euler clearly shows that we must
know $u_n$  before we can compute $u_{n+1}$,  as illustrated in
Figure \ref{FEFig}.
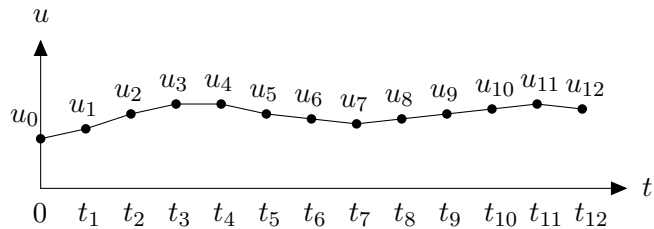
\begin{figure}
  \centering
    \begin{tikzpicture}[line cap=round,line join=round,>=triangle 45,x=1.0cm,y=1.1cm,scale=0.6]  
\clip(-0.7,-0.8) rectangle (13.8,3.7);
\draw [->] (0,0) -- (13,0);
\draw [->] (0,0) -- (0,3);
\draw (13.1,0.4) node[anchor=north west] {$t$};
\draw (-0.4,3.8) node[anchor=north west] {$u$};
\draw (-0.9,1.8) node[anchor=north west] {$u_0$};
\draw (0.4,2) node[anchor=north west] {$u_1$};
\draw (1.4,2.3) node[anchor=north west] {$u_2$};
\draw (2.4,2.5) node[anchor=north west] {$u_3$};
\draw (3.4,2.5) node[anchor=north west] {$u_4$};
\draw (4.4,2.3) node[anchor=north west] {$u_5$};
\draw (5.4,2.2) node[anchor=north west] {$u_6$};
\draw (6.4,2.1) node[anchor=north west] {$u_7$};
\draw (7.4,2.2) node[anchor=north west] {$u_8$};
\draw (8.4,2.3) node[anchor=north west] {$u_9$};
\draw (9.4,2.4) node[anchor=north west] {$u_{10}$};
\draw (10.4,2.5) node[anchor=north west] {$u_{11}$};
\draw (11.4,2.4) node[anchor=north west] {$u_{12}$};
\draw (-0.4,-0.1) node[anchor=north west] {$0$};
\draw (0.6,-0.1) node[anchor=north west] {$t_1$};
\draw (1.6,-0.1) node[anchor=north west] {$t_2$};
\draw (2.6,-0.1) node[anchor=north west] {$t_3$};
\draw (3.6,-0.1) node[anchor=north west] {$t_4$};
\draw (4.6,-0.1) node[anchor=north west] {$t_5$};
\draw (5.6,-0.1) node[anchor=north west] {$t_6$};
\draw (6.6,-0.1) node[anchor=north west] {$t_7$};
\draw (7.6,-0.1) node[anchor=north west] {$t_8$};
\draw (8.6,-0.1) node[anchor=north west] {$t_9$};
\draw (9.6,-0.1) node[anchor=north west] {$t_{10}$};
\draw (10.6,-0.1) node[anchor=north west] {$t_{11}$};
\draw (11.6,-0.1) node[anchor=north west] {$t_{12}$};
\fill [color=black] (0,1) circle (3pt);
\fill [color=black] (1,1.2) circle (3pt);
\draw (0,1)-- (1,1.2);
\fill [color=black] (2,1.5) circle (3pt);
\draw (1,1.2)-- (2,1.5);
\fill [color=black] (3,1.7) circle (3pt);
\draw (2,1.5)-- (3,1.7);
\fill [color=black] (4,1.7) circle (3pt);
\draw (3,1.7)-- (4,1.7);
\fill [color=black] (5,1.5) circle (3pt);
\draw (4,1.7)-- (5,1.5);
\fill [color=black] (6,1.4) circle (3pt);
\draw (5,1.5)-- (6,1.4);
\fill [color=black] (7,1.3) circle (3pt);
\draw (6,1.4)-- (7,1.3);
\fill [color=black] (8,1.4) circle (3pt);
\draw (7,1.3)-- (8,1.4);
\fill [color=black] (9,1.5) circle (3pt);
\draw (8,1.4)-- (9,1.5);
\fill [color=black] (10,1.6) circle (3pt);
\draw (9,1.5)-- (10,1.6);
\fill [color=black] (11,1.7) circle (3pt);
\draw (10,1.6)-- (11,1.7);
\fill [color=black] (12,1.6) circle (3pt);
\draw (11,1.7)-- (12,1.6);
\end{tikzpicture}
  \caption{Sequential nature of time integration using Forward Euler.}
  \label{FEFig}
\end{figure}
It is not clear for example if one can do useful computational work
for the approximations $u_{10}$ to $u_{12}$ before knowing the
approximation $u_9$.

Nevertheless, many new PinT methods have been developed since 2001,
and they are often classified into four groups based on the
algorithmic techniques used; see \cite{gander50years} and also the
recent research monograph \cite{Gander:TPTI:2024}, and later text
  for more complete references:
\begin{enumerate}
\item Methods based on {\em Multiple Shooting} going back to the work
  of \cite{Nievergelt:1964:PMI,Bellen:1989:PAI,Chartier:1993:APS},
  leading to \cite{Saha:1996:API} and culminating in the {\em
    Parareal} algorithm \cite{Lions:2001:PTD} and many variants.
\item Methods based on {\em Domain Decomposition}
  \cite{Schwarz1870} and {\em Waveform Relaxation} 
  \cite{Lelarasmee:1982:WRM} that were combined in
  \cite{Bjorhus:1995:DDS}, resulting in {\em Schwarz Waveform
    Relaxation (SWR)} \cite{Gander:1999:OCO}.
\item Methods based on {\em Multigrid} going back to the parabolic
  multigrid method \cite{Hackbusch:1984:PMG}, and developed into fully
  parallel {\em Space-Time MultiGrid (STMG)} methods
  \cite{Gander:2016:AOANST}.
\item {\em Direct time parallel methods}, which started with parallel
  time stepping techniques \cite{Miranker:1967:PMF}, and led to the
  modern {\em Revisionist Integral Deferred Correction (RIDC)} method
  \cite{CMO10}. Currently very successful methods in this class we
  will see are {\em ParaExp} \cite{gander2013paraexp}, and
  parallelization by diagonalization \cite{maday2008parallelization},
  which led to {\em ParaDiag} \cite{gander:2021:ParaDiag}.
\end{enumerate}
The first three groups contain iterative methods, whereas the last one
contains non-iterative ones, but the boundaries in this classification
are not strict. A good example are the ParaDiag methods, which were
first exclusively direct solvers based on the diagonalization of the
time stepping matrix, but then rapidly iterative variants appeared,
within WR methods from the second group \cite{gander2019convergence}
or within parareal from the first group
\cite{gander2020diagonalization}, and approximate ParaDiag methods
were also applied as stationary iterations or preconditioners for
Krylov methods directly to the all-at once system derived from the
space time discretization in the third group
\cite{mcdonald2018preconditioning,liu2020fast}. Parareal from the
first group can also be interpreted as a multigrid method from the
third group with aggressive coarsening \cite{gander2007analysis}, and
in turn MGRiT as a Parareal algorithm with overlap
\cite{gander2018MIP}.

We would like to adopt a different approach here, however, to classify
PinT methods, specifically based on the types of problems that they
can effectively  solve:
\begin{enumerate} 
\item Effective PinT methods for hyperbolic problems.
\item PinT methods designed  for parabolic problems.
\end{enumerate}
To achieve this, we explain in Section \ref{Sec2} intuitively why
there must be a fundamental distinction in PinT methods when solving
hyperbolic or parabolic problems, and introduce model problems that
will later serve to illustrate this for PinT methods.  We then
describe effective PinT methods for hyperbolic problems in Section
\ref{Sec3}, which in general work even better for parabolic
  problems, and in Section \ref{Sec4}, we present PinT methods
  designed for parabolic problems, which in general struggle when
  applied to hyperbolic problems. We will draw conclusions
in Section \ref{Sec5}. The Matlab codes for the numerical results in
Sections \ref{Sec2}-\ref{Sec4} can be obtained from
\url{https://github.com/wushulin/ActaPinT}.

\section{Model Problems linking the Parabolic and Hyperbolic world}
\label{Sec2}

We will often use as our test problems Partial Differential Equations
(PDEs) that allow us to link the parabolic and hyperbolic worlds. A
typical example is the linear advection-diffusion equation we will
first see in subsection \ref{AdvectionReactionDiffusionSec}, which
contains both parabolic and hyperbolic components.  We will also
frequently use the system of ordinary differential equations (ODEs)
\begin{equation}\label{linearODE}
  \begin{array}{rcll}
   {\bm u}'(t)&=&A{\bm u}(t)+{\bm g}(t),~t\in(0, T], \\
     {\bm u}(0)&=&{\bm u}_0,
  \end{array}
\end{equation}
where $A\in\mathbb{R}^{N_x\times N_x}$ is the discrete matrix arising
from semi-discretizing the PDE in space, because many time parallel
methods (except the domain decomposition based methods) are described
and analyzed for ODEs. A nonlinear important PDE variant of
advection-diffusion is the so-called Burgers' equation that we will
first see in subsection \ref{BurgersSec}.  Similarly to the linear
case, in order to discuss time parallel methods in the nonlinear
setting, we will use the nonlinear system of ODEs
\begin{equation}\label{nonlinearODE}
  \begin{array}{rcll}
   {\bm u}'(t)&=&f({\bm u}(t), t),~t\in(0, T], \\
     {\bm u}(0)&=&{\bm u}_0,
  \end{array}
\end{equation}
where $f: \mathbb{R}^{N_x}\times\mathbb{R}\rightarrow\mathbb{R}^{N_x}$
depends on its first variable in a nonlinear manner, such as $f({\bm
  u}(t), t)=A{\bm u}(t)+B{\bm u}^2(t)+{\bm g}(t)$ for Burgers'
equation.

We introduce however also simpler equations that are either of
parabolic or hyperbolic nature, like the heat equation and the second
order wave equation.  To avoid complicated notation, we will only
consider model problems in one spatial dimension on the unit interval
$\Omega=(0, 1)$, which is not really a restriction, since the
applicability and convergence properties of PinT methods do in general
not depend on the space dimension.

\subsection{Heat equation}

As parabolic model problem, we consider the one dimensional heat
equation, 
\begin{equation}\label{heatequation}
  \begin{array}{rcll}
      \partial_tu(x,t)=\partial_{xx} u(x,t)+g(x,t)&\mbox{in $\Omega\times(0,T]$},\\
  \end{array}
\end{equation}
with initial value $u(x,0)=u_0(x)$ and either homogeneous Dirichlet or
Neumann boundary conditions.  An example solution with $u_0(x)=0$ and
a forcing function that heats at four different time instances
$t_1=0.1, t_2=0.6$, $t_3=1.35$ and $t_4=1.85$ in the middle of the space domain
$\Omega=(0, 1)$,
\begin{equation}\label{fxt}
g(x,t)= 10{\sum}_{j=1}^{4}{\rm exp}(-\sigma[(t-t_j)^2+(x-0.5)^2]),  
\end{equation}
is shown for $\sigma=200$ in Figure \ref{ParabolicExampleFig} in the
first three panels, for homogeneous Dirichlet, Neumann, and also
periodic boundary conditions.
\begin{figure}
  \centering
  \includegraphics[width=1.2in,height=2.6in,angle=0]{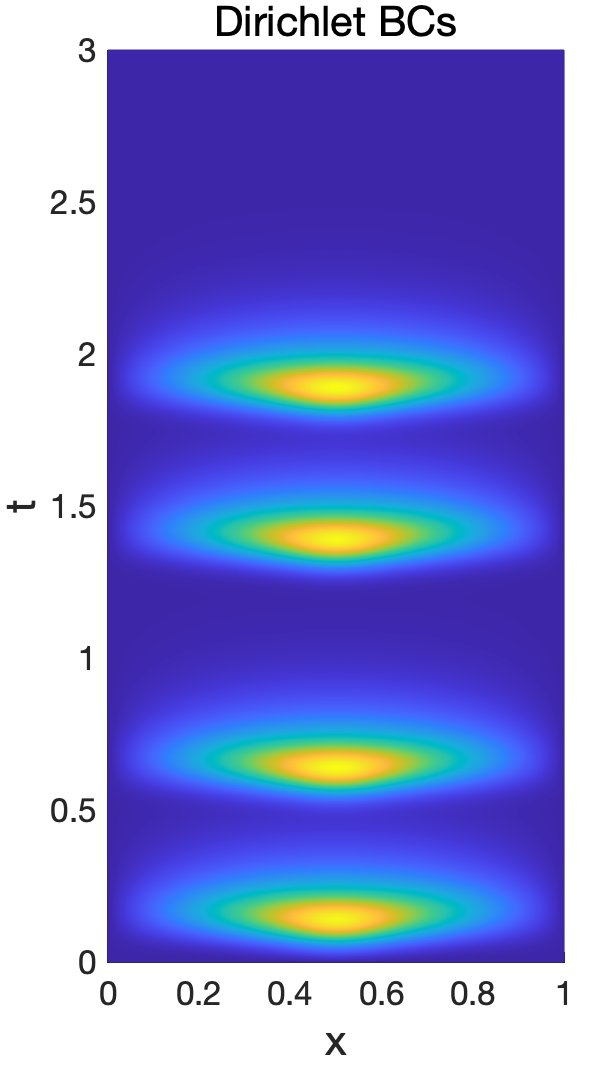}~
    \includegraphics[width=1.2in,height=2.6in,angle=0]{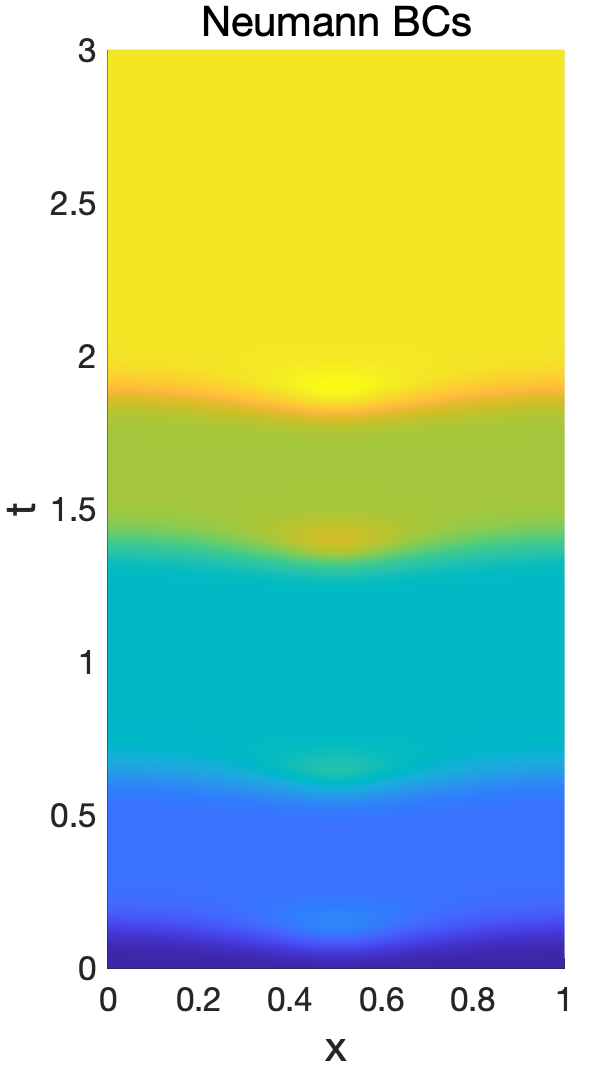}~
        \includegraphics[width=1.2in,height=2.6in,angle=0]{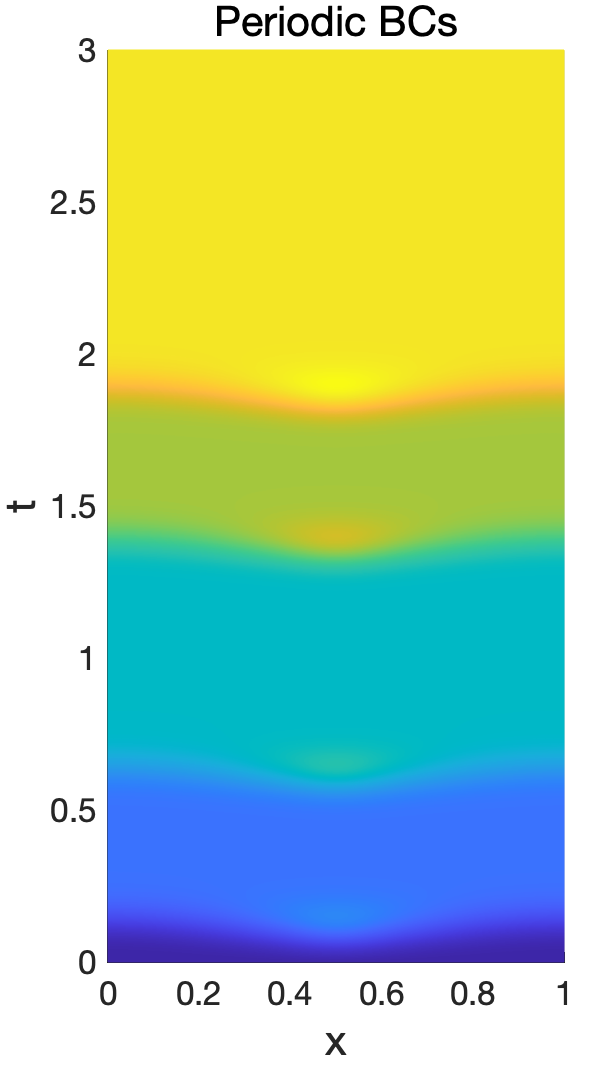}~   \includegraphics[width=1.2in,height=2.6in,angle=0]{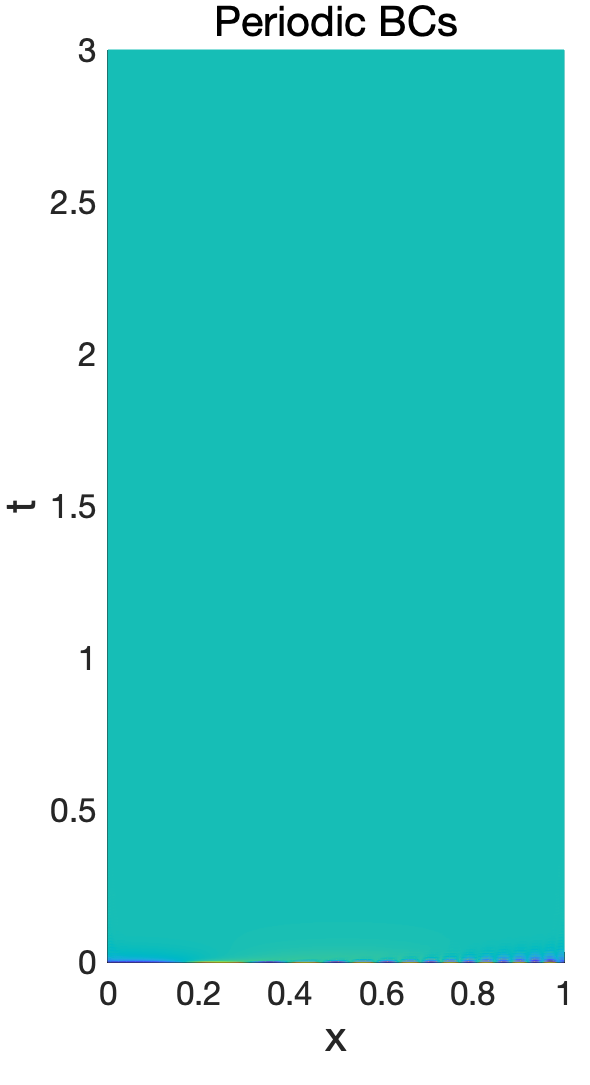}
  \caption{Heat equation with homogeneous Dirichlet, Neumann, and
      periodic boundary conditions (first three panels), using
    the source term \eqref{fxt} and zero initial condition
    $u_0(x)=0$. In the last panel we still use periodic boundary
    conditions, but a zero source term, with initial 
    condition $u_0(x)=\sin^2(8\pi(1-x)^2)$.}
  \label{ParabolicExampleFig}
\end{figure}
We observe that with Dirichlet conditions, the solution does not
propagate far in time, and thus we can compute for example the
solution for $t \in (1.7, 2.2)$ for the fourth source term
independently from the solution at earlier times! This is a prime
example where, despite the causality principle, we can perform useful
computations for later time instances before knowing the earlier
ones. This concept can be naturally understood from daily life
experience: it is straightforward to predict the temperature in
your living room in winter a week or a month in advance; you simply
need to know if the heater will be on and the windows closed.

However, this changes significantly in our simple model problem when
Neumann conditions are applied, as shown in the second plot of Figure
\ref{ParabolicExampleFig}. Here, the solution for $t \in (1.7,
  2.2)$ is influenced by the first, second and third source term
at earlier times, since heat is now accumulating, nicely illustrating
the causality principle. This scenario corresponds to a perfectly
insulated room where heat cannot escape, and in this situation it is
crucial to know how frequently or for how long the heating was on,
since this heat will stay forever in the perfectly insulated room.
Note however that in practice it is difficult to have a perfectly
insulated room, and heat will always eventually escape, which one
  would model with a Robin boundary condition.

The situation in the third panel of Figure
  \ref{ParabolicExampleFig} with periodic boundary conditions is
  similar to the case with Neumann boundary conditions in the second
  panel, the solution for $t \in (1.7, 2.2)$ is also influenced by the
  first, second and third source term at earlier times, and with
  periodic conditions, heat can never escape.

 In the fourth panel of Figure \ref{ParabolicExampleFig} we show a
  solution with zero source term and periodic boundary conditions, but
  now imposing an initial condition with a precise, oscillating
  signal, namely $u_0(x)=\sin^2(8\pi(1-x)^2)$. We see that the only
  information left from this signal after a very short time already
  is a constant, about the same constant as from the first two source
  terms in the second and third panel of Figure
  \ref{ParabolicExampleFig}.

In spite of the causality principle, time parallelization and thus
  PinT computations for a heat equation, and also more general
  parabolic problems, should thus be rather easily possible in the
  case of Dirichlet boundary conditions, since then the solutions are
  completely local in time \cite{gander:2024:PararealNoCoarse}, as it
  is the case in space with solvation models in computational
  chemistry, see
  \cite{ciaramella2017analysis,ciaramella2018analysis,ciaramella2018analysis3}.
  With Neumann or periodic boundary conditions, it should still be
  well possible to do PinT computations, provided one can propagate low
  frequency solution components, like the constant in our example,
  effectively over long time, using a coarse grid for example.

\subsection{Advection-diffusion equation}\label{AdvectionReactionDiffusionSec}

We now consider the advection-diffusion equation with homogeneous
Dirichlet and periodic boundary conditions\footnote{We would not learn
  anything new with Neumann conditions.} on the unit domain
$\Omega=(0, 1)$,
\begin{equation}\label{ADE}
    \partial_tu(x,t) + \partial_xu(x,t) - \nu\partial_{xx}u(x,t) = g(x,t) \quad \text{in } \Omega\times(0,T],
\end{equation}
with initial condition $u(x,0)=u_0(x)$, where $\nu>0$ is the diffusion
parameter.  We show in Figure \ref{ADEFig} in the top row 
the solution obtained with zero Dirichlet boundary conditions, and in
the bottom row with periodic boundary conditions.
\begin{figure}
  \centering
  \includegraphics[width=1.2in,height=2.6in]{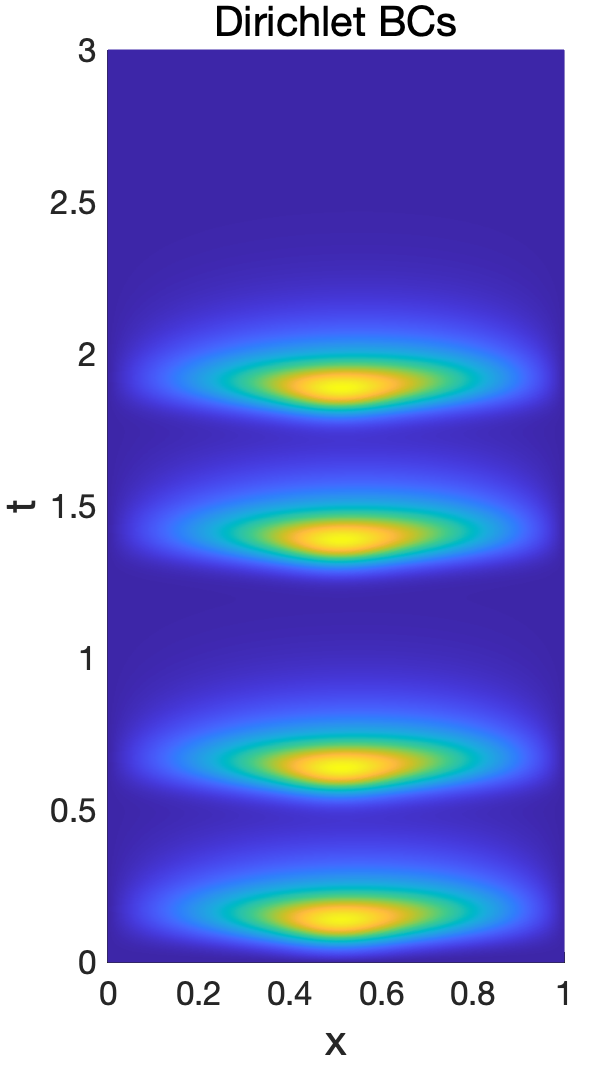}~
  \includegraphics[width=1.2in,height=2.6in]{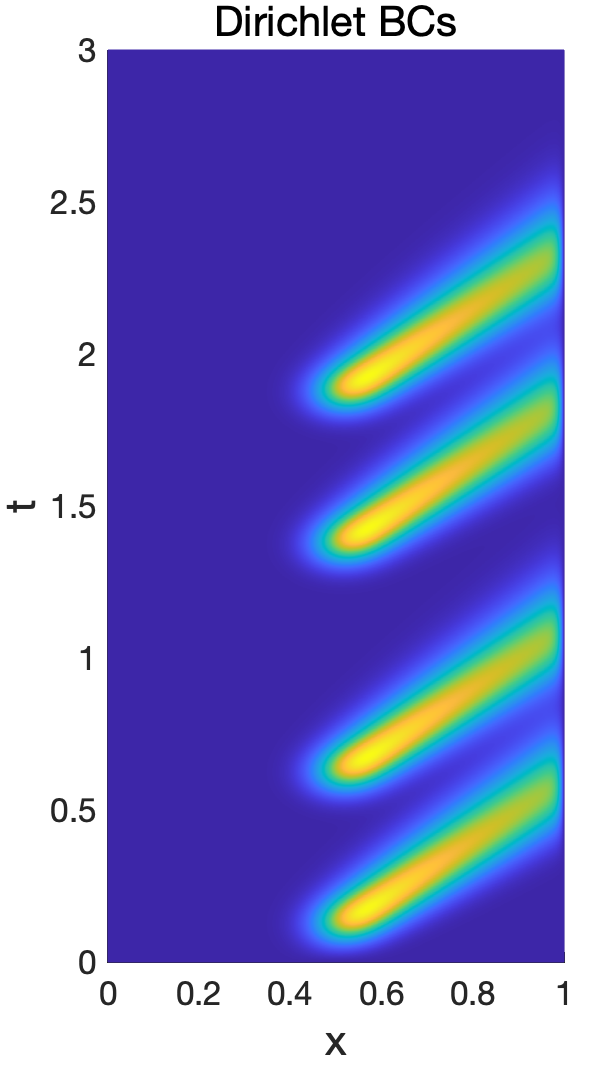}~
  \includegraphics[width=1.2in,height=2.6in]{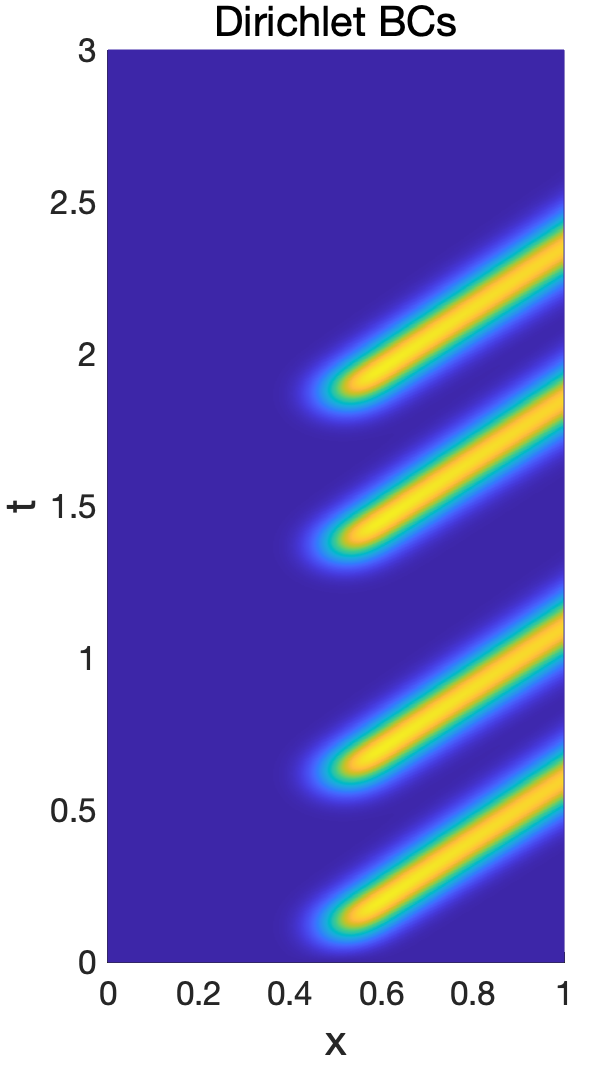}~
    \includegraphics[width=1.2in,height=2.6in]{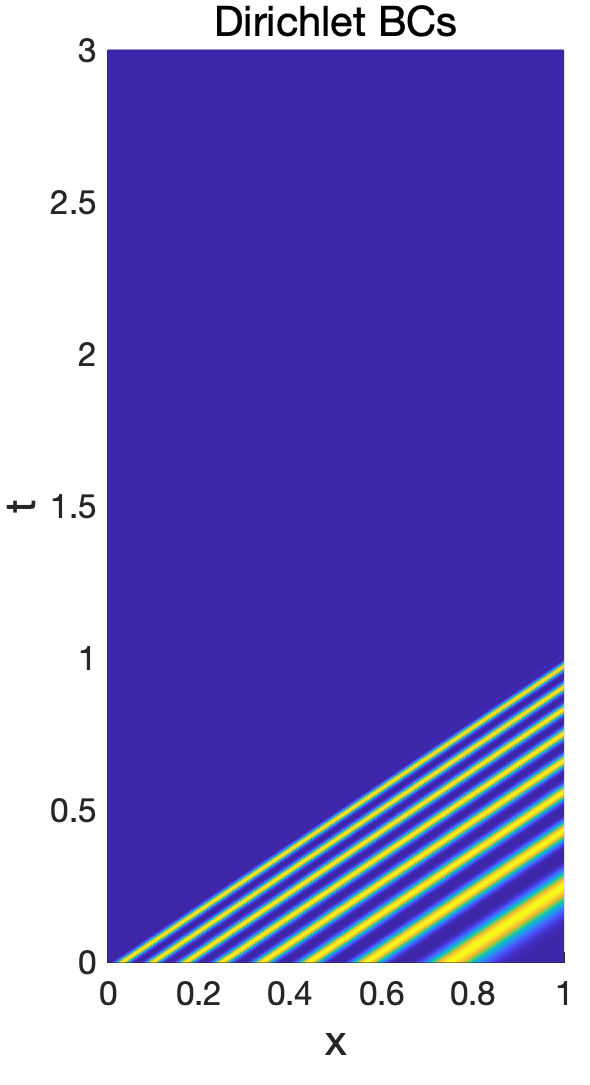}  \\
  \includegraphics[width=1.2in,height=2.6in]{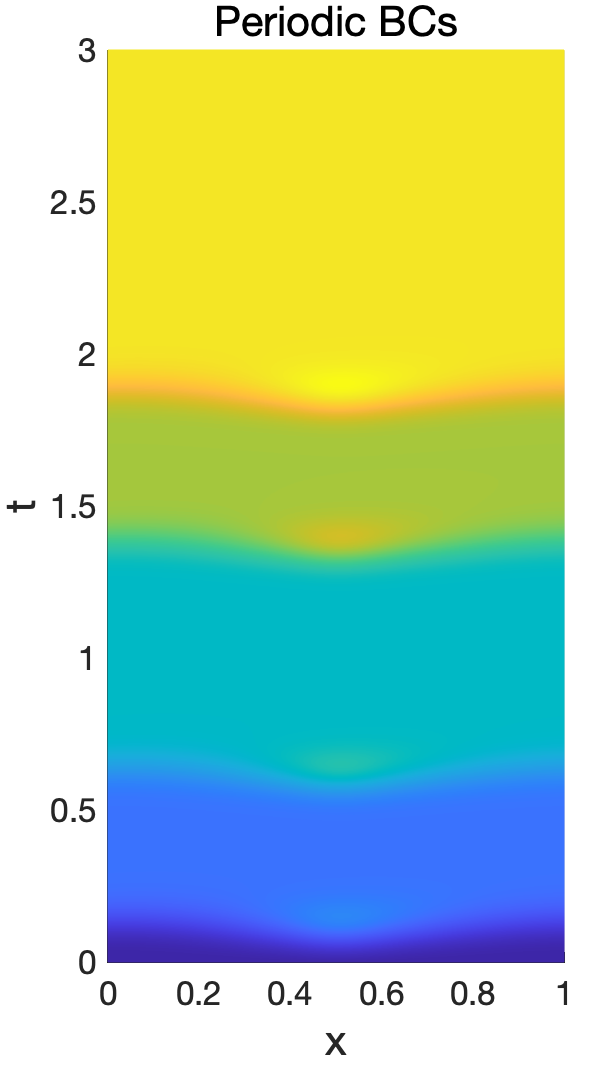}~
  \includegraphics[width=1.2in,height=2.6in]{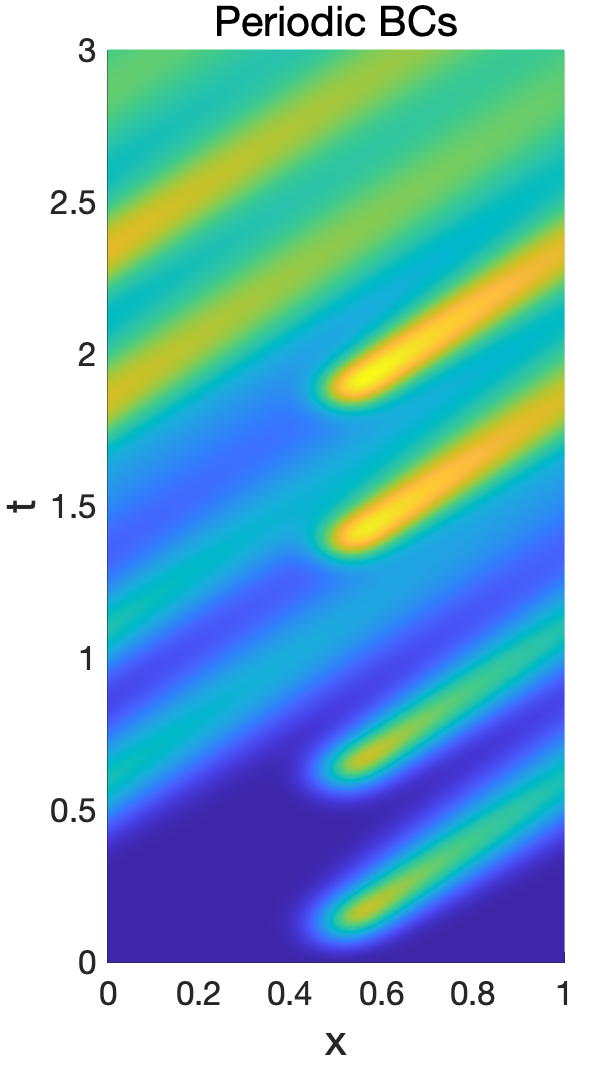}~
  \includegraphics[width=1.2in,height=2.6in]{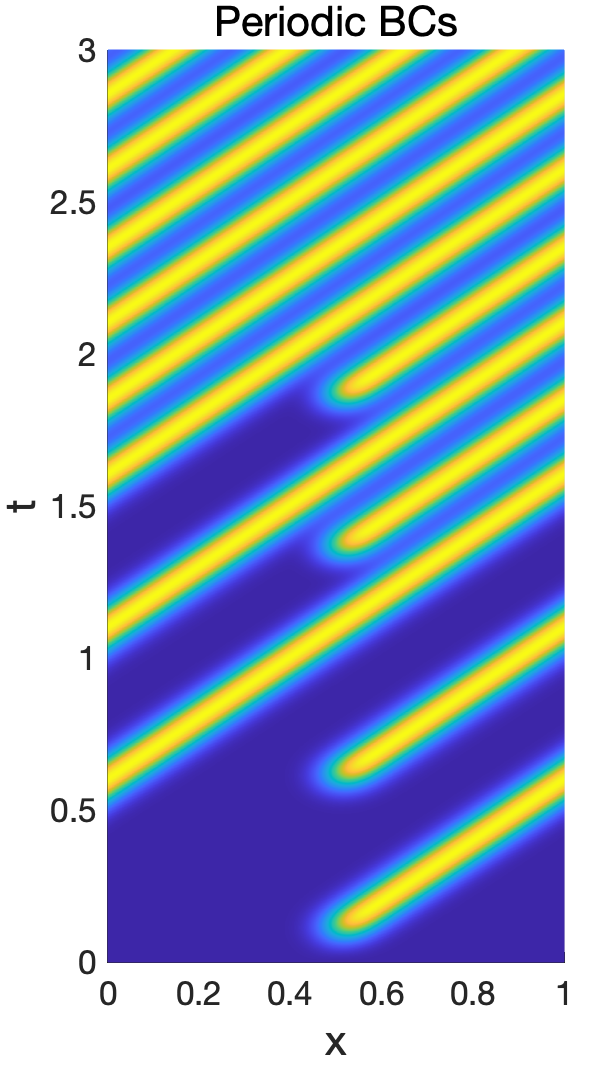}~
      \includegraphics[width=1.2in,height=2.6in]{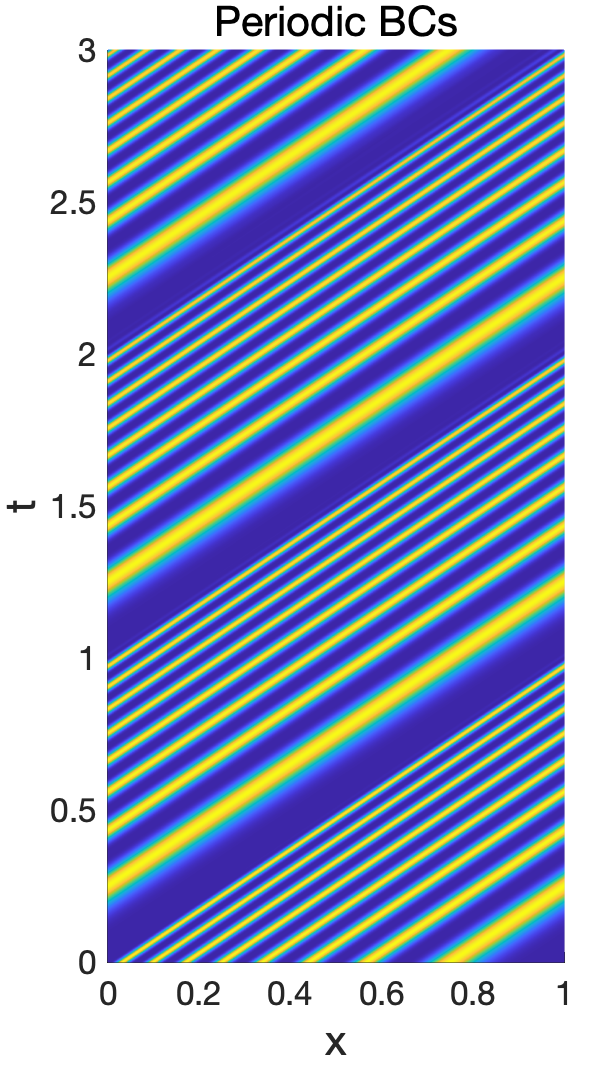}
  \caption{Advection-diffusion equation with zero Dirichlet
      boundary conditions (top row) and periodic boundary conditions
      (bottom row). In the first three pannels, we use a zero initial
      condition, $u_0(x) = 0$, and the same source term as in Figure
      \ref{ParabolicExampleFig} for the heat equation, and a
      smaller and smaller diffusion parameter $\nu=1, 10^{-2}$ and
      $5\times10^{-4}$. Last column: solution for zero source term and
      initial condition $u_0(x)=\sin^2(8\pi(1-x)^2)$ with small
      diffusion $\nu=5\times10^{-4}$.}
\label{ADEFig}
\end{figure}
 In the first three panels, we use a zero initial condition,
  $u_0(x)=0$ and the same source term \eqref{fxt} we had used for the
  heat equation for three different values of the diffusion parameter,
  $\nu=1, 10^{-2}$ and $5\times10^{-4}$. We see that when $\nu$ is
  large, then the diffusion part dominates and the solution has
similar properties as the solution of the heat equation. If $\nu$ is
small however, i.e., the advection part plays a dominant role, then
the solution is transported from left to right over much longer time,
as we see in  the top middle two panels in Figure \ref{ADEFig}. In
  the top right panel, we use a zero source term, but a non-zero
  initial condition $u_0(x)=\sin^2(8\pi(1-x)^2)$ and again the small
  diffusion parameter $5\times10^{-4}$. We see that now all the fine
  features present in the high frequency components of the initial
  condition are transported far in time.  Nevertheless, for both
$\nu$ large and $\nu$ small, we can still compute the solution for
 $t\in (1.25, 2.5)$  before we obtain the solution earlier in time,
because all solution components eventually are diffused or leave the
domain.

For periodic boundary conditions however, we see in Figure
  \ref{ADEFig} in the bottom row that the advection-diffusion equation
  transports information over long time: in the first panel with large
  diffusion this information is only low frequency, a constant, like
  for the heat equation, and PinT computations are still possible if
  one has a way of transporting coarse solution components far in
  time, using a coarse grid for example. In the next two panels
  however, we see that when the diffusion parameter becomes small,
  more and more fine information is transported very far in time, and
  for successful PinT computation there must be a mechanism to
  propagate this information effectively far in time.  The last panel
  without source and just a non-zero initial condition shows that for
  small diffusion, a lot of fine, high frequency information
  propagates very far in time, and we cannot pre-compute the solution
  later in time any more without knowing the solution earlier in time
  when the diffusion parameter becomes small. It is therefore
  difficult to do PinT computations, especially when
  $\nu\rightarrow0$, in the hyperbolic limit. This is fundamentally
different from the heat equation case, and only becomes manifest with
periodic boundary conditions and small diffusion, an important
  point when testing the performance of PinT methods on advection
  dominated problems.
  
\subsection{Burgers' equation}\label{BurgersSec}
To illustrate the difference between the various PinT methods in a
  non-linear setting, we will use Burgers' equation,
\begin{equation}\label{Burgers}
    \begin{array}{rcll}
        \partial_tu(x,t) - \nu\partial_{xx}u(x,t) + \frac{1}{2}\partial_x(u^2(x,t)) & = & g(x,t) & \text{in } \Omega\times(0,T],\\
        u(x,0) & = & u_0(x) & \text{in } \Omega,
    \end{array}
\end{equation}
with $\nu > 0$. We show in Figure \ref{BurgersFig} in the top row 
the solution obtained with zero Dirichlet boundary conditions, and in
the bottom row with periodic boundary conditions.
\begin{figure}
    \centering
  \includegraphics[width=1.2in,height=2.6in]{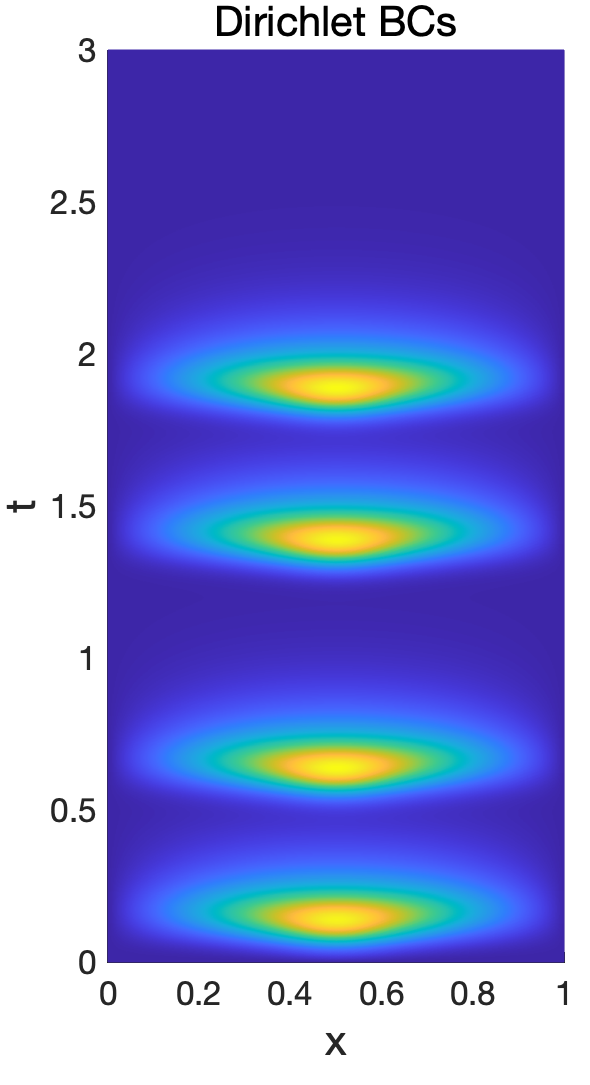}~
  \includegraphics[width=1.2in,height=2.6in]{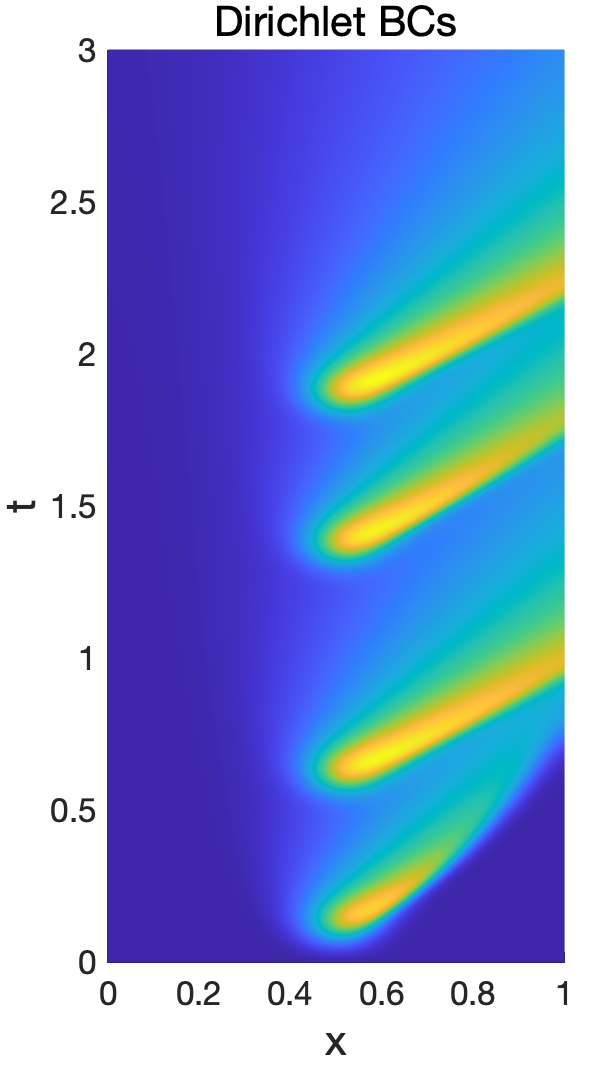}~
  \includegraphics[width=1.2in,height=2.6in]{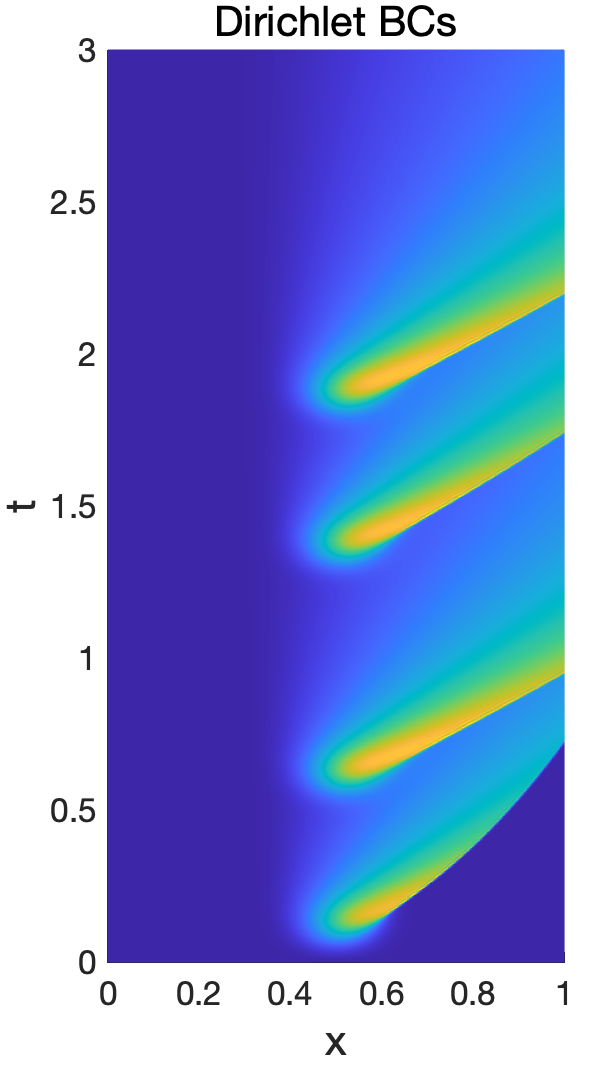}~
    \includegraphics[width=1.2in,height=2.6in]{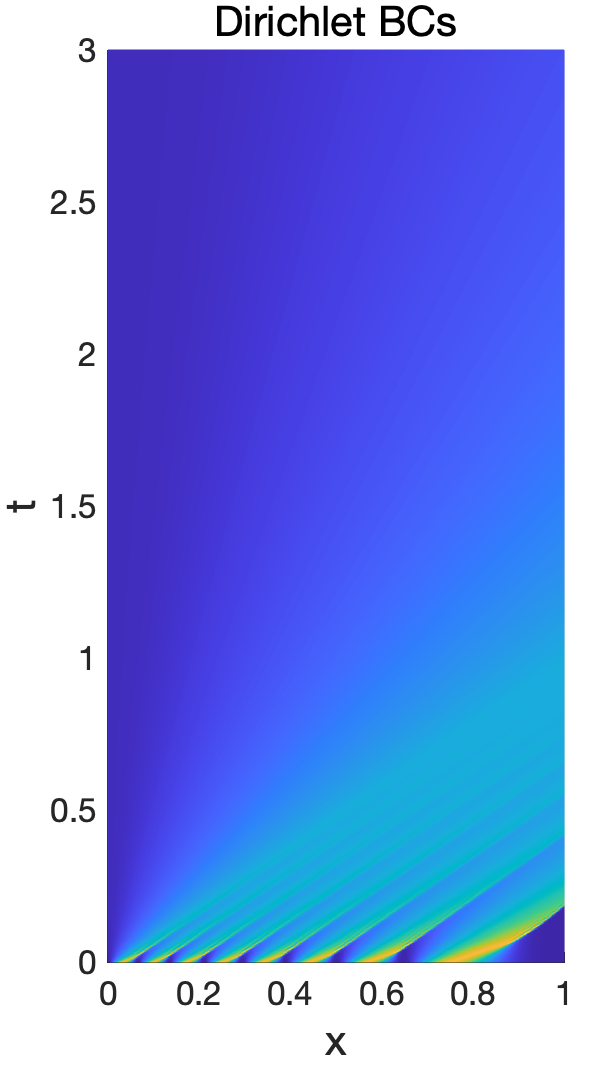}  \\
  \includegraphics[width=1.2in,height=2.6in]{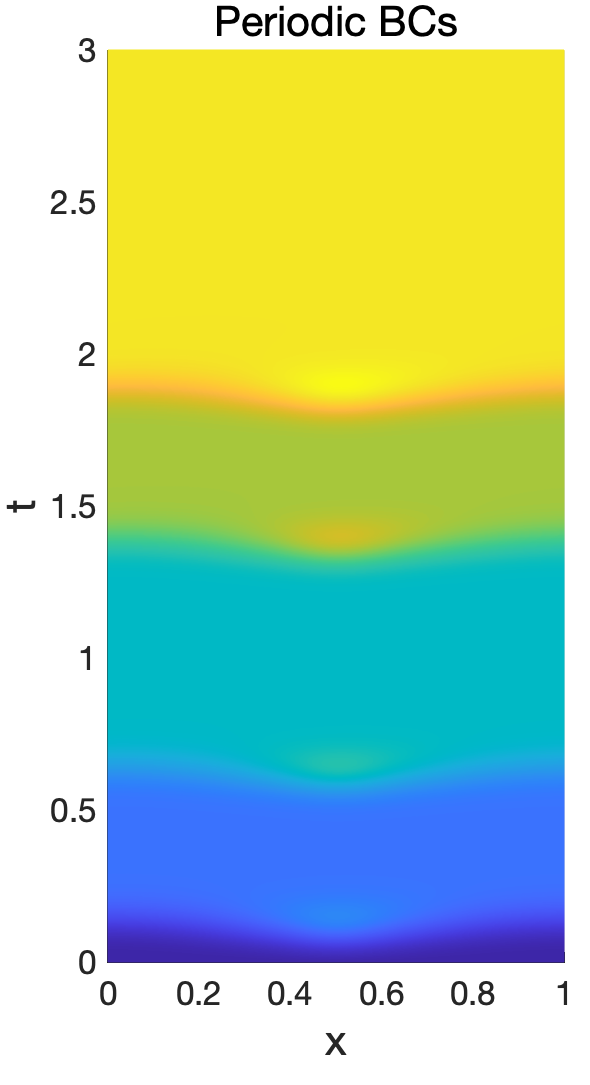}~
  \includegraphics[width=1.2in,height=2.6in]{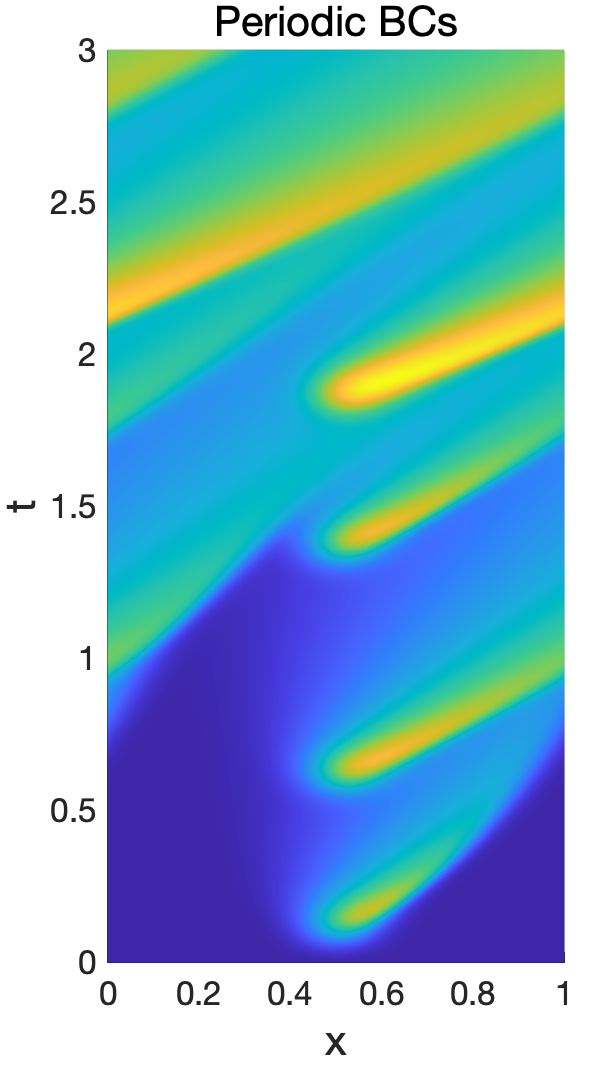}~
  \includegraphics[width=1.2in,height=2.6in]{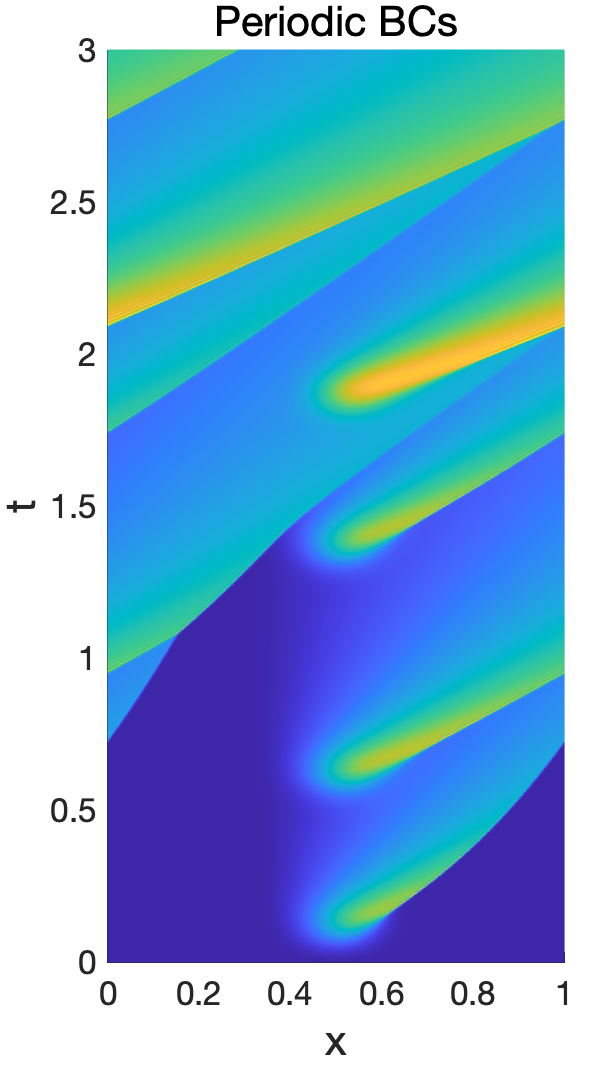}~
      \includegraphics[width=1.2in,height=2.6in]{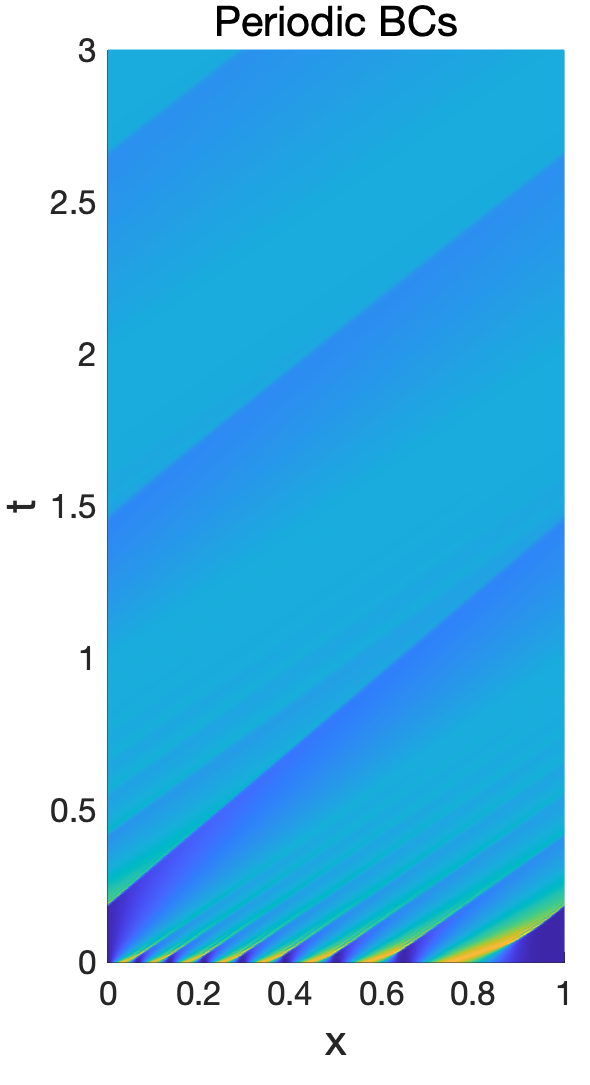}
    \caption{Burgers' equation with zero Dirichlet boundary
        conditions (top row) and periodic boundary conditions (bottom
        row). In the first three pannels, we use a zero initial
      condition, $u_0(x) = 0$, and the same source term as in Figure
      \ref{ParabolicExampleFig} for the heat equation, and a
      smaller and smaller diffusion parameter $\nu=1, 10^{-2}$ and
      $5\times10^{-4}$. Last column: solution for zero source term and
      initial condition $u_0(x)=\sin^2(8\pi(1-x)^2)$ with small
      diffusion $\nu=5\times10^{-4}$.}
    \label{BurgersFig}
\end{figure}
 In the first three panels, we use a zero initial condition,
  $u_0(x)=0$ and the same source term \eqref{fxt} we had used for the
  heat and advection-diffusion equation, also for three different
  values of the diffusion parameter, $\nu=1, 10^{-2}$ and
  $5\times10^{-4}$. We see that when $\nu$ is large, then the
diffusion part dominates and the solution has similar properties as
the solution of the heat equation. If $\nu$ is small and the
non-linear advection part starts playing a dominant role, then the
solution is transported from left to right over much longer time, as
we see in the top middle two panels in Figure \ref{ADEFig}, like
  for advection-diffusion. However, we see a further very important
  new phenomenon in the non-linear case: the solution shape changes as
  well, and even with the smooth source term, very sharp edges are
  forming in the solution, so called shock waves, containing very high
  frequency components that travel fare in space and time. In the top
  right panel, we use a zero source term, but a non-zero initial
  condition $u_0(x)=\sin^2(8\pi(1-x)^2)$ as for advection-diffusion
  before, and again the small diffusion parameter $5\times10^{-4}$. We
  see that also from the already fine features present in the high
  frequency components of the initial condition even sharper edges are
  formed in shock waves, and all are transported far in time.
Nevertheless, for both $\nu$ large and $\nu$ small, we can still
compute the solution for $t\in (1.25, 2.5)$ before we obtain the
solution earlier in time, because all solution components eventually
are diffused or leave the domain, as in the advection-diffusion
  case in the top row in Figure \ref{ADEFig}.

In contrast, for periodic boundary conditions, we see in the
  bottom row of Figure \ref{BurgersFig} that our observations from the
  bottom row for the advection-diffusion equation in Figure
  \ref{ADEFig} are further accentuated, as soon as the diffusion
  parameter becomes small: more and more fine information is generated
  and transported very far in time, through shock waves that are
  forming. For successful PinT computation, such high frequency shock
  waves must be transported by a mechanism that propagate them
  effectively far in space and time, which is very difficult using a
  coarse grid for example.  The last panel without source and just a
  non-zero initial condition shows the same effect for an initial
  condition transported over very long time: one cannot pre-compute
  the solution later in time any more without knowing the solution
  earlier in time when the diffusion parameter becomes small. It is
  therefore even harder to do PinT computations in such non-linear
  problems when $\nu\rightarrow0$, in the hyperbolic limit, where
  shock waves are natural in the solutions and all frequency
  components in it travel very far in space and time. Note that again
  we need periodic boundary conditions and small diffusion to
  encounter these difficulties, an important point when testing the
  performance of PinT methods on such problems. In the next subsection
  we will see that for hyperbolic problems, these difficulties already
  appear no matter what boundary conditions are used.

\subsection{Second-order wave equation}
For hyperbolic problems, we will use the second-order wave equation as
our model problem,
\begin{equation}\label{WaveEquation1d}
    \begin{array}{rcll}
      \partial_{tt}u(x,t) & = &c^2\partial_{xx}u(x,t) + g(x,t)
      &\text{in } (0,1)\times(0,T],\\
      u(x,0) & = &u_0(x) &\text{in } (0,1),\\
      \partial_tu(x,0) & = & 0 &\text{in } (0,1),\\
    \end{array}
\end{equation}
with a constant wave speed $c>0$. We show in Figure
\ref{WaveExampleFig}
\begin{figure}
  \centering
 \includegraphics[width=1.2in,height=2.6in,angle=0]{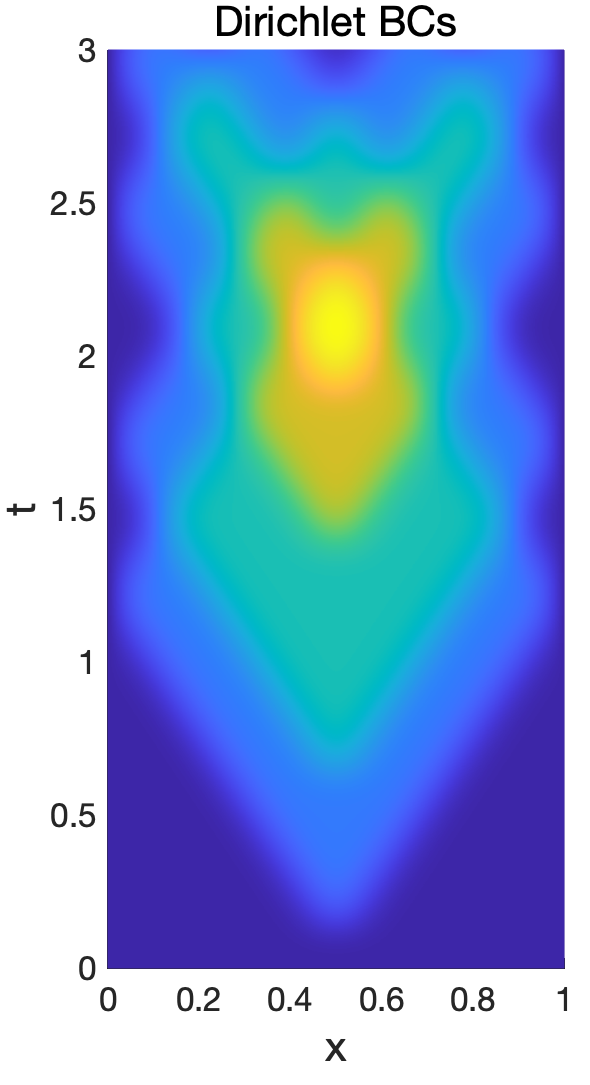}~
     \includegraphics[width=1.2in,height=2.6in,angle=0]{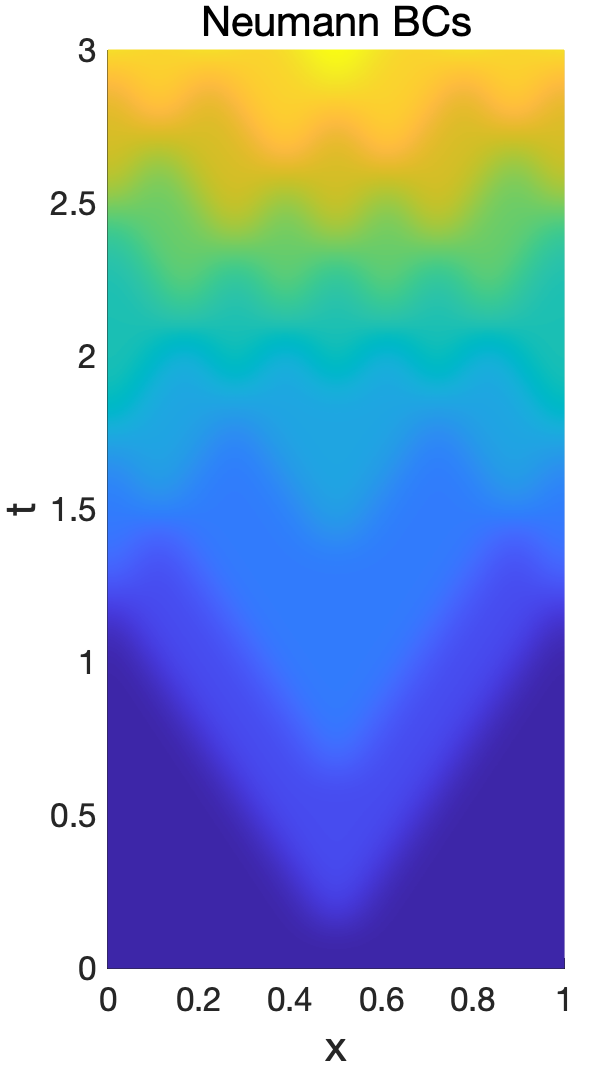}~
    \includegraphics[width=1.2in,height=2.6in,angle=0]{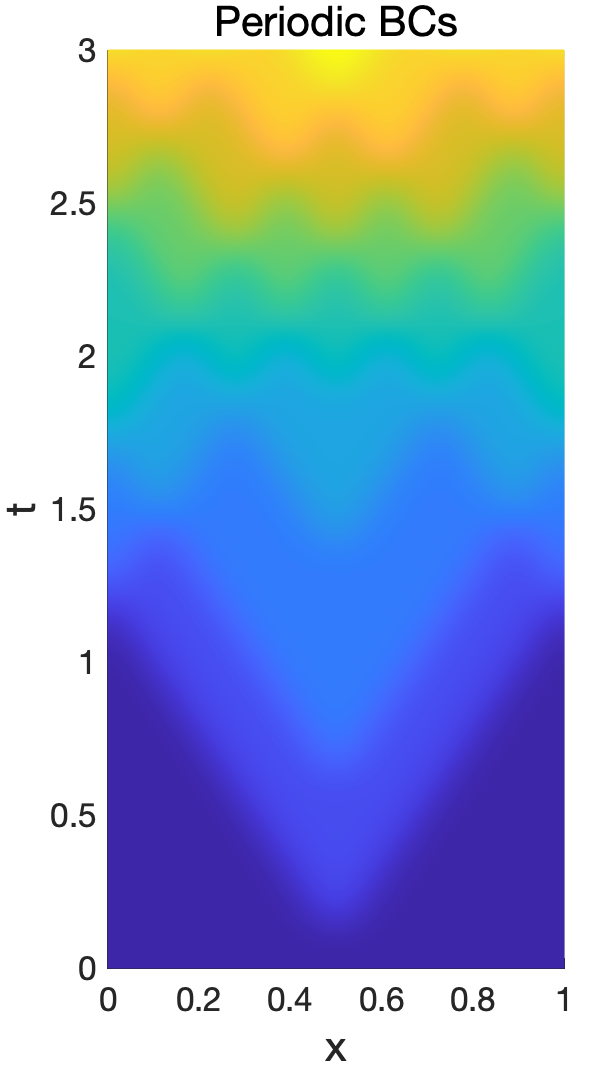}~    \includegraphics[width=1.2in,height=2.6in,angle=0]{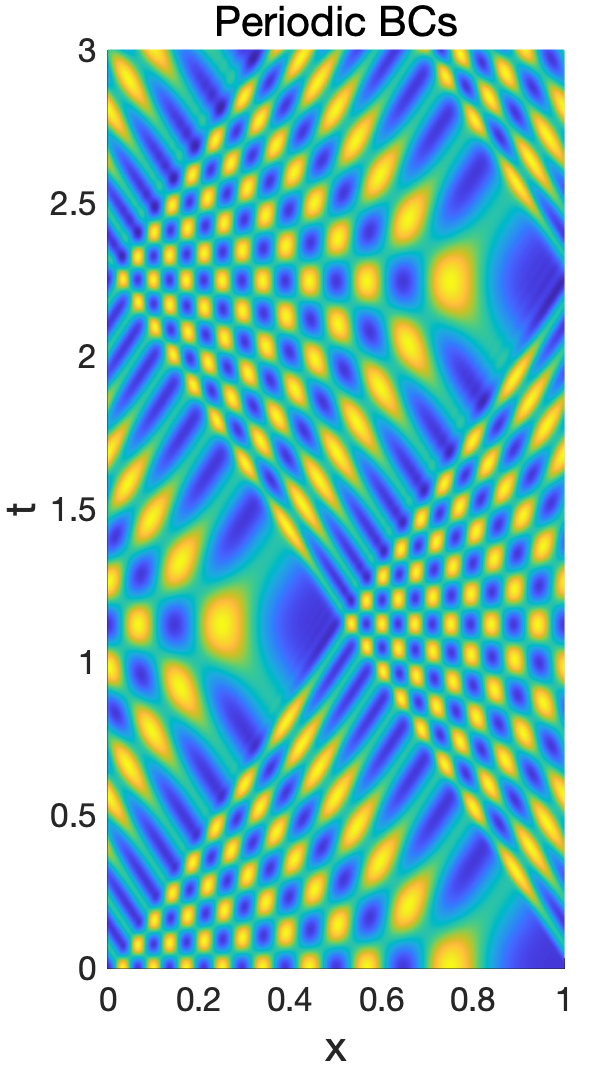}
  \caption{Solution of the second-order wave equation, with $c^2 =
    0.2$, zero initial condition $u_0(x)=0$ and the same source
      term as in Figure \ref{ParabolicExampleFig} for Dirichlet,
      Neumann and periodic boundary conditions (first three panels),
      and in the last panel the solution with zero source term and
      non-zero initial condition $u_0(x)=\sin^2(8\pi(1-x)^2)$.}
  \label{WaveExampleFig}
\end{figure}
in the first three panels the solution of the wave equation with the
same source term \eqref{fxt} we used before, with Dirichlet, Neumann
and periodic boundary conditions. In all cases, we observe that for
the wave equation, the solution depends in a complex and detailed
manner over a long time on the various source terms in space and
time.  In the last panel in Figure \ref{WaveExampleFig}, we show
  the solution of the wave equation with zero source term, but using
  the initial condition $u_0(x)=\sin^2(8\pi(1-x)^2)$ and zero first
  derivative in time. We see that the solution of this hyperbolic
  problem depends in a very detailed manner on all frequency
  components present in the initial condition, and this would be the
  same also for the other boundary conditions. This is typical for
hyperbolic problems and all boundary conditions; one only needs
periodic boundary conditions in the advection-diffusion and Burgers'
equation case to make this difficulty appear for small $\nu$, because
the advection term is first order and the transport has a
direction. For the wave equation and other hyperbolic problems, this
detailed and long time propagation in several directions with
reflections is present already for Dirichlet and Neumann boundary
conditions. It is this propagation of fine information over long time
in hyperbolic problems that makes time parallelization more
challenging than for parabolic problems, and requires different PinT
techniques to address it.

\section{Effective PinT methods for hyperbolic problems}\label{Sec3}

We have seen in Section \ref{Sec2} that parabolic problems have
solutions which are rather local in time, see Figure
\ref{ParabolicExampleFig}, where with Dirichlet conditions all
information is forgotten very rapidly over time, and with Neumann
and periodic boundary conditions only the lowest frequency
component, namely the constant, remains over long time. This changes
when transport terms are present and become dominant, see Figures
\ref{ADEFig}, \ref{BurgersFig}, and in the hyperbolic limit, and for
hyperbolic problems in general, all frequency components can travel
arbitrarily far in space and time, see Figure \ref{WaveExampleFig} for
the second order wave equation. PinT methods must take this into
account to be effective. It is interesting that many methods designed
specifically for hyperbolic problems also work well (or even better)
for parabolic problems. An exception are the mapped tent pitching
methods introduced at the end of Subsection \ref{sec3.2} which use the
finite speed of propagation in hyperbolic problems in their
construction. On the other hand, PinT methods designed for
parabolic problems (see Section \ref{Sec4}) do not perform well in
general for hyperbolic problems.

\subsection{Historical development}

We present four PinT methods that have proven effective for
hyperbolic problems. For each method, we show the main theoretical
properties and demonstrate these properties using the four PDEs
introduced in Section~\ref{Sec2}.

The first methods are rooted in solving overlapping or
non-overlapping space-time continuous subproblems, an approach
initially proposed for parabolic problems in \cite{G98} and
independently introduced in \cite{GanK02}. This strategy incorporates
elements of both Domain Decomposition (DD) methods, a long-established
technique for parallel PDE solving going back to \cite{Schwarz1870},
and Waveform Relaxation (WR) methods, which originated in circuit
simulations \cite{LRS82}.  These methods have been developed and
analyzed both for parabolic and hyperbolic problems in
\cite{Gander:1997:PhD}, and the name Schwarz Waveform Relaxation (SWR)
methods was coined in \cite{Gander:1999:OCO}. Further results for
non-linear parabolic problems can be found in
\cite{Gander:1998:WRA,gander2005overlapping}. Optimized Schwarz
Waveform Relaxation (OSWR) methods using more effective transmission
conditions were developed for parabolic problems in
\cite{Gander:2007:OSW,bennequin2009homographic,bennequin2016optimized},
and for hyperbolic problems in
\cite{Gander:2003:OSWW,Gander:2004:ABC}, see also \cite{gander2023non}
for non-linear advection-diffusion equations. The recently developed
Unmapped Tent Pitching (UTP) technique \cite{Ciaramella:2023:UTP} is
based on SWR. There are also Dirichlet-Neumann and Neumann-Neumann
Waveform relaxation variants, see
\cite{Gander:2014:DNNWR,gander2021dirichlet}.

The third method is based on the time parallelization of the Integral
Deferred Correction (IDC) technique. IDC for evolution problems was
first introduced in \cite{BS84}, and was later identified as a
specialized time-integrator in \cite{DGR00}, which theoretically has
the capability to generate numerical solutions of arbitrarily high
order by accurately treating the associated integral. Revisionist
Integral Deferred Correction (RIDC) is one such technique \cite{CMO10}
that can be used parallel in time, and there is another recent
parallel version (PIDC) from \cite{GT07}, which we will introduce in
detail in Section \ref{Sec3.4}.

The fourth time-parallel method that we will introduce in Section
\ref{Sec3.5} is the ParaExp method, proposed a decade ago by
\cite{gander2013paraexp}, which relies on a new strategy of
separately handling the initial value and source term,  see also
  \cite{merkel2017paraexp,kooij2017block}, and
  \cite{GGP18} for a non-linear variant. 

Finally, in Section \ref{Sec3.6}, we will present the ParaDiag family
of methods. Time-parallel methods based on diagonalization were first
proposed in \cite{maday2008parallelization} as direct time parallel
solvers, without iteration, and they were studied in more detail in
\cite{GHR16} for parabolic problems, with a non-linear variant in
\cite{GH17}, and in \cite{gander2019direct} for hyperbolic
problems. Rapidly then iterative variants appeared, within WR methods
\cite{gander2019convergence} or within Parareal
\cite{gander2020diagonalization}. Approximate ParaDiag methods were
also applied as preconditioners for Krylov methods directly to the
all-at once system in \cite{mcdonald2018preconditioning} and
\cite{liu2020fast}.  A comprehensive study of ParaDiag methods
appeared in \cite{gander2019direct}.  Since then, ParaDiag methods
have gained widespread traction in the PinT field, with new techniques
enhancing these methods, see for example \cite{Kressner2022} for a
direct ParaDiag technique using interpolation, and \cite{GP24}
for a new ParaDiag variant combining the Sherman-Morrison-Woodbury
formula and Krylov techniques.

\subsection{Schwarz waveform relaxation (SWR) methods}\label{sec3.2}

SWR combines the strengths of the classical Schwarz DD method and WR,
while overcoming some of their inherent limitations. In the context of
evolution PDEs, the Schwarz DD method typically involves a uniform
implicit time discretization, followed by the application of the DD
technique to solve the resulting elliptic problems at each time step
sequentially; see e.g.  \cite{Cai91,Meu91,Cai94}.  The DD iterations
need to converge at each time step before proceeding to the next,
across all subdomains, and one has to use the same time discretization
across subdomains, which undermines a key advantage of DD methods,
namely to tailor numerical treatments for each subdomain individually.

On the other hand, the classical WR method begins with  a system of
  ODEs, often obtained from a spatial discretization of an evolution
  PDE, which is  then solved using a dynamic iteration  similar to
  the Picard iteration, but using an  appropriate system
partitioning. For instance, in the case of the linear  system of
  ODEs \eqref{linearODE}, the WR iteration can be expressed as 
$$
\frac{d{\bm u^k}(t)}{dt}-M{\bm u}^k(t)=N{\bm u}^{k-1}(t)+f(t), ~t\in(0, T),
$$
where $k\geq1$ denotes the iteration index, ${\bm u}^k(0)={\bm
  u}_0$ for all $k\geq0$, and $(M, N)$ represents a consistent
splitting of $A$ such that $A=M+N$. For Jacobi (diagonal) or
Gauss-Seidel (triangular) type splittings, solving for ${\bm u}^k(t)$
boils down to solving a series of scalar ODEs.  In the Jacobi case,
all these ODEs can be solved in parallel, making this into a PinT
method in the sense that the future of all unknowns is approximated
before the future of connected unknowns is already known. Similarly in
the Gauss-Seidel case, one can obtain such parallelism using red-black
or other colorings. Even more parallelism can be introduced
using the cyclic reduction technique, see
\cite{worley1991parallelizing,HVW95,SV00}. However, a significant
challenge of WR lies in finding an effective system splitting
to ensure rapid convergence. As Nevanlinna remarked in \cite{Nev89}:
\begin{quote}
  ``{\em In practice, one is interested in knowing what subdivisions
    yield fast convergence for the iterations... The splitting into
    subsystems is assumed to be given. How to split in such a way that
    the coupling remains weak is an important question.}''
\end{quote}
A bad splitting can lead to arbitrarily slow convergence or even
divergence, rendering WR impractical.

SWR circumvents these limitations by initially decoupling the spatial
domain (rather than performing a spatial discretization first) and
then independently solving the space-time continuous PDEs on these
subdomains, similar to the WR approach. This approach allows for the
use of tailored space and time discretizations for each subdomain
problem; but more importantly, knowing that the space-time subdomain
problems are coupled for a particular PDE, one can design transmission
conditions which decouple the problems such that the methods converge
very rapidly, completely addressing the difficulty identified by
Nevanlinna above. This leads to the class of Optimized Schwarz
Waveform Relaxation (OSWR) methods, which have been studied for many
different types of PDEs, see e.g. \cite{martin2009schwarz} for the
shallow water equations, \cite{courvoisier2013time} for the time
domain Maxwell equations,
\cite{halpern2010optimized,besse2017schwarz,antoine2017analysis} for
Schr\"odinger equations, \cite{audusse2010optimized} for the primitive
equations of the ocean, \cite{antoine2016lagrange} for quantum wave
problems, \cite{wu2017optimized} for fractional diffusion problems,
\cite{thery2022analysis} the coupled Ekman boundary layer problem, and
also the many references therein.  OSWR methods have distinct
convergence characteristics for first-order parabolic problems (such
as the advection-diffusion equation \eqref{ADE} and the nonlinear
Burgers' equation \eqref{Burgers}) compared to second-order hyperbolic
problems like the wave equation \eqref{WaveEquation1d}. In the
following, we present these two types of problems separately.

\subsubsection{First-order  parabolic problems}

For the advection-diffusion equation \eqref{ADE}  with  homogeneous
Dirichlet boundary conditions, $u(0, t)=u(L,t)=0$, and an initial
condition, $u(x,0)=u_0(x)$, the OSWR  method with the two
  overlapping subdomains $\Omega_1:=(0, \beta L)$ and
  $\Omega_2:=(\alpha L, L)$, $\alpha<\beta$,
  and Robin transmission conditions is given by 
\begin{equation}\label{ADESWR}
\begin{split}
  & \begin{cases}
    \partial_tu_1^k(x,t)+\mathcal{L}u_1^k(x,t)=0    ~~(x,t)\in\Omega_1\times(0,T],\\
    u_1^k(0, t)=0,   \\
    \frac{1}{p}\partial_xu_1^k(\beta L, t)+ u_1^k(\beta L,t)= \frac{1}{p} \partial_xu_2^{k-1}(\beta L, t)+u_2^{k-1}(\beta L,t),  
   \end{cases}\\
&\begin{cases}
   \partial_tu_2^k(x,t)+\mathcal{L}u_2^k(x,t)=0  ~~ (x,t)\in\Omega_2\times(0,T],\\
   \frac{1}{p}\partial_xu_2^k(\alpha L, t)- u_2^k(\alpha L,t)= \frac{1}{p} \partial_xu_1^{k-1}(\alpha L, t)- u_1^{k-1}(\alpha L,t),   \\
   u_2^k(L, t)=0,  
  \end{cases}
\end{split}
\end{equation}
with $\mathcal{L}=\partial_x-\nu\partial_{xx}$, and initial conditions
$u_1^k(x,0)=u_0(x)$ for $x\in\Omega_1$ and $u_2^k(x,0)=u_0(x)$ for
$x\in\Omega_2$. Here, $k\geq1$ represents the iteration index,
$\{u_1^0(\alpha L, t), u^0(\beta L,t)\}$ are initial guesses for
$t\in(0, T]$, and $0<\alpha\leq \beta<1$, and $(\beta-\alpha) L$
  denotes the overlap size. We use Robin Transmission Conditions (TCs)
  in \eqref{ADESWR} with parameter $p>0$ at $x=\alpha L$ and $x=\beta
  L$ to transmit information between the subdomains, and the classical
  Dirichlet TCs correspond to the limit $p\rightarrow\infty$, i.e.,
  $u^k_1(\beta L, t)=u^{k-1}_2(\beta L, t)$ and $u^k_2(\alpha L,
  t)=u^{k-1}_1(\alpha L, t)$. Generalizing this two-subdomain case in
  \eqref{ADESWR} to a multi-subdomain scenario is straightforward, and
  the OSWR method for nonlinear problems is obtained by simply
  replacing the linear operator ${\cal L}$ by the corresponding
  non-linear one in \eqref{ADESWR}.

For the OSWR iteration given in \eqref{ADESWR}, the optimized choice
of the parameter $p$ and the corresponding convergence factor were
analyzed by Gander and Halpern in 2007 under the simplified assumption
that the space domain is unbounded.

\begin{theorem}\cite{Gander:2007:OSW}\label{proSWR}
\em{For    OSWR   \eqref{ADESWR} with
  $\mathcal{L}=\partial_x-\nu\partial_{xx}$ and an  overlap  size of
  $l>0$, the  optimized  choice for the Robin parameter, denoted by $p^*$,
  is given by $p^*=\frac{\tilde{p}^*\nu}{s}$, where $\tilde{p}^*$
  is the unique solution of the nonlinear equation
\begin{subequations}
\begin{equation}\label{Opta}
R_0(y_0, \tilde{p}^*, y_0)=R_0(\bar{y}(y_0, \tilde{p}^*), \tilde{p}^*, y_0),
\end{equation}
provided that $y_0:=\frac{l}{\nu}<y_c$, where $y_c$ is a constant equal to $1.618386576...$, and
\begin{equation*}
\begin{split}
&R_0(y, \tilde{p}, y_0)=\frac{(y-\tilde{p})^2+y^2-y_0^2}{(y+\tilde{p})^2+y^2-y_0^2}e^{-y}, \\
&\bar{y}(y_0, \tilde{p})=\sqrt{\frac{y_0^2+2\tilde{p}+\sqrt{\tilde{p}(-\tilde{p}^3-4\tilde{p}^2+(4+2y_0^2)\tilde{p}+8y_0^2)}}{2}}.
\end{split}
\end{equation*}
The function $R_0(y, \tilde{p}, y_0)$ is the convergence
factor of the  OSWR  iteration, obtained in Fourier space, where $y$
corresponds to a single Fourier mode $\omega \in [\frac{\pi}{T},
  \frac{\pi}{\Delta t}]$, i.e., $y=\frac{l}{\nu}\omega$.

If, on the other hand, $y_0\geq y_c$, then $\tilde{p}^*$ is the unique
solution of
\begin{equation}\label{Optb}
  y_0=\tilde{p}^*\sqrt{\frac{\tilde{p}^*}{4+\tilde{p}^*}}.
\end{equation}
With the  optimized  Robin parameter $p^*$, the convergence factor
$\rho$  over all relevant  Fourier modes can be bounded as
\begin{equation}\label{Optc}
\rho:={\max}_{y\in[y_{\text{min}}, y_{\text{max}}]}R_0(y, \tilde{p}^*, y_0)\leq R_0(\bar{y}(y_0, \tilde{p}^*), \tilde{p}^*, y_0),
\end{equation}
where $y_{\text{min}}=\frac{l\pi}{\nu T}$ and $y_{\text{max}}=\frac{l\pi}{\nu \Delta t}$.
\end{subequations}}
\end{theorem}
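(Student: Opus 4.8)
The plan is to carry out a Fourier analysis of the error equations, exactly as in the classical analysis of OSWR for advection-diffusion problems. First I would introduce the errors $e_i^k := u_i^k - u|_{\Omega_i}$, which satisfy the same system \eqref{ADESWR} but with zero initial and boundary data, since both $u_i^k$ and the exact solution $u$ satisfy $\partial_t u + \mathcal{L} u = 0$ with the same initial value $u_0$. Working on the unbounded domain (the simplifying assumption in the statement), I would extend the functions by zero in time for $t<0$ and apply the Fourier transform in $t$, with dual variable $\omega$. The operator $\partial_t + \partial_x - \nu\partial_{xx}$ becomes, after the transform, the ODE in $x$: $-\nu \hat e'' + \hat e' + i\omega \hat e = 0$, whose characteristic roots are $\lambda_\pm(\omega) = \frac{1}{2\nu}\bigl(1 \pm \sqrt{1 + 4i\nu\omega}\bigr)$. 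On $\Omega_1 = (-\infty, \beta L)$ the bounded solution is a multiple of $e^{\lambda_-(\omega)(x-\beta L)}$ wait --- one picks the decaying branch as $x\to-\infty$, namely $e^{\lambda_+(\omega) x}$ with $\mathrm{Re}\,\lambda_+>0$; on $\Omega_2 = (\alpha L, +\infty)$ one picks $e^{\lambda_-(\omega) x}$ with $\mathrm{Re}\,\lambda_- < 0$.

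Next I would insert these half-space solutions into the Robin transmission conditions and eliminate the coefficients to get the iteration map over two steps. Applying $\frac{1}{p}\partial_x + 1$ to the $\Omega_2$-solution at $x=\beta L$ and $\frac{1}{p}\partial_x - 1$ to the $\Omega_1$-solution at $x=\alpha L$, the two-step error recurrence $\hat e_1^{k} \mapsto \hat e_1^{k+2}$ (equivalently for $\hat e_2$) carries the convergence factor
\begin{equation*}
\rho(\omega) = \left|\frac{\frac{\lambda_+}{p} - 1}{\frac{\lambda_+}{p} + 1}\cdot\frac{\frac{\lambda_-}{p} + 1}{\frac{\lambda_-}{p} - 1}\right| e^{-(\beta-\alpha)L\,\mathrm{Re}(\lambda_+ - \lambda_-)} ,
\end{equation*}
after using $\lambda_+ \lambda_- = i\omega/\nu$ and $\lambda_+ + \lambda_- = 1/\nu$ to simplify. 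The exponential overlap term has exponent $-\frac{l}{\nu}\mathrm{Re}\sqrt{1+4i\nu\omega}$ with $l = (\beta-\alpha)L$. Introducing the scaled frequency $y := \frac{l}{\nu}|\mathrm{something}|$ --- more precisely the substitution that turns $\sqrt{1+4i\nu\omega}$ into a real parameter, and the scaled Robin parameter $\tilde p = \frac{ps}{\nu}$ with $s$ the appropriate scaling factor from the problem --- one checks by direct algebraic manipulation that $\rho$ takes exactly the stated closed form $R_0(y,\tilde p, y_0)$ with $y_0 = l/\nu$, and that $\bar y(y_0,\tilde p)$ is the unique interior critical point of $y\mapsto R_0(y,\tilde p, y_0)$, obtained by setting $\partial_y R_0 = 0$ and solving the resulting quadratic in $y^2$.

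The heart of the proof is the min-max (equioscillation) argument for the optimized parameter. One wants $p^*$ minimizing $\max_{y\in[y_{\min},y_{\max}]} R_0(y,\tilde p, y_0)$. The standard reasoning: for fixed $\tilde p$, the maximum over $y$ is attained either at the left endpoint $y_{\min}$ (where $R_0$ is largest for under-relaxed $\tilde p$) or at the interior critical point $\bar y(y_0,\tilde p)$; as $\tilde p$ increases the endpoint value decreases while the critical value increases, so the optimum balances them, giving equation \eqref{Opta}, $R_0(y_0,\tilde p^*, y_0) = R_0(\bar y(y_0,\tilde p^*),\tilde p^*, y_0)$ --- here $y_{\min}$ has been replaced by $y_0$, which is legitimate in the relevant asymptotic regime $T$ large, $\Delta t$ small, so $[y_{\min}, y_{\max}]$ effectively covers the whole range. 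The case distinction at $y_c = 1.618\ldots$ arises because when $y_0 \ge y_c$ the interior critical point $\bar y$ ceases to lie in the feasible range (or the derivative condition degenerates), and the optimum is instead characterized by a tangency condition, which after simplification is \eqref{Optb}. The bound \eqref{Optc} is then immediate since at $p^*$ the maximum over the subinterval is at most the value at the critical point. The main obstacle is the careful monotonicity and case analysis: verifying that $R_0(\cdot,\tilde p,y_0)$ has exactly the claimed shape (one interior maximum, monotone behavior at the ends), that $\bar y$ is well-defined precisely when the discriminant $\tilde p(-\tilde p^3 - 4\tilde p^2 + (4+2y_0^2)\tilde p + 8 y_0^2)$ is nonnegative, and pinning down the threshold $y_c$ where the two regimes meet --- this requires genuine calculus on the explicit rational-exponential function $R_0$, and is the part I would expect to consume most of the technical effort.
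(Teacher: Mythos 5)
The survey itself contains no proof of Theorem \ref{proSWR}: the result is quoted from \cite{Gander:2007:OSW}, so your proposal can only be measured against the argument of that reference, and in outline it does follow the same route --- Fourier transform in time of the error equations on the unbounded domain, half-space solutions built from the characteristic roots $\lambda_\pm=\frac{1\pm\sqrt{1+4\mathrm{i}\nu\omega}}{2\nu}$, elimination through the transmission conditions to get the two-step contraction factor, passage to the scaled variables $y,\,y_0,\,\tilde p$, and a min--max analysis of $R_0$. Two points, however, are genuine gaps rather than routine details. First, the identification of the contraction factor with $R_0$ is never actually checked, and with the operators exactly as written in \eqref{ADESWR} it does not hold in the stated single-parameter form: a direct computation (using $\lambda_++\lambda_-=1/\nu$ and $\lambda_+\lambda_-=-\mathrm{i}\omega/\nu$, note the sign) gives the modulus $\left|\frac{(z-(2\nu p+1))(z-(2\nu p-1))}{(z+(2\nu p+1))(z+(2\nu p-1))}\right|e^{-\frac{l}{\nu}\mathrm{Re}\,z}$ with $z=\sqrt{1+4\mathrm{i}\nu\omega}$, a product of two distinct M\"obius factors, whereas $R_0$ is the single squared factor $\left|\frac{z-q}{z+q}\right|^2e^{-\frac{l}{\nu}\mathrm{Re}\,z}$, $q=\nu\tilde p/l$, which arises for the transmission operators of \cite{Gander:2007:OSW} in which the advective shift $\frac{1}{2\nu}$ is absorbed into the Robin operator. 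That normalization --- the meaning of $s$ in $p^*=\tilde p^*\nu/s$ (presumably the overlap $l$) --- is exactly what your unspecified ``appropriate scaling factor'' hides, and it must be made explicit for the stated formula to be the convergence factor of \eqref{ADESWR}.

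Second, and more importantly, the content of the theorem --- uniqueness of $\tilde p^*$, the threshold $y_c=1.6183\ldots$, and the second-regime equation \eqref{Optb} --- is only gestured at, and the mechanism you propose for $y_0\ge y_c$ (the interior maximum $\bar y$ leaving the feasible range, a ``tangency'') is not what happens. In that regime the endpoint $y=y_0$ simply ceases to be active in the maximum, and the optimal parameter is the stationary point of $\tilde p\mapsto R_0(\bar y(y_0,\tilde p),\tilde p,y_0)$: by the envelope argument, $\partial_{\tilde p}R_0(y,\tilde p,y_0)=0$ reduces to $2y^2=\tilde p^2+y_0^2$, and inserting the closed form of $\bar y^2$ gives $\sqrt{\tilde p(-\tilde p^3-4\tilde p^2+(4+2y_0^2)\tilde p+8y_0^2)}=\tilde p(\tilde p-2)$, which after squaring collapses to $y_0^2=\frac{(\tilde p^*)^3}{4+\tilde p^*}$, i.e.\ \eqref{Optb}; the constant $y_c$ is then the value of $y_0$ at which this characterization and the equioscillation equation \eqref{Opta} coincide. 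None of this calculus, nor the monotonicity arguments that give existence and uniqueness of $\tilde p^*$ in each regime, appears in your sketch, while the final bound \eqref{Optc} is indeed immediate once the two regimes are established. So your plan of attack is the right one and matches the cited proof in spirit, but as written it defers precisely the part that constitutes the theorem.
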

As we mentioned above, classical SWR with Dirichlet TCs corresponds to
the case $p=\infty$ in \eqref{ADESWR}.  By setting $p=\infty$ in the
function $R_0$, we obtain
\begin{equation}\label{DirichletABC}
\rho \leq e^{-y_{\text{min}}}=e^{-\frac{l\pi}{\nu T}},
\end{equation}
 which was  analyzed in \cite[Section 3]{Gander:2007:OSW}. 

We now show a numerical experiment to illustrate OSWR. We discretize
the advection-diffusion operator $\mathcal{L}$ using a centered finite
difference method and Backward Euler in time.  Let $L=8.2$, $T=5$,
$\Delta t=0.01$, $\Delta x=0.02$, and $l=2\Delta x$. The initial value
for the advection-diffusion equation is $u_0(x)=e^{-10(x-L/2)^2}$. We
show in Figure \ref{SWRADEFig} (left) the theoretical convergence
factor of the SWR method with Dirichlet and optimized Robin TCs as
function of the diffusion parameter $\nu$. We see that the convergence
factor becomes small when the advection term becomes dominant, and the
method converges faster and faster. To test this numerically, we
decompose the spatial domain $(0, L)$ into 4 subdomains and solve the
advection-diffusion equation using the OSWR method for several values
of the parameter $\nu$.  The method starts from a random initial
guess, and we stop the iteration when the error between the iterate
and the converged solution is less than $10^{-8}$. The iteration
number for Dirichlet and optimized Robin TCs is shown in Figure
\ref{SWRADEFig} (right), and we see that indeed fewer iterations are
required when $\nu$ is small, as predicted by the theoretical results
in the left panel.
\begin{figure}
\centering
\includegraphics[width=2.3in,height=1.85in,angle=0]{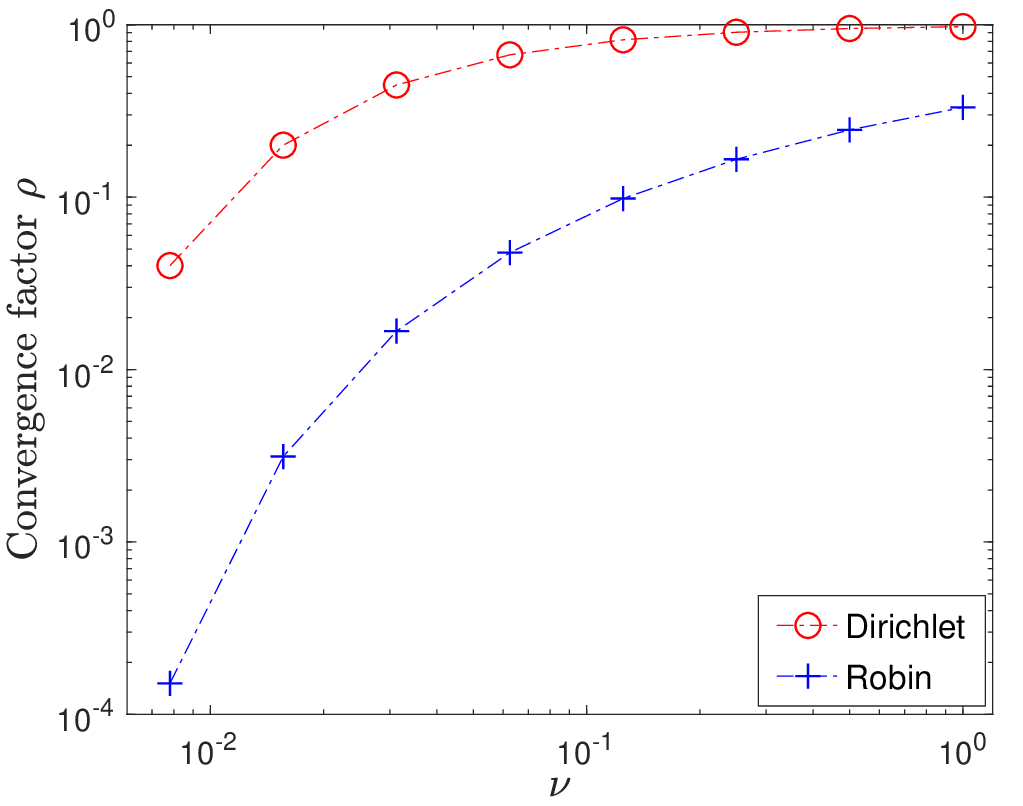}
\includegraphics[width=2.3in,height=1.85in,angle=0]{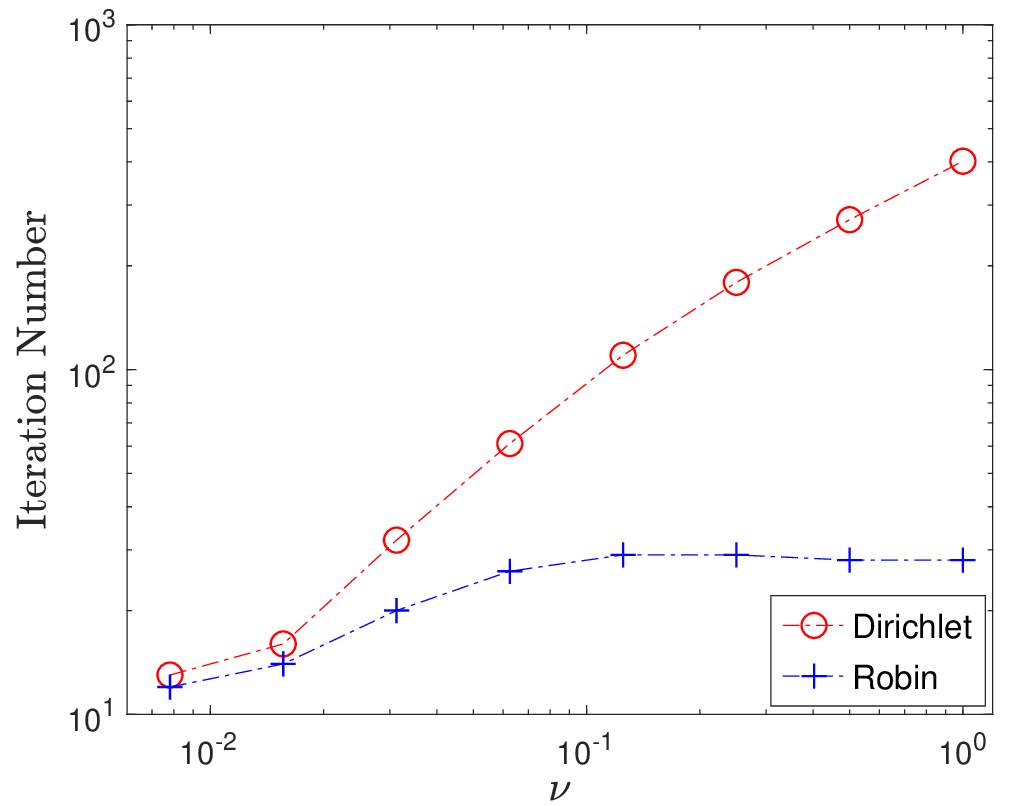}\\
\caption{Left: the theoretical convergence factor of the OSWR method
  when applied to the advection-diffusion equation \eqref{ADE}
  decreases when the diffusive parameter $\nu$ decreases. Right: the
  iteration, measured in a 4-subdomain numerical experiment with
  tolerance of $10^{-8}$, supports this prediction very well.}
\label{SWRADEFig}
\end{figure}

For a given $\nu$, the discretized OSWR method with four subdomains
may however not converge as rapidly as predicted by the theoretical
convergence factor $\rho$ obtained for a two subdomain decomposition
at the continuous level on an unbounded domain. For instance, when
$\nu=0.1$, the iteration numbers shown in Figure \ref{SWRADEFig} are
92 for Dirichlet TCs and 28 for Robin TCs, but iteration numbers
predicted by $\rho$ are 32 for Dirichlet TCs and 4 for Robin TCs,
which are significantly smaller. The reason for this discrepancy is
that the convergence factor $\rho$ in Theorem \ref{proSWR} is analyzed
for the 2-subdomain case at the space-time continuous level on an
unbounded domain, and we tested the discretized SWR method in the
4-subdomain case on a bounded domain. Convergence analyses of SWR with
Dirichlet TCs in the multi-subdomain case can be found in
\cite{GStuart98,WHH12}. However, a comprehensive convergence analysis
for Robin TCs in the multi-subdomain case is still missing so far. For
Robin TCs, the convergence of SWR in the two-subdomain case at the
semi-discrete level can be found in \cite{WK14}, see also the detailed
studies in \cite{gander2018optimized,gander2021discrete} for the
steady state case between continuous and discrete analyses on bounded
and unbounded domains.

 In addition to  Dirichlet and Robin TCs, there are efforts to
further accelerate   OSWR  using Ventcel TCs
\cite{BGGH16}. Essentially, these TCs serve as local approximations of
the {\em optimal} TCs analyzed in Fourier (or Laplace) space in
\cite[Section 3]{Gander:2007:OSW}, namely
$$
\partial_x-\frac{1}{2\nu}\mathcal{F}^{-1} \left(1+\sqrt{1+{\rm i}4\nu\omega}\right),
$$
where ${\rm i}=\sqrt{-1}$ and $\mathcal{F}^{-1}$ denotes the inverse
Fourier transform with $\omega$ representing the Fourier mode. In an
asymptotic sense, i.e., $l=C_1\Delta x$, $\Delta t=C_1\Delta x^\beta$,
and with $\Delta x$ being small, the convergence factor $\rho$
satisfies $\rho=1-\mathcal{O}(\Delta x^\gamma)$, where $\gamma>0$ is a
quantity that depends on $\beta$; see
\cite{Gander:2007:OSW,BGGH16,bennequin2009homographic}. Convolution
TCs  analyzed in \cite{WX17}  result in a mesh-independent constant
convergence factor $\rho=1-C$, where $C\in(0, 1)$. This is
particularly useful for handling  evolution  PDEs with nonlocal
terms, such as Volterra partial integro-differential equations.

\subsubsection{Second-order hyperbolic  problems}
{Unlike for first-order  parabolic  problems, for second-order
 hyperbolic  problems (e.g., the wave equation
  \eqref{WaveEquation1d}), the SWR method converges in a finite
  number of iterations, even when using simple Dirichlet transmission
  conditions. Applying SWR to the wave equation for a two
    subdomain decomposition leads to the algorithm 
\begin{equation}\label{WaveSWR}
\begin{split}
  & \begin{cases}
    \partial_{tt}u^k_1(x,t)=c^2 \partial_{xx}u_1^k(x,t)+g(x,t)  &(x, t)\in\Omega_1\times(0, T],\\
    u_1^k(x,0)=u_0(x),~\partial_tu_1^k(x,0)=\tilde{u}_0(x)  &x\in(0, \beta L),\\
    u_1^k(0, t)=0, ~  {u_1^k(\beta L, t)=u_2^{k-1}(\beta L, t)} &t\in(0, T),\\     
      \end{cases}\\
&\begin{cases}
    \partial_{tt}u^k_2(x,t)=c^2 \partial_{xx}u_2^k(x,t)+g(x,t)  &(x, t)\in\Omega_2\times(0, T],\\
    u_2^k(x,0)=u_0(x),~\partial_tu_2^k(x,0)=\tilde{u}_0(x)  &x\in(\alpha L, L),\\
     {u_2^k(\alpha L, t)=u_1^{k-1}(\alpha L, t)}, ~ u_2^k(L, t)=0 &t\in(0, T),\\   
      \end{cases}
      \end{split}
\end{equation}
where $c>0$ and $0<\alpha<\beta<1$.
\begin{theorem}\cite[Theorem 6.3.3]{Gander:1997:PhD} \label{Prop3.2}
  For the SWR method \eqref{WaveSWR}, the errors at the interfaces
  $x=\alpha L$ and $x=\beta L$ become zero after $k$ iterations, i.e.,
  $u^k_1(\alpha L, t)-u(\alpha L, t)=0$ and $u^k_2(\beta L, t)-u(\beta
  L, t)=0$, {provided that} $k>\frac{Tc}{\beta-\alpha}$.
\end{theorem}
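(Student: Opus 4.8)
The plan is to track how errors propagate through the SWR iteration using the finite speed of propagation for the wave equation. Let $e_i^k := u_i^k - u|_{\Omega_i}$ denote the error in subdomain $i$ at iteration $k$. By linearity, each $e_i^k$ satisfies the homogeneous wave equation $\partial_{tt}e_i^k = c^2\partial_{xx}e_i^k$ on $\Omega_i\times(0,T]$, with zero initial data $e_i^k(x,0)=0$ and $\partial_t e_i^k(x,0)=0$, zero exterior boundary condition, and transmission data $e_1^k(\beta L,t)=e_2^{k-1}(\beta L,t)$, $e_2^k(\alpha L,t)=e_1^{k-1}(\alpha L,t)$. The key structural fact is d'Alembert's formula: a solution of the 1D wave equation with zero initial data in a strip is governed by characteristics of slope $\pm 1/c$, so the value of $e_1^k$ at the interface $x=\alpha L$ at time $t$ depends only on the boundary data fed in at $x=\beta L$ during the time interval $[0, t-(\beta-\alpha)/c]$ (the time it takes a signal to cross the overlap of width $(\beta-\alpha)L$)—and similarly with the roles of $\alpha$ and $\beta$ swapped. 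If $t < (\beta-\alpha)/c$, no information from the interface has yet reached the opposite interface, so the error there is still zero.

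The main step is then an induction on $k$ establishing: $e_1^k(\alpha L,t)=0$ and $e_2^k(\beta L,t)=0$ for all $t \le t_k := k(\beta-\alpha)/c$. For the base case $k=0$ with arbitrary initial guess there is nothing to prove on $(0, t_0]=\emptyset$; more carefully, one checks $k=1$ directly: $e_1^1$ solves the homogeneous wave equation with zero initial data and transmission datum $e_2^0(\beta L,t)$ at $x=\beta L$, but by finite speed of propagation this datum cannot affect $x=\alpha L$ before time $(\beta-\alpha)/c$, so $e_1^1(\alpha L,t)=0$ for $t\le (\beta-\alpha)/c$, and symmetrically for $e_2^1(\beta L,t)$. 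For the inductive step, assume the claim holds at level $k-1$. Then the transmission data feeding subdomain $1$ at iteration $k$, namely $e_2^{k-1}(\beta L,\cdot)$, vanishes on $[0, t_{k-1}]$; propagating across the overlap costs an additional $(\beta-\alpha)/c$ in time, so $e_1^k(\alpha L,t)=0$ for $t \le t_{k-1}+(\beta-\alpha)/c = t_k$, and likewise $e_2^k(\beta L,t)=0$ for $t\le t_k$. Once $t_k = k(\beta-\alpha)/c \ge T$, i.e. $k \ge Tc/(\beta-\alpha)$, the interface errors vanish on all of $(0,T]$; since the statement asks for $k > Tc/(\beta-\alpha)$ (strict), this is comfortably covered. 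Note the overlap size used here is $(\beta-\alpha)L$ on the domain $(0,L)$; matching the normalization $L=1$ implicit in the theorem statement gives the stated bound $k>Tc/(\beta-\alpha)$.

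The step I expect to be the main obstacle is making the "finite speed of propagation across the overlap" argument fully rigorous when the subdomain is bounded and has its own exterior Dirichlet boundary (at $x=0$ for $\Omega_1$, at $x=L$ for $\Omega_2$). One must argue that reflections off the exterior boundary do not bring information back to the interface faster than the direct characteristic path; this is true because any reflected characteristic path from $x=\beta L$ back to $x=\alpha L$ is strictly longer than the direct one, so the bound $t_{k-1}+(\beta-\alpha)/c$ is if anything conservative. The cleanest way to handle this is to use the domain of dependence / domain of influence for the wave equation on an interval: the solution at $(\alpha L, t)$ with zero initial data depends only on boundary data at points reachable by broken characteristics of total length $\le ct$, and the shortest such path from the $x=\beta L$ interface has length exactly $(\beta-\alpha)L$. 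Once this geometric fact is in hand, the induction is routine. One should also record that the source term $g$ drops out entirely because it is common to the exact solution and both subdomain problems, so it does not appear in the error equations—this is why the result is independent of $g$.
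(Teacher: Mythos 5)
Your argument is correct and is essentially the paper's (cited) proof: the paper attributes Theorem \ref{Prop3.2} precisely to the finite speed of propagation, and your induction---each iteration pushing the exact interface trace forward by the overlap crossing time $(\beta-\alpha)L/c$, with the error solving the homogeneous wave equation with zero initial data so that only the transmitted interface datum matters, and reflections off the outer Dirichlet boundaries only lengthening characteristic paths---is exactly that mechanism, as also illustrated by the tents bounded by characteristic lines in the red--black SWR discussion. Your remark about the factor $L$ is apt: the stated bound $k>\frac{Tc}{\beta-\alpha}$ tacitly normalizes the overlap width $(\beta-\alpha)L$ (i.e.\ $L=1$), and your derivation gives the dimensionally consistent threshold $k\geq \frac{Tc}{(\beta-\alpha)L}$, which the stated condition implies.
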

The reason for   this  convergence result lies in the finite speed
of propagation inherent to hyperbolic problems. By exploiting this
property, similar results can be achieved in the case of many
subdomains, as well as for more general decompositions in higher
dimensions \cite{Gander:2004:ABC} and for other hyperbolic equations,
see for example \cite{GR2005} for 1D nonlinear conservation laws. 

Theorem \ref{Prop3.2} shows a very important property of SWR applied
to hyperbolic problems, already indicated in \cite[Figure
  3.1]{Gander:2003:OSWW}: the subdomains are computing the exact
solution in the cone within the subdomain which is only influenced by
the initial condition, and not by the transmission conditions
where possibly incorrect data is still coming from the neighboring
subdomains. Using this property, one can choose the space-time
subdomains in SWR applied to hyperbolic problems in order to avoid
iterations and advance directly with space-time subdomain solves in
parallel. To illustrate this, it is best to consider again the
one-dimensional wave equation (\ref{WaveEquation1d}) and a {\em
  red-black} domain decomposition with generous overlap, as it was
done in \cite{Ciaramella:2023:UTP}, see Figure \ref{RBSWRFig}.
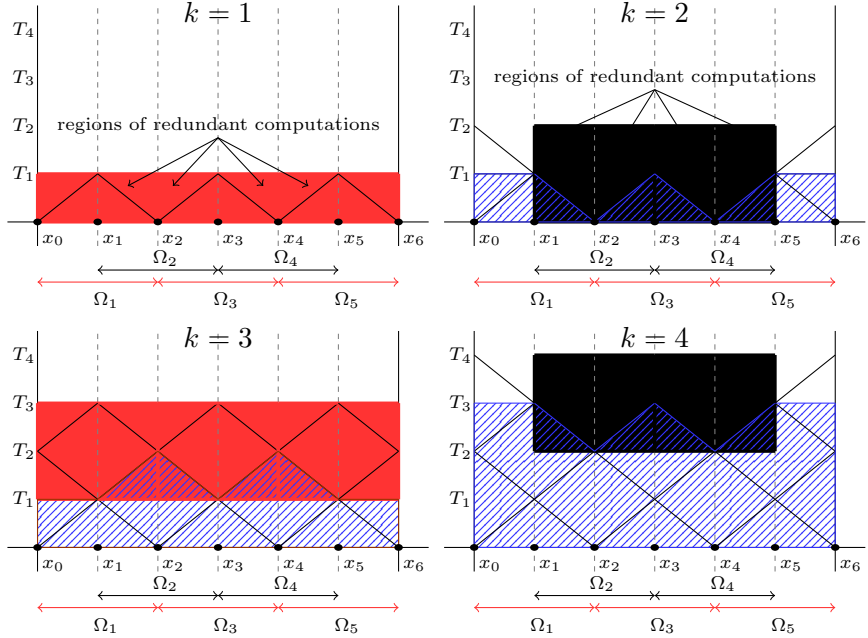
\begin{figure}
  \centering
\definecolor{zzttqq}{rgb}{0.6,0.2,0}
\definecolor{ttttff}{rgb}{0.2,0.2,1}
\definecolor{fftttt}{rgb}{1,0.2,0.2}
\mbox{\begin{tikzpicture}[xscale=0.20,yscale=0.16]
\fill[line width=1.2pt,color=fftttt,fill=fftttt,fill opacity=0.5] (4,2) -- (4,6) -- (12,6) -- (12,2) -- cycle;
\fill[line width=1.2pt,color=fftttt,fill=fftttt,fill opacity=0.5] (12,6) -- (20,6) -- (20,2) -- (12,2) -- cycle;
\fill[line width=1.2pt,color=fftttt,fill=fftttt,fill opacity=0.5] (20,6) -- (20,2) -- (28,2) -- (28,6) -- cycle;
\draw (2,2)-- (30,2);
\draw (4,0)-- (4,20);
\draw [gray,dash pattern=on 2pt off 2pt] (8,0)-- (8,20);
\draw [gray,dash pattern=on 2pt off 2pt] (12,0)-- (12,20);
\draw [gray,dash pattern=on 2pt off 2pt] (16,0)-- (16,20);
\draw [gray,dash pattern=on 2pt off 2pt] (20,0)-- (20,20);
\draw [gray,dash pattern=on 2pt off 2pt] (24,0)-- (24,20);
\draw (28,0)-- (28,20);
\draw [line width=1.2pt,color=fftttt] (4,2)-- (4,6);
\draw [line width=1.2pt,color=fftttt] (4,6)-- (12,6);
\draw [line width=1.2pt,color=fftttt] (12,6)-- (12,2);
\draw [line width=1.2pt,color=fftttt] (12,2)-- (4,2);
\draw [line width=1.2pt,color=fftttt] (12,6)-- (20,6);
\draw [line width=1.2pt,color=fftttt] (20,6)-- (20,2);
\draw [line width=1.2pt,color=fftttt] (20,2)-- (12,2);
\draw [line width=1.2pt,color=fftttt] (12,2)-- (12,6);
\draw [line width=1.2pt,color=fftttt] (20,6)-- (20,2);
\draw [line width=1.2pt,color=fftttt] (20,2)-- (28,2);
\draw [line width=1.2pt,color=fftttt] (28,2)-- (28,6);
\draw [line width=1.2pt,color=fftttt] (28,6)-- (20,6);
\draw (4,2)-- (8,6);
\draw (8,6)-- (12,2);
\draw (12,2)-- (16,6);
\draw (16,6)-- (20,2);
\draw (20,2)-- (24,6);
\draw (24,6)-- (28,2);

\draw[color=black] (16,19.5) node {$k=1$};

\begin{scriptsize}
\draw[color=black] (16,10) node {regions of redundant computations};
\end{scriptsize}
\draw [->,color=black] (16,9)-- (10,5);
\draw [->,color=black] (16,9)-- (13,5);
\draw [->,color=black] (16,9)-- (19,5);
\draw [->,color=black] (16,9)-- (22,5);

\draw [<->,color=fftttt] (4,-3)-- (12,-3);
\draw [<->,color=fftttt] (12,-3)-- (20,-3);
\draw [<->,color=fftttt] (20,-3)-- (28,-3);
\draw[<->] (8,-2.0)-- (16,-2.0);
\draw[<->] (16,-2.0)-- (24,-2.0);

\begin{scriptsize}
\draw[color=black] (8.56,-4.5) node {$\Omega_1$};
\draw[color=black] (16.54,-4.5) node {$\Omega_3$};
\draw[color=black] (24.58,-4.5) node {$\Omega_5$};

\draw[color=black] (12.52,-1.0) node {$\Omega_2$};
\draw[color=black] (20.56,-1.0) node {$\Omega_4$};

\begin{scope}[xshift=0.5cm,yshift=-0.5cm]
\draw[color=black] (4.54,1.0) node {$x_0$};
\draw[color=black] (8.56,1.0) node {$x_1$};
\draw[color=black] (12.52,1.0) node {$x_2$};
\draw[color=black] (16.54,1.0) node {$x_3$};
\draw[color=black] (20.56,1.0) node {$x_4$};
\draw[color=black] (24.58,1.0) node {$x_5$};
\draw[color=black] (28.54,1.0) node {$x_6$};
\draw[color=black] (2.6,6.5) node {$T_1$};
\draw[color=black] (2.6,10.5) node {$T_2$};
\draw[color=black] (2.6,14.5) node {$T_3$};
\draw[color=black] (2.6,18.5) node {$T_4$};
\end{scope}

\fill [color=black] (4,2) circle (8.0pt);
\fill [color=black] (8,2) circle (8.0pt);
\fill [color=black] (12,2) circle (8.0pt);
\fill [color=black] (16,2) circle (8.0pt);
\fill [color=black] (20,2) circle (8.0pt);
\fill [color=black] (24,2) circle (8.0pt);
\fill [color=black] (28,2) circle (8.0pt);
\end{scriptsize}
\end{tikzpicture}
\begin{tikzpicture}[xscale=0.20,yscale=0.16]
\begin{scope}[xshift=30cm]
\definecolor{ttttff}{rgb}{0.2,0.2,1}
\fill[line width=1.2pt,fill=black,fill opacity=0.4] (8,2) -- (8,10) -- (16,10) -- (16,2) -- cycle;
\fill[line width=1.2pt,fill=black,fill opacity=0.4] (16,2) -- (16,10) -- (24,10) -- (24,2) -- cycle;
\fill[pattern color=ttttff,fill=ttttff,pattern=north east lines] (4,6) -- (4,2) -- (12,2) -- (8,6) -- cycle;
\fill[pattern color=ttttff,fill=ttttff,pattern=north east lines] (12,2) -- (16,6) -- (20,2) -- cycle;
\fill[pattern color=ttttff,fill=ttttff,pattern=north east lines] (20,2) -- (24,6) -- (28,6) -- (28,2) -- cycle;
\draw (2,2)-- (30,2);
\draw (4,0)-- (4,20);
\draw [gray,dash pattern=on 2pt off 2pt] (8,0)-- (8,20);
\draw [gray,dash pattern=on 2pt off 2pt] (12,0)-- (12,20);
\draw [gray,dash pattern=on 2pt off 2pt] (16,0)-- (16,20);
\draw [gray,dash pattern=on 2pt off 2pt] (20,0)-- (20,20);
\draw [gray,dash pattern=on 2pt off 2pt] (24,0)-- (24,20);
\draw (28,0)-- (28,20);
\draw (8,6)-- (12,2);
\draw (12,2)-- (16,6);
\draw (16,6)-- (20,2);
\draw (20,2)-- (24,6);
\draw (8,6)-- (12,10);
\draw (12,10)-- (16,6);
\draw (16,6)-- (20,10);
\draw (20,10)-- (24,6);
\draw [line width=1.2pt] (8,2)-- (8,10);
\draw [line width=1.2pt] (8,10)-- (16,10);
\draw [line width=1.2pt] (16,10)-- (16,2);
\draw [line width=1.2pt] (16,2)-- (8,2);
\draw [line width=1.2pt] (16,2)-- (16,10);
\draw [line width=1.2pt] (16,10)-- (24,10);
\draw [line width=1.2pt] (24,10)-- (24,2);
\draw [line width=1.2pt] (24,2)-- (16,2);
\draw (8,6)-- (4,6);
\draw (24,6)-- (28,6);
\draw [color=ttttff] (4,6)-- (4,2);
\draw [color=ttttff] (4,2)-- (12,2);
\draw [color=ttttff] (12,2)-- (8,6);
\draw [color=ttttff] (8,6)-- (4,6);
\draw [color=ttttff] (12,2)-- (16,6);
\draw [color=ttttff] (16,6)-- (20,2);
\draw [color=ttttff] (20,2)-- (12,2);
\draw [color=ttttff] (20,2)-- (24,6);
\draw [color=ttttff] (24,6)-- (28,6);
\draw [color=ttttff] (28,6)-- (28,2);
\draw [color=ttttff] (28,2)-- (20,2);
\draw (4,2)-- (8,6);
\draw (24,6)-- (28,2);
\draw (8,6)-- (4,10);
\draw (24,6)-- (28,10);

\draw[color=black] (16,19.5) node {$k=2$};

\begin{scriptsize}
\draw[color=black] (16,14) node {regions of redundant computations};
\end{scriptsize}
\draw [->,color=black] (16,13)-- (9,9);
\draw [->,color=black] (16,13)-- (14,9);
\draw [->,color=black] (16,13)-- (18,9);
\draw [->,color=black] (16,13)-- (23,9);

\draw [<->,color=fftttt] (4,-3)-- (12,-3);
\draw [<->,color=fftttt] (12,-3)-- (20,-3);
\draw [<->,color=fftttt] (20,-3)-- (28,-3);
\draw[<->] (8,-2.0)-- (16,-2.0);
\draw[<->] (16,-2.0)-- (24,-2.0);

\begin{scriptsize}
\draw[color=black] (8.56,-4.5) node {$\Omega_1$};
\draw[color=black] (16.54,-4.5) node {$\Omega_3$};
\draw[color=black] (24.58,-4.5) node {$\Omega_5$};

\draw[color=black] (12.52,-1.0) node {$\Omega_2$};
\draw[color=black] (20.56,-1.0) node {$\Omega_4$};

\begin{scope}[xshift=0.5cm,yshift=-0.5cm]
\draw[color=black] (4.54,1.0) node {$x_0$};
\draw[color=black] (8.56,1.0) node {$x_1$};
\draw[color=black] (12.52,1.0) node {$x_2$};
\draw[color=black] (16.54,1.0) node {$x_3$};
\draw[color=black] (20.56,1.0) node {$x_4$};
\draw[color=black] (24.58,1.0) node {$x_5$};
\draw[color=black] (28.54,1.0) node {$x_6$};
\draw[color=black] (2.6,6.5) node {$T_1$};
\draw[color=black] (2.6,10.5) node {$T_2$};
\draw[color=black] (2.6,14.5) node {$T_3$};
\draw[color=black] (2.6,18.5) node {$T_4$};
\end{scope}

\fill [color=black] (4,2) circle (8.0pt);
\fill [color=black] (8,2) circle (8.0pt);
\fill [color=black] (12,2) circle (8.0pt);
\fill [color=black] (16,2) circle (8.0pt);
\fill [color=black] (20,2) circle (8.0pt);
\fill [color=black] (24,2) circle (8.0pt);
\fill [color=black] (28,2) circle (8.0pt);
\end{scriptsize}
\end{scope}

\end{tikzpicture}}
\mbox{\begin{tikzpicture}[xscale=0.20,yscale=0.16]
\fill[line width=1.2pt,color=fftttt,fill=fftttt,fill opacity=0.4] (4,14) -- (4,6) -- (12,6) -- (12,14) -- cycle;
\fill[line width=1.2pt,color=fftttt,fill=fftttt,fill opacity=0.4] (12,6) -- (20,6) -- (20,14) -- (12,14) -- cycle;
\fill[line width=1.2pt,color=fftttt,fill=fftttt,fill opacity=0.4] (20,6) -- (28,6) -- (28,14) -- (20,14) -- cycle;
\fill[pattern color=ttttff,fill=ttttff,pattern=north east lines] (4,2) -- (4,6) -- (8,6) -- (12,10) -- (16,6) -- (20,10) -- (24,6) -- (28,6) -- (28,2) -- cycle;
\draw (2,2)-- (30,2);
\draw (4,0)-- (4,20);
\draw [gray,dash pattern=on 2pt off 2pt] (8,0)-- (8,20);
\draw [gray,dash pattern=on 2pt off 2pt] (12,0)-- (12,20);
\draw [gray,dash pattern=on 2pt off 2pt] (16,0)-- (16,20);
\draw [gray,dash pattern=on 2pt off 2pt] (20,0)-- (20,20);
\draw [gray,dash pattern=on 2pt off 2pt] (24,0)-- (24,20);
\draw (28,0)-- (28,20);
\draw (8,6)-- (12,2);
\draw (12,2)-- (16,6);
\draw (16,6)-- (20,2);
\draw (20,2)-- (24,6);
\draw (8,6)-- (12,10);
\draw (12,10)-- (16,6);
\draw (16,6)-- (20,10);
\draw (20,10)-- (24,6);
\draw (8,6)-- (4,6);
\draw (24,6)-- (28,6);
\draw (12,10)-- (8,14);
\draw (12,10)-- (16,14);
\draw (16,14)-- (20,10);
\draw (20,10)-- (24,14);
\draw [line width=1.2pt,color=fftttt] (4,14)-- (4,6);
\draw [line width=1.2pt,color=fftttt] (4,6)-- (12,6);
\draw [line width=1.2pt,color=fftttt] (12,6)-- (12,14);
\draw [line width=1.2pt,color=fftttt] (12,14)-- (4,14);
\draw [line width=1.2pt,color=fftttt] (12,6)-- (20,6);
\draw [line width=1.2pt,color=fftttt] (20,6)-- (20,14);
\draw [line width=1.2pt,color=fftttt] (20,14)-- (12,14);
\draw [line width=1.2pt,color=fftttt] (12,14)-- (12,6);
\draw [line width=1.2pt,color=fftttt] (20,6)-- (28,6);
\draw [line width=1.2pt,color=fftttt] (28,6)-- (28,14);
\draw [line width=1.2pt,color=fftttt] (28,14)-- (20,14);
\draw [line width=1.2pt,color=fftttt] (20,14)-- (20,6);
\draw (8,6)-- (4,2);
\draw (24,6)-- (28,2);
\draw (8,6)-- (4,10);
\draw (4,10)-- (8,14);
\draw (24,6)-- (28,10);
\draw (28,10)-- (24,14);
\draw [color=zzttqq] (4,2)-- (4,6);
\draw [color=zzttqq] (4,6)-- (8,6);
\draw [color=zzttqq] (8,6)-- (12,10);
\draw [color=zzttqq] (12,10)-- (16,6);
\draw [color=zzttqq] (16,6)-- (20,10);
\draw [color=zzttqq] (20,10)-- (24,6);
\draw [color=zzttqq] (24,6)-- (28,6);
\draw [color=zzttqq] (28,6)-- (28,2);
\draw [color=zzttqq] (28,2)-- (4,2);

\draw[color=black] (16,19.5) node {$k=3$};

\draw [<->,color=fftttt] (4,-3)-- (12,-3);
\draw [<->,color=fftttt] (12,-3)-- (20,-3);
\draw [<->,color=fftttt] (20,-3)-- (28,-3);
\draw[<->] (8,-2.0)-- (16,-2.0);
\draw[<->] (16,-2.0)-- (24,-2.0);

\begin{scriptsize}
\draw[color=black] (8.56,-4.5) node {$\Omega_1$};
\draw[color=black] (16.54,-4.5) node {$\Omega_3$};
\draw[color=black] (24.58,-4.5) node {$\Omega_5$};

\draw[color=black] (12.52,-1.0) node {$\Omega_2$};
\draw[color=black] (20.56,-1.0) node {$\Omega_4$};

\begin{scope}[xshift=0.5cm,yshift=-0.5cm]
\draw[color=black] (4.54,1.0) node {$x_0$};
\draw[color=black] (8.56,1.0) node {$x_1$};
\draw[color=black] (12.52,1.0) node {$x_2$};
\draw[color=black] (16.54,1.0) node {$x_3$};
\draw[color=black] (20.56,1.0) node {$x_4$};
\draw[color=black] (24.58,1.0) node {$x_5$};
\draw[color=black] (28.54,1.0) node {$x_6$};
\draw[color=black] (2.6,6.5) node {$T_1$};
\draw[color=black] (2.6,10.5) node {$T_2$};
\draw[color=black] (2.6,14.5) node {$T_3$};
\draw[color=black] (2.6,18.5) node {$T_4$};
\end{scope}

\fill [color=black] (4,2) circle (8.0pt);
\fill [color=black] (8,2) circle (8.0pt);
\fill [color=black] (12,2) circle (8.0pt);
\fill [color=black] (16,2) circle (8.0pt);
\fill [color=black] (20,2) circle (8.0pt);
\fill [color=black] (24,2) circle (8.0pt);
\fill [color=black] (28,2) circle (8.0pt);
\end{scriptsize}
\end{tikzpicture}
\begin{tikzpicture}[xscale=0.20,yscale=0.16]
  \begin{scope}[xshift=30cm]
\definecolor{ttttff}{rgb}{0.2,0.2,1}
\fill[line width=1.2pt,fill=black,fill opacity=0.4] (8,10) -- (16,10) -- (16,18) -- (8,18) -- cycle;
\fill[line width=1.2pt,fill=black,fill opacity=0.4] (16,10) -- (24,10) -- (24,18) -- (16,18) -- cycle;
\fill[pattern color=ttttff,fill=ttttff,pattern=north east lines] (4,2) -- (4,14) -- (8,14) -- (12,10) -- (16,14) -- (20,10) -- (24,14) -- (28,14) -- (28,2) -- cycle;
\draw (2,2)-- (30,2);
\draw (4,0)-- (4,20);
\draw [gray,dash pattern=on 2pt off 2pt] (8,0)-- (8,20);
\draw [gray,dash pattern=on 2pt off 2pt] (12,0)-- (12,20);
\draw [gray,dash pattern=on 2pt off 2pt] (16,0)-- (16,20);
\draw [gray,dash pattern=on 2pt off 2pt] (20,0)-- (20,20);
\draw [gray,dash pattern=on 2pt off 2pt] (24,0)-- (24,20);
\draw (28,0)-- (28,20);
\draw (8,6)-- (12,2);
\draw (12,2)-- (16,6);
\draw (16,6)-- (20,2);
\draw (20,2)-- (24,6);
\draw (8,6)-- (12,10);
\draw (12,10)-- (16,6);
\draw (16,6)-- (20,10);
\draw (20,10)-- (24,6);
\draw (12,10)-- (8,14);
\draw (12,10)-- (16,14);
\draw (16,14)-- (20,10);
\draw (20,10)-- (24,14);
\draw (8,6)-- (4,2);
\draw (24,6)-- (28,2);
\draw (8,6)-- (4,10);
\draw (4,10)-- (8,14);
\draw (24,6)-- (28,10);
\draw (28,10)-- (24,14);
\draw [line width=1.2pt] (8,10)-- (16,10);
\draw [line width=1.2pt] (16,10)-- (16,18);
\draw [line width=1.2pt] (16,18)-- (8,18);
\draw [line width=1.2pt] (8,18)-- (8,10);
\draw [line width=1.2pt] (16,10)-- (24,10);
\draw [line width=1.2pt] (24,10)-- (24,18);
\draw [line width=1.2pt] (24,18)-- (16,18);
\draw [line width=1.2pt] (16,18)-- (16,10);
\draw [color=ttttff] (4,2)-- (4,14);
\draw [color=ttttff] (4,14)-- (8,14);
\draw [color=ttttff] (8,14)-- (12,10);
\draw [color=ttttff] (12,10)-- (16,14);
\draw [color=ttttff] (16,14)-- (20,10);
\draw [color=ttttff] (20,10)-- (24,14);
\draw [color=ttttff] (24,14)-- (28,14);
\draw [color=ttttff] (28,14)-- (28,2);
\draw [color=ttttff] (28,2)-- (4,2);
\draw (8,14)-- (12,18);
\draw (12,18)-- (16,14);
\draw (16,14)-- (20,18);
\draw (20,18)-- (24,14);
\draw (24,14)-- (28,18);
\draw (8,14)-- (4,18);

\draw[color=black] (16,19.5) node {$k=4$};

\draw [<->,color=fftttt] (4,-3)-- (12,-3);
\draw [<->,color=fftttt] (12,-3)-- (20,-3);
\draw [<->,color=fftttt] (20,-3)-- (28,-3);
\draw[<->] (8,-2.0)-- (16,-2.0);
\draw[<->] (16,-2.0)-- (24,-2.0);

\begin{scriptsize}
\draw[color=black] (8.56,-4.5) node {$\Omega_1$};
\draw[color=black] (16.54,-4.5) node {$\Omega_3$};
\draw[color=black] (24.58,-4.5) node {$\Omega_5$};

\draw[color=black] (12.52,-1.0) node {$\Omega_2$};
\draw[color=black] (20.56,-1.0) node {$\Omega_4$};

\begin{scope}[xshift=0.5cm,yshift=-0.5cm]
\draw[color=black] (4.54,1.0) node {$x_0$};
\draw[color=black] (8.56,1.0) node {$x_1$};
\draw[color=black] (12.52,1.0) node {$x_2$};
\draw[color=black] (16.54,1.0) node {$x_3$};
\draw[color=black] (20.56,1.0) node {$x_4$};
\draw[color=black] (24.58,1.0) node {$x_5$};
\draw[color=black] (28.54,1.0) node {$x_6$};
\draw[color=black] (2.6,6.5) node {$T_1$};
\draw[color=black] (2.6,10.5) node {$T_2$};
\draw[color=black] (2.6,14.5) node {$T_3$};
\draw[color=black] (2.6,18.5) node {$T_4$};
\end{scope}

\fill [color=black] (4,2) circle (8.0pt);
\fill [color=black] (8,2) circle (8.0pt);
\fill [color=black] (12,2) circle (8.0pt);
\fill [color=black] (16,2) circle (8.0pt);
\fill [color=black] (20,2) circle (8.0pt);
\fill [color=black] (24,2) circle (8.0pt);
\fill [color=black] (28,2) circle (8.0pt);
\end{scriptsize}
\end{scope}
\end{tikzpicture}}
\caption{Illustration of Red-Black SWR with
  generous overlap.}
  \label{RBSWRFig}
\end{figure}
In the first panel, we solve the wave equation in parallel within the
three red subdomains in space-time, i.e. in $\Omega_j\times(0,T_1)$
for $j=1,3,5$. We use arbitrary interface data at $x=x_2$ and $x=x_4$
because the solution there is not yet known. Due to the finite speed
of propagation, we obtain the exact solution within triangular tents
in space-time, bounded by the characteristic lines of the wave
equation. These tents are marked in blue in the second panel, and the
solution is also correct in two additional small zones on the left and
right, since the outer boundary conditions are known. In this first
red solve, SWR also computes a not yet correct approximation in the
regions above the correct tents, as indicated in the first panel: SWR
performs redundant computations, as advocated by Nievergelt already in
order to obtain more parallelism. In the next step of this red-black
SWR, one computes wave equation solutions in the black subdomains
$\Omega_j\times(0,T_2)$ for $j=2,4$ in space-time, as indicated in the
second panel of Figure \ref{RBSWRFig}. Since we already have the
correct solution in the blue region, the exact solution is now
obtained in the two rhomboid blue tents indicated in the third panel,
again at the cost of some redundant computations. Next, one solves
again in the red subdomains, but now further in time in the interval
$(T_1,T_3)$. Continuing this red-black SWR algorithm, we obtain the
exact solution further and further advanced in time, as indicated in
the last panel in Figure \ref{RBSWRFig}.

This red-black SWR algorithm is an effective and simple way to
implement one of the most powerful current space-time solvers for
hyperbolic problems, namely the Mapped Tent Pitching (MTP) algorithm
from \cite{GSW2017}, see \cite{gopalakrishnan2020explicit} for its
application to time domain Maxwell equations. In MTP, one maps the
tent shape which we have seen in red-black SWR to space-time cylinders
in which the solution is then computed by classical time stepping, and
then the solution is mapped back, thus avoiding redundant
computations.  However, one has the extra cost of computing the
mapping, and also after the mapping the computational domains have the
same size as the space-time subdomains in the red-black SWR above, and
thus comparable computational cost. In addition, in MTP, order
reduction was observed due to the mapping, and specialized time
integrators were developed and need to be used to avoid this. In
contrast, in red-black SWR, also called now Unmapped Tent Pitching
(UTP), no order reduction occurs, and red-black SWR can be very easily
implemented, also for higher spatial dimensions, using Restricted
Additive Schwarz (RAS) techniques from DD directly applied to the
all-at-once space-time system, see \cite{gander2008schwarz} for an
explanation.  We show in Figure \ref{UTPFig} an example of using
  red-black SWR or equivalently UTP to solve our wave equation model
  problem \ref{WaveEquation1d} whose solution is shown in Figure
  \ref{WaveExampleFig} on the right.
\begin{figure}
  \centering
 \includegraphics[width=1.2in,height=2.6in,angle=0]{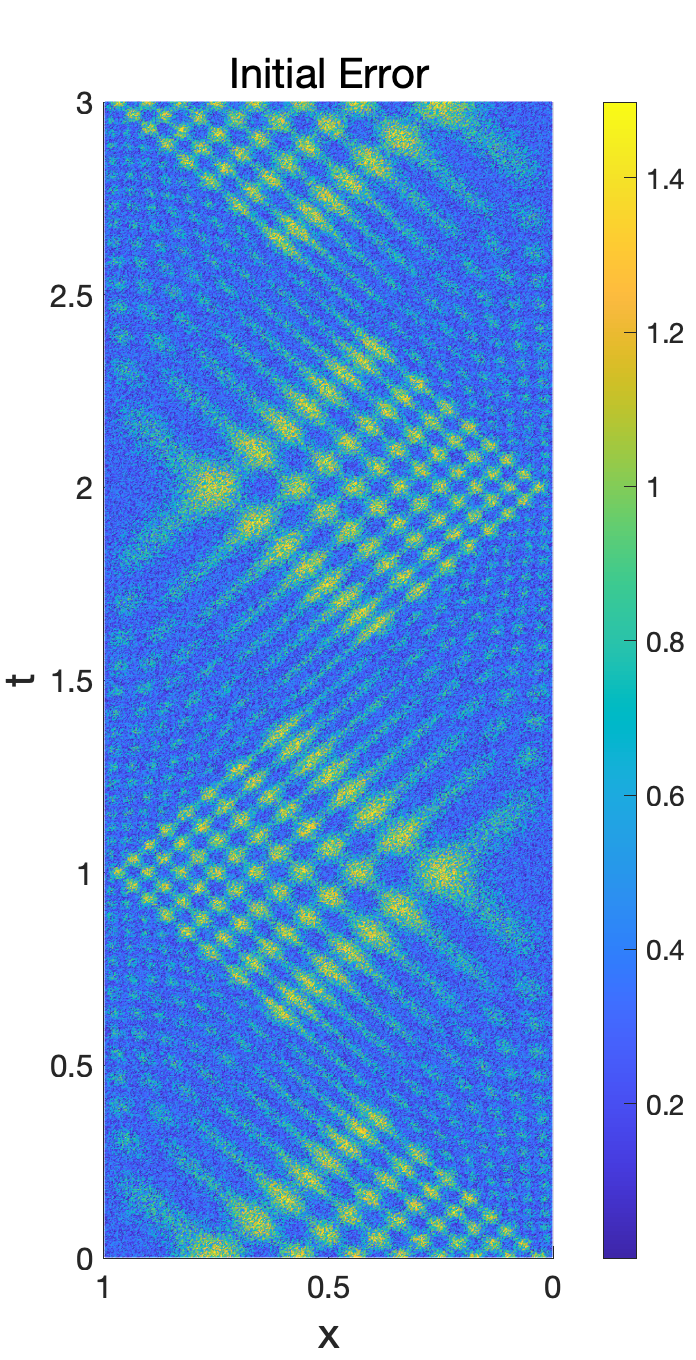}~
     \includegraphics[width=1.2in,height=2.6in,angle=0]{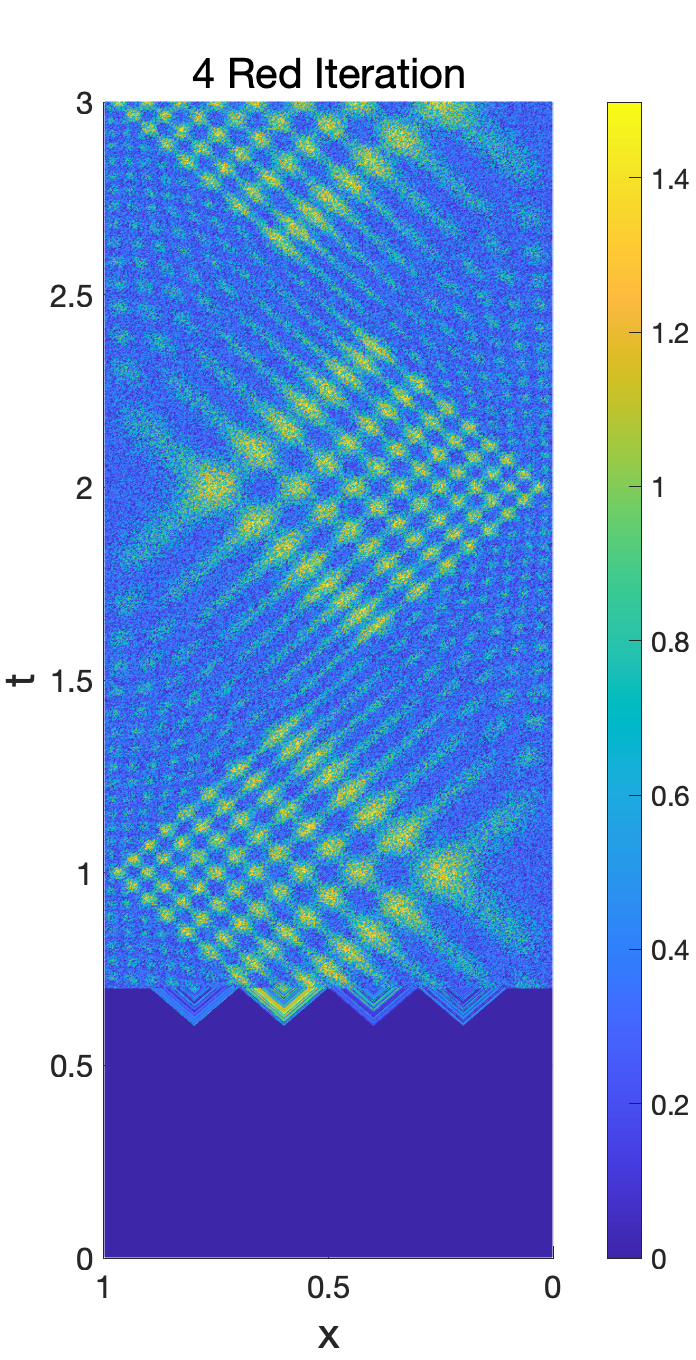}~
    \includegraphics[width=1.2in,height=2.6in,angle=0]{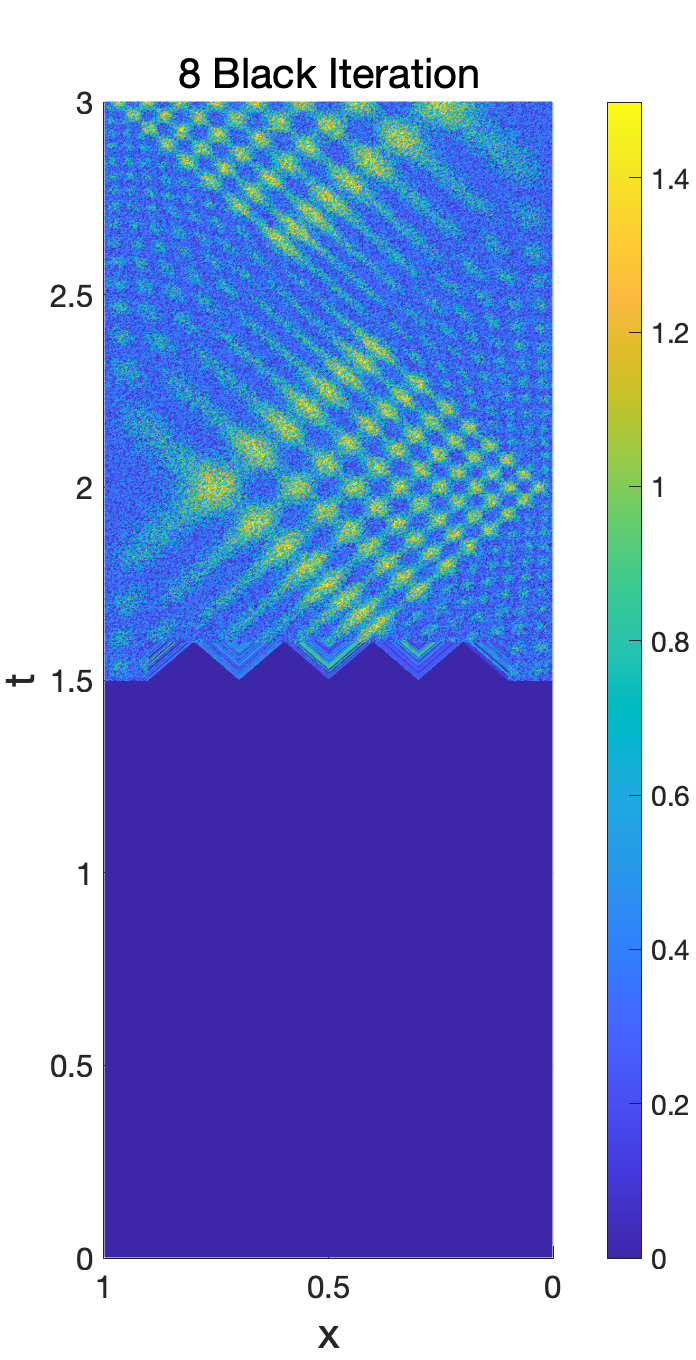}~    \includegraphics[width=1.2in,height=2.6in,angle=0]{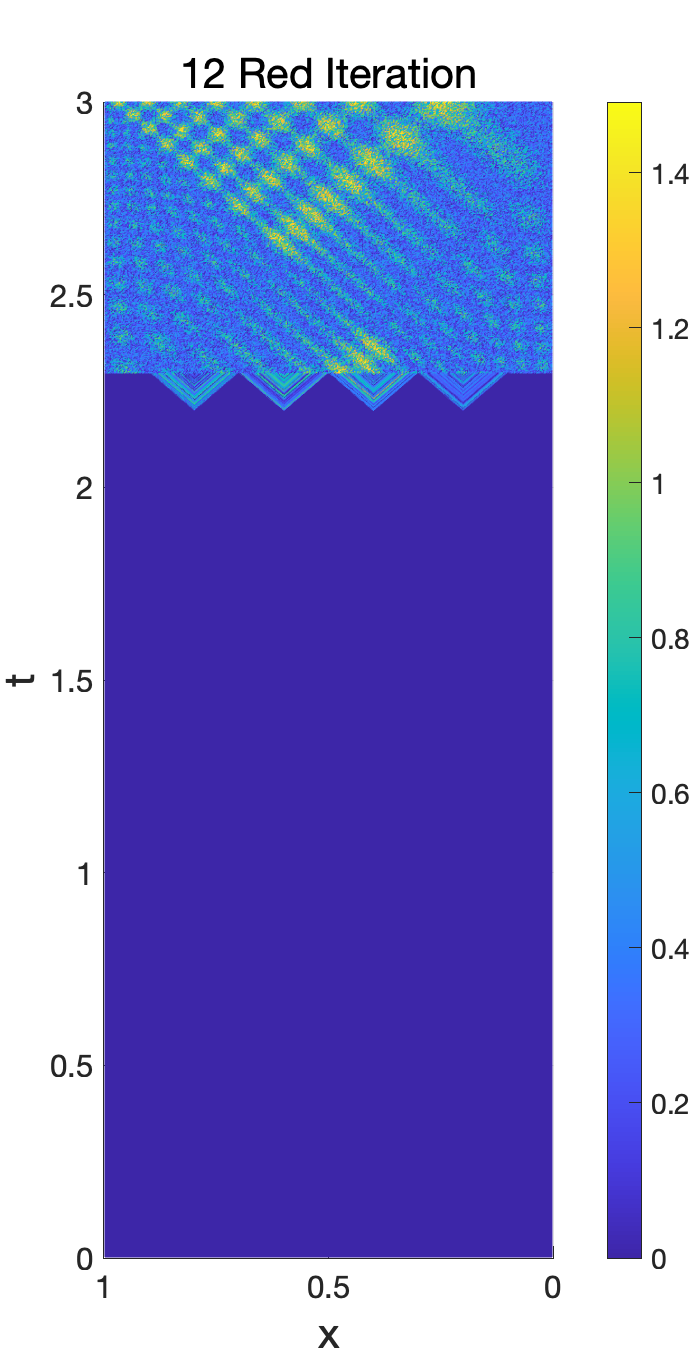}
  \caption{Red-black SWR or equivalently UTP applied to the second
    order wave equation. From left to right: initial error with random
  initial guess, and then 4th red iteration, 8th black iteration and
  12th red iteration. }
  \label{UTPFig}
\end{figure}
We see that without knowing anything about the tent structure, UTP
constructs the exact solution in the red and black tents, and advances
exactly like MTP. Note, that UTP can also as easily be applied to
non-linear hyperbolic problems, and if one does not know the tent
height, it suffices to look at the residual in the computed solution
which indicated naturally the tent height by how far the residual has
become zero in time, and one can adapt the time domain length
$T_i-T_{i-1}$ accordingly!

Clearly, the original MTP is not appropriate for parabolic problems,
since for such problems the speed of propagation is infinite, and
hence there are no tents in which the solution would be correct. SWR
and thus UTP however can be very effective also for parabolic
problems, especially the optimized SWR variants, see e.g
\cite{Gander:2007:OSW,bennequin2009homographic}, and in case of
slightly diffusive problems like our advection dominated diffusion
model problem one could consider to apply UTP, maybe with one or two
additional iterations in each time slab.

\subsection{Time Parallel IDC}\label{Sec3.4}

{\emph{Integral Deferred Correction}} (IDC), introduced by
\cite{DGR00}, serves as a technique to obtain high-order numerical
solutions through an iterative correction procedure. While the
original IDC is sequential in time, there are two more
  recent techniques to parallelize IDC in time: the Pipeline IDC
(PIDC) method by \cite{GT07} and the Revisionist IDC (RIDC)
method by \cite{CMO10}. Both PIDC and RIDC fundamentally differ from
the original IDC, but to understand them, it is necessary to explain
 first  how IDC works. To this end, we rewrite the nonlinear ODE
\eqref{nonlinearODE} as an integral equation,
\begin{equation}\label{RIDC1}
\bm{u}(t) = \bm{u}_0 + \int_{0}^t f(\bm{u}(\tau), \tau) d\tau,\quad  t \in (0, T].
\end{equation}
Suppose we already have a rough approximation \(\tilde{\bm{u}}(t)\) of
the desired solution \(\bm{u}(t)\),  for example  by simply setting
\(\tilde{\bm{u}}(t) \equiv \bm{u}_0\) for \(t \in [0, T]\) or by
solving the ODE with lower accuracy.  To improve the approximation
  \(\tilde{\bm{u}}(t)\), we introduce the error \(\bm{e}(t) :=
  \bm{u}(t) - \tilde{\bm{u}}(t)\), and the residual
\begin{equation}\label{RIDC2}
\begin{array}{rcll}
\bm{r}(t) & := &\bm{u}_0 + \int_{0}^t f(\tilde{\bm{u}}(\tau), \tau) d\tau - \tilde{\bm{u}}(t), & t \in (0, T].
\end{array}
\end{equation}
By substituting \(\bm{u}(t) = \bm{e}(t) + \tilde{\bm{u}}(t)\) into
\eqref{RIDC1} and using  \eqref{RIDC2}, we can express the error
\(\bm{e}(t)\) in terms of the residual \(\bm{r}(t)\),
\begin{equation}\label{RIDC3}
\begin{array}{rcll}
\bm{e}(t) & = &\displaystyle\bm{u}_0 + \int_{0}^t f(\tilde{\bm{u}}(\tau) + \bm{e}(\tau), \tau) d\tau - \tilde{\bm{u}}(t) \\
& = &\displaystyle\bm{r}(t) + \int_{0}^t [f(\tilde{\bm{u}}(\tau) + \bm{e}(\tau), \tau) - f(\tilde{\bm{u}}(\tau), \tau)] d\tau, & t \in (0, T].
\end{array}
\end{equation}
 Taking a derivative,  this is equivalent to the differential
equation
\begin{equation}\label{RIDC4}
\begin{array}{rcll}
\bm{e}'(t) - \bm{r}'(t) = f(\tilde{\bm{u}}(t) + \bm{e}(t), t) - f(\tilde{\bm{u}}(t), t), & t \in (0, T].
\end{array}
\end{equation}
Let the current approximate solution $\bm{\tilde{u}}$ be known at
specific time points $0=t_0<t_1<t_2<\cdots<t_{M}=T$, $\{{\bm
  u}^k_m\}:=\bm{\tilde{u}}(t_m)$.  The procedure to obtain the next
approximate solution $\{{\bm u}_m^{k+1}\}$ involves discretizing
\eqref{RIDC4} and using a quadrature rule to approximate ${\bm r}(t)$
at the discrete time nodes.  Applying the linear-$\theta$ method (with
$\theta\in[0, 1]$) to \eqref{RIDC4} yields
\begin{equation}\label{RIDC5}
\begin{split}
{\bm e}_{m+1} -{\bm e}_m=&{\bm r}_{m+1} - {\bm r}_{m}+\Delta t_m(1-\theta)[f({\bm u}^{k+1}_m, t_m)-f({\bm u}^k_m, t_m)]+\\
&\Delta t_m \theta[f({\bm u}^{k+1}_{m+1}, t_{m+1})-f({\bm u}^k_{m+1}, t_{m+1})],
\end{split}
\end{equation}
where $m=0,1,\dots, M-1$ and $\Delta t_m=t_{m+1}-t_m$.
From \eqref{RIDC2}, ${\bm r}_{m+1} - {\bm r}_{m}=\int_{t_m}^{t_{m+1}}f({\bm u}^k(\tau), \tau)d\tau-({\bm u}^k_{m+1}-{\bm u}^k_m)$. Substituting this into \eqref{RIDC5} and then using ${\bm u}_{m}^{k+1}={\bm u}_m^k+{\bm e}^k_m$, we obtain 
\begin{equation*}
\begin{split}
{\bm u}_{m+1}^{k+1}=&{\bm u}_m^{k+1}+ \Delta t_m(1-\theta)[f({\bm u}^{k+1}_m, t_m)-f({\bm u}^k_m, t_m)]+\\
&\Delta t_m \theta[f({\bm u}^{k+1}_{m+1}, t_{m+1})-f({\bm u}^k_{m+1}, t_{m+1})]+\int_{t_m}^{t_{m+1}}f({\bm u}^k(\tau), \tau)d\tau,
\end{split}
\end{equation*}
where the integral is computed  using a quadrature rule,
\begin{subequations}
\begin{equation}\label{RIDC7a}
\begin{split}
\int_{t_m}^{t_{m+1}}f({\bm u}^k(\tau), \tau)d\tau\approx {\sum}_{j=1}^M\omega_{m,j} f({\bm u}_j^k, t_j).
\end{split}
\end{equation}
The quadrature weights are determined by integrating the Lagrange polynomials,
\begin{equation}\label{RIDC7b}
\begin{split}
\omega_{m,j}=\int_{t_m}^{t_{m+1}}\left(\prod_{i=1, i\neq j}^M\frac{\tau-t_i}{t_s-t_i}\right)d\tau.
\end{split}
\end{equation}
\end{subequations}
In summary, with  the discrete time points $\{t_m\}_{m=0}^M$ spanning
the time interval $[0, T]$,  IDC generates the approximate
solution as
\begin{equation}\label{RIDC8}
\begin{split}
{\bm u}_{m+1}^{k+1}=&{\bm u}_m^{k+1}+ \Delta t_m(1-\theta)[f({\bm u}^{k+1}_m, t_m)-f({\bm u}^k_m, t_m)]+\\
&\Delta t_m \theta[f({\bm u}^{k+1}_{m+1}, t_{m+1})-f({\bm u}^k_{m+1}, t_{m+1})]+\sum_{j=1}^M\omega_{m,j} f({\bm u}_j^k, t_j),
\end{split}
\end{equation}
where $k=0,1,\dots, k_{\max}-1$,  and for each correction index $k$ we
sweep from left to right, i.e., $m=0,1,\dots, M-1$.  The choice of
quadrature rule determines the maximum order of accuracy achievable in
practice.  If we use $M$ uniformly distributed nodes with distance
$\Delta t$, the maximal order of accuracy is of $\CO(\Delta t^M)$ and,
more specifically, we have
\begin{proposition}[\cite{DGR00}]\label{ProIDC}
{\em Suppose the time-integrator is of order $p$, such as $p=1$ for
  $\theta=1$ (Backward  Euler) and $p=2$ for
  $\theta=\frac{1}{2}$ (Trapezoidal Rule) in \eqref{RIDC8}. Then,
  the approximate solution $\{{\bm u}^k_m\}$ after $k$ corrections is
  of order $\CO(\Delta t^{\min\{M, (k+1)p\}})$.  }
\end{proposition}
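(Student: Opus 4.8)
The plan is to prove the order statement by induction on the correction index $k$, tracking how each sweep of \eqref{RIDC8} raises the order of the error $\bm{e}^k(t) := \bm{u}(t) - \bm{u}^k(t)$ (evaluated at the nodes) until it saturates at the quadrature accuracy $\CO(\Delta t^M)$. The base case $k=0$ uses that the initial approximation is produced by a time-integrator of order $p$, so $\bm{e}^0_m = \CO(\Delta t^p)$ uniformly in $m$, matching $\min\{M,(0+1)p\}=\min\{M,p\}$ (and for $M\ge p$ this is just $p$). For the inductive step, assume $\bm{e}^k_m = \CO(\Delta t^{q_k})$ with $q_k=\min\{M,(k+1)p\}$; I want to show $\bm{e}^{k+1}_m=\CO(\Delta t^{q_{k+1}})$ with $q_{k+1}=\min\{M,(k+2)p\}$.

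The key computation is to subtract the exact analogue of the update \eqref{RIDC8} from the discrete one. Writing the exact solution's increment via \eqref{RIDC1} as $\bm{u}(t_{m+1})-\bm{u}(t_m)=\int_{t_m}^{t_{m+1}} f(\bm{u}(\tau),\tau)\,d\tau$, and using the Lipschitz continuity (in fact smoothness) of $f$ in its first argument, one gets a recursion of the schematic form
\begin{equation*}
\bm{e}^{k+1}_{m+1} = \bm{e}^{k+1}_m + \Delta t_m\,\mathcal{O}\!\left(\|\bm{e}^{k+1}_m\|+\|\bm{e}^{k+1}_{m+1}\|\right)\cdot\mathcal{O}(\Delta t^{q_k-1}\text{-type terms}) + \tau^{k}_m,
\end{equation*}
where the "quadrature defect'' $\tau^k_m$ collects two contributions: the local truncation error of the $\theta$-method applied to the error equation \eqref{RIDC4} — which is $\CO(\Delta t^{p+1})$ per step times a factor $\bm{e}^k=\CO(\Delta t^{q_k})$ sitting inside the $f$-differences, hence $\CO(\Delta t^{q_k+p+1})$ locally — and the quadrature error $\int_{t_m}^{t_{m+1}}f(\bm{u}^k(\tau),\tau)\,d\tau - \sum_j \omega_{m,j} f(\bm{u}^k_j,t_j)$, which splits into the interpolation-quadrature error of the exact integrand ($\CO(\Delta t^{M+1})$ locally on the full stencil of $M$ nodes) plus a term controlled by $\|\bm{e}^k\|=\CO(\Delta t^{q_k})$. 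Summing the discrete Gronwall-type recursion over the $\CO(1/\Delta t)$ steps turns local order $r+1$ into global order $r$, giving $\bm{e}^{k+1}_m = \CO(\Delta t^{\min\{q_k+p,\ M\}})$, which is exactly $q_{k+1}$; the absorption of the $\Delta t_m\,\CO(\|\bm{e}^{k+1}\|)$ terms on the left is standard for $\Delta t$ small (or $\theta<1$, or by a fixed-point argument at each step).

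The main obstacle I expect is bookkeeping the \emph{order gain per sweep} cleanly — in particular, showing that the $\theta$-method discretization of \eqref{RIDC4} contributes exactly $+p$ to the order rather than $+1$. The point is subtle: although the $\theta$-method itself has local error $\CO(\Delta t^{p+1})$, here it is applied to an equation whose right-hand side is a \emph{difference} $f(\tilde{\bm u}+\bm e,t)-f(\tilde{\bm u},t)=\CO(\|\bm e^k\|)$, so the low-order part of the integrator's error is multiplied by the already-small $\bm e^k$, while the genuinely new error injected is at the quadrature level $\CO(\Delta t^{M+1})$ locally. Making this precise requires expanding $f$ around $\tilde{\bm u}$ to first order and carefully separating the "$\bm e^k$-proportional" terms (which give $q_k+p$) from the "$\bm e^k$-independent" quadrature residual (which caps everything at $M$); a Taylor expansion of $f$ in its first variable plus the standard error bound for Lagrange interpolation on the nodes $\{t_m\}$ supplies all the needed estimates, and smoothness of $\bm u$ (hence of $f(\bm u(\cdot),\cdot)$) on $[0,T]$ justifies the $\CO(\Delta t^{M+1})$ local quadrature bound. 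The remaining steps — the discrete Gronwall summation and the base case — are routine.
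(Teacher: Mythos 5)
The paper itself does not prove Proposition \ref{ProIDC}; it only cites \cite{DGR00}, so your proposal has to be measured against the standard argument in the IDC literature. Your skeleton is exactly that standard route: induction on the sweep index $k$, an error recursion obtained by subtracting the exact update relation from \eqref{RIDC8}, a defect split into an ``integrator'' part and a ``quadrature'' part, and a discrete Gronwall summation turning local order $r+1$ into global order $r$. You also correctly locate the crux, namely why one sweep gains $p$ orders rather than $1$.

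However, at precisely that crux there is a genuine gap. Your claimed local defect $\mathcal{O}(\Delta t^{\,q_k+p+1})$ for the $\theta$-rule applied to the difference term rests on the bound $\mathcal{O}(\Delta t^{p+1})\cdot\mathcal{O}(\|\bm e^k\|)$, but the $\theta$-rule's local error for $\int_{t_m}^{t_{m+1}}g$ with $g(\tau)=f(\tilde{\bm u}^k(\tau)+\bm e^k(\tau),\tau)-f(\tilde{\bm u}^k(\tau),\tau)$ is of size $\mathcal{O}\bigl(\Delta t^{p+1}\|g^{(p)}\|_\infty\bigr)$, and $g^{(p)}$ contains derivatives of $\bm e^k$ (already $f_u(\tilde{\bm u}^k+\bm e^k)(\bm e^k)'$ at first order). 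Your induction hypothesis only controls $\|\bm e^k\|_\infty=\mathcal{O}(\Delta t^{q_k})$; since $\bm e^k$ is a discrete object produced by the previous numerical sweep, nothing yet guarantees that its divided differences (equivalently, derivatives of its interpolant) are also $\mathcal{O}(\Delta t^{q_k})$, and smoothness of the exact solution $\bm u$ plus Lagrange interpolation bounds do not give this. The same issue hides in your quadrature term: the $\mathcal{O}(\Delta t^{M+1})$ bound applies to the exact integrand $f(\bm u(\cdot),\cdot)$, while the part depending on $\bm u^k$ either needs derivative bounds on the iterate or, if bounded crudely by $\Delta t\,\|\bm e^k\|_\infty$, destroys the order gain altogether. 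The missing ingredient is the ``smoothness of the (rescaled) error'' lemma: the induction hypothesis must be strengthened to assert that divided differences of $\bm e^k$ up to the relevant order are also $\mathcal{O}(\Delta t^{q_k})$, and this is exactly where the uniform spacing of the $M$ nodes enters essentially. Indeed, for non-uniformly spaced nodes (e.g.\ Gauss--Lobatto) the gain of $p$ orders per sweep with an order-$p$ integrator is known to fail in general, so an argument like yours, which never uses uniformity, cannot be complete as stated; with the strengthened hypothesis propagated through the induction, the rest of your outline (Taylor expansion of $f$, defect splitting, Gronwall) goes through.
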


The original IDC method \cite{DGR00}  used  Gauss nodes for the
quadrature rule, resulting in a higher maximal order of accuracy. For
instance, using Gauss-Lobatto nodes achieves an order of $2J-1$. This
IDC variant is called {\em Spectral Deferred Correction} (SDC) and
serves as the key component of the PFASST algorithm, introduced in
Section \ref{Sec4.3} later.

For long-time computations, where $T$ is large, creating a uniformly
high-order numerical approximation across the entire time  interval
  is  challenging, because it is difficult to  accurately
approximate a function over a large interval with a single high-order
polynomial. In such scenarios, it is natural to segment the time
interval $[0, T]$ into multiple {\em windows}, denoted  by
$\{I_n:=[T_{n-1}, T_{n}]\}_{n=1}^{N_t}$ with $T_0=0$ and
$T_{N_t}=T$. Then, IDC can be applied to each window
individually. Within each time window, a lower-order polynomial often
provides precise quadrature, especially when the window size is
small. However, this process is entirely sequential, since the
computations for the $(n+1)$-st window $I_{n+1}$ must await the
completion of computations for $I_n$. This dependency arises because
the initial value for $I_{n+1}$ remains unknown until the preceding
window's computations are finalized. Additionally, within each time
window, we use a basic IDC and the computation proceeds
step-by-step.

\subsubsection{Pipeline IDC (PIDC)}\label{PIDC}

The first parallel version of IDC, known as PIDC, was introduced in
\cite{GT07}. PIDC uses a \emph{pipeline} parallelization approach for
IDC. It is based on a simple concept applicable to any time evolution
computation, already proposed by \cite{Womble:1990:ATS}. Specifically,
the computation for the next window $I_{n+1}=[T_n, T_{n+1}]$ can start
when a preliminary initial value from the current computation on
$I_n=[T_{n-1}, T_n]$ at $t=T_n$ becomes available. For instance,
following the first sweep on $I_n$, an approximation ${\bm
  u}_{n, M}^1$ at $t=T_n$, i.e.  the rightmost solution on window
$I_n$ after one sweep, is obtained, and one can compute the first
sweep on $I_{n+1}$ using ${\bm u}_{n, M}^1$ as the initial value at
the same time as performing the second sweep on $I_n$. After this
computation, one can already start on $I_{n+2}$ while continuing on
$I_{n+1}$ and $I_n$, computing three sweeps in parallel. In general,
for $N_t$ time windows, when conducting the $k$-th sweep on the
$n$-th window $I_n$, we simultaneously perform the $(k-1)$-st sweep on
$I_{n+1}$, the $(k-2)$-nd sweep on $I_{n+2}$, and so on, up to the
first sweep on $I_{n+k-1}$. This procedure is illustrated in Figure
\ref{PipelineIDCFig} with $M=6$ and $k_{\max}=4$, where the first 4
sweeps represent the initial state. Following this stage, sweeps on
$I_n$, $I_{n+1}$, $I_{n+2}$, and $I_{n+3}$ are executed in parallel.
\begin{figure}
  \centering
 \includegraphics[width=2.25in,height=1.2in,angle=0]{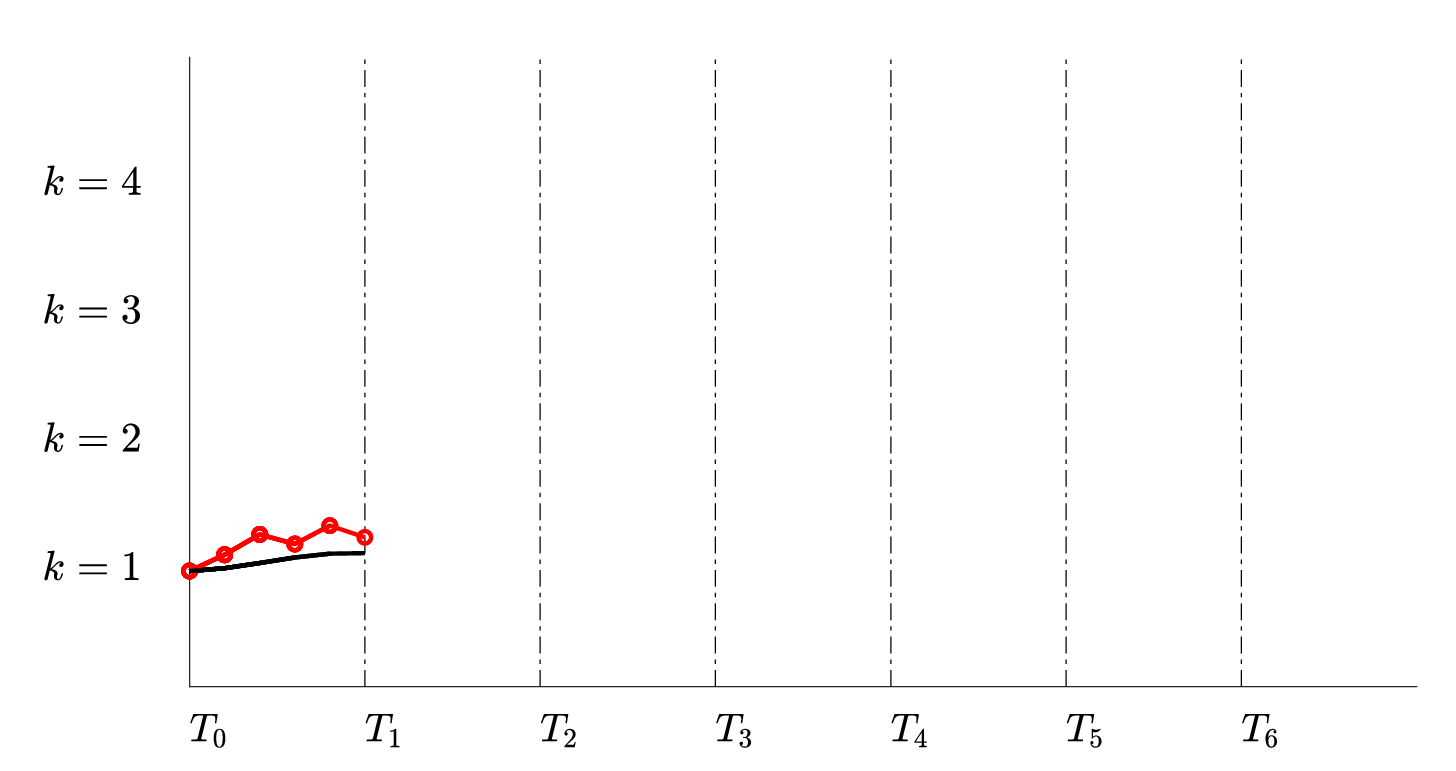}
 \includegraphics[width=2.25in,height=1.2in,angle=0]{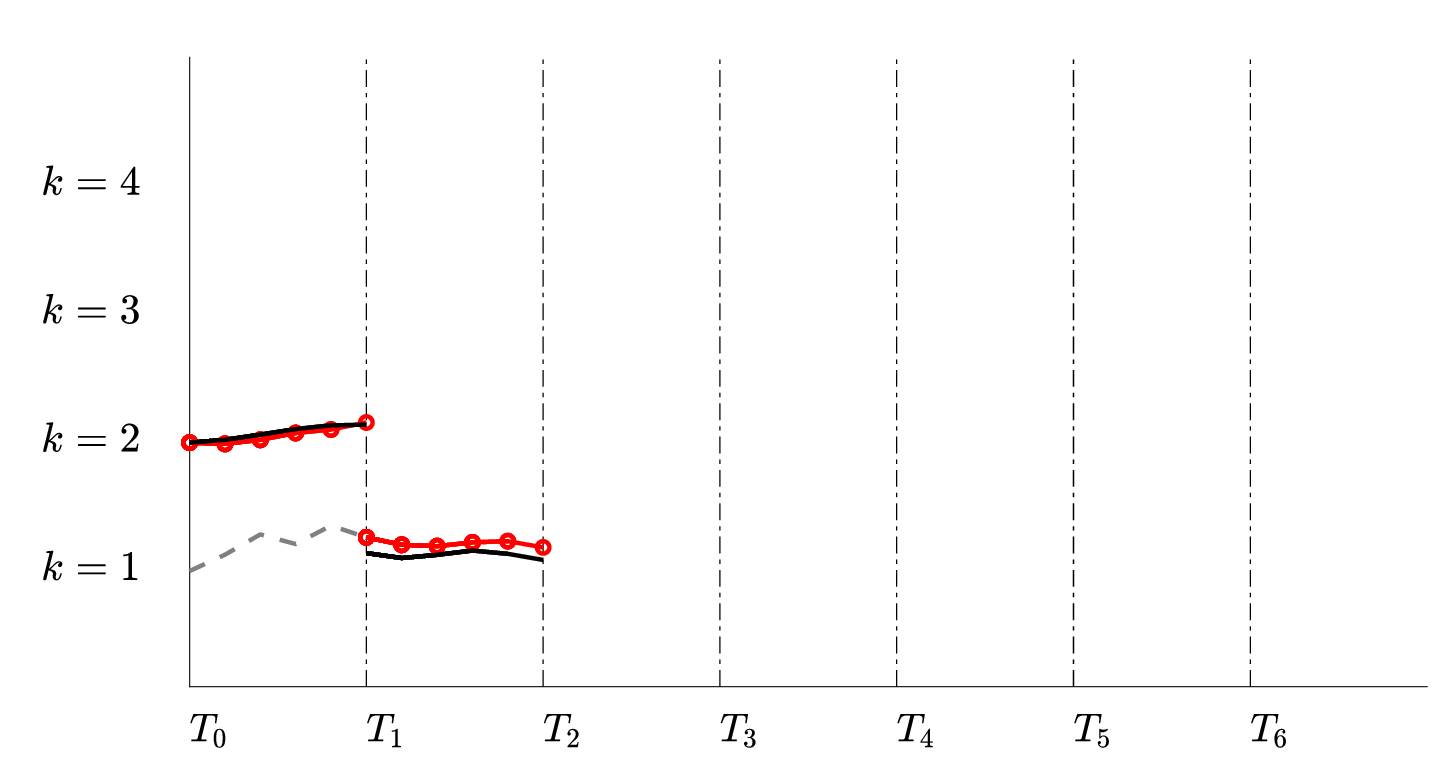}\\
 \includegraphics[width=2.25in,height=1.2in,angle=0]{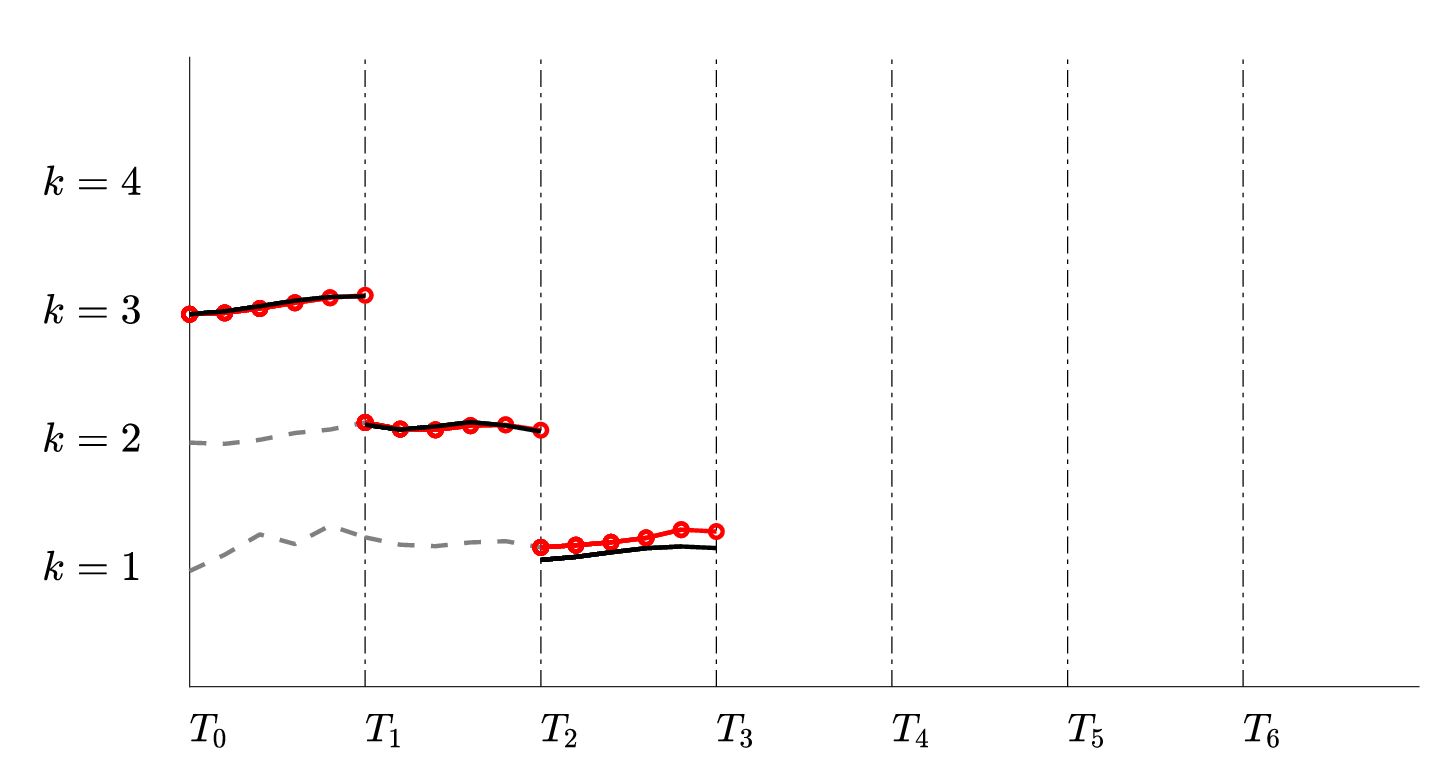}
 \includegraphics[width=2.25in,height=1.2in,angle=0]{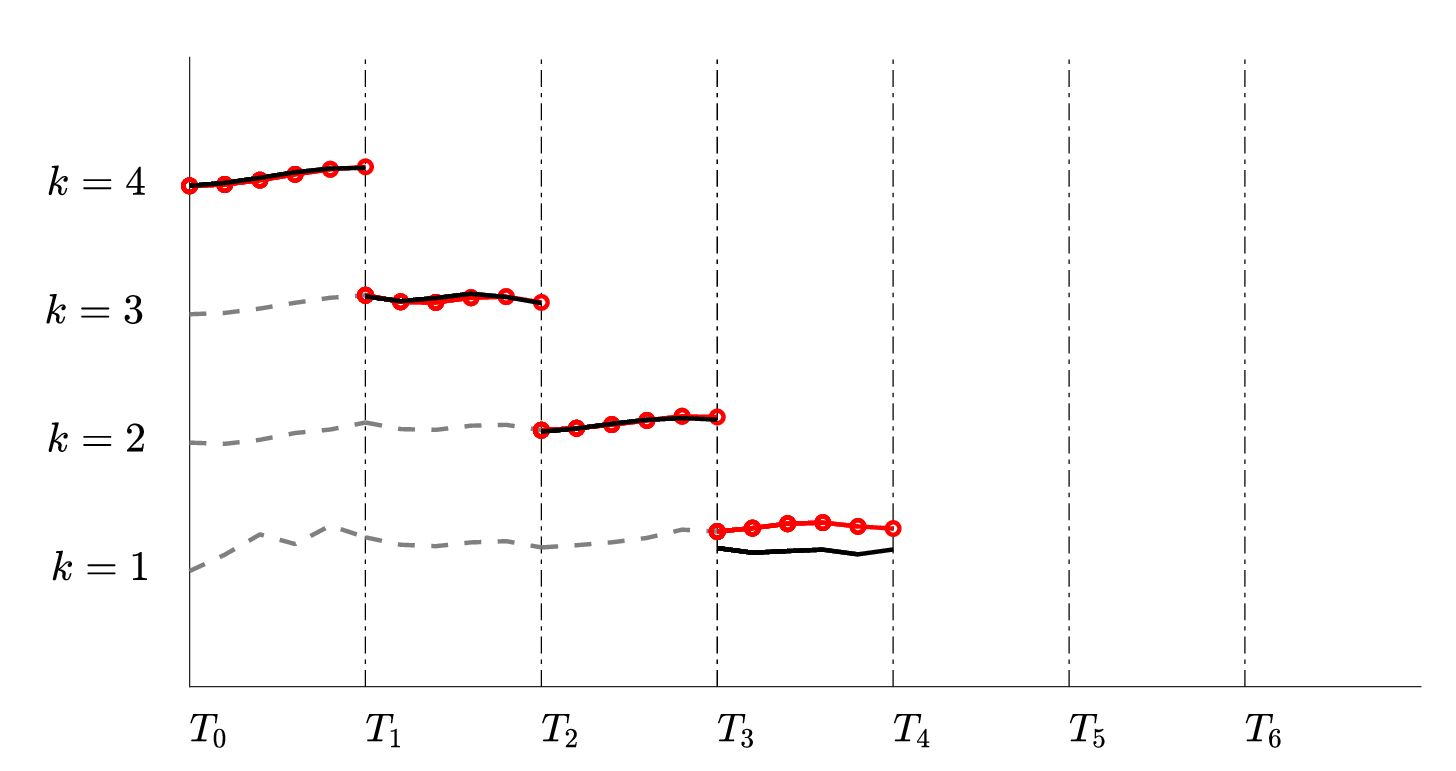}\\
  \includegraphics[width=2.25in,height=1.2in,angle=0]{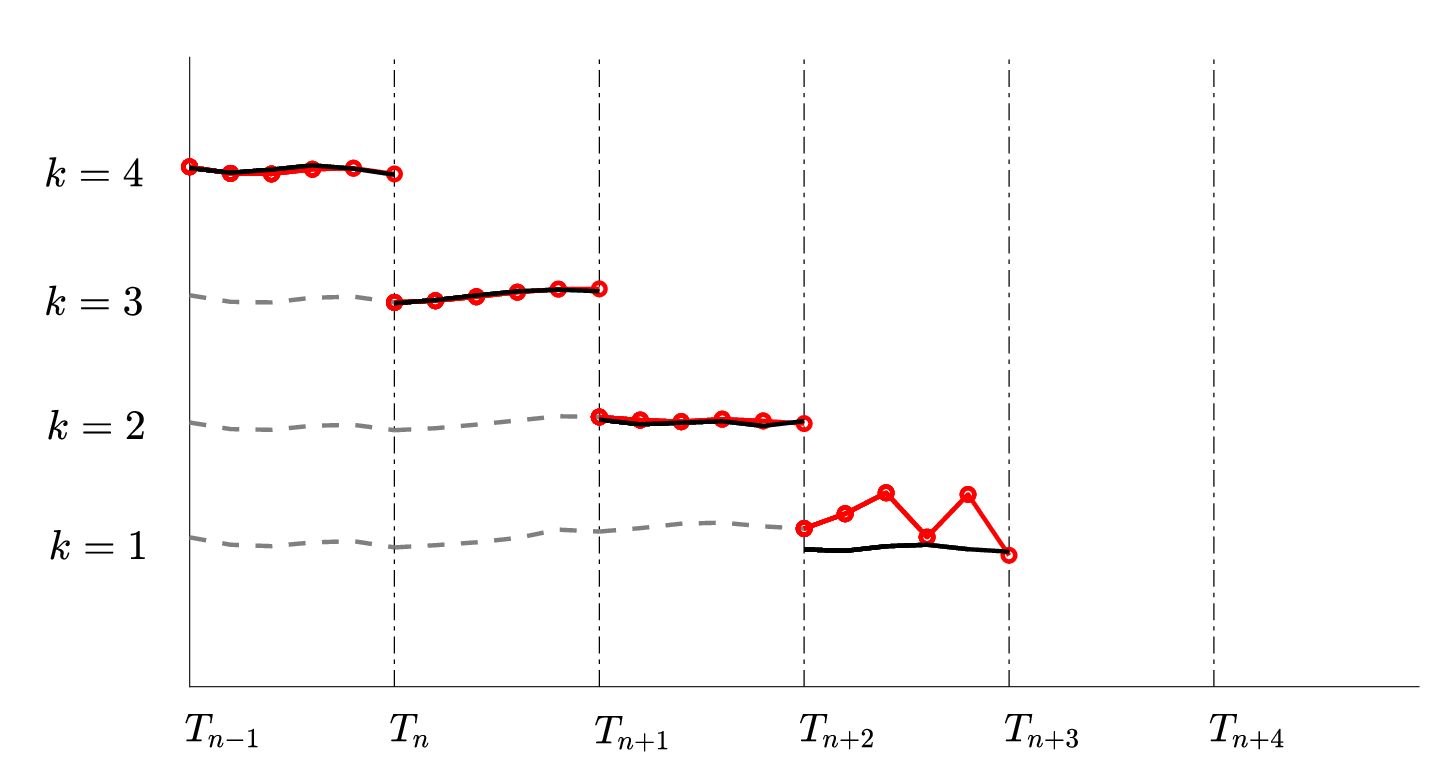}
   \includegraphics[width=2.25in,height=1.2in,angle=0]{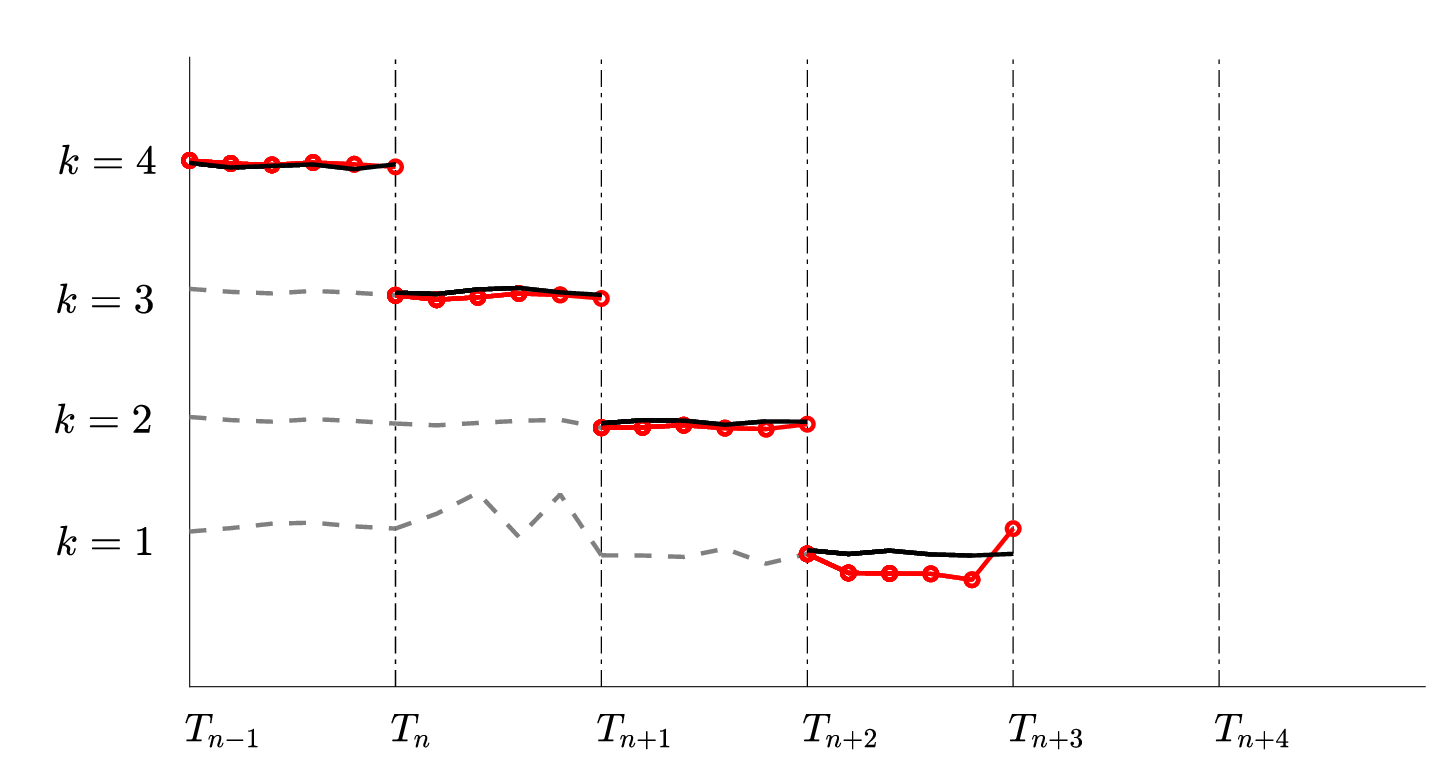}
  \caption{In PIDC with $k_{\max}=4$, the sweeps on the 4 time
    windows can run simultaneously (bottom row), once the
      initialization phase in the first four windows (top and middle
      rows) is completed. The black dashed lines represent sweep
    histories, while the red solid lines marked with a red circle
    indicate current sweeps run in parallel. The black solid
    lines show the exact solution.}
  \label{PipelineIDCFig}
\end{figure}
On $I_n$ (with $n\geq1$), because each sweep starts from a rough and
changing initial value, there is no guarantee that the accuracy of the
generated solution increases as we proceed with the corrections.  We
illustrate this point by applying IDC and PIDC to the
advection-diffusion equation \eqref{ADE} with two values of the
diffusion parameter, $\nu=1$ and $\nu=10^{-3}$. We consider periodic
boundary conditions and discretize with centered finite differences
with a mesh size $\Delta x=\frac{1}{64}$, which leads to the
linear system of ODEs \eqref{linearODE}, i.e., ${\bm u}'(t)=A{\bm
  u}(t)$ with $A=\frac{\nu}{\Delta x^2}A_{\rm xx}-\frac{1}{2\Delta
  x}A_{\rm x}$, where $\frac{\nu}{\Delta x^2}A_{\rm xx}\approx
\nu\partial_{xx}$ and $\frac{1}{2\Delta x}A_{\rm x}\approx \partial_x$
are the discretization matrices given by
\begin{equation}\label{AxAxx}
{\small  \begin{split}
 &A_{\rm xx}=
  \begin{bmatrix}
  -2 &1 & & &1\\
  1 &-2 &1 & &\\
  &\ddots &\ddots &\ddots &\\
  & &1 &-2 &1\\
  1 & & &1 &-2
  \end{bmatrix},~ A_{\rm x}= \begin{bmatrix}
  0 &1 & & &-1\\
  -1 &0 &1 & &\\
  &\ddots &\ddots &\ddots &\\
  & &-1 &0 &1\\
  1 & & &-1 &0
  \end{bmatrix}. 
  \end{split}}
\end{equation}
Let $T=3$ and the window size be $\Delta T=\frac{1}{10}$. Then, using
Backward Euler as time-integrator, we show in Figure \ref{ADEPIDCFig}
\begin{figure}
  \centering
 \includegraphics[width=2.3in,height=1.85in,angle=0]{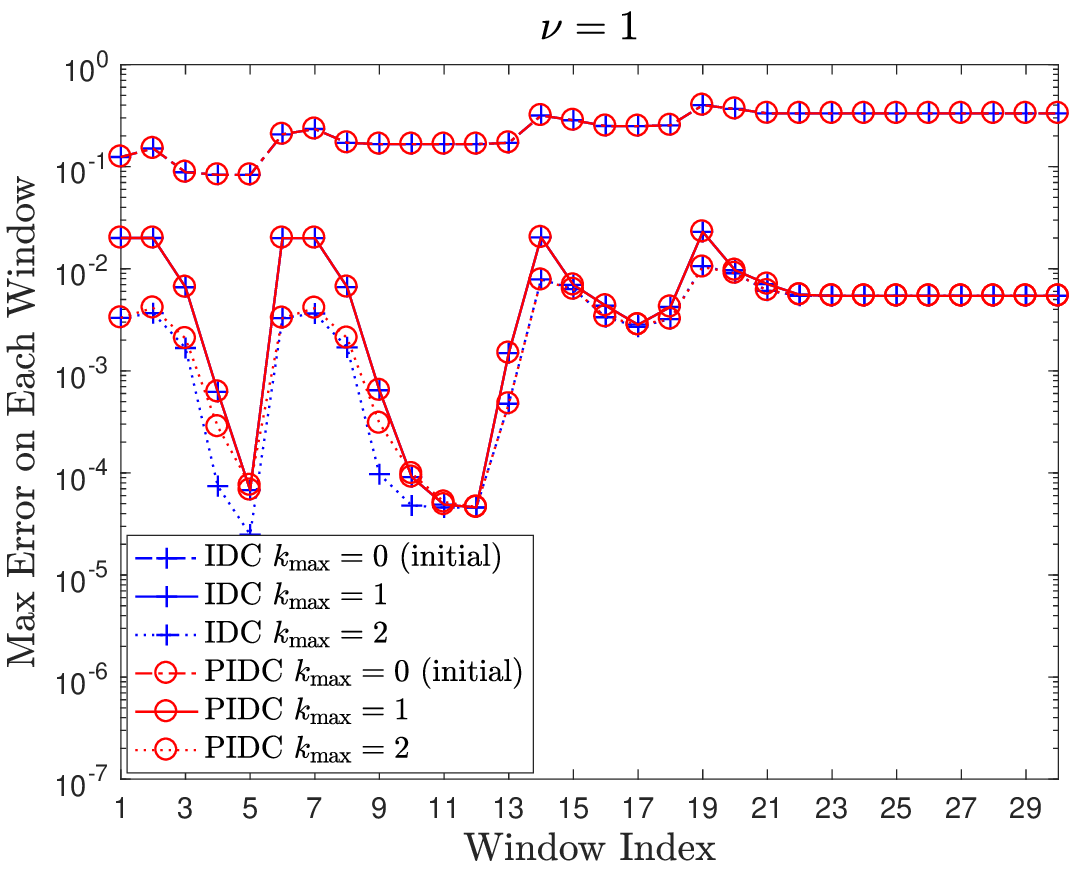}
 \includegraphics[width=2.3in,height=1.85in,angle=0]{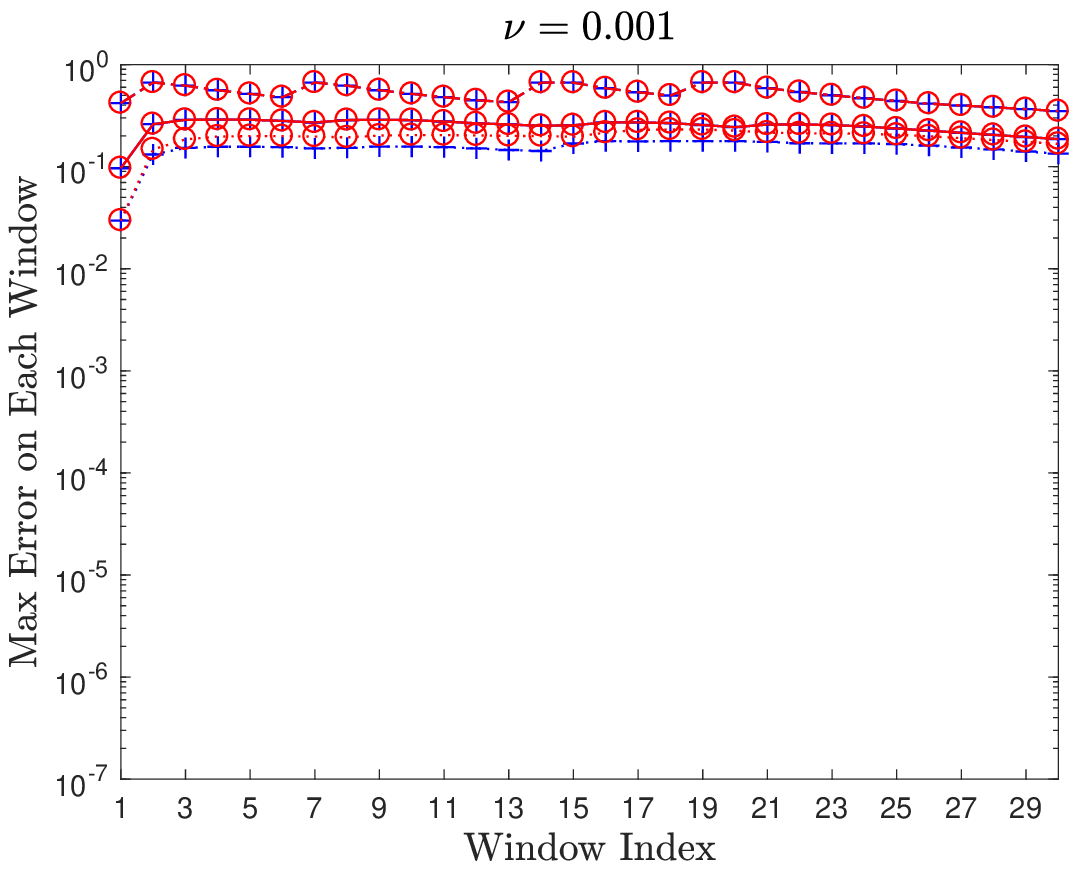}\\
 \includegraphics[width=2.3in,height=1.85in,angle=0]{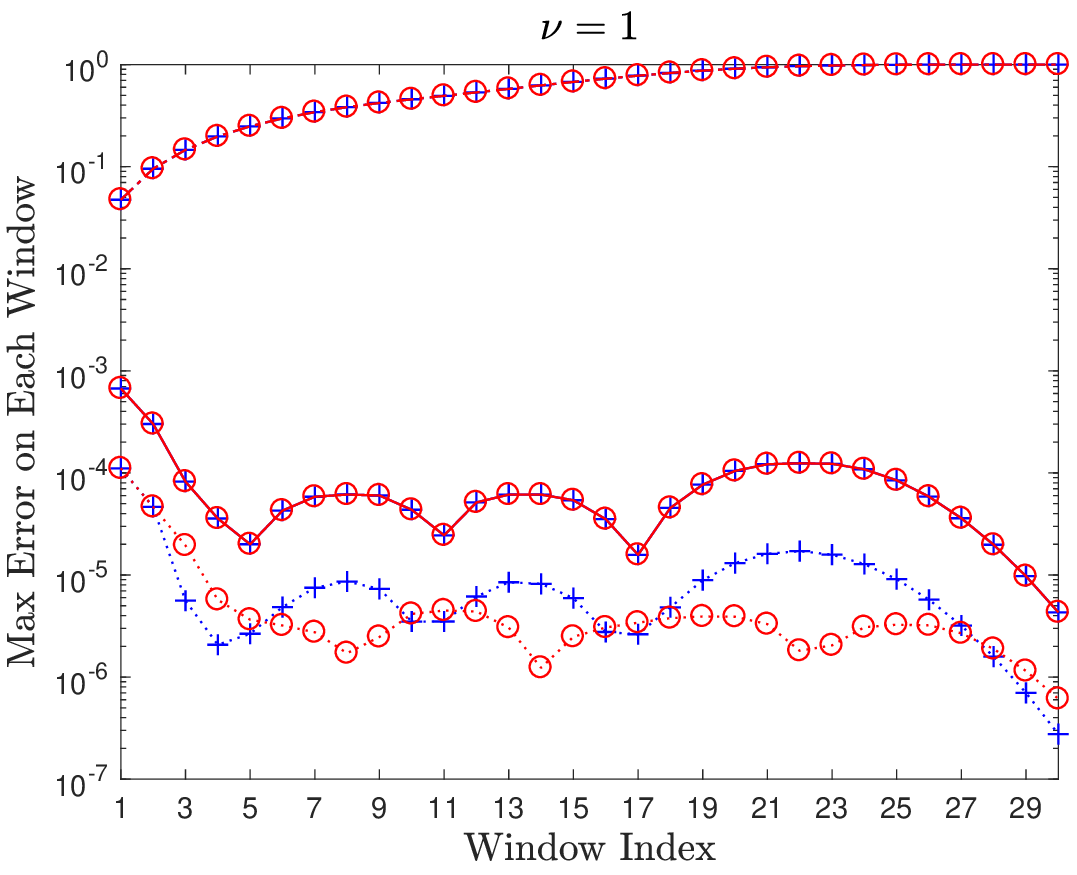}
 \includegraphics[width=2.3in,height=1.85in,angle=0]{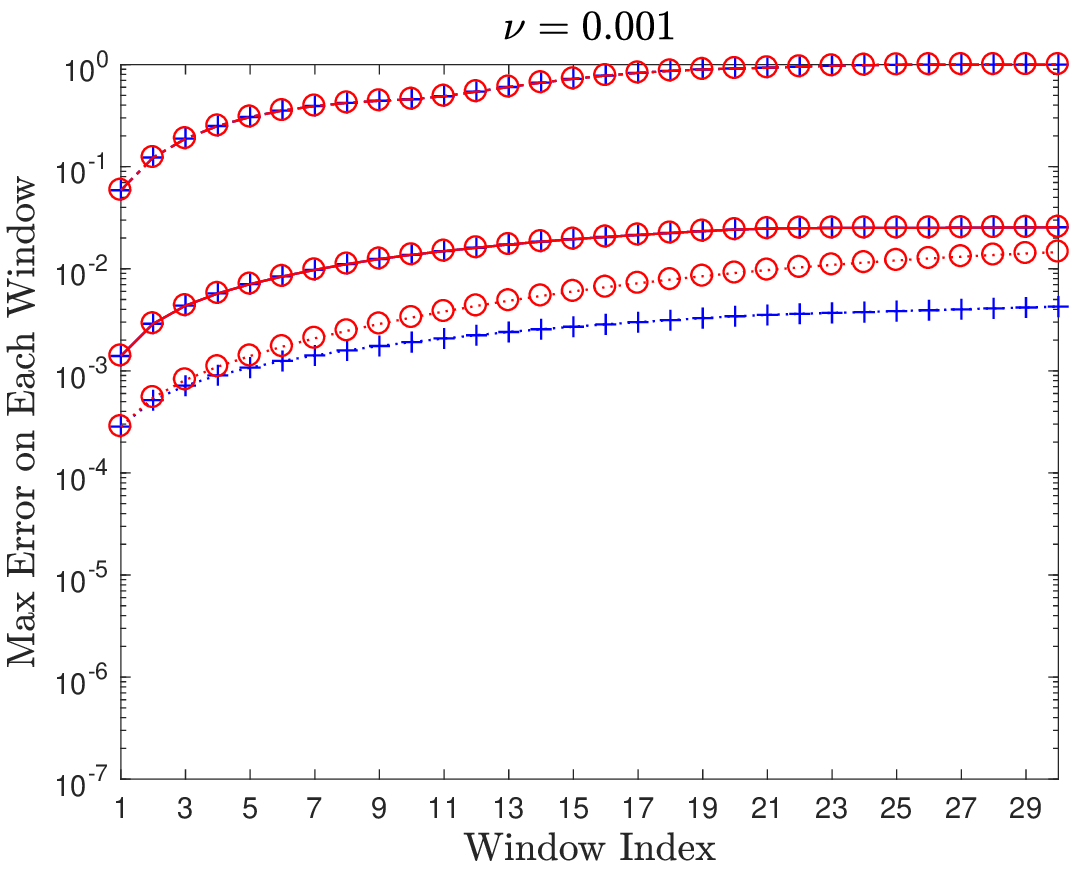}
  \caption{Maximum relative error on each time window for original IDC and
      its parallel version PIDC for the advection diffusion equation
      \eqref{ADE} with  source function
      $g(x,t)$ in \eqref{fxt} with $\sigma=1000$ (low regularity, top)
      and $\sigma=5$ (higher regularity, bottom), and large diffusion
      parameter (left), and small diffusion parameter (right).  The
      legend in the top left panel is also valid for the other
      panels.}
  \label{ADEPIDCFig}
\end{figure}
for IDC and PIDC with $M=5$ the maximal relative error for each
  time window measured as
$$
{\rm err}^k_n=\frac{\max_{m}\|{\bm u}^{\rm ref}_{n,m}-{\bm u}^{k}_{n,m}\|_\infty}{\max_{n,m}\|{\bm u}_{n,m}^{\rm ref}\|_\infty}, 
$$
where the reference solution ${\bm u}_{n,m}^{\rm ref}$ is
computed by the built-in solver \texttt{ODE45} in Matlab, using for
both the relative and absolute tolerance $1e-13 $. In each panel, for
both IDC and PIDC, we show the initial error and the errors after $1$
and $2$ sweeps. The initial guess on the $(n+1)$-th window
$I_{n+1}=[T_n, T_{n+1}]$ is fixed simply as ${\bm u}_{n+1, m}^0\equiv
{\bm u}^1_{n, M}$ for $j=0,1,\dots, M$.

The results in Figure \ref{ADEPIDCFig} show that for good performance
of IDC and PIDC, the solution of the problem needs to be regular. In
the first row, we used the source function $g(x,t)$ from \eqref{fxt}
with parameter $\sigma=1000$, which implies a $\delta$-function type
source, such that the solution is not regular enough. We see in the
first panel that both IDC and PIDC perform similarly, and after the
first correction the errors are not further reduced, the solution is
not regular enough for a higher order approximation to perform
well. In the second panel in the top row, we see that when the
diffusion is becoming small, the improvement of the first IDC
iteration is much worse than in the left panel, and a further
iteration does also not help much, and similarly for PIDC. In the
bottom row on the left, we see that if we use a very regular source,
\eqref{fxt} with parameter $\sigma=5$, and thus the solution has
enough regularity, both IDC and PIDC improve now for large diffusion
in the second iteration as well, and PIDC is comparable to IDC. For
small diffusion however at the bottom right, again performance is not
as good, and PIDC performs clearly less well at the second iteration
compared to IDC. These results indicate that for hyperbolic problems,
if the solution is not regular enough, PIDC will not be very suitable
for PinT computations.

\subsubsection{Revisionist IDC (RIDC)}

The RIDC method proposed by \cite{CMO10} is using a sliding IDC
interval as a main new idea for more fine grained parallelization. To
do so, consider a quadrature rule with $M$ equidistant nodes. In RIDC,
a first processor computes the initial approximation using a low order
time stepper, like in IDC, but once it arrives at the end of the IDC
interval after $M$ steps, it does not stop, it just continues
progressing in time computing step $M+1$, $M+2$ and so on. With the
first $M$ values of the first processor available, the second
processor has now enough information to start the first IDC
correction. Once it arrives at the end of the first IDC interval
computing the correction for the $M$-th step, the third processor can
start, but the second processor does not stop, it just continues by
moving its IDC interval and associated quadrature formula one fine
time step to the right, i.e. instead of using the approximations from
the steps $1,2,\ldots,M$ of the first processor, it considers the
approximations from the steps $2,3,\ldots,M+1$ of the first processor
as its IDC interval and quadrature nodes, and computes with these its
approximation for the $M+1$-st step. And then it considers the
approximations from the steps $3,4,\ldots,M+2$-nd of the first
processor as its IDC interval, and computes with these as quadrature
nodes the step $M+2$, and so on. Similarly, the third processor will
also continue with a sliding IDC interval, and so on.  Like PIDC,
  RIDC needs regularity to be effective, since it is a high order
  approximation technique, and thus for hyperbolic problems in the
  case of low regularity solutions, RIDC risks not to be very
  effective for PinT computations.

Like PIDC, RIDC needs regularity to be effective, since it is a
high-order approximation technique, and thus for hyperbolic problems
in the case of low-regularity solutions, RIDC risks not being very
effective for PinT computations. This is illustrated in Figure
  \ref{Fig_IDC_RIDC}, where we apply IDC and RIDC to the advection
  diffusion equation (2.5) with the same data used for Figure 3.5.
 \begin{figure}[H]
  \centering
 \includegraphics[width=2.3in,height=1.85in,angle=0]{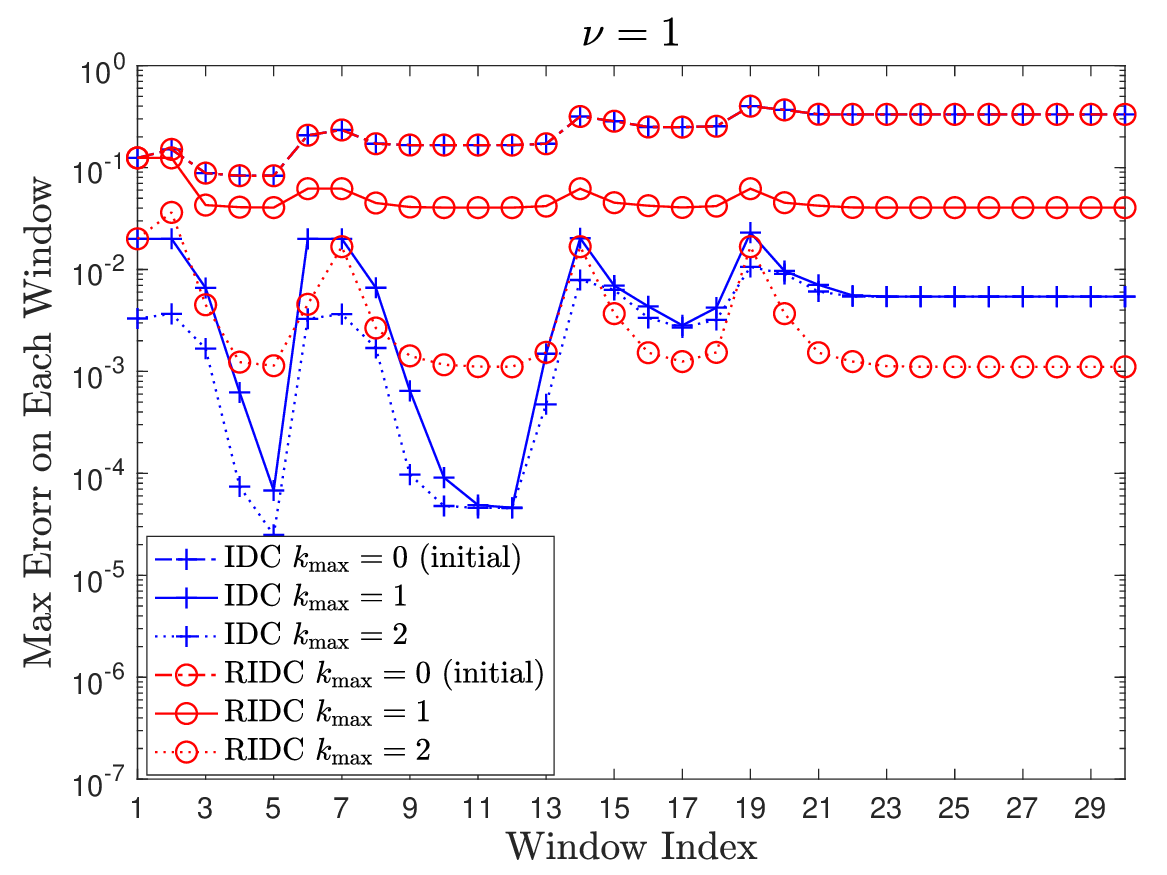}
 \includegraphics[width=2.3in,height=1.85in,angle=0]{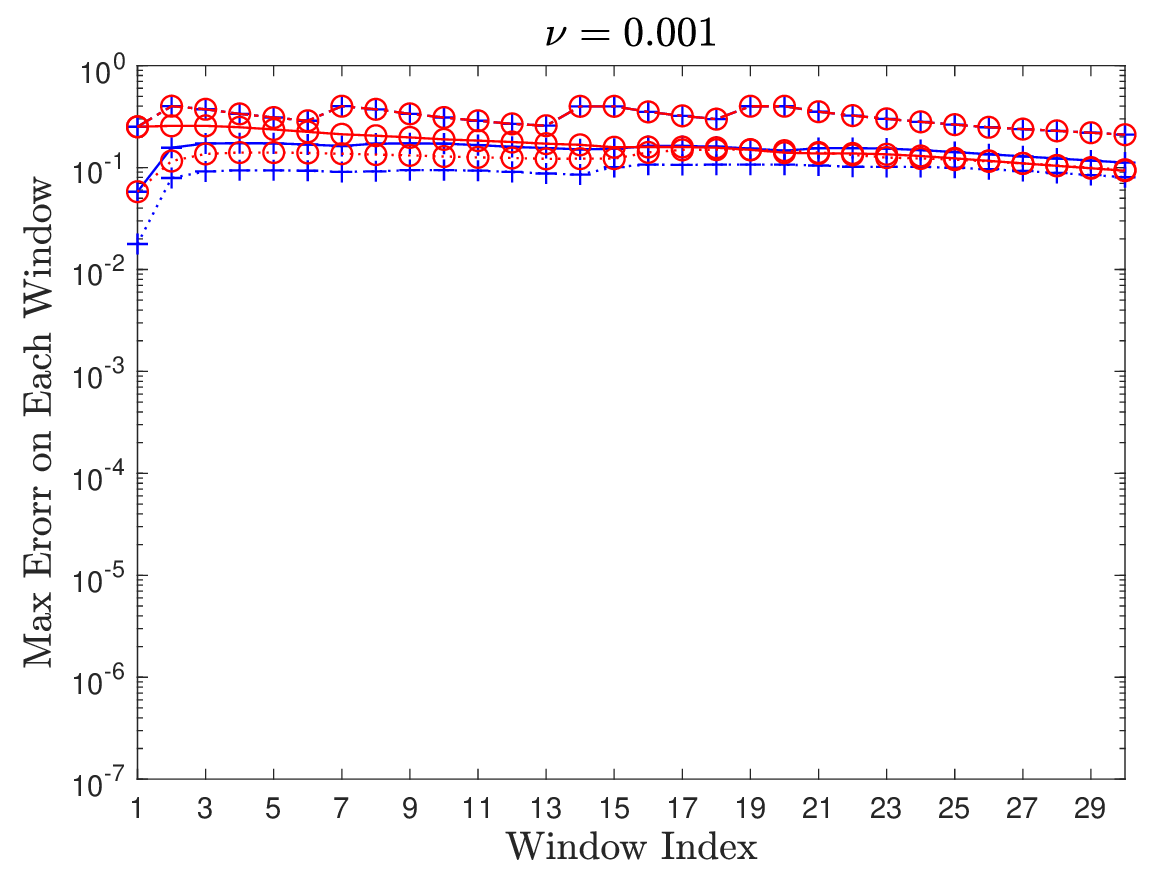}\\
 \includegraphics[width=2.3in,height=1.85in,angle=0]{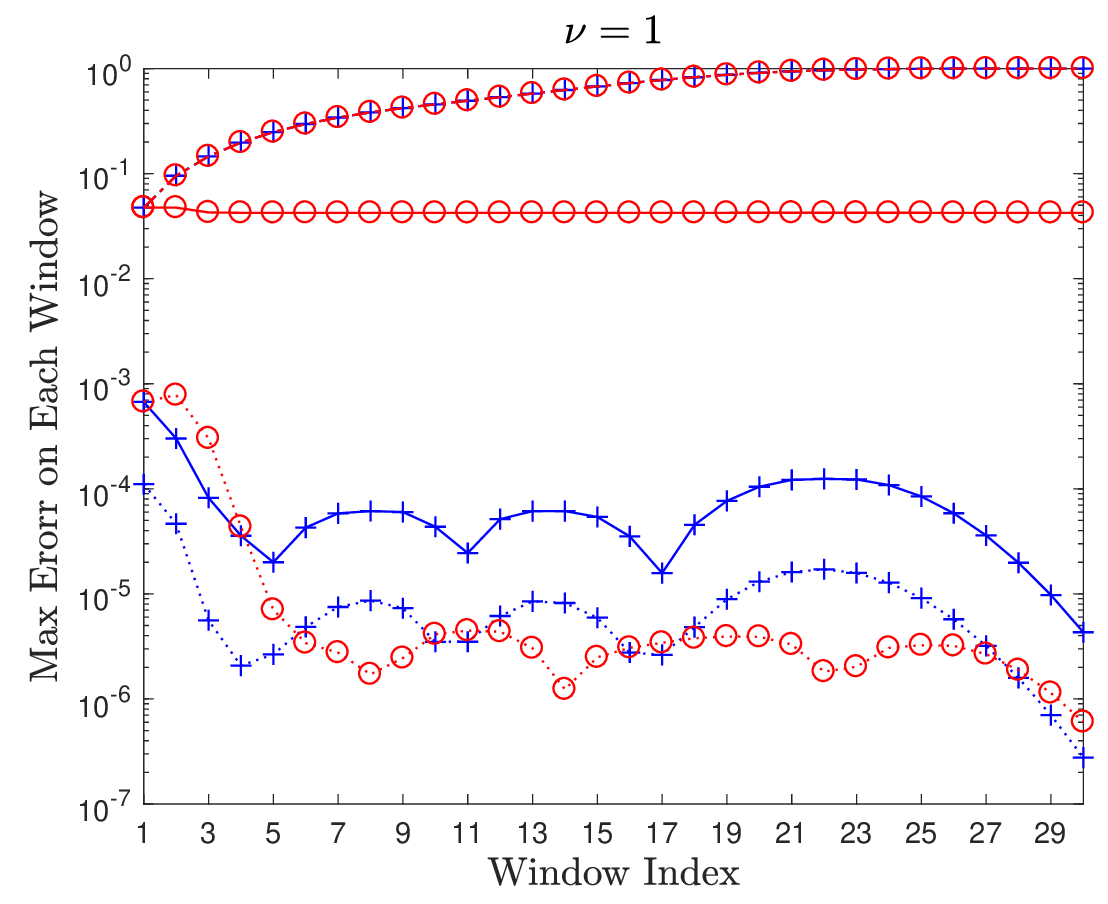}
 \includegraphics[width=2.3in,height=1.85in,angle=0]{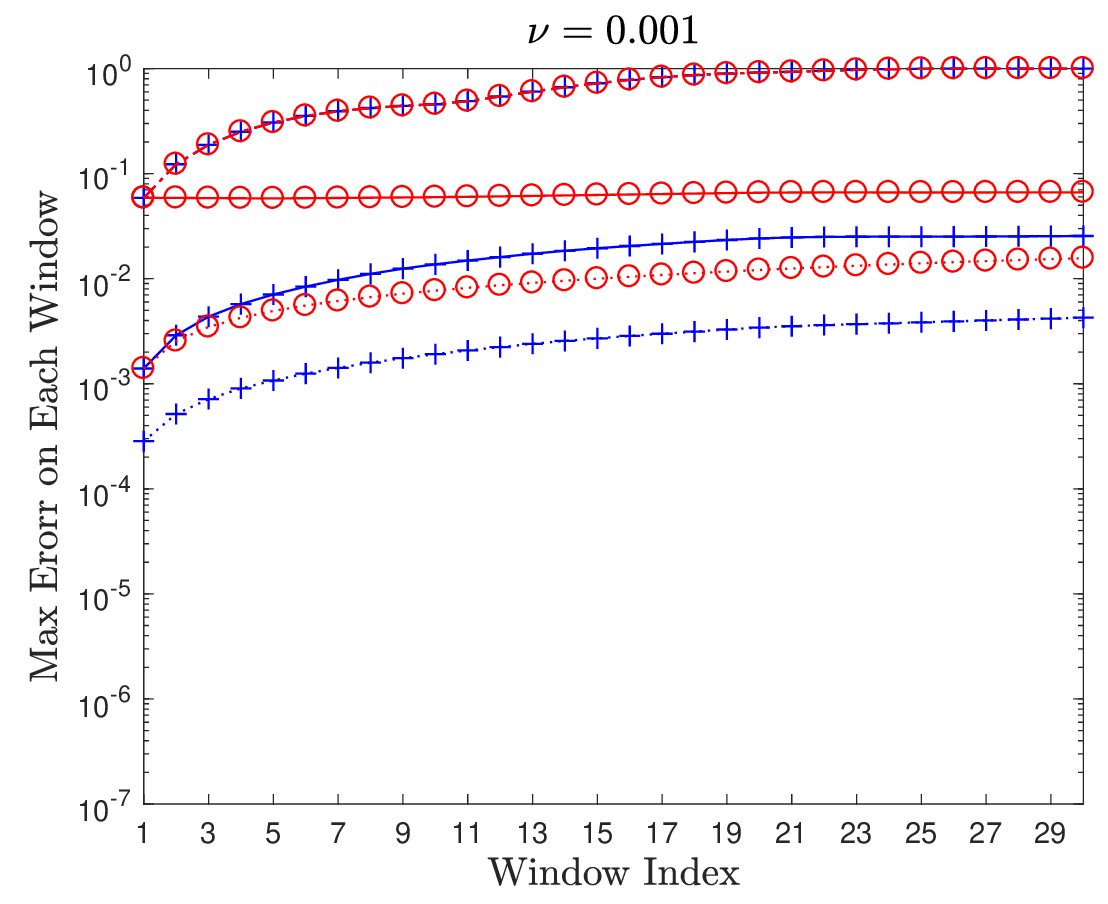}
  \caption{Maximum relative error on each time window for original IDC and
its parallel version RIDC for the advection diffusion equation (2.5) with source term $g(x,t)$ in (2.4) and initial condition $u(x,0)=0$.}
  \label{Fig_IDC_RIDC}
\end{figure}

\subsection{ParaExp}\label{Sec3.5}

The fourth time parallel method we want to present is the ParaExp
algorithm \cite{gander2013paraexp}, which is a direct time parallel
method that solves linear problems such as \eqref{linearODE}, which is
the semi-discrete version of PDEs like the advection-diffusion
equation \eqref{ADE} or the wave equation (\ref{WaveEquation1d}), see
also \cite{merkel2017paraexp,kooij2017block}. ParaExp uses special
  approximations of the matrix exponential function, and there are
  also other such techniques, like REXI \cite{schreiber2018beyond},
  see also the early PinT methods based on Laplace transforms REXI
  \cite{schreiber2018beyond}.

ParaExp is based on a time decomposition, and performs two steps
in order to construct the solution.  First, on each time interval, the
equation is solved in parallel with a source term but zero initial
condition (red problems in Figure \ref{ParaExpFig}),
\begin{equation}\label{ParaExp1}
    \begin{array}{rcll}
      {\bm v}'_n(t) = A {\bm v}_n(t) +  {\bm g}(t), &
      t \in (T_{n-1}, T_n], &
        {\bm v}_n(T_{n-1}) = 0,
    \end{array}
\end{equation}
where $n = 1, 2, \dots, N_t$ and $T_{N_t} = T$.
\begin{figure}
  \centering
  \includegraphics[width=0.8\textwidth]{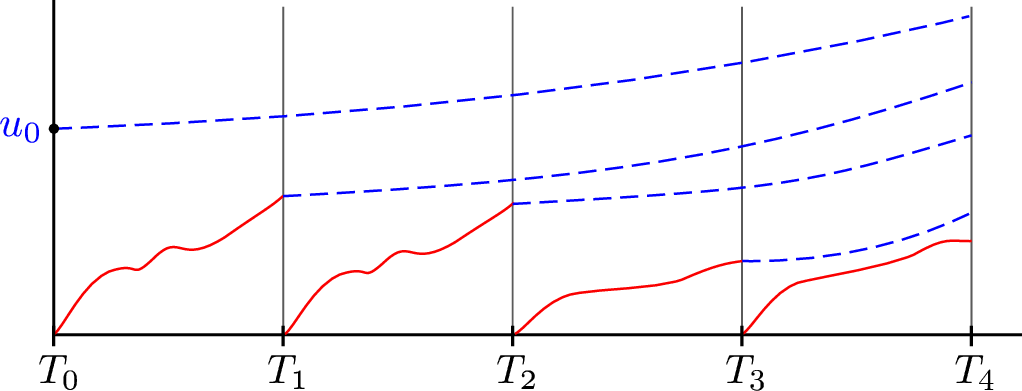}
  \caption{Two steps in the ParaExp solver.}
  \label{ParaExpFig}
\end{figure}
One then solves in parallel the linear equation \eqref{linearODE}
without source terms (blue problems in Figure \ref{ParaExpFig}), using
as initial conditions the results from \eqref{ParaExp1},
\begin{equation}\label{ParaExp2}
    \begin{array}{rcll}
      {\bm w}'_n(t) = A{\bm w}_n(t), & t \in (T_{n-1}, T], &
      {\bm w}_n(T_{n-1}) = {\bm v}_{n-1}(T_{n-1}),
    \end{array}
\end{equation}
where $n = 1, 2, \dots, N_t$ and ${\bm v}_0(T_0) = {\bm u}_0$.  The
exact solution ${\bm u}(t)$ can then by linearity be constructed from
the decoupled red and blue solutions,
\begin{equation}\label{ParaExp3}
{\bm u}(t) = {\bm v}_n(t) + {\sum}_{j=1}^n {\bm w}_j(t), \quad t \in [T_{n-1}, T_n], \quad n = 1, 2, \dots, N_t,
\end{equation}
as one can see as follows: for $n = 1$, by adding \eqref{ParaExp1} to
\eqref{ParaExp2} we have
$$
  ({\bm v}_1(t) + {\bm w}_1(t))' = A({\bm v}_1(t) + {\bm w}_1(t)) +
      {\bm g}(t),\ t \in (T_0, T_1],\ 
      ({\bm v}_1(0) + {\bm w}_1(0)) = {\bm u}_0.
$$
This proves \eqref{ParaExp3} for $n = 1$. Now, suppose
\eqref{ParaExp3} holds for $n$, and we thus have 
$$
  {\bm u}(T_{n}) = {\bm v}_n(T_n) + {\sum}_{j=1}^n {\bm w}_j(T_n).
$$
Then, in the next time interval $[T_{n}, T_{n+1}]$, since ${\bm
  w}_{n+1}(T_n) = {\bm v}_{n}(T_n)$ we have
$$
{\bm u}(T_{n}) = {\bm w}_{n+1}(T_n) + {\sum}_{j=1}^n {\bm w}_j(T_n) = {\sum}_{j=1}^{n+1} {\bm w}_j(T_n).
$$
Now, the function ${\bm w}(t)$ consisting of the first $n+1$ blue
solutions, i.e., ${\bm w}(t) = \sum_{j=1}^{n+1} {\bm w}_j(t)$
satisfies ${\bm w}'(t) = A{\bm w}(t)$ for $t \in (T_n, T_{n+1}]$
  and ${\bm w}(T_n) = {\bm u}(T_n)$, and hence ${\bm w}(t) + {\bm
    v}_{n+1}(t)$ satisfies the underlying problem \eqref{linearODE}
  for $t \in [T_n, T_{n+1}]$, which proves \eqref{ParaExp3} for $n+1$.

As illustrated in Figure \ref{ParaExpFig}, the computation of the red
and blue solutions can be done in parallel for all time intervals. But
the computation of the blue problems \eqref{ParaExp2} over longer and
longer time intervals seems at first sight to be as expensive as the
original problem \eqref{linearODE}. This is however not the case,
since the blue problems are homogeneous, i.e., without a source term,
and their solution is given by a matrix exponential,
\begin{equation}\label{ParaExp4}
    \begin{array}{rcll}
   {\bm w}_n(t) = \exp((t-T_{n-1})A){\bm v}_{n-1}(T_{n-1}), & t\in[T_{n-1}, T],
    \end{array}
\end{equation}
where the computation time of the product between the matrix
exponential and the vector ${\bm v}_{n-1}(T_{n-1})$ is independent of
the value of $t$. There are many efficient and mature computational
tools to approximate such solutions over long time
\cite{Higham2008,MV02}, such as rational Krylov methods and Chebyshev
expansions, and also the scaling and squaring algorithm with a
Pad\'{e} approximation (i.e., the built-in command `\texttt{expmv}' in
MATLAB\_R2023b or later versions). This latter approach is however
more suitable for smaller matrices, for large sparse matrices the
former approaches should be used. With efficient computations of the
matrix exponential, using ParaExp can achieve high parallel
efficiencies, up to 80\% for the time parallelization of the wave
equation \eqref{WaveEquation1d}, see \cite{gander2013paraexp}. ParaExp
is therefore an excellent time parallelization method for linear
hyperbolic problems.

The ParaExp method described above is restricted to linear
problems. An extension to nonlinear problems \eqref{nonlinearODE} was
presented in \cite{GGP18}, assuming that there is a linear part
of the nonlinear term such as
\begin{equation}\label{ParaExp5}
f({\bm u}(t), t) = A{\bm u}(t) + B({\bm u}(t)) + {\bm g}(t).
\end{equation}
Following the idea in the linear case, we decouple the nonlinear
problem \eqref{ParaExp5} into a linear problem ${\bm w}'(t) = A{\bm
  w}(t)$ with ${\bm w}(0) = {\bm u}_0$ and a nonlinear problem ${\bm
  v}'(t) = B({\bm v}(t) + {\bm w}(t)) + {\bm g}(t)$ with zero initial
value ${\bm v}(0) = 0$.   The sum ${\bm u}(t) = {\bm w}(t) + {\bm
    v}(t)$ then still solves \eqref{ParaExp5}, but the problems on
  the time intervals are now coupled: in $\{[T_{n-1},
    T_n]\}_{n=1}^{N_t}$, the initial value of ${\bm w}(t)$ at $t =
  T_{n-1}$ depends on ${\bm v}(T_{n-1})$. To obtain parallelism in
  time, we need to iterate by first solving in parallel the linear
problems
\begin{equation*}
    \begin{array}{rcll}
    ({\bm w}_n^k)'(t) & = & A{\bm w}_n^k(t), & t\in[T_{n-1}, T],\\
    {\bm w}_n^k(T_{n-1}) & = & {\bm v}_{n-1}^{k-1}(T_{n-1}), & {\bm w}_1^k(T_0) = {\bm u}_0,
    \end{array}
\end{equation*}
and then solving in parallel the nonlinear problems
\begin{equation*}
    \begin{array}{rcll}
    ({\bm v}_n^k)'(t) & = & A{\bm u}_n^k(t) + B({\bm v}_n^k(t) + \sum_{j=1}^{n}{\bm w}_{j}^{k}(t)) + {\bm g}(t), & t\in[T_{n-1}, T_n],\\
    {\bm u}_n^k(T_{n-1}) & = & 0, &
    \end{array}
\end{equation*}
where $n = 1,2,\dots, N_t$. The $k$-th iterate solution is then
defined by ${\bm u}_n^k(t) = {\bm v}_n^k(t) + \sum_{j=1}^n{\bm
  w}_j^k(t)$ for $t\in[T_{n-1}, T_n]$.

In the above nonlinear problems, the explicit dependence of $B$ on
$\sum_{j=1}^{n}{\bm w}_{j}^{k}(t)$ implies that we have to solve the
linear problems on the entire interval $[T_{n-1}, T_n]$. This would be
redundant and expensive if $A$ is large. To avoid this, we reformulate the
iteration by  rewriting ${\bm v}_n^k(t)$ as  ${\bm v}_n^k(t)={\bm
  u}_n^k(t)-{\sum}_{j=1}^n{\bm w}_j^k(t)$. 
In this new nonlinear version of the ParaExp algorithm we then solve
for $n=1,2,\dots, N_t$ sequentially
\begin{equation}\label{ParaExp6}
    \begin{array}{rcll}
({\bm w}^k_n)'(t) & = &A{\bm w}_n^k(t), 
      &t\in[T_{n-1}, T],\\
     {\bm w}_n^k(T_{n-1})& = &{\bm u}_{n-1}^{k-1}(T_{n-1})-{\sum}_{j=1}^{n-1}{\bm w}_{j}^{k-1}(T_{n-1}) &{\bm w}_1^k(T_0)={\bm u}_0,\\ 
    \end{array}
\end{equation}
followed by solving in parallel the nonlinear problems
\begin{equation}\label{ParaExp7}
    \begin{array}{rcll}
({\bm u}^k_n)'(t) & = &A{\bm u}_n^k(t)+B({\bm u}_n^k(t))+{\bm g}(t), 
      &t\in[T_{n-1}, T_n],\\
     {\bm u}_n^k(T_{n-1})& = &{\sum}_{j=1}^{n}{\bm w}_{j}^{k}(T_{n-1}),  &\\ 
    \end{array}
\end{equation}
and finally we form the approximate solution at the $k$-th iteration as 
$$
{\bm u}^k(t)={\bm u}_n^k(t),~t\in[T_{n-1}, T_n]. 
$$
The non-linear ParaExp algorithm \eqref{ParaExp6}-\eqref{ParaExp7} has
a finite step convergence property and a very interesting relation to
the Parareal algorithm:
\begin{theorem}[\cite{GGP18}]\label{pro3.1}
{\em The iterate ${\bm u}^k(t)$ at the $k$-th iteration coincides with
  the exact solution ${\bm u}(t)$ for $t\in[0, T_k]$, i.e., the
  iterative ParaExp method converges in a finite number of steps. Moreover, at
  each time point $T_n$, the iterate ${\bm u}^k(t)$ also coincides with
  the solution generated by the Parareal algorithm
\begin{subequations}
  \begin{equation}\label{ParaExp8a}
{\bm U}_n^k=\CG(T_{n-1}, T_n, {\bm U}_{n-1}^{k})+\CF(T_{n-1}, T_n, {\bm U}_{n-1}^{k-1})
-\CG(T_{n-1}, T_n, {\bm U}_{n-1}^{k-1}), 
\end{equation}
i.e., ${\bm u}^k_n={\bm U}_n^k$ for $n=0,1,\dots, N_t$, where the
coarse propagator
$\CG(T_{n-1}, T_n, {\bm U})$ solves the linear problem
\begin{equation}\label{ParaExp8b}
{\bm u}'(t)=A{\bm u}(t),~{\bm u}(T_{n-1})={\bm U},~t\in[T_{n-1}, T_n], 
\end{equation}
and the fine propagator $\CF(T_{n-1}, T_n, {\bm U})$ solves the
nonlinear problem
\begin{equation}\label{ParaExp8c}
{\bm u}'(t)=A{\bm u}(t)+B({\bm u}(t))+{\bm g}(t),~{\bm u}(T_{n-1})={\bm U},~t\in[T_{n-1}, T_n]. 
\end{equation}
\end{subequations}
}
\end{theorem}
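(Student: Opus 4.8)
The plan is to establish the ``moreover'' part first --- the identification with the Parareal iterates --- because the finite-step convergence is then an immediate consequence of the classical termination property of Parareal. Throughout, write $\CG(T_{n-1},T_n,\cdot)$ and $\CF(T_{n-1},T_n,\cdot)$ for the linear and nonlinear solution operators defined in \eqref{ParaExp8b} and \eqref{ParaExp8c}; the point to keep in mind is that $\CF(T_{n-1},T_n,\cdot)$ is \emph{exactly} the solution operator of the original problem \eqref{nonlinearODE}--\eqref{ParaExp5} restricted to the window $[T_{n-1},T_n]$, while $\CG(T_{n-1},T_n,\cdot)$ is \emph{linear} in its last argument. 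From the quantities produced by \eqref{ParaExp6}--\eqref{ParaExp7} I would define the candidate Parareal iterate $\bm{U}_n^k:=\sum_{j=1}^{n+1}\bm{w}_j^k(T_n)$, which by \eqref{ParaExp7} is precisely the datum $\bm{u}_{n+1}^k(T_n)$ passed on to the $(n+1)$-st window; this one-sided value is the natural reading of ``the iterate $\bm{u}^k$ at the point $T_n$'', and it must be used consistently, since $\bm{u}^k$ can jump at the $T_n$'s before convergence.

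The heart of the proof is a short superposition computation. For $j\le n$ the function $\bm{w}_j^k$ solves the homogeneous linear ODE on all of $[T_{j-1},T]\supseteq[T_{n-1},T_n]$, so by linearity $\sum_{j=1}^n\bm{w}_j^k(T_n)=\CG\bigl(T_{n-1},T_n,\sum_{j=1}^n\bm{w}_j^k(T_{n-1})\bigr)=\CG(T_{n-1},T_n,\bm{U}_{n-1}^k)$, whence
\[
\bm{U}_n^k=\bm{w}_{n+1}^k(T_n)+\CG(T_{n-1},T_n,\bm{U}_{n-1}^k).
\]
Since $\bm{w}_{n+1}^k$ is only ``born'' at $T_n$, its value there is read off directly from the initial condition in \eqref{ParaExp6}: $\bm{w}_{n+1}^k(T_n)=\bm{u}_n^{k-1}(T_n)-\sum_{j=1}^n\bm{w}_j^{k-1}(T_n)$. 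The second term is $\CG(T_{n-1},T_n,\bm{U}_{n-1}^{k-1})$ by the same superposition step, and the first is $\CF(T_{n-1},T_n,\bm{U}_{n-1}^{k-1})$ because $\bm{u}_n^{k-1}$ solves \eqref{ParaExp7} on $[T_{n-1},T_n]$ with $\bm{u}_n^{k-1}(T_{n-1})=\sum_{j=1}^n\bm{w}_j^{k-1}(T_{n-1})=\bm{U}_{n-1}^{k-1}$. Substituting yields exactly the Parareal recursion \eqref{ParaExp8a}, and $\bm{U}_0^k=\bm{w}_1^k(T_0)=\bm{u}_0$ matches Parareal's starting value for every $k$; since \eqref{ParaExp8a} is an explicit forward recursion in $n$ given the previous iteration, $\bm{u}_n^k=\bm{U}_n^k$ follows by a straightforward induction on $n$ and $k$.

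For the finite-step convergence I would then invoke the classical termination property of Parareal (or reprove it in two lines): because $\CF$ is the exact flow, whenever $\bm{U}_{n-1}^{k-1}=\bm{u}(T_{n-1})$ the correction $\CF(T_{n-1},T_n,\cdot)-\CG(T_{n-1},T_n,\cdot)$ in \eqref{ParaExp8a} cancels the coarse error and leaves $\bm{U}_n^k=\CF(T_{n-1},T_n,\bm{U}_{n-1}^k)$; together with $\bm{U}_0^k=\bm{u}_0$ for all $k$, a double induction on $(k,n)$ gives $\bm{U}_n^k=\bm{u}(T_n)$ for all $n\le k$. Feeding this exact interface value into \eqref{ParaExp7}, which \emph{is} the original ODE on the window, shows $\bm{u}^k(t)=\bm{u}(t)$ for $t\in[0,T_k]$, hence convergence after at most $N_t$ iterations. (One can equally run this induction directly on \eqref{ParaExp6}--\eqref{ParaExp7} without mentioning Parareal, but going through the equivalence is cleaner.)

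The argument is a careful unrolling rather than a deep one, so I expect the only real obstacle to be the index bookkeeping: distinguishing the right-hand value $\bm{u}_{n+1}^k(T_n)=\sum_{j=1}^{n+1}\bm{w}_j^k(T_n)$ from the left-hand value $\bm{u}_n^k(T_n)$ (they differ until convergence), keeping track of the one-step shift between the window index and the number of summands in $\sum_j\bm{w}_j^k$, and handling the fact that each new $\bm{w}_{n+1}^k$ enters the sum only from $T_n$ on. Getting these right is exactly what makes the substitution collapse onto \eqref{ParaExp8a}.
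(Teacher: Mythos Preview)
The paper does not supply its own proof of this theorem; it simply states the result with the citation \cite{GGP18} and moves on, so there is no in-paper argument to compare against. Your proof is correct and is essentially the natural one: exploit the linearity of the homogeneous propagator $\CG$ to collapse $\sum_{j\le n}\bm w_j^k$ into a single coarse step, and read the initial condition of the freshly created $\bm w_{n+1}^k$ in \eqref{ParaExp6} as the Parareal correction $\CF-\CG$ at level $k-1$; the finite-step termination then drops out of the standard Parareal argument since $\CF$ here is the exact flow. Your careful remark about distinguishing the one-sided value $\bm u_{n+1}^k(T_n)=\sum_{j=1}^{n+1}\bm w_j^k(T_n)$ from $\bm u_n^k(T_n)$ is exactly the bookkeeping point one has to get right.
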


This is the first time that we see the {\em Parareal} algorithm, which we
will discuss in detail in Section \ref{Sec4}. The Parareal algorithm
\eqref{ParaExp8a}-\eqref{ParaExp8c} is a simplified version since, for
the standard version, the coarse propagator $\CG$ also solves
\eqref{ParaExp8c}.  As will be discussed in Section \ref{Sec4}, also
the standard Parareal algorithm does not perform well for hyperbolic
problems, and thus we cannot expect the simplified version to work
well in this case.  An illustration of this aspect is shown in Figure
\ref{NonlinearParaExpFig}
\begin{figure}
  \centering
 \includegraphics[width=1.6in,height=1.4in,angle=0]{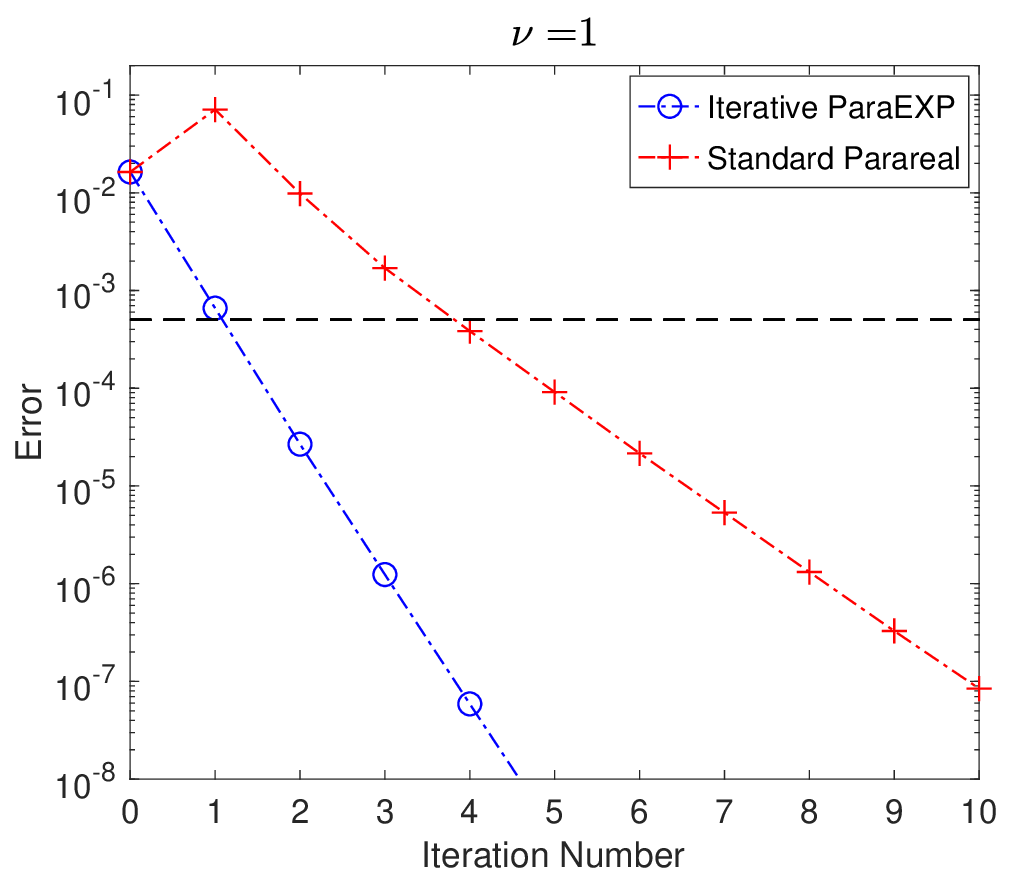}
 \includegraphics[width=1.6in,height=1.4in,angle=0]{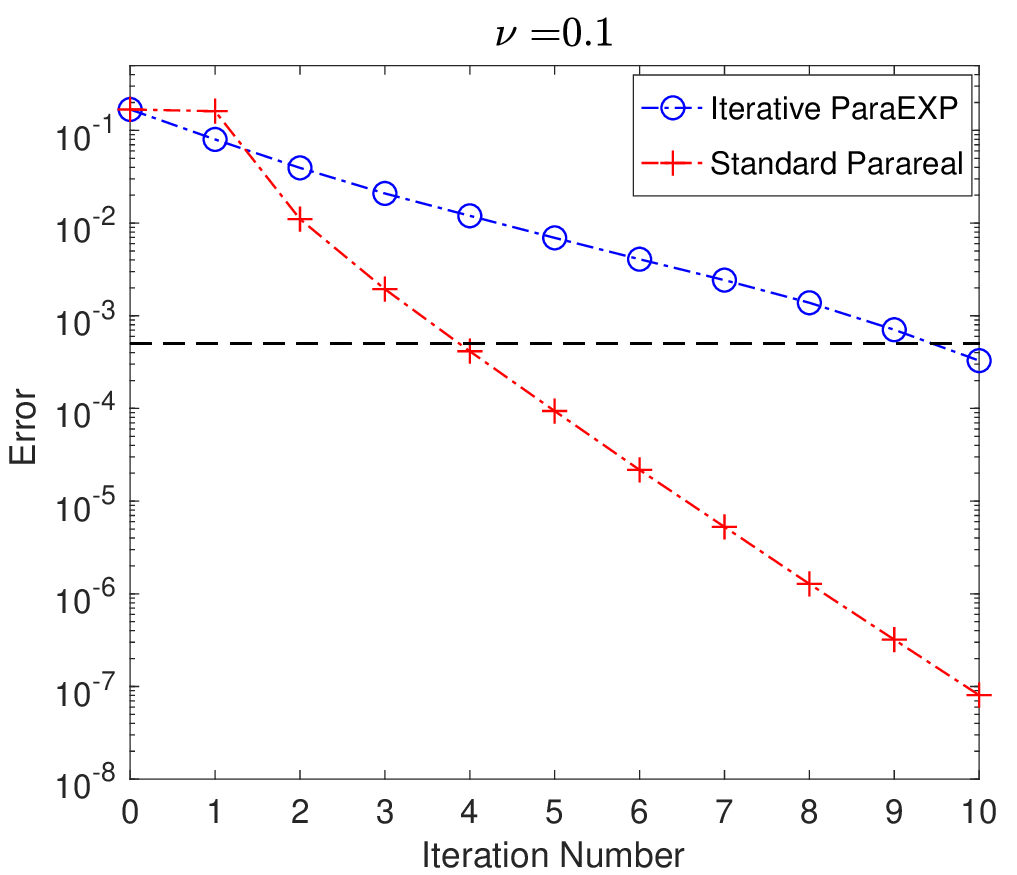}
  \includegraphics[width=1.6in,height=1.4in,angle=0]{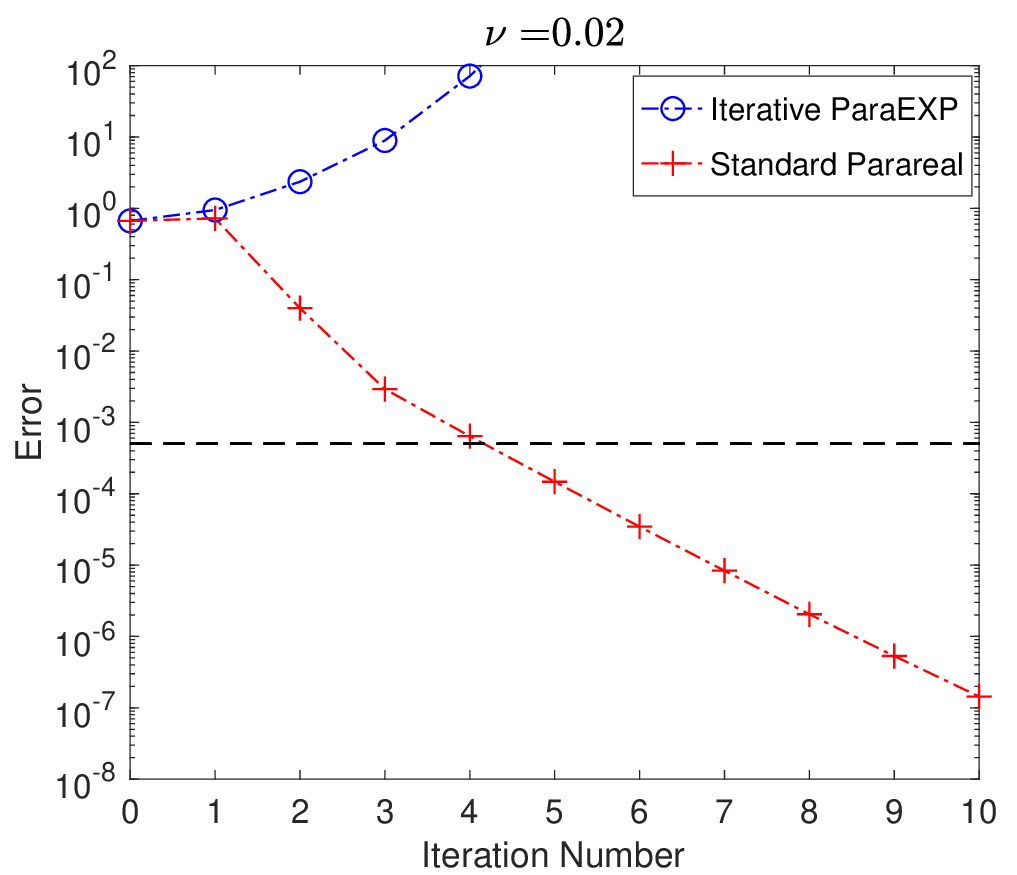}
  \caption{Convergence behavior of ParaExp and standard Parareal
    for Burgers' equation with diffusion parameter $\nu$
    changing from large to small. In each panel, the transverse line
    indicates  the order of the truncation error, $\max\{\Delta t,
    \Delta x^2\}$, where in practice one would stop the iteration.}
  \label{NonlinearParaExpFig}
\end{figure}
for \eqref{nonlinearODE} with
\begin{equation}\label{ParaExp9a}
  f({\bm u}(t), t)=A{\bm u}(t)+ B{\bm u}^2(t),~t\in(0, 2), 
\end{equation} 
arising from semi-discretizing the 1D Burgers' equation with periodic
boundary conditions using centered finite differences with a mesh size
$\Delta x=\frac{1}{100}$, where $A=A_{\rm xx}$ and
$B=-\frac{1}{2}A_{\rm x}$ with $A_{\rm xx}$ and $A_{\rm x}$ given in
\eqref{AxAxx}.  For both ParaExp and Parareal, we use for the fine
solver $\CF$ Backward Euler with a small step size $\Delta
t=\frac{0.01}{20}$. For Parareal, we use for the coarse solver $\CG$
Backward Euler as well but with a larger step size $\Delta
T=0.01$. For ParaExp, we use the built-in solver \texttt{expmv} in
Matlab for the coarse propagator.

Clearly, for strongly diffusive problems, i.e., when $\nu$ is large,
ParaExp converges very fast and the convergence rate is better than
for standard Parareal. As $\nu$ decreases, standard Parareal converges
faster than ParaExp, and particularly for $\nu=0.02$, the latter
diverges as shown in Figure \ref{NonlinearParaExpFig} on the right. If
we decrease $\nu$ further, then also standard Parareal will eventually
diverge, and we will delve into this more in Section \ref{Sec4}.

\subsection{ParaDiag}\label{Sec3.6}

The last technique we would like to explain is the ParaDiag family of
methods, which is based on diagonalizing the time stepping matrix (or
its approximation). There are two variants of ParaDiag depending on
how we treat the time stepping matrix.

In the ParaDiag I family, which also represents a direct time parallel
solver like ParaExp, one diagonalizes the time stepping matrix and
then can solve each time step in parallel after diagonalization
\cite{maday2008parallelization}. The diagonalization in ParaDiag I is
however only possible when either using variable time step sizes, or
by using a different time-integrator for the last step compared to the
other ones, like in boundary value methods. For the case of variable
step sizes, a detailed error analysis \cite{gander2019direct} shows
that one can only use a limited number of time steps to parallelize,
in double precision about 20, since one has to balance roundoff error
with truncation error. Another shortcoming is that this direct
ParaDiag method has only been explored for a few low-order
time-integrators, such as Backward Euler and the trapezoidal
rule. ParaDiag I with variable time step sizes is not easy to
generalize to higher-order time-integrators, such as Runge-Kutta
methods. When using a boundary value method type discretization, the
number of time steps one can parallelize in a single time window is
greatly improved, but again only Backward Euler and the trapezoidal
rule are applicable \cite{LWWZ22}.

Both the limitations on the number of time steps and the
time-integrator are overcome by the ParaDiag II family
\cite{gander:2021:ParaDiag}. The key idea is to design a suitable
approximation of the time stepping matrix and then to use it in a
stationary iteration or as a preconditioner for a Krylov method, so
one has to pay with iterations. The design principles for constructing
such a preconditioner are twofold: its diagonalization should be
well-conditioned in contrast to the ParaDiag I family of methods
(i.e., the condition number of its eigenvector matrix should be
small), and the iterations should converge fast, i.e. have small
spectral radius, or equivalently the spectrum of the preconditioned
matrix should be tightly clustered around 1 for Krylov
acceleration. The  first design principle ensures that roundoff
error arising from solving the preconditioning step via
diagonalization is well controlled. The second design principle
guarantees fast convergence of the preconditioned iteration. This
iterative ParaDiag method was proposed in
\cite{mcdonald2018preconditioning} and independently in
\cite{gander2019convergence}. ParaDiag II techniques have been used as
important components in new variants of Parareal and MGRiT which
  we will see in Section \ref{Sec4.1} and \ref{Sec4.4}, and which in
  their original form work only well for parabolic problems. ParaDiag
  II techniques enhance the new Parareal and MGRiT variants in two
  directions: they improve the speedup by making the coarse grid
  correction parallel \cite{WSiSC18,WZSiSC19}, and they can also make
  Parareal and MGRiT work well for hyperbolic problems
  \cite{gander2020diagonalization} by allowing the coarse and fine
  propagators using the same grids, as we will show in Section
  \ref{Sec4.5}. The application of ParaDiag II to solve the
forward-backward system arising in PDE constrained optimization can be
found in \cite{WWZSIMAX23,WLSiSC20,HPer2024}, where ParaDiag II
produces a parallel version of the matching Schur complement (MSC)
preconditioner \cite{PSW12}. Modifications and improvements of
ParaDiag II can be found in \cite{Gander:TPTI:2024} and
\cite{LWSiSC22}.

ParaDiag methods are applicable to both parabolic and hyperbolic
problems, but the mechanisms for the direct and iterative versions are
completely different. In the following, we introduce the main theory
for these two versions and illustrate them with numerical results
for the advection-diffusion equation \eqref{ADE}, Burgers'
equation \eqref{Burgers}, and the wave equation
\eqref{WaveEquation1d}.

\subsubsection{Direct ParaDiag Methods (ParaDiag I)}\label{sec3.6.1}

Parallelization by diagonalization of the time stepping matrix,
originally introduced in \cite{maday2008parallelization}, is based on
a very simple idea: consider solving the initial value problem
\eqref{linearODE}, i.e. ${\bm u}'=A{\bm u}+{\bm g}(t)$ with
$A\in\mathbb{R}^{N_x\times N_x}$, by Backward Euler with variable step
sizes,
\begin{equation}\label{IEdtn}
\frac{{\bm u}_{n}-{\bm u}_{n-1}}{\Delta t_n}=A{\bm u}_n+{\bm g}_n, \quad n=1,2,\dots, N_t, 
\end{equation}
with ${\sum}_{n=1}^{N_t}\Delta t_n=T$. Instead of solving these $N_t$ difference equations one by one, we reformulate them as an  {\em all-at-once} system, i.e., we solve all the solution vectors collected in ${\bm U}:=({\bm u}_1^{\top}, {\bm u}_2^{\top}, \dots, {\bm u}_{N_t}^{\top})^{\top}$ in one shot,
\begin{subequations}
\begin{equation}\label{AAA1a}
\CK{\bm U}={\bm b}, \quad \CK:=B\otimes I_x-I_t\otimes A, 
\end{equation}
where $\otimes$ is the Kronecker product, $I_x\in\mathbb{R}^{N_x\times
  N_x}$ and $I_t\in\mathbb{R}^{N_t\times N_t}$ are identity matrices,
and $B$ is the time stepping matrix,
\begin{equation}\label{AAA1b}
B=\begin{bmatrix}
\frac{1}{\Delta t_1} & & &\\
-\frac{1}{\Delta t_2} &\frac{1}{\Delta t_2} & &\\
&\ddots &\ddots  &\\
& &-\frac{1}{\Delta t_{N_t}} &\frac{1}{\Delta t_{N_t}}
\end{bmatrix}, \quad {\bm b}=\begin{bmatrix}
\frac{1}{\Delta t_1}{\bm u}_0+{\bm g}_1\\
{\bm g}_2\\
\vdots\\
{\bm g}_{N_t}
\end{bmatrix}. 
\end{equation}
\end{subequations}
Since the time steps $\{\Delta t_n\}$ are all different from each
other, we can diagonalize $B$,
\begin{equation}\label{diag1}
B=VDV^{-1}, \quad D={\rm diag}\left(\frac{1}{\Delta t_1},\frac{1}{\Delta t_2},\dots, \frac{1}{\Delta t_{N_t}}\right). 
\end{equation}
Then, we can factor $\CK$ in a block-wise manner as
$$
  \CK=(V\otimes I_x)(D\otimes I_x-I_t\otimes A)(V^{-1}\otimes I_x).
$$
This allows us to solve ${\bm U}$ from \eqref{AAA1a} by the following
three steps:
\begin{equation}\label{3steps}
\begin{cases}
{\bm U}^a=(V^{-1}\otimes I_x){\bm b}, &\text{(step-a)}\\
\left(\frac{1}{\Delta t_n} I_x-A\right){\bm u}^b_n={\bm u}^a_n, \quad n=1,2,\dots, N_t, &\text{(step-b)}\\
{\bm U}=(V\otimes I_x){\bm u}^b, &\text{(step-c)}\\
\end{cases}
\end{equation}
where ${\bm U}^a:=(({\bm u}^a_1)^{\top}, \dots, ({\bm
  u}^a_{N_t})^{\top})^{\top}$ and ${\bm U}^b:=(({\bm u}^b_1)^{\top},
\dots, ({\bm u}^b_{N_t})^{\top})^{\top}$. Note that the first and last
steps only involve matrix-vector multiplications and thus the
computation is cheap. The major computation is step-b, but
interestingly, all the $N_t$ linear systems stemming from the time
steps are independent and can be solved in parallel.

For an arbitrary choice of the step sizes $\{\Delta t_n\}$, we have to
rely on numerical methods (e.g., \texttt{eig} in Matlab) to obtain the
eigenvector matrix $V$. This does not bring significant computational
burden since $N_t$ does not need to be very large in practice, but it
prevents us from performing a complete analysis of the method, such as
studying the roundoff error and the selection of the parameters
involved. The time mesh used in \cite{maday2008parallelization} is a
geometric mesh $\Delta t_n=\mu^{n-1}\Delta t_1$ with $\mu>1$. They
tested this direct time-parallel method with $\mu=1.2$ for the heat
equation in 1D and obtained close to perfect speedup.

For prescribed $T$, the constraint $\sum_{n=1}^{N_t}\Delta t_n =
\sum_{n=1}^{N_t}\mu^{n-1}\Delta t_1 = T$ specifies the first step size
$\Delta t_1$ as $\Delta t_1 =
\frac{T}{\sum_{n=1}^{N_t}\mu^{n-1}}$. This gives
\begin{equation}\label{dtn}
\Delta t_n = \frac{\mu^{n-1}}{\sum_{n=1}^{N_t}\mu^{n-1}}T. 
\end{equation}
A large $\mu$ will produce large step sizes and thus large
discretization error, while a small $\mu$ close to 1 produces large
roundoff error when diagonalizing the time stepping matrix $B$, which
can be understood by noticing that as $\mu$ approaches 1, the time
stepping matrix $B$ is close to a Jordan block, and diagonalizing such
matrices results in large roundoff error. Therefore, it is important
to know how to fix $\mu$ by balancing the discretization and roundoff
error. This was carefully studied in \cite{GHR16} for first-order
parabolic problems, and in \cite{gander2019direct} for the
second-order wave equation. In what follows, we let $\mu = 1 +
\varrho$ with $\varrho > 0$ being a small value and we revisit the
main existing results for fixing $\varrho$.

\begin{theorem}[first-order problem]\label{pro1st}
{\em For the system of ODEs $\bm{u}' = A\bm{u} + \bm{g}$ with initial
  value $\bm{u}(0) = \bm{u}_0$ and $t \in [0, T]$, suppose $\sigma(A)
  \subset \mathbb{R}^-$ with $|\lambda(A)| \leq
  \lambda_{\max}$\footnote{Here and hereafter, $\lambda(\cdot)$ and
    $\sigma(\cdot)$ denote an arbitrary eigenvalue and the spectrum of
    the involved matrix.}. Let $\bm{u}_{N_t}(\varrho)$ and
  $\bm{u}_{N_t}(0)$ be the numerical solutions at $t = T$ obtained by
  using Backward Euler with geometric step sizes and the
  uniform step size, respectively. Let
  $\{\tilde{\bm{u}}_{n}(\varrho)\}$ be the numerical solution computed
  by the diagonalization method \eqref{3steps}. Then, it holds that
\begin{equation}\label{error1st}
\begin{split}
& \|\bm{u}_{N_t}(\varrho) - \bm{u}_{N_t}(0)\| \lesssim C(\lambda_{*}T, N_t)\varrho^2,\\
& \|\tilde{\bm{u}}_{n}(\varrho) - \bm{u}_{n}(\varrho)\| \lesssim \epsilon \frac{N_t^2(2N_t+1)(N_t+\lambda_{\max}T)}{\phi(N_t)}\varrho^{-(N_t-1)},
\end{split}
\end{equation}
where $C(x, N_t) := \frac{N_t(N_t^2-1)}{24}r(x/N_t, N_t)$ with
$r(\tilde{x},N_t) :=
\left(\frac{\tilde{x}}{1+\tilde{x}}\right)^2(1+\tilde{x})^{-N_t}$. Here
$\epsilon$ is the machine precision\footnote[1]{In the ISO C Standard,
  $\epsilon = 1.19 \times 10^{-7}$ for single precision and $\epsilon
  = 2.22 \times 10^{-16}$ for double precision.} and
$$
\phi(N_t) := 
\begin{cases}
\frac{N_t}{2}! \left(\frac{N_t}{2} - 1\right)!, & \text{if } N_t \text{ is even},\\
\left(\frac{N_t - 1}{2}!\right)^2, & \text{if } N_t \text{ is odd}. 
\end{cases}
$$
The quantity $\lambda_* :=
\frac{N_t\tilde{x}_*}{T}$ with $\tilde{x}_*$ being the maximizer of
the function $r(\tilde{x},N_t)$ for $\tilde{x} \in [0, \infty)$. The
  best choice of $\varrho$, denoted by $\varrho_{\rm opt}$, is the
  quantity balancing the two error bounds in \eqref{error1st}, i.e.,
\begin{equation}\label{varrho_opt}
\varrho_{\rm opt} = \left(\epsilon \frac{N_t^2(2N_t+1)(N_t+\lambda_{\max}T)}{\phi(N_t)C(\lambda_{*}T, N_t)}\right)^{\frac{1}{N_t+1}}. 
\end{equation}
}
\end{theorem}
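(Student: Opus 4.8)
The two bounds in \eqref{error1st} are essentially decoupled, and I would prove them separately: the first is a \emph{mesh perturbation} estimate comparing the exact‑arithmetic Backward Euler solutions on the geometric and on the uniform mesh, while the second is a \emph{roundoff} estimate for the diagonalization procedure \eqref{3steps}; the optimal $\varrho$ then drops out by balancing the two powers of $\varrho$.

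For the first bound, the plan is to use the spectral decomposition of $A$ (or, if $A$ is not diagonalizable, an eigenvalue‑wise resolvent argument) to reduce to the scalar test equation $u'=\lambda u$ with $\lambda<0$. There Backward Euler with step sizes $\{\Delta t_n(\varrho)\}$ from \eqref{dtn} gives $u_{N_t}=f_\lambda(\varrho)\,u_0+(\text{forcing terms})$ with $f_\lambda(\varrho)=\prod_{n=1}^{N_t}\bigl(1-\lambda\Delta t_n(\varrho)\bigr)^{-1}$. The crucial structural fact is that the constraint $\sum_n\Delta t_n(\varrho)\equiv T$ forces $\sum_n\Delta t_n'(0)=\sum_n\Delta t_n''(0)=0$, while $\Delta t_n(0)=T/N_t$ for every $n$; logarithmic differentiation of $f_\lambda$ then immediately gives $f_\lambda'(0)=0$ and $f_\lambda''(0)=f_\lambda(0)\,\lambda^2\bigl(1-\lambda T/N_t\bigr)^{-2}\sum_n\bigl(\Delta t_n'(0)\bigr)^2$. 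From \eqref{dtn} one reads off $\Delta t_n'(0)=\tfrac{T}{N_t}\bigl(n-\tfrac{N_t+1}{2}\bigr)$, so $\sum_n\bigl(\Delta t_n'(0)\bigr)^2=\tfrac{T^2}{N_t^2}\cdot\tfrac{N_t(N_t^2-1)}{12}$ by the standard variance identity, whence $f_\lambda''(0)=\tfrac{N_t(N_t^2-1)}{12}\,r(\tilde x,N_t)$ with $\tilde x=-\lambda T/N_t$. A second‑order Taylor remainder, with the supremum over the spectrum of $A$ absorbed into the maximizer $\tilde x_*$ of $r(\tilde x,N_t)$, produces $|f_\lambda(\varrho)-f_\lambda(0)|\lesssim C(\lambda_*T,N_t)\varrho^2$; the forcing contribution is estimated in exactly the same way, and reassembling through the eigenvector matrix of $A$ (whose condition number, together with $\|\bm u_0\|$ and $\|\bm g\|$, is absorbed in $\lesssim$) yields the first line of \eqref{error1st}.

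For the second bound, I would carry out a backward error analysis of the three steps in \eqref{3steps}: steps (a) and (c) are multiplications by $V^{-1}\otimes I_x$ and $V\otimes I_x$, and step (b) solves the shifted systems $\bigl(\tfrac{1}{\Delta t_n}I_x-A\bigr)\bm u_n^b=\bm u_n^a$. Standard perturbation bounds then give a total roundoff error $\lesssim\epsilon\,\kappa(V)\,\max_n\bigl\|\bigl(\tfrac{1}{\Delta t_n}I_x-A\bigr)^{-1}\bigr\|$ times polynomial operation‑count factors, the careful bookkeeping of which produces the prefactor $N_t^2(2N_t+1)$. The heart of the matter is a sharp estimate of $\kappa(V)=\|V\|\,\|V^{-1}\|$ for the bidiagonal time‑stepping matrix $B$ in \eqref{AAA1b} with geometric diagonal $d_n=\mu^{-(n-1)}/\Delta t_1$: the eigenvectors of such a matrix have entries equal to products of ratios $(d_j-d_k)^{-1}$, and since $d_j-d_k=\CO(\varrho)$ when $\mu=1+\varrho$, a product of $N_t-1$ such differences, together with the cancellations proper to a geometric Vandermonde determinant, yields precisely the $\varrho^{-(N_t-1)}/\phi(N_t)$ scaling. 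Finally, the hypothesis $\sigma(A)\subset\mathbb{R}^-$ makes $\tfrac{1}{\Delta t_n}I_x-A$ a boundedly invertible (positive) shift, so the discrete propagator bound for $\CK^{-1}$ gives $\max_n\bigl\|\bigl(\tfrac{1}{\Delta t_n}I_x-A\bigr)^{-1}\bigr\|\lesssim\CO(N_t+\lambda_{\max}T)$, which is the last factor; combining all pieces gives the second line of \eqref{error1st}.

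The main obstacle is the sharp condition‑number bound for $V$: obtaining the exact factorials in $\phi(N_t)$ and the exact polynomial $N_t^2(2N_t+1)$ requires the closed‑form eigenvector expansion of $B$ and a careful estimation of products and quotients of the form $\prod_{i\neq j}(\mu^{i-1}-\mu^{j-1})$ at geometrically spaced nodes, i.e.\ essentially a Lagrange‑interpolation / Lebesgue‑constant computation for such nodes. Once both lines of \eqref{error1st} are established, $\varrho_{\rm opt}$ follows by equating them: the two bounds scale as $\varrho^2$ and $\varrho^{-(N_t-1)}$, so the balance point satisfies $\varrho^{N_t+1}=(\text{roundoff constant})/C(\lambda_*T,N_t)$, which is exactly \eqref{varrho_opt}.
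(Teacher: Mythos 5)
The paper's own proof is far shorter than what you attempt: it diagonalizes $A$, reduces to the Dahlquist test equation $y'=\lambda y$, and then simply cites \cite[Theorem 2]{GHR16} for the truncation estimate and \cite[Theorem 6]{GHR16} for the roundoff estimate, observing that the latter bound is maximal at $|\lambda|=\lambda_{\max}$. Your reduction to the scalar equation is the same first step, and your derivation of the first bound is a correct, self-contained replacement for the citation: the constraint $\sum_n\Delta t_n(\varrho)\equiv T$ kills the first-order term, $\Delta t_n'(0)=\frac{T}{N_t}\bigl(n-\frac{N_t+1}{2}\bigr)$ together with the variance identity gives $f_\lambda''(0)=\frac{N_t(N_t^2-1)}{12}r(\tilde x,N_t)$, and the Taylor remainder reproduces exactly $C(\lambda_*T,N_t)\varrho^2$. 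One caution: your claim that the forcing contribution is handled ``in exactly the same way'' is not automatic, because the cancellation $\sum_n\Delta t_n'(0)=0$ does not act termwise on the sum $\sum_n\bigl[\prod_{j\ge n}(1-\lambda\Delta t_j)^{-1}\bigr]\Delta t_n g(t_n)$; already for $\lambda=0$, $N_t=2$, $g(t)=t^2$ the difference between the geometric and uniform meshes is genuinely first order in $\varrho$. The paper's proof quietly restricts to the homogeneous test equation, and you would need either the same restriction or an additional argument for $\bm g\neq 0$.

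For the roundoff bound your text is a plan rather than a proof, and it contains one concrete error: you attribute the factor $(N_t+\lambda_{\max}T)$ to $\max_n\bigl\|\bigl(\frac{1}{\Delta t_n}I_x-A\bigr)^{-1}\bigr\|$, but for $\sigma(A)\subset\mathbb{R}^-$ this norm is bounded by $\Delta t_n\le T$ and it decreases as $\lambda_{\max}$ grows, so it cannot produce that factor; it rather reflects the forward norms $\bigl\|\frac{1}{\Delta t_n}I_x-A\bigr\|\le\frac{1}{\Delta t_n}+\lambda_{\max}\approx\frac{N_t+\lambda_{\max}T}{T}$ that enter the backward-error analysis of the shifted solves in step (b). More importantly, the decisive ingredient --- the sharp estimate of ${\rm Cond}(V)$ that yields the scaling $\varrho^{-(N_t-1)}/\phi(N_t)$ together with the prefactor $N_t^2(2N_t+1)$ --- is exactly what you defer as ``the main obstacle''; this is the actual content of \cite[Theorem 6]{GHR16}, and with the closed forms \eqref{VV1a} for $V$ and $V^{-1}$ it is a finite but nontrivial computation that your proposal does not carry out. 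The final balancing step giving \eqref{varrho_opt} is immediate once both bounds are in place and matches the paper.
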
   

\begin{proof}
Let $\lambda \in \sigma(A)$ be an arbitrary eigenvalue of $A$ and
consider the Dahlquist test equation $y' = \lambda y$. Then, the
first estimate in \eqref{error1st} follows from the analysis in
\cite[Theorem 2]{GHR16}. For this test equation, the error due to
diagonalization follows from the analysis in \cite[Theorem
  6]{GHR16} and the bound of the error reaches its
maximum when $|\lambda| = \lambda_{\max}$.
\end{proof}

The first estimate in \eqref{error1st} presents the truncation error
between the use of a geometric time mesh and a uniform time
mesh. From this, we can estimate the truncation error between ${\bm
  u}_{N_t}(\varrho)$ and the exact solution ${\bm u}(T)$ as $ \|{\bm
  u}_{N_t}(\varrho)-{\bm u}(T)\|\leq \|{\bm u}_{N_t}(\varrho)-{\bm
  u}_{N_t}(0)\|+\|{\bm u}_{N_t}(0)-{\bm u}(T)\|, $ where the estimate
of the last term is well understood and does not play a dominant
role. The second estimate in \eqref{error1st} is the roundoff error
due to diagonalization of the time stepping matrix $B$. The analysis of
this error is closely related to the condition number of the
eigenvector matrix $V$. With the geometric step sizes in \eqref{dtn}, $V$
and $V^{-1}$ are lower triangular Toeplitz matrices, see \cite{GHR16},
\begin{subequations}
\begin{equation}\label{VV1a}
\begin{split}
&V=\mathbb{T}(p_1,p_2,\dots, p_{N_t-1}),~p_n:=\frac{1}{\prod_{j=1}^n(1-\varrho^j)}, \\
&V^{-1}=\mathbb{T}(q_1,q_2,\dots, q_{N_t-1}), ~q_n=(-1)^n\varrho^{\frac{n(n-1)}{2}}p_n,
\end{split}
\end{equation}
where $\mathbb{T}$ is the lower triangular Toeplitz operator
\begin{equation}\label{VV1b}
\mathbb{T}(a_1, a_2,\dots, a_{N_t})=
\begin{bmatrix}
1 & & &\\
a_1 &1 & &\\
\vdots &\ddots &\ddots &\\
a_{N_t-1} &\dots &a_1 &1
\end{bmatrix}.
\end{equation}
\end{subequations}
The closed form formula for $V$ and $V^{-1}$ in \eqref{VV1a} is useful
to estimate Cond$(V)$, and then the roundoff error in
\eqref{error1st}. However, in practice, we do not use these formulas
for $V$ and $V^{-1}$ in \eqref{3steps}. Instead, we use the command
\texttt{eig} in MATLAB to get $V$ and $V^{-1}$, since it automatically
optimizes the condition number by scaling the eigenvectors.

We now study the error of the ParaDiag I method for two PDEs with
homogeneous Dirichlet boundary conditions and the initial value
$u(x,0)=\sin(2\pi x)$ for $x\in(0, 1)$, the heat equation
\eqref{heatequation} and the advection-diffusion equation \eqref{ADE}
with $\nu=10^{-2}$. Both PDEs are discretized by centered finite
differences with mesh size $\Delta x=\frac{1}{50}$. With $T=0.2$ and
five values of $N_t$, we show in Figure \ref{ParaDiag1Fig1} the error
of ParaDiag I for $\varrho\in[10^{-2}, 1]$. The error is measured as
$\max_{n=1,2,\dots, N_t}\|\tilde{\bm u}_n(\varrho)-{\bm
  u}(t_n)\|_\infty$,
where ${\bm u}(t_n)$ is the reference solution
computed by the exponential integrator, i.e., ${\bm
  u}(t_n)=e^{-At_n}{\bm u}_0$ and $\tilde{\bm u}_n(\varrho)$ is the
solution at $t=t_n$ by ParaDiag I. Clearly, there exists an optimal
choice of $\varrho$ that minimizes the error.   We also show by a star
for each $N_t$ the theoretically estimated $\varrho_{\rm opt}$
from \eqref{varrho_opt}. For the advection-diffusion equation, this
$\varrho_{\rm opt}$ predicts the optimal choice very well, and also
quite well for the heat equation, except when $N_t$ is small, even
  though the theoretical estimate was just obtained by balancing
  roundoff and truncation error estimates asymptotically.
\begin{figure}
\centering
\includegraphics[width=2.3in,height=1.85in,angle=0]{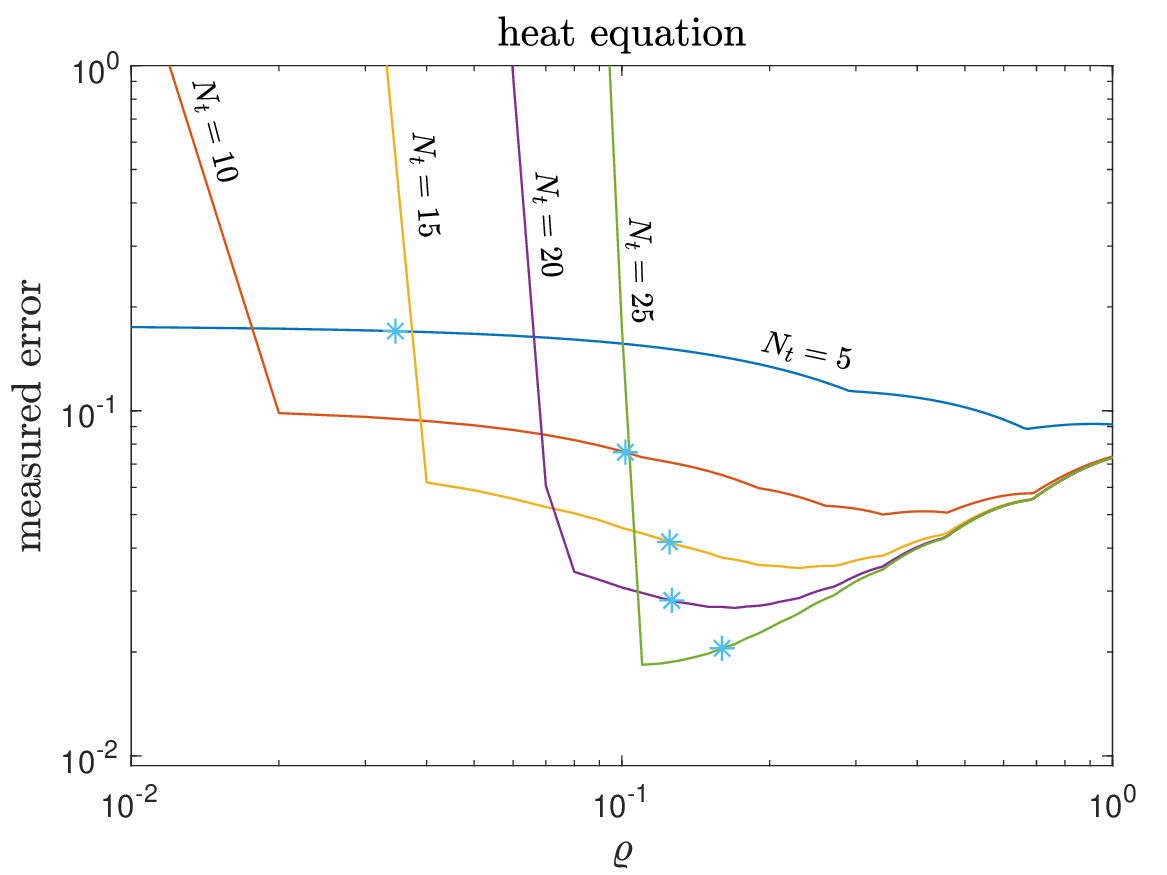}
\includegraphics[width=2.3in,height=1.85in,angle=0]{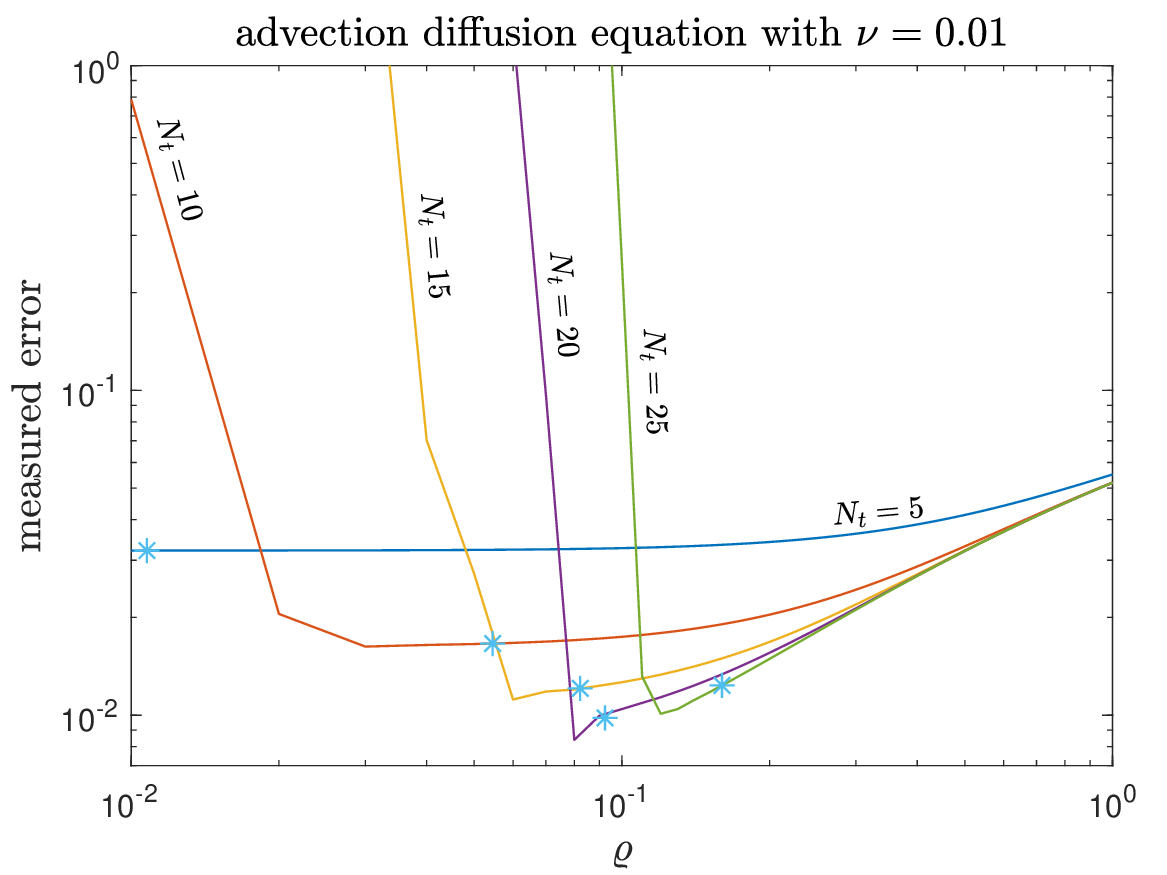}
\caption{Measured error of ParaDiag I for five values of $N_t$ using
  geometric step sizes like in \eqref{dtn} with $\mu=1+\varrho$ and
  $\varrho\in[10^{-2}, 1]$. The star denotes the theoretically
  estimated $\varrho_{\rm opt}$ from \eqref{varrho_opt}.}
\label{ParaDiag1Fig1}
\end{figure}
Let $\varrho_{num}$ be the minimizer determined numerically as shown in
Figure \ref{ParaDiag1Fig1}. We show in Figure \ref{ParaDiag1Fig2} the
error for  Backward  Euler with uniform step size $\Delta
t=\frac{T}{N_t}$ and the ParaDiag I method using variable step sizes
$\Delta t_n$, i.e., \eqref{dtn} with $\mu=1+\varrho_{num}$.  Here $T=0.5$
and $N_t=2^{4:10}$.  As $N_t$ grows, the error for ParaDiag I
decreases first and then increases rapidly when $N_t$ exceeds
some threshold less than 100.
\begin{figure} 
  \centering
 \includegraphics[width=2.3in,height=1.85in,angle=0]{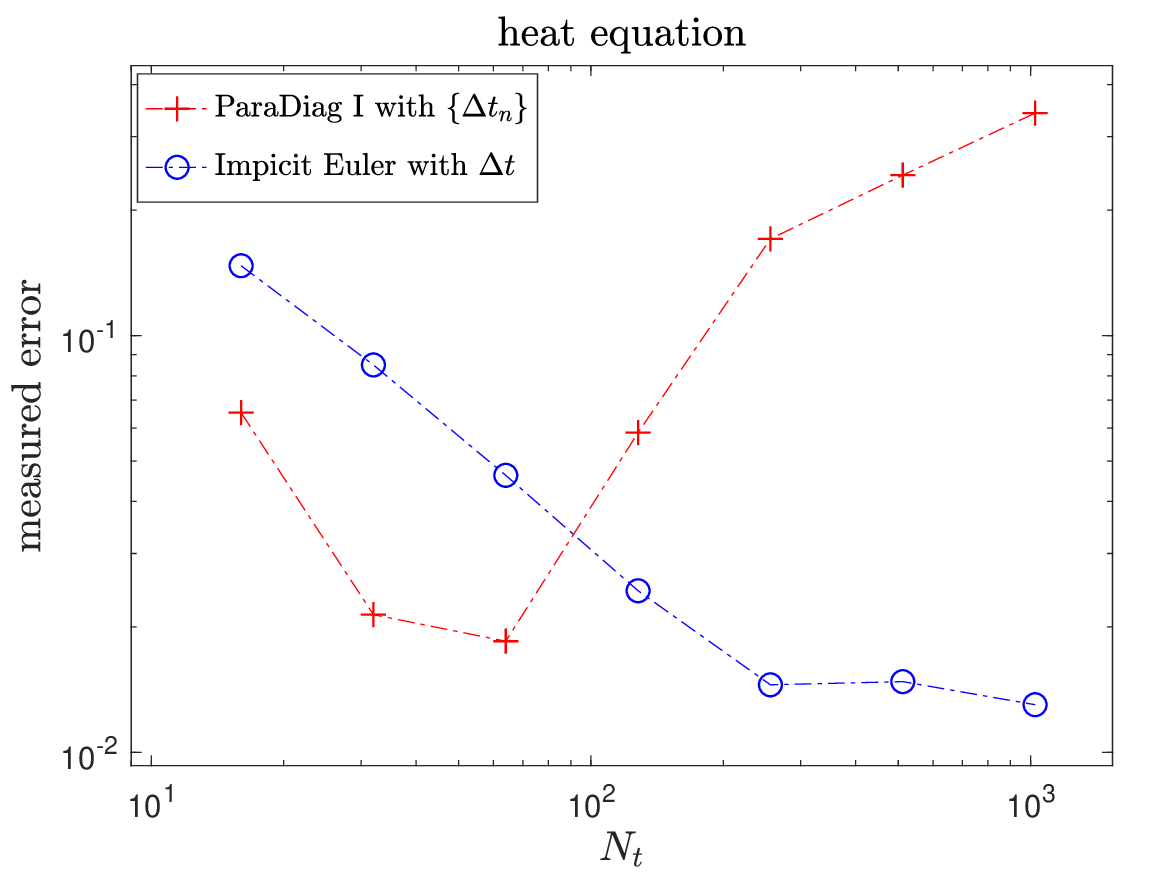}
 \includegraphics[width=2.3in,height=1.85in,angle=0]{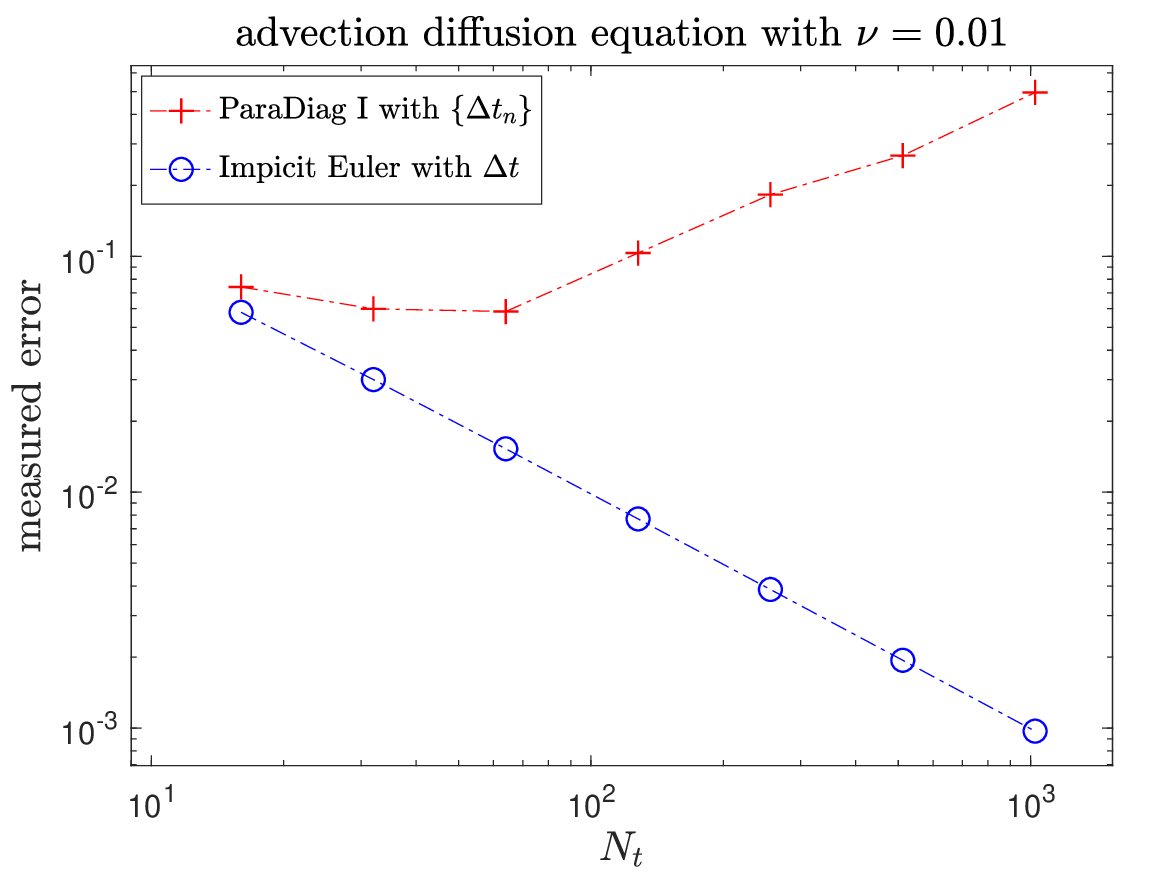} 
  \caption{The error of ParaDiag I increases rapidly for $N_t$
    exceeding a threshold when using  $\mu=1+\varrho_{num}$.}
  \label{ParaDiag1Fig2}
\end{figure}

We next consider a second-order problem, e.g., the wave equation
\eqref{WaveEquation1d} after space discretization,
\begin{equation}\label{2ndODE}
{\bm u}''(t)=A{\bm u}(t) \text{~for~}t\in(0, T],~{\bm u}(0)={\bm u}_0,~{\bm u}'(0)=\tilde{\bm u}_0, 
\end{equation} 
where $A\in\mathbb{R}^{N_x\times N_x}$. For the wave equation
\eqref{WaveEquation1d}, $A$ is a discretized Laplacian. To use
ParaDiag I, we transform this equation into a first order system,
\begin{equation}\label{two2ndODE}
{\bm w}'(t)={\bm A}{\bm w}(t) \text{~for~}t\in(0, T], {\bm w}(0)=({\bm u}_0^{\top}, \tilde{\bm u}_0^{\top})^{\top}, 
\end{equation} 
where ${\bm w}(t)=({\bm u}^{\top}(t), ({\bm u}'(t))^{\top})^{\top}$ and 
$$
{\bm A}=
\begin{bmatrix}
& I_x\\
A &
\end{bmatrix}. 
$$
To avoid numerical dispersion, we use the Trapezoidal Rule as the
time-integrator,
\begin{equation}\label{Wave_TR}
\frac{{\bm w}_{n}-{\bm w}_{n-1}}{\Delta t_n}=\frac{\bm A}{2}({{\bm w}_{n}+{\bm w}_{n-1}}),~n=1,2,\dots, N_t, 
\end{equation}
which is energy preserving in the sense that $\|{\bm w}_n\|_2=\|{\bm
  w}_0\|_2$. The step sizes $\{\Delta t_n\}$ are again geometric as in
\eqref{dtn} with some parameter $\mu=1+\varrho>1$.  Similar to
\eqref{AAA1a}, we can represent \eqref{Wave_TR} as an all-at-once
system,
\begin{subequations}
\begin{equation}\label{AAA2a}
\CK{\bm W}={\bm b},~\CK=B\otimes I_x-\tilde{B}\otimes A, 
\end{equation}
with some suitable vector ${\bm b}$, where $B$ is the matrix in
\eqref{AAA1b} and
\begin{equation}\label{AAA2b}
\tilde{B}=\frac{1}{2}
\begin{bmatrix}
1 & & &\\
1 &1 & &\\
&\ddots &\ddots &\\
& &1 &1
\end{bmatrix}. 
\end{equation}
\end{subequations}
To apply the diagonalization technique, we rewrite \eqref{AAA2a} as 
\begin{equation}\label{AAA2a1}
\CK{\bm W}=\tilde{\bm b},~\CK=\tilde{B}^{-1}B\otimes I_x- I_t\otimes A, ~\tilde{\bm b}=(\tilde{B}^{-1}\otimes I_x){\bm b}. 
\end{equation}
The matrix $\tilde{B}^{-1}B$ can be diagonalized, see
\cite{gander2019direct},
\begin{subequations}
\begin{equation}\label{VV12a}
\tilde{B}^{-1}B=V{\rm diag}\left(\frac{2}{\Delta t_1}, \dots,\frac{2}{\Delta t_{N_t}}\right)V^{-1}, 
\end{equation}
where $V$ and $V^{-1}$ are given by 
\begin{equation}\label{VV12b}
\begin{split}
&V= \mathbb{T}(p_1,p_2,\dots, p_{N_t-1}),~p_n:=\prod_{j=1}^n\frac{1+\mu^j}{1-\mu^j},\\
&V^{-1}= \mathbb{T}(q_1,q_2,\dots, q_{N_t-1}), ~q_n:=\mu^{-n}\prod_{j=1}^n\frac{1+\mu^{-j+2}}{1-\mu^{-j}}. 
\end{split}
\end{equation}
\end{subequations}
Then, the all-at-once system \eqref{AAA2a1} can be solved via ParaDiag
I (cf. \eqref{3steps}) as well.

Similar to the first-order equation studied above, by balancing the
truncation error (between the geometric mesh and the uniform mesh) and
the roundoff error, the best mesh parameter $\mu=1+\varrho_{\rm opt}$
is obtained as follows.

\begin{theorem}[second-order problem]\label{pro2nd}
{\em For the 2nd-order system of ODEs ${\bm u}''=A{\bm u}$ with
  $\lambda(A)\leq0$, let ${\bm u}_{N_t}(\varrho)$ and ${\bm
    u}_{N_t}(0)$ denote the numerical solutions at $t=T$ obtained by
  using the Trapezoidal Rule with geometric time step sizes and the
  uniform time step size. Let $\{\tilde{\bm u}_{n}(\varrho)\}$ denote the
  numerical solution computed by the diagonalization method
  \eqref{3steps}. Then, it holds that
\begin{equation}\label{error2st}
\begin{split}
&\|{\bm u}_{N_t}(\varrho)-{\bm u}_{N_t}(0)\|\lesssim \frac{N_t(N_t^2-1)}{15}\varrho^2,\\
&\|\tilde{\bm u}_{n}(\varrho)-{\bm u}_{n}(\varrho)\|\lesssim \epsilon\frac{2^{2N_t-\frac{1}{2}}N_t}{(N_t-1)!} \varrho^{-(N_t-1)}.
\end{split}
\end{equation} 
The best choice of $\varrho$, denoted by $\varrho_{\rm opt}$, is the quantity balancing the two error bounds in \eqref{error2st}, i.e.,
\begin{equation}\label{varrho_opt2}
\varrho_{\rm opt}=\left(\epsilon \frac{15\times 2^{2N_t-\frac{1}{2}}}{(N_t^2-1)(N_t-1)!} \right)^{\frac{1}{N_t+1}}.
\end{equation}
}
\end{theorem}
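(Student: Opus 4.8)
The plan, mirroring the proof of Theorem~\ref{pro1st}, is to reduce the matrix problem to a scalar test equation and then balance a truncation-error estimate against a roundoff-error estimate governed by the condition number of the eigenvector matrix in \eqref{VV12b}. Since $\lambda(A)\le 0$, we diagonalize $A$ (the symmetric discrete Laplacian in the wave-equation model case) and treat each eigenvalue $\lambda=-\omega^2\le 0$ separately, so the relevant scalar problem is the harmonic oscillator $y''=\lambda y$. Its first-order reformulation $w'=\mathbf{A}w$ with $\mathbf{A}=\bigl[\begin{smallmatrix}0&1\\ \lambda&0\end{smallmatrix}\bigr]$ has the purely imaginary eigenvalues $\pm\mathrm{i}\omega$, which is exactly why the Trapezoidal Rule \eqref{Wave_TR} is the correct integrator here: its stability function $R(z)=(1+z/2)/(1-z/2)$ satisfies $|R(\mathrm{i}\theta)|=1$, so the scheme is energy preserving and the per-mode amplification never grows with $\omega$; this is what makes the eventual bounds uniform over $\sigma(A)$, with the worst case attained at the eigenvalue of largest modulus.

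For the first bound in \eqref{error2st} I would expand, as in \cite[Theorem~2]{GHR16} but now for the Trapezoidal one-step map, the difference between the product of the maps on the geometric mesh $\Delta t_n=\mu^{n-1}\Delta t_1$ and on the uniform mesh $\Delta t=T/N_t$. Writing $\mu=1+\varrho$ and Taylor expanding, the $\CO(\varrho)$ contributions cancel because of the constraint $\sum_n\Delta t_n=T$, leaving an $\CO(\varrho^2)$ remainder whose combinatorial prefactor is the $\sum_n(n-1)^2$-type sum responsible for the $N_t(N_t^2-1)$ growth; since $|R(\mathrm{i}\theta)|=1$ the remainder is bounded uniformly in $\theta=\omega\Delta t$, and taking the worst-case quadrature constant (numerically $1/15$, as recorded in \cite{gander2019direct}) yields $\|{\bm u}_{N_t}(\varrho)-{\bm u}_{N_t}(0)\|\lesssim\frac{N_t(N_t^2-1)}{15}\varrho^2$.

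For the second bound I would use the explicit lower-triangular Toeplitz forms of $V$ and $V^{-1}$ in \eqref{VV12b}. As $\varrho\to0$ one has $1-\mu^j=-j\varrho+\CO(\varrho^2)$ while $1+\mu^j\to2$, so $|p_n|\sim 2^n/(n!\,\varrho^n)$ and likewise for $q_n$; summing each column, $\|V\|_\infty$ and $\|V^{-1}\|_\infty$ are dominated by the $n=N_t-1$ entry, and feeding the resulting $\mathrm{Cond}_\infty(V)$ into the backward-stability analysis of the three-step solver \eqref{3steps} (the second-order analogue of \cite[Theorem~6]{GHR16}, carried out for \eqref{VV12b} in \cite{gander2019direct}) gives the roundoff bound $\epsilon\,\frac{2^{2N_t-1/2}N_t}{(N_t-1)!}\varrho^{-(N_t-1)}$. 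Finally \eqref{varrho_opt2} follows by equating the two bounds: $\frac{N_t(N_t^2-1)}{15}\varrho^2=\epsilon\frac{2^{2N_t-1/2}N_t}{(N_t-1)!}\varrho^{-(N_t-1)}$ gives $\varrho^{N_t+1}=\epsilon\frac{15\cdot 2^{2N_t-1/2}}{(N_t^2-1)(N_t-1)!}$, which is $\varrho_{\rm opt}$.

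I expect the main obstacle to be the roundoff estimate: one must track the precise $N_t$-dependence of $\mathrm{Cond}(V)$ for the Toeplitz matrices in \eqref{VV12b} and propagate it through the three solve steps \eqref{3steps} without losing the sharp power $\varrho^{-(N_t-1)}$ (instead of the naive $\varrho^{-2(N_t-1)}$ one would get from $\|V\|\,\|V^{-1}\|$); this is where the special structure of the products $\prod_j(1\pm\mu^j)$ — in particular the cancellations that make $V^{-1}$ decay — must be exploited. The truncation estimate is then routine once one commits to expanding the Trapezoidal map, and the balancing step is immediate algebra.
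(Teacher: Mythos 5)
Your route is essentially the paper's own: the proof there is a two-line reduction to the scalar mode equation $u''+\lambda u=0$ for $\lambda\in\sigma(-A)$, followed by citing \cite[Theorem 2.1]{gander2019direct} for the truncation error $\CO\bigl(\frac{N_t(N_t^2-1)}{6}r_1(\frac{\lambda T}{2N_t})\varrho^2\bigr)$ with $r_1(s)=\frac{s^3}{(1+s^2)^2}\leq\frac{2}{5}$ (whence the constant $\frac{1}{15}$), and \cite[Theorem 2.11]{gander2019direct} for the roundoff bound $\epsilon\frac{2^{2N_t-1/2}N_t}{(N_t-1)!}r_2(\frac{\lambda T}{2N_t})\varrho^{-(N_t-1)}$ with $r_2(s)=\frac{1}{1+s^2}\leq1$; balancing the two bounds then gives \eqref{varrho_opt2} exactly as you compute. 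Two caveats on your sketch. First, your claim that the worst case is attained at the eigenvalue of largest modulus is not correct here: $r_1$ peaks at the interior value $\frac{\lambda T}{2N_t}=\sqrt{3}$ and $r_2$ at $\lambda\to 0$, which is precisely why, in contrast to the first-order estimate \eqref{error1st}, no $\lambda_{\max}$ appears in \eqref{error2st}; only the uniform bounds on $r_1,r_2$ over $s\geq0$ are used. Second, your roundoff argument via $\|V\|_\infty\|V^{-1}\|_\infty$ from \eqref{VV12b} would, as you yourself note, yield $\varrho^{-2(N_t-1)}$ with a squared factorial, whereas the stated bound has the single power $\varrho^{-(N_t-1)}$ and $2^{2N_t-1/2}/(N_t-1)!$; the paper does not rederive this sharper estimate but imports it wholesale from \cite[Theorem 2.11]{gander2019direct}. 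If you likewise intend to cite that theorem, your argument closes at the same level of rigor as the paper's; if you intend to rederive it, the cancellation you flag in the products $\prod_j(1\pm\mu^j)$ and in the propagation through the three steps of \eqref{3steps} is the actual mathematical content of that step and remains open in your proposal.
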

\begin{proof}
Let $\lambda>0$ be an arbitrary eigenvalue of $-A$ and consider the
scalar equation $u''+\lambda u=0$. Then, for small $\varrho$,
according to \cite[Theorem 2.1]{gander2019direct}, the truncation
error between using the geometric mesh and the uniform mesh is
of order $\CO(\frac{N_t(N_t^2-1)}{6}r_1(\frac{\lambda
  T}{2N_t})\varrho^2)$ with $r_1(s)=\frac{s^3}{(1+s^2)^2}$. For
$s\geq0$, it holds that $r_1(s)\leq\frac{2}{5}$ and this gives the
first estimate in \eqref{error2st}. The second estimate follows from
the roundoff error analysis in \cite[Theorem 2.11]{gander2019direct},
which is $\frac{2^{2N_t-\frac{1}{2}}N_t}{(N_t-1)!}r_2(\frac{\lambda
  T}{2N_t})$ with $r_2(s)=\frac{1}{1+s^2}$. For $s\geq0$ it holds that
$r_2(s)\leq 1$.
\end{proof}
We show in Figure \ref{ParaDiag1Fig3}
\begin{figure}
\centering
\includegraphics[width=2.3in,height=1.85in,angle=0]{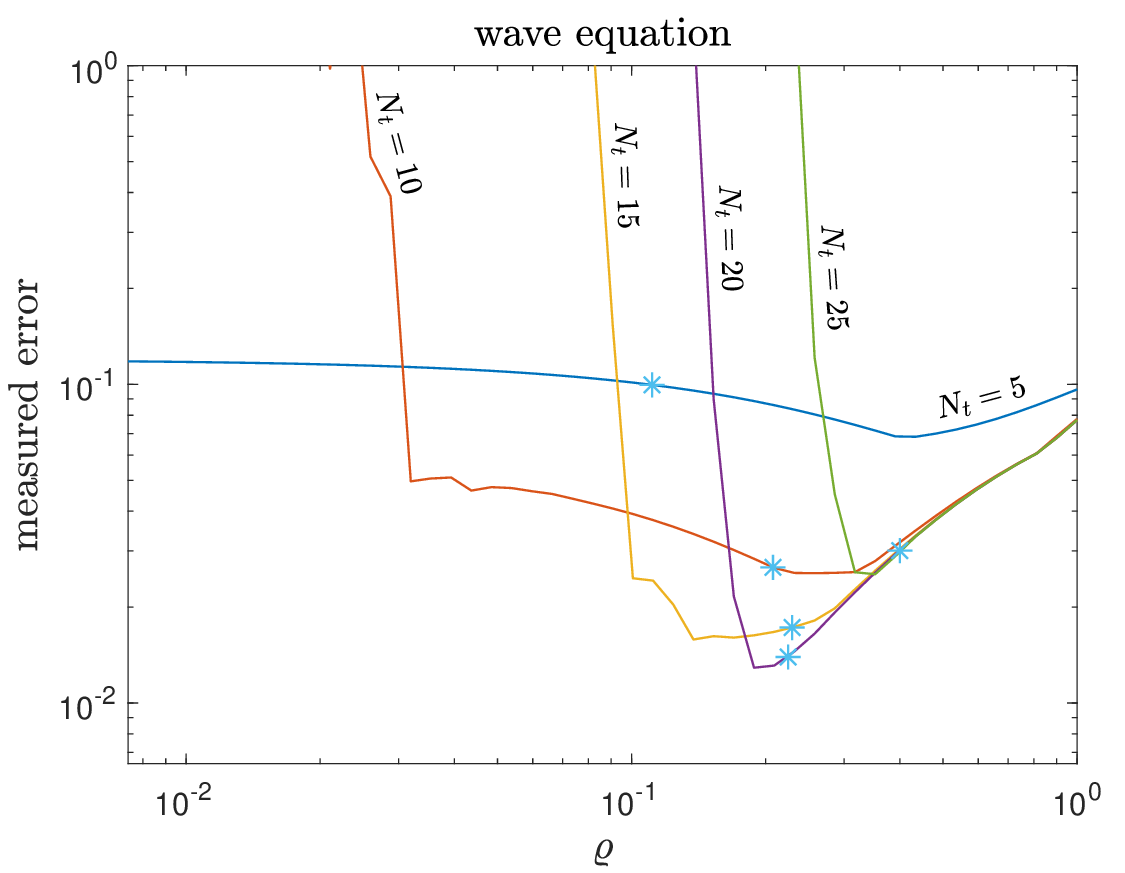}
\includegraphics[width=2.3in,height=1.85in,angle=0]{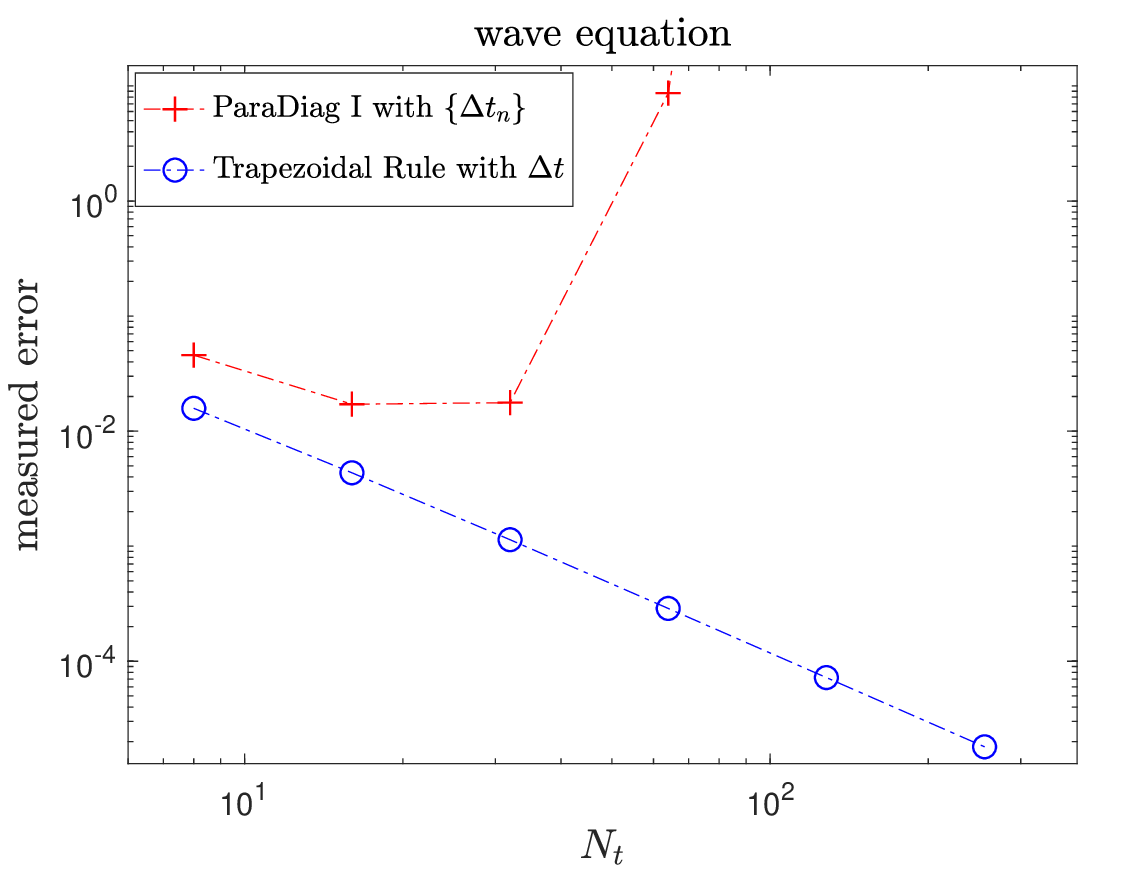}
\caption{Measured error of ParaDiag I and the Trapezoidal Rule using
  uniform step sizes for the wave equation \eqref{WaveEquation1d}.
  The star in the left panel is the parameter $\varrho_{\rm opt}$
  obtained in theory (cf. \eqref{varrho_opt2}).}
\label{ParaDiag1Fig3}
\end{figure}
(left) the error of ParaDiag I for 5 values of $N_t$ when applied to
the wave equation \eqref{WaveEquation1d} with homogeneous Dirichlet
boundary conditions. For each $N_t$, the parameter $\varrho$ varies
from $10^{-2}$ to 1. Here, $\Delta x=\frac{1}{20}$ and
$T=0.2$. Similar to the first-order parabolic problems (cf. Figure
\ref{ParaDiag1Fig1}), there exists an optimal choice of $\varrho$
which minimizes the error, and the theoretical estimate $\varrho_{\rm
  opt}$ is close to this choice.  With $\varrho_{num}$ denoting the
best working parameter determined numerically for each $N_t$, we show
in Figure \ref{ParaDiag1Fig3} (right) the error for the geometric time
mesh and the uniform time mesh. For the former, the error grows
rapidly when $N_t>32$.

For ParaDiag I, the increase in the error shown in Figures
\ref{ParaDiag1Fig2} and \ref{ParaDiag1Fig3} (right) can be
attributed to the poor condition number of the eigenvector matrix $V$
of the time step matrices $B$ and $\tilde{B}^{-1}B$. In Table
\ref{TabCondV},
\begin{table}
\caption{Cond$(V)$ for $B$ ( Backward  Euler) and $\tilde{B}^{-1}B$
  (Trapezoidal Rule)}\label{TabCondV} \centering
\begin{tabular}{rcccccc}
\hline
\small{$N_t$} & 5 & 10 & 20 & 30 & 60 & 100 \\
\hline
\small{$B$} & \small{$1.7\times10^3$} & \small{$8.4\times10^4$} & \small{$1.3\times10^6$} & \small{$2.8\times10^6$} & \small{$4.4\times10^6$} & \small{$4.8\times10^6$}\ \\
\small{$\tilde{B}^{-1}B$} & \small{$4.7\times10^3$} & \small{$7.9\times10^5$} & \small{$6.9\times10^7$} & \small{$3.8\times10^8$} & \small{$1.9\times10^9$} & \small{$4.1\times10^9$} \\
\hline
\end{tabular}
\end{table}
we show this condition number for several values of $N_t$, where
$\mu=1+\varrho_{num}$, and $\varrho_{num}$ is determined numerically
for each $N_t$ by minimizing the error shown in Figures
\ref{ParaDiag1Fig1} and \ref{ParaDiag1Fig3} on the left.  As $N_t$
increases, the condition number rises rapidly, which confirms our
analysis of roundoff error very well (cf. \eqref{error1st} and
\eqref{error2st}), but then reaches a plateau when using the
  numerically optimized parameter, and not the theoretically
  determined one, an observation that merits further study.

We now introduce another ParaDiag I method from \cite{LWWZ22}, which
addresses the limitation associated with $N_t$. Instead of using
 Backward  Euler or the Trapezoidal Rule with geometric time step
sizes, we use the same time step size $\Delta t$, but different
methods, an idea which goes back to the boundary value technique
\cite{AV85}. In this approach, we take for example centered finite
differences for the first $(N_t-1)$ time steps, followed by a final
 Backward  Euler step. For the system of ODEs \eqref{linearODE},
this gives
\begin{equation}\label{hybridDis}
\begin{cases}
\frac{{\bm u}_{n+1}-{\bm u}_{n-1}}{2\Delta t}=A{\bm u}_n+{\bm g}_n,   n=1,2,\dots, N_t-1, \\
\frac{{\bm u}_{N_t}-{\bm u}_{N_t-1}}{\Delta t}=A{\bm u}_{N_t}+{\bm g}_{N_t}.
\end{cases}
\end{equation}
It is important to note that this implicit boundary value technique
time discretization has quite different stability properties from
traditional time discretizations; see e.g. \cite[Section
  5.2]{gander50years}. For the boundary value technique discretization
\eqref{hybridDis}, the all-at-once system is
\begin{subequations}
\begin{equation}\label{hybridAAAa}
\CK{\bm U}={\bm b}, \quad \CK=B\otimes I_x- I_t\otimes A,
\end{equation}
where
\begin{equation}\label{hybridAAAb}
B=\frac{1}{\Delta t}\begin{bmatrix}
0 & \frac{1}{2} & & & \\
-\frac{1}{2} & 0 & \frac{1}{2} & & \\
& \ddots & \ddots & \ddots & \\
& & -\frac{1}{2} & 0 & \frac{1}{2} \\
& & & -1 & 1 \\
\end{bmatrix}, \quad {\bm b}=\begin{bmatrix} \frac{{\bm u}_0}{2\Delta t}+{\bm g}_1 \\ {\bm g}_2 \\ \vdots \\ {\bm g}_{N_t} \end{bmatrix}.
\end{equation}
\end{subequations}
For the all-at-once system given by equation \eqref{hybridAAAa}, only
the initial value ${\bm u}_0$ is required, and all time steps are
solved simultaneously in one-shot.

The authors in \cite{AV85} explored boundary value techniques to
circumvent the well-known Dahlquist barriers between convergence and
stability, which arise when using \eqref{hybridDis} in a time-stepping
mode. In a general nonlinear case, they proved that the numerical
solutions obtained simultaneously are of uniform second-order accuracy
\cite[Theorem 4]{AV85}, even though the last step is a first-order
scheme. Even earlier, in \cite{F54} and also \cite{FM57}, such
boundary value technique discretizations appeared already: instead of
Backward  Euler, the authors used the BDF2 method for
the last step in \eqref{hybridDis},
$$
  \frac{3{\bm u}_{N_t}-4{\bm u}_{N_t-1}+{\bm u}_{N_t-2}}{2\Delta
    t}=A{\bm u}_{N_t}+{\bm g}_{N_t}.
$$
The method \eqref{hybridDis} is a prime example of the so-called
\textit{boundary value methods} (BVMs) developed a bit later, and
the all-at-once system \eqref{hybridAAAa} was carefully justified in
\cite{BMT93}, see also \cite{BT03}. In BVMs, the resulting all-at-once
system is typically solved iteratively by constructing effective
preconditioners.

A mathematical analysis of ParaDiag I based on BVM discretization like
\eqref{hybridDis} can be found in \cite{LWWZ22}:
\begin{theorem}\label{prohybrid}
  {\em The time stepping matrix $B$ given by \eqref{hybridAAAb} can be
    factored as $B=VDV^{-1}$ with {\rm
      Cond}$(V)=\CO(N_t^2)$\footnote[1]{Closed form formulas for $V$,
      $V^{-1}$, and $D$ are provided in \cite[Section 3]{LWWZ22}.}.}  
\end{theorem}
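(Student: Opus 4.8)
The plan is to diagonalize $B$ in \eqref{hybridAAAb} by hand, reading off its eigenvectors from the underlying three-term recurrence, and then to bound $\mathrm{Cond}(V)$ using the resulting explicit structure of the eigenvector matrix; this is the route taken in \cite[Section 3]{LWWZ22}, and I would reproduce it as follows.

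First I would set $\mu:=\Delta t\,\lambda$ for an eigenvalue $\lambda$ of $B$ and let $\mathbf v=(v_1,\dots,v_{N_t})^\top$ be a corresponding eigenvector. The rows of $\Delta t\,B$ give $v_2=2\mu v_1$ from the first centered row, the recurrence $v_{n+1}=2\mu v_n+v_{n-1}$ for $2\le n\le N_t-1$, and the closure $v_{N_t-1}=(1-\mu)v_{N_t}$ from the final Backward-Euler row. Taking $v_0:=0$, $v_1:=1$, this recurrence is the one for Chebyshev polynomials of the second kind: one checks that $v_n=\mathrm{i}^{\,n-1}U_{n-1}(-\mathrm{i}\mu)$, a polynomial of degree $n-1$ in $\mu$, equivalently $v_n=\mathrm{i}^{\,n-1}\sin(n\theta)/\sin\theta$ after writing $\mu=\mathrm{i}\cos\theta$. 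Thus every eigenvector is determined up to scaling by its eigenvalue, and substituting into the closure condition yields, after simplification, the scalar equation
\[
\sin\theta\,\cos(N_t\theta)+\mathrm{i}\,\sin(N_t\theta)=0 ,
\]
equivalently a polynomial equation of degree $N_t$ in $\mu$. I would then argue that this polynomial has $N_t$ simple roots $\mu_1,\dots,\mu_{N_t}$ — giving the factorization $B=VDV^{-1}$ with $D=\frac{1}{\Delta t}\mathrm{diag}(\mu_1,\dots,\mu_{N_t})$ — and locate them: dropping the Backward-Euler term $\mathrm{i}\sin(N_t\theta)$ leaves $\cos(N_t\theta)=0$, whose roots are the equispaced Gauss--Chebyshev abscissae $\theta_j^{(0)}=(2j-1)\pi/(2N_t)$, and the true roots $\theta_j$ are $\CO(1/N_t)$ (complex) perturbations of these in the bulk; the few roots with $\theta$ near the endpoints $0$ and $\pi$ need a separate, more delicate asymptotic treatment, since there $\sin\theta$ is small and the naive expansion degenerates.

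For the conditioning I would factor $V=D_{\mathrm{i}}\widetilde V$ with $D_{\mathrm{i}}:=\mathrm{diag}(1,\mathrm{i},\dots,\mathrm{i}^{\,N_t-1})$ \emph{unitary} and, in the normalisation $v_1^{(j)}=1$ of \cite[Section 3]{LWWZ22}, $\widetilde V_{n,j}=\sin(n\theta_j)/\sin\theta_j$, so that $\mathrm{Cond}(V)=\mathrm{Cond}(\widetilde V)$ and it suffices to bound $\|\widetilde V\|_2$ and $\|\widetilde V^{-1}\|_2$. At the unperturbed nodes, $\widetilde V^{(0)}=[\sin(n\theta_j^{(0)})]_{n,j}\,\mathrm{diag}(1/\sin\theta_j^{(0)})$, where the first factor has nearly orthogonal rows by the discrete orthogonality $\sum_j\sin(m\theta_j^{(0)})\sin(k\theta_j^{(0)})=\frac{N_t}{2}\delta_{mk}$ (up to the endpoint $m=k=N_t$), hence is $\CO(1)$-conditioned, while the diagonal column scaling is $\CO(N_t)$-conditioned because $\theta_j^{(0)}$ comes within $\CO(1/N_t)$ of $0$ and $\pi$; this already gives $\mathrm{Cond}(\widetilde V^{(0)})=\CO(N_t)$. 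The bound $\mathrm{Cond}(V)=\CO(N_t^2)$ then follows by controlling the effect of the node perturbation on both norms: $\|\widetilde V\|_2\le\|\widetilde V\|_F\le\sqrt{N_t}\max_{n,j}|\sin(n\theta_j)/\sin\theta_j|=\CO(N_t)$, and $\|\widetilde V^{-1}\|_2$ is estimated from the Lagrange-interpolation / Christoffel--Darboux formula for the inverse of the Chebyshev--Vandermonde matrix, which reduces the bound to lower bounds on the nodal separations $\prod_{k\ne j}(\cos\theta_j-\cos\theta_k)$ — and these degrade by at most a polynomial factor precisely because the $\theta_j$ remain $\Omega(1/N_t)$-separated.

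The main obstacle is this last estimate. One has to show that the perturbations introduced by the Backward-Euler closure — which are $\CO(1/N_t)$ in the bulk but require a genuinely different analysis for the eigenvalues whose $\theta$ sits near $0$ or $\pi$, where they are larger and complex — do not destroy the near-orthogonality underlying the $\CO(1)$ (respectively $\CO(N_t)$) conditioning of the unperturbed transform, so that the \emph{exponential} blow-up of $\mathrm{Cond}(V)$ seen for geometric meshes (cf.\ Theorems \ref{pro1st} and \ref{pro2nd}) is replaced here by the mild algebraic growth $\CO(N_t^2)$. The closed-form expressions for $V$ and $V^{-1}$ in \cite[Section 3]{LWWZ22} are exactly what makes this tractable, reducing the whole estimate to explicit (if lengthy) trigonometric sum and product bounds.
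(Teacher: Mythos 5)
The paper itself does not prove this statement: Theorem~\ref{prohybrid} is quoted from \cite{LWWZ22}, and the closed-form formulas for $V$, $V^{-1}$ and $D$ that make the bound verifiable are deferred to Section~3 of that reference, so there is no in-paper argument to compare against beyond the citation. Your structural set-up is correct and is indeed the natural (and, as you say, the cited) route: the rows of $\Delta t\,B$ from \eqref{hybridAAAb} give $v_{n+1}=2\mu v_n+v_{n-1}$ with $v_0=0$, hence $v_n=\mathrm{i}^{\,n-1}\sin(n\theta)/\sin\theta$ with $\mu=\mathrm{i}\cos\theta$, and the Backward-Euler closure row does yield $\sin\theta\cos(N_t\theta)+\mathrm{i}\sin(N_t\theta)=0$; the unitary splitting $V=D_{\mathrm i}\widetilde V$ is also a legitimate reduction.

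As a proof of $\mathrm{Cond}(V)=\CO(N_t^2)$, however, there is a genuine gap, which you yourself flag as ``the main obstacle'': every step that actually produces the bound is only announced. Simplicity of the $N_t$ roots (needed even for $B=VDV^{-1}$ to exist, since the recurrence forces each eigenvalue to have a one-dimensional eigenspace) is asserted, not shown. The root location is only treated in the bulk; near $\theta=0,\pi$ your own expansion suggests an $\CO(1)$ imaginary perturbation and then, as you admit, degenerates, and precisely there it is unclear that $|\sin(n\theta_j)/\sin\theta_j|$ stays polynomially bounded in $n$ --- if some root really had an order-one imaginary part, $|\sin(N_t\theta_j)|$ would be exponentially large, and the bound $\max_{n,j}|\sin(n\theta_j)/\sin\theta_j|=\CO(N_t)$ you invoke for $\|\widetilde V\|$ would fail in the normalization $v_1^{(j)}=1$ (note also that the Frobenius estimate should read $\|\widetilde V\|_F\le N_t\max_{n,j}|\,\cdot\,|$, not $\sqrt{N_t}\max_{n,j}|\,\cdot\,|$). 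Since the condition number is not invariant under rescaling of the eigenvector columns, the choice of normalization is exactly where the theorem lives; it is fixed by the explicit $V$ of \cite{LWWZ22}, which the proposal never writes down or checks against your normalization. Finally, the estimate of $\|\widetilde V^{-1}\|_2$ is reduced to unproven lower bounds on separations of the perturbed complex nodes, with no mechanism given for why the degradation from the unperturbed $\CO(N_t)$ estimate is one extra factor of $N_t$ rather than more. In short, the eigenstructure derivation is right and consistent with the cited reference, but the quantitative trigonometric estimates that constitute the actual content of the $\CO(N_t^2)$ claim are missing.
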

ParaDiag I with BVM discretization can also be applied to second-order
problems of the form ${\bm u}''=A{\bm u}$, with initial values ${\bm
  u}(0)={\bm u}_0$ and ${\bm u}'(0)=\tilde{\bm u}_0$. By setting ${\bm
  v}(t):={\bm u}'(t)$ and ${\bm w}(t):=({\bm u}^{\top}(t), {\bm
  v}^{\top}(t))^{\top}$, we can rewrite this equation as
$$
{\bm w}'(t) = 
{\bm A}{\bm w}(t), ~{\bm A}:=\begin{bmatrix}
&I_x\\
A &
\end{bmatrix},~
{\bm w}(0):=\begin{bmatrix}
{\bm u}_0\\
\tilde{\bm u}_0
\end{bmatrix}.
$$
Then, similar to \eqref{hybridDis}  the same time discretization scheme leads to
\begin{equation}\label{hybridDis2}
\begin{cases}
\frac{{\bm w}_{n+1}-{\bm w}_{n-1}}{2\Delta t}={\bm A}{\bm w}_n,~n=1,2,\dots, N_t-1, \\
\frac{{\bm w}_{N_t}-{\bm w}_{N_t-1}}{\Delta t}={\bm A}{\bm w}_{N_t}.
\end{cases}
\end{equation}
Rewriting the second order problem as a first order system doubles the
storage requirement for the space variables at each time point, which
is not desirable, especially if the second-order problem arises from a
semi-discretization of a PDE in high dimensions or with small mesh
sizes. To avoid this, one can write the all-at-once system for
\eqref{hybridDis2} using only ${\bm U}:=({\bm u}_1, {\bm u}_2,\dots,
      {\bm u}_{N_t})^{\top}$, which leads to
\begin{equation}\label{hybridAAA3}
(B^2\otimes I_x-I_t\otimes A){\bm U}={\bm b},
\end{equation}
where $B$ is the matrix defined in \eqref{hybridAAAb}, and ${\bm b}:=
\left(\frac{\tilde{\bm u}_0^{\top}}{2\Delta t}, -\frac{{\bm
    u}_0^{\top}}{4\Delta t^2}, 0, \dots, 0\right)^{\top}$. To see
this, we trace the steps back at the discrete level which led to the
first order system at the continuous level: from \eqref{hybridDis2} we
represent $\{{\bm u}_n\}$ and $\{{\bm v}_n\}$ separately as
\begin{equation*}
\begin{split}
&\begin{cases}
\frac{{\bm u}_{n+1}-{\bm u}_{n-1}}{2\Delta t}={\bm v}_n,~n=1,2,\dots, N_t-1, \\
\frac{{\bm u}_{N_t}-{\bm u}_{N_t-1}}{\Delta t}={\bm v}_{N_t},
\end{cases}\\
&\begin{cases}
\frac{{\bm v}_{n+1}-{\bm v}_{n-1}}{2\Delta t}=A{\bm u}_n,~n=1,2,\dots, N_t-1, \\
\frac{{\bm v}_{N_t}-{\bm v}_{N_t-1}}{\Delta t}=A{\bm u}_{N_t}.
\end{cases}
\end{split}
\end{equation*}
Hence, with the matrix $B$ in \eqref{hybridAAAb}, we have
$$
(B\otimes I_x){\bm U}-{\bm V}={\bm b}_1,\quad (B\otimes I_x){\bm V}-A{\bm U}={\bm b}_2,
$$
where ${\bm V}:=({\bm v}_1^{\top}, \dots, {\bm
  v}_{N_t}^{\top})^{\top}$, ${\bm b}_1:=(\frac{{\bm
    u}_0^{\top}}{2\Delta t}, 0, \dots, 0)^{\top}$ and ${\bm
  b}_2:=(\frac{\tilde{\bm u}_0^{\top}}{2\Delta t}, 0, \dots,
0)^{\top}$. From the first equation, we have ${\bm V}=(B\otimes
I_x){\bm U}-{\bm b}_1$. Substituting this into the second equation
gives $(B\otimes I_x)^2{\bm U}-A{\bm U}={\bm b}_2+(B\otimes I_x){\bm
  b}_1$. A routine calculation then yields ${\bm b}_2+(B\otimes I_x){\bm
  b}_1={\bm b}$, and combining this with $(B\otimes I_x)^2=B^2\otimes
I_x$ gives the all-at-once system \eqref{hybridAAA3}.

We now compare the ParaDiag I method with geometric time stepping to
the one with BVM discretization applied to the wave
equation \eqref{WaveEquation1d} with homogeneous Dirichlet boundary
conditions and $T=0.5$, discretized in space using centered finite
differences with $\Delta x=\frac{1}{40}$. The errors for $N_t=2^{2}$
to $2^{8}$ are shown in Figure \ref{ParaDiag1Fig4} on the left.
\begin{figure}
\centering
\includegraphics[width=2.3in,height=1.85in,angle=0]{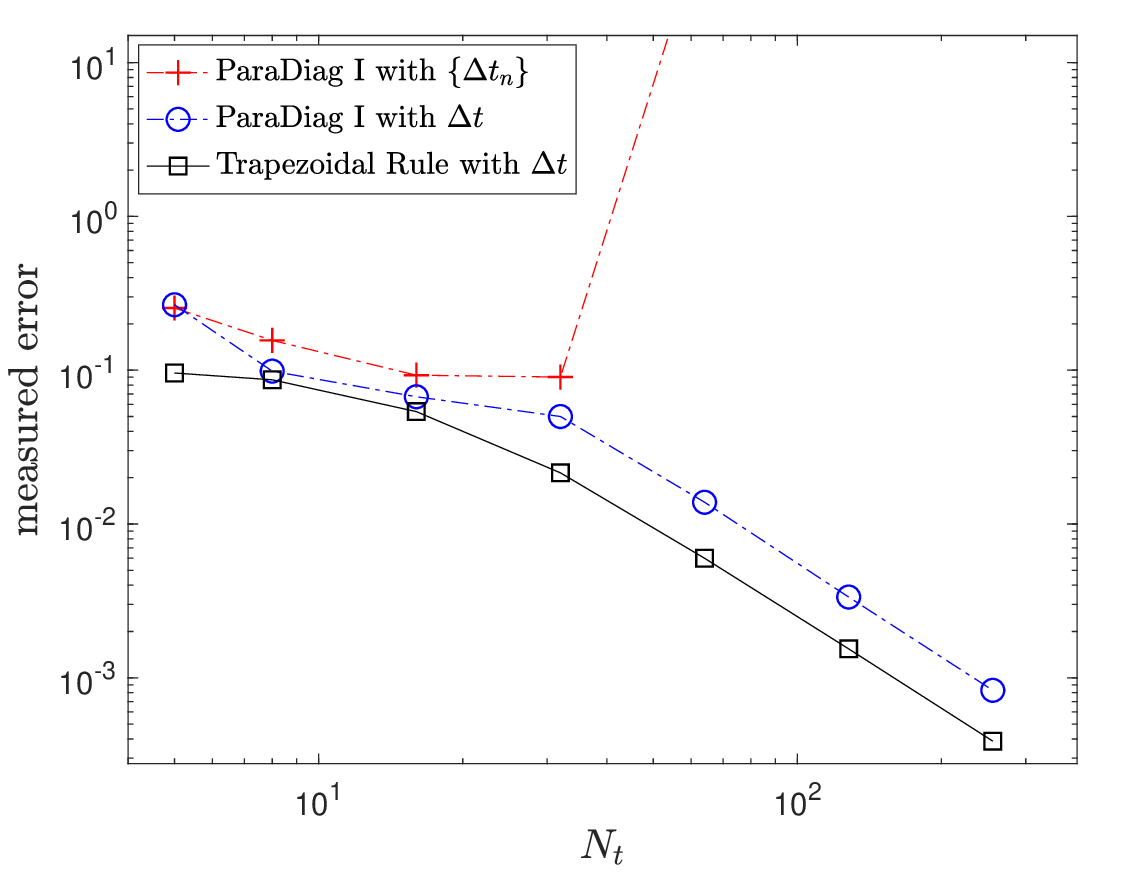}
\includegraphics[width=2.3in,height=1.85in,angle=0]{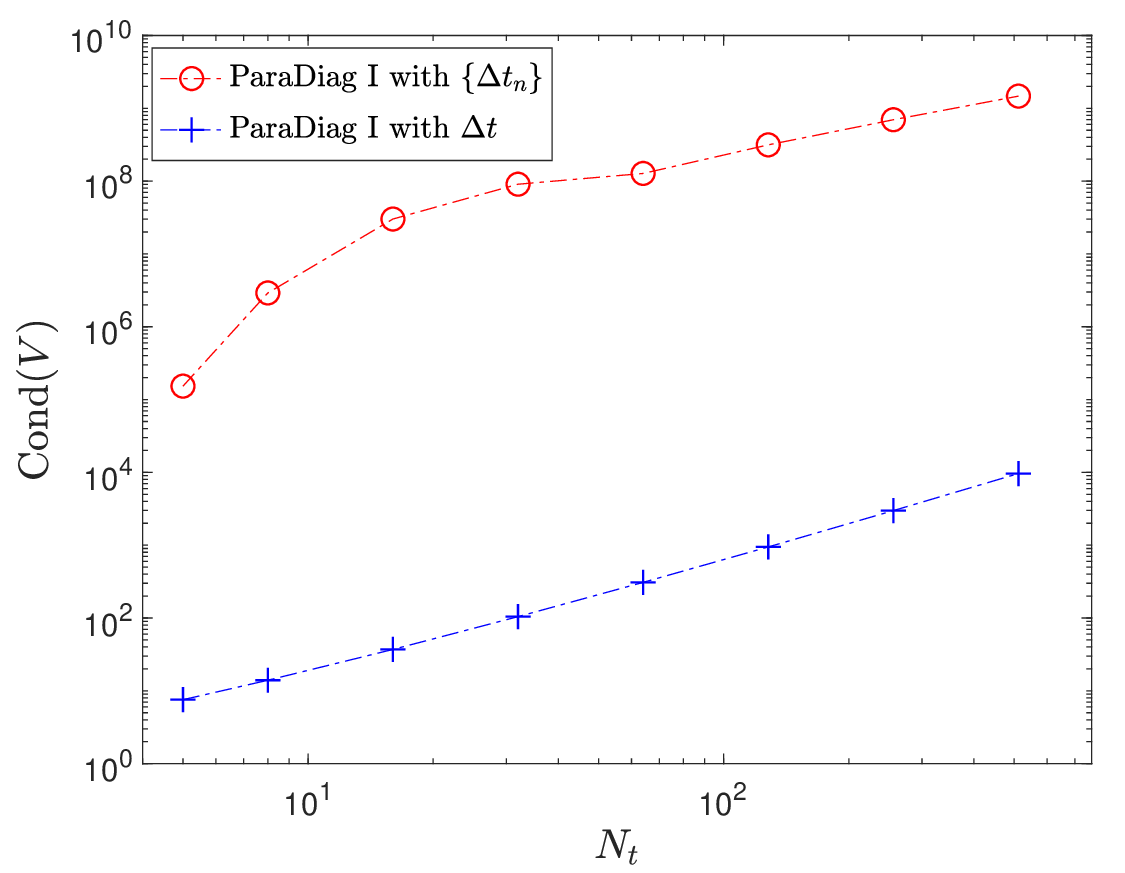}
\caption{Left: measured error for the wave equation
  \eqref{WaveEquation1d} using ParaDiag I with the Trapezoidal Rule
  and geometric step sizes $\{\Delta t_n\}$ and the BVM discretization
  \eqref{hybridDis} with uniform time step $\Delta t=T/N_t$. Right:
  condition number of the eigenvector matrix of the time-stepping
  matrix.}
\label{ParaDiag1Fig4}
\end{figure}
We see that the error of ParaDiag I with geometric time stepping shows
the typical deterioration due to roundoff around $N_t=32$, whereas
ParaDiag I with BVM discretization is of order $\CO(\Delta t^2)$
without any deterioration, like the Trapezoidal Rule. In Figure
\ref{ParaDiag1Fig4} on the right, we show the corresponding condition
number of the eigenvector matrix of the time-step matrix, which shows
that ParaDiag I with BVM discretization has a much lower condition
number and explains why we do not observe any deterioration.

To conclude this section, we show how to apply ParaDiag I to
nonlinear problems. We consider the first-order nonlinear system of
ODEs \eqref{nonlinearODE}, i.e., ${\bm u}'(t)=f({\bm u}(t), t)$ with
initial value ${\bm u}(0)={\bm u}_0$, nonlinear second-order problems
can be treated similarly. As in the linear case, the all-at-once
system for this nonlinear problem is
\begin{equation}\label{nonlinearAAA}
(B\otimes I_x){\bm U}-F({\bm U})={\bm b},
\end{equation}
where $F({\bm U}):=(f^{\top}({\bm u}_1, t_1), f^{\top}({\bm u}_2,
t_2),\dots, f^{\top}({\bm u}_{N_t}, t_{N_t}))^{\top}$, and ${\bm b}$
is a suitable right hand side vector containing the initial condition
and possible terms not depending on the solution. The time step matrix
$B$ is either the one given in \eqref{AAA1b} using variable time
steps, or the one given in \eqref{hybridAAAb} corresponding to the BVM
discretization \eqref{hybridDis}.

{Since the problem \eqref{nonlinearAAA} is non-linear, we apply
  Newton's method,
\begin{equation*}
(B\otimes I_x-\nabla F({\bm U}^k))({\bm U}^{k+1}-{\bm U}^k)={\bm b}-((B\otimes I_x){\bm U}^k- F({\bm U}^k)),
\end{equation*}
which can be simplified to 
\begin{subequations}
\begin{equation}\label{NewtonIt_a}
(B\otimes I_x- \nabla F({\bm U}^k)){\bm U}^{k+1}={\bm b}-\left(\nabla F({\bm U}^k){\bm U}^k-F({\bm U}^k)\right),
\end{equation}
where $k\geq0$ is the Newton iteration index, and
\begin{equation}\label{NewtonIt_b}
\nabla F({\bm U}^k)={\rm blkdiag}(\nabla f({\bm u}_1^k, t_1), \dots, \nabla f({\bm u}_{N_t}^k, t_{N_t})),
\end{equation}
\end{subequations}
with $\nabla f({\bm u}_n^k, t_n)$ being the Jacobian matrix of $f({\bm
  u}, t_n)$ with respect to the first variable ${\bm u}$. To make the
\dg technique still applicable, we have to replace (or approximate)
all the blocks $\{\nabla f({\bm u}_n^k, t_n)\}$ by a single matrix
$A_k$. Inspired by the idea in \cite{GH17}, we consider an
averaged Jacobian matrix,
\begin{equation}\label{Ak}
  A_k:=\frac{1}{N_t}{\sum}_{n=1}^{N_t}\nabla f({\bm u}_n^k, t_n)\quad
  \text{or}\quad A_k:=\nabla f\left (\frac{1}{N_t}{\sum}_{n=1}^{N_t} {\bm u}_n^k, \frac{T}{N_t} \right ).
\end{equation}
Then, we get a {simple Kronecker-product} approximation of $\nabla F({\bm U}^k)$ as
$$
\nabla F({\bm U}^k)\approx I_t\otimes {A_k}.
$$
By substituting this into \eqref{NewtonIt_a}, we obtain the quasi
Newton method
\begin{equation}\label{SNI}
(B\otimes I_x-I_t\otimes {A_k}){\bm U}^{k+1}={\bm b}-\left((I_t\otimes {A_k}){\bm U}^k-F({\bm U}^k)\right).
\end{equation}
Convergence of such quasi Newton methods is well-understood; see,
e.g., \cite[Theorem 2.5]{D04} and \cite{OR00}.

In this quasi Newton method \eqref{SNI}, the Jacobian system can also
be solved parallel in time: with the \dg $B=VDV^{-1}$, we solve ${\bm
  U}^{k+1}$ in \eqref{SNI} again in three steps,
\begin{equation}\label{nonlinear3step}
\begin{cases}
{\bm U}^a=(V^{-1}\otimes I_x){\bm r}^k, &\text{(step-a)}\\
(\lambda_n I_x-A_k){\bm u}^b_n={\bm u}_n^a, ~n=1,2,\dots, N_t, &\text{(step-b)}\\
{\bm U}^{k+1}=(V\otimes I_x){\bm U}^b, &\text{(step-c)}
\end{cases}
\end{equation}
where ${\bm r}^k:={\bm b}- ((I_t\otimes {A_k}){\bm U}^k-F({\bm
  U}^k)$. In the linear case, i.e., $f({\bm u}, t)=A{\bm u}+{\bm
  g}(t)$, we have $A_k=A$ and ${\bm r}^k={\bm b}$ and
\eqref{nonlinear3step} reduces to \eqref{3steps}.

The convergence rate of the quasi Newton method depends on the
accuracy of the approximation of the average matrix $A_k$ to all $N_t$
Jacobian blocks $\nabla f({\bm u}_n^k, t_n)$. One can imagine that if
$\nabla f(u_n^k, t_n)$ changes dramatically for $n=1,2,\dots, N_t$,
any single matrix cannot be a good approximation. In this case, we can
divide the time interval $[0, T]$ into multiple smaller windows and
apply ParaDiag I to these time windows sequentially. We tested this
approach by combining ParaDiag I with the BVM discretization
\eqref{hybridDis} for Burgers' equation \eqref{Burgers} with periodic
boundary conditions. We discretized in space using centered finite
differences with mesh size $\Delta x=0.01$. In time, both the time
step size $\Delta t$ and the length of the time interval $T$ were
varied simultaneously, maintaining a fixed number of time steps,
$N_t=\frac{T}{\Delta t}=200$. In Figure \ref{ParaDiagI_BurgersFig},
\begin{figure}
\centering
\includegraphics[width=2.3in,height=1.75in,angle=0]{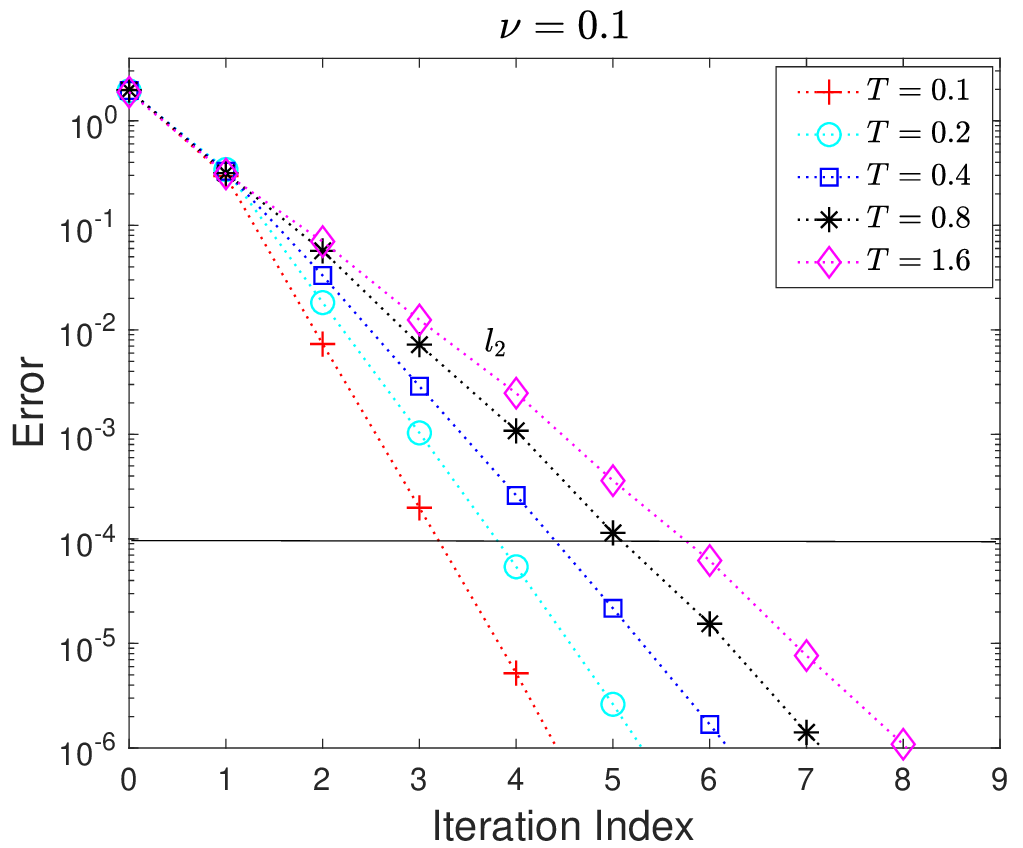}
\includegraphics[width=2.3in,height=1.75in,angle=0]{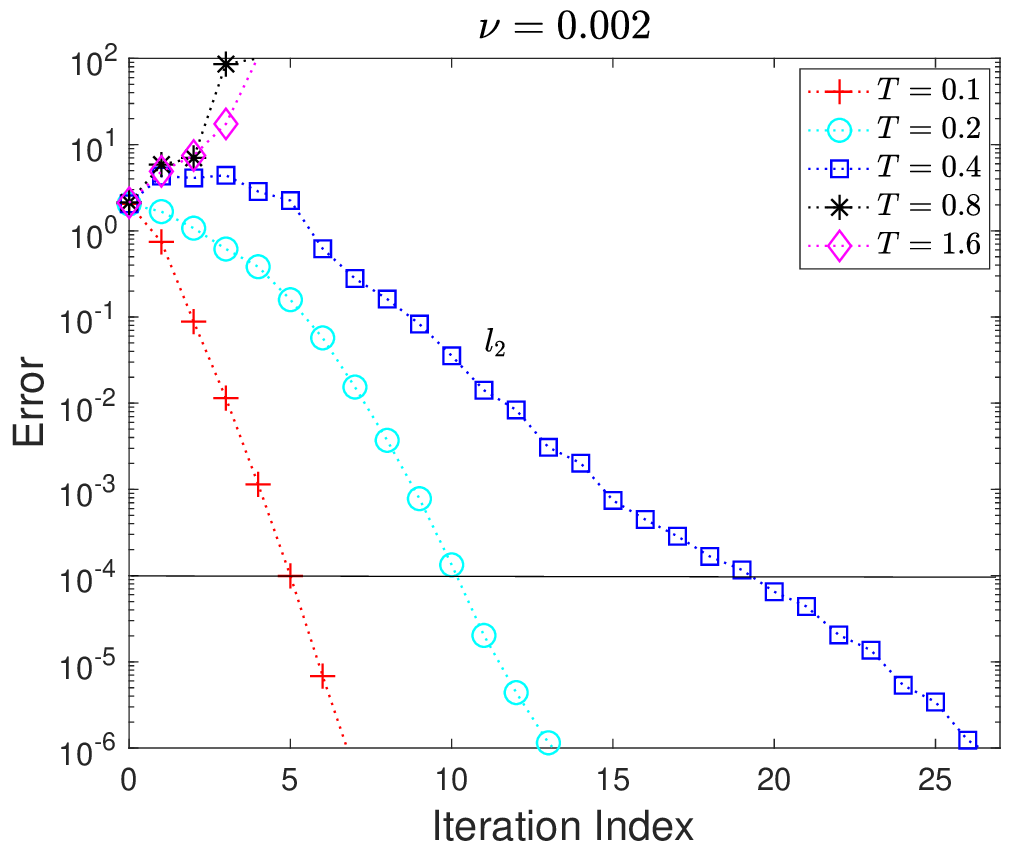}
\caption{Error of ParaDiag I with BVM discretization \eqref{hybridDis}
  for Burgers' equation \eqref{Burgers} with two values of the
  diffusion parameter $\nu$. For each value of $T$, the number of time
  steps is fixed to $N_t=\frac{T}{\Delta t}=200$. The horizontal line
  denotes the approximate space-time discretization error
  $\max\{\Delta t^2, \Delta x^2\}=10^{-4}$.}
  \label{ParaDiagI_BurgersFig}
\end{figure}
we present the convergence histories for two values of the diffusion
parameter $\nu$ and several values of $T$.  Note the dependence of the
convergence rate on $T$, especially when $\nu$ is small. For
$\nu=0.1$, ParaDiag I has similar convergence rates when $T$
increases, indicating that the Jacobian matrix $\nabla f(u, t)$ has
smaller variations for $(t, u)\in\{(t_1, u_1), (t_2, u_n), \dots,
(t_{N_t}, u_{N_t})\}$.

The major computation in ParaDiag I is solving the $N_t$ independent
Jacobian systems in step-b of \eqref{nonlinear3step}. Assuming the
method reaches the stopping criterion after $k$ iterations, the total
number of Jacobian solves is $k$, given that we have access to $N_t$
processors and each of them handles one Jacobian system in step-b. On
the other hand, in a time-stepping mode, one would need to solve
$\sum_{n=1}^{N_t}{\rm It}_{n}$ Jacobian systems, where ${\rm It}_n$
represents the number of Newton iterations performed at the $n$-th
time step. Table \ref{Tab_JacNum}
\begin{table}
\caption{Number of total Jacobian solves for the sequential
  Trapezoidal Rule and ParaDiag I with BVM discretization in
  parallel}\label{Tab_JacNum} \centering
\begin{tabular}{l|c|ccccc}
\hline
\multirow{3}{*}{$\nu=0.1$} &{\small$T$}   &{\small 0.1}        &{\small 0.2}           &{\small 0.4}           &{\small 0.8}      &{\small1.6}  \\  
\cline{2-7}
&{\small Trapezoidal Rule}    &401   &401   &403   &419   &443     \\
&{\small ParaDiag I}          &5     &5     &6     &7     &7\\  
\hline
\hline
\multirow{3}{*}{$\nu=0.002$} &{\small$T$}   &{\small 0.1}        &{\small 0.2}           &{\small 0.4}           &{\small 0.8}      &{\small1.6}  \\  
\cline{2-7}
&{\small Trapezoidal Rule}              &400   &446   &476   &460   &526     \\
&{\small ParaDiag I}           &7    &12    &22   &$\times$   &$\times$\\
\hline
\end{tabular}
\end{table}
shows a comparison of the total number of parallel Jacobian solves in
ParaDiag I with BVM discretization, and when using the trapezoidal
rule sequentially. We see the clear computational advantage of
ParaDiag I with BVM discretization, especially when convergence is
rapid.

A recently proposed idea in \cite[Section 3.3]{LWu22} to accelerate
nonlinear ParaDiag II which we will see in the next section can also
be used to accelerate nonlinear ParaDiag I: instead of using a single
matrix $A_k$ to approximate all the blocks $\{\nabla f({\bm u}_n^k,
t_n)\}$ (cf. \eqref{Ak}), we approximate $\nabla F({\bm U}^k)$ by
using a tensor structure matrix $\Phi_k\otimes A_k$ with a diagonal
matrix $\Phi_k$ determined by minimizing
\begin{equation}\label{eq3.17}
\min_{\Phi_k={\rm diag}(\phi_1,\phi_2,\dots, \phi_{N_t})}\| \nabla F({\bm U}^k)- \Phi_k\otimes A_k\|,
\end{equation}
where $A_k$ is the averaging matrix given in \eqref{Ak}.  For the
Frobenius norm $\|\cdot\|_F$, the solution of this minimization
problem is known as the \textit{Nearest Kronecker product
  Approximation} (NKA), given by \cite[Theorem 3]{VLP93}
\begin{equation}  \label{NKA1}
\phi_n=\frac{{\rm trace}((\nabla f({\bm u}_n^k, t_n))^\top A_k)}{{\rm trace}(A_k^\top A_k)},\quad n=1, 2, \dots, N_t,
\end{equation} 
 under the assumption that ${\rm trace}(A_k^{\top}A_k)>0$.  This
leads to the quasi Newton iteration
$$
(B\otimes I_x-\Phi_k\otimes  {A_k}){\bm U}^{k+1}={\bm b}- \left((\Phi_k\otimes  {A_k}){\bm U}^k-F({\bm U}^k)\right), 
$$
which,  after multiplying  both sides by  the matrix $B^{-1}\otimes I_x$,  can be represented as   
\begin{equation*} 
{\small
\begin{split}
(I_t\otimes I_x-B^{-1}\Phi_k\otimes  {A_k}){\bm U}^{k+1}=(B^{-1}\otimes I_x)({\bm b}+F({\bm U}^k))- (B^{-1}\Phi_k\otimes  {A_k}){\bm U}^k.
\end{split}}
\end{equation*}
By diagonalizing $B^{-1}\Phi_k$ as $V{\rm
  diag}(\lambda_1,\lambda_2,\dots, \lambda_{N_t})V^{-1}$ we can solve
${\bm U}^{k+1}$ via the 3-step diagonalization procedure
\eqref{nonlinear3step} as well, where for step-b we now have to solve the
linear systems
$$
(I_x-\lambda_n A_k){\bm u}^b_n={\bm u}_n^a, ~n=1,2,\dots, N_t. 
$$
So far there is no theory for the diagonalization of $B^{-1}\Phi_k$,
but in practice this matrix is often diagonalizable and $V$ is
generally well conditioned.

 In practice, it is better not to compute the scaling factors
  $\{\phi_n\}$ for each Newton iteration, which can be rather
  expensive due to the matrix-matrix multiplications in \eqref{NKA1}.
  It suffices  to compute these quantities only once before starting
the Newton iteration (i.e., as an offline task) by using a {\em
  reduced} model. For time-dependent PDEs, such a model could be a
semi-discretized system of ODEs obtained by using a coarse space grid.
We now show this idea for Burgers' equation \eqref{Burgers} with
periodic boundary conditions, discretized by a centered finite
difference scheme with mesh size $\Delta x=\frac{1}{200}$. The scaling
factors $\{\phi_n\}$ were determined by the Trapezoidal Rule with a
coarse mesh size $\Delta X=\frac{1}{20}$.  In Figure
\ref{ParaDiagI_NKA_BurgersFig}
\begin{figure}
\centering
\includegraphics[width=2.3in,height=1.75in,angle=0]{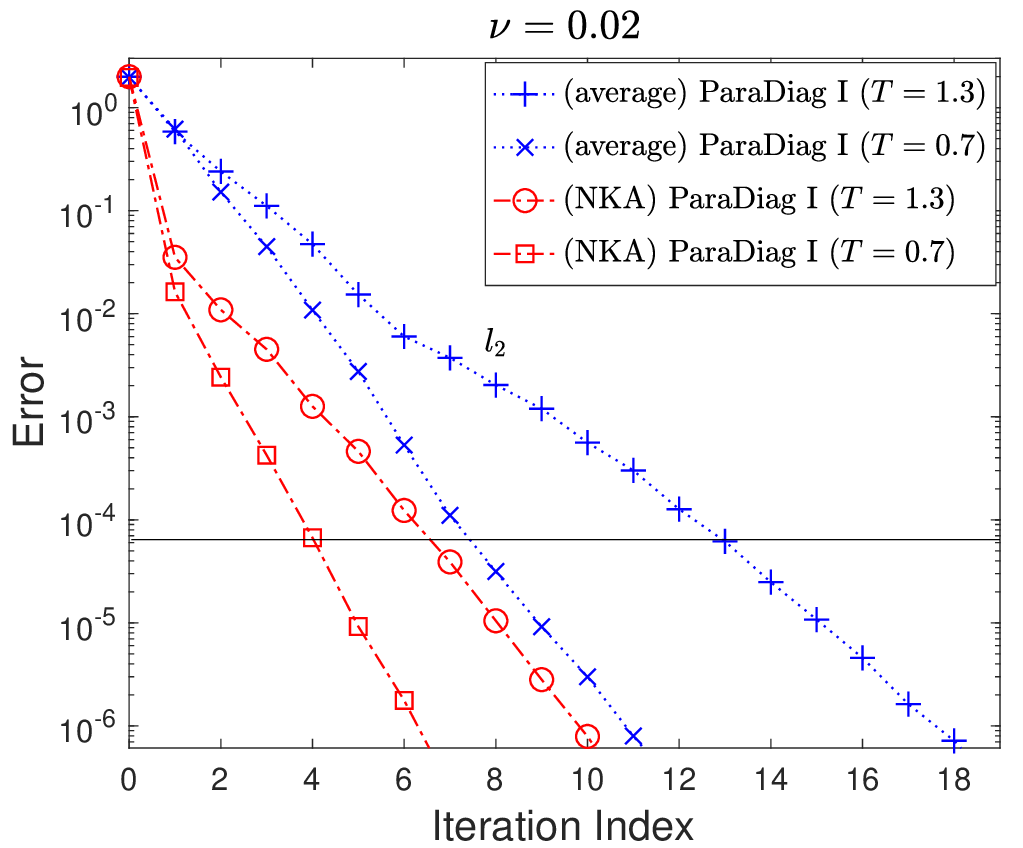}
\includegraphics[width=2.3in,height=1.75in,angle=0]{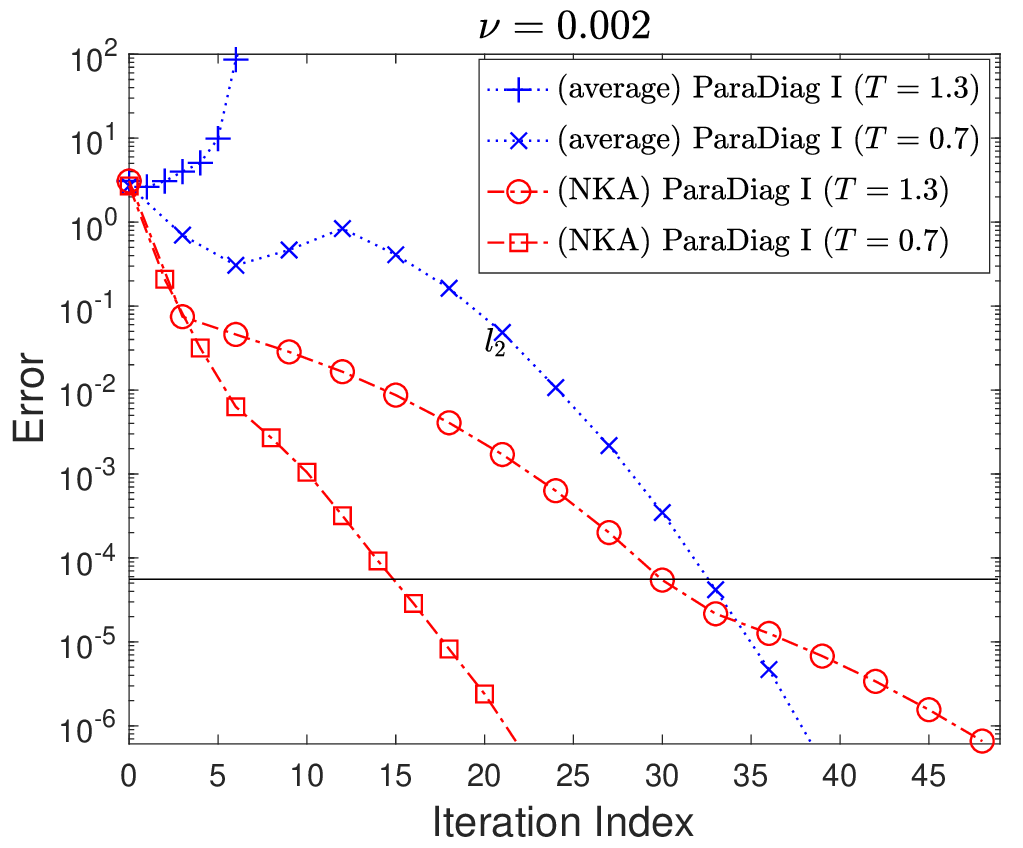}
\caption{Error of the two quasi Newton versions of ParaDiag I with BVM
  discretization \eqref{hybridDis} for Burgers' equation
  \eqref{Burgers} with two values of the diffusion parameter $\nu$. }
\label{ParaDiagI_NKA_BurgersFig}
\end{figure}
we show for two values of the diffusion parameter $\nu$ how the error
decays for the two quasi Newton versions of ParaDiag I with BVM
discretization \eqref{hybridDis}. For each $\nu$ we consider two time
interval lengths, $T=0.7$ and $T=1.3$. Clearly, the quasi Newton
version with NKA technique improves the convergence rate, especially
when $T=1.3$.

\subsubsection{Iterative ParaDiag Methods (ParaDiag II)}\label{sec3.6.2}

It is difficult to generalize ParaDiag I to high-order
time-integrators like multistage Runge-Kutta methods. This is not the
case for the class of ParaDiag II methods, which use approximations of the
time stepping matrices in order to make them diagonalizable, and then
solve the all-at-once system by iteration.

A first member in the ParaDiag II class of methods was proposed in
\cite{mcdonald2018preconditioning}  at the discrete level, and
  independently in \cite{gander2019convergence} at the continuous
  level. Although these two papers essentially describe the same
method, the descriptions themselves are quite different. In
\cite{mcdonald2018preconditioning}, the authors consider the
approximate solution of the first-order linear system of ODEs
\eqref{linearODE} using a linear multistep method with $m$-steps,
$$
{\sum}_{l=0}^{m}a_l{\bm u}_{n-l}=\Delta t{\sum}_{l=0}^{m}b_l(A{\bm u}_{n-l})+\bar{\bm g}_n, ~n=1,\dots, N_t,
$$
where one assumes that the first $m$ initial values $\{{\bm
  u}_{-(m-1)}, {\bm u}_{-(m-2)},\dots, {\bm u}_{0}\}$ are given. The
all-at-once system of these $N_t$ difference equations is $\CK{\bm
  U}={\bm b}$ with ${\bm U}:=({\bm u}_1^\top, \dots, {\bm
  u}_{N_t}^\top)^\top$ and $\CK:=B_1\otimes I_x-B_2\otimes (\Delta
tA)$, where ${\bm b}$ is a vector depending on the initial values and
the source term ${\bm g}(t)$, and
{\small
  $$
  B_1:=\begin{bmatrix} a_0 & &
  & & &\\ a_1 &a_0 & & & &\\ \vdots &\ddots &\ddots & & &\\ a_{m} &
  &~\ddots &\ddots & &\\ &\ddots & &~~~a_1 &~a_0 &\\ & &a_m &\dots
  &a_1 &a_0
\end{bmatrix},~B_2:=\begin{bmatrix}
b_0 & & & & &\\
b_1 &b_0 & & & &\\
\vdots &\ddots &\ddots & & &\\
b_{m} & &~\ddots &\ddots & &\\
&\ddots & &~~~b_1 &~b_0 &\\
& &a_m &\dots &b_1 &b_0
\end{bmatrix}.
$$}
In \cite{mcdonald2018preconditioning}, this all-at-once system is
solved  with GMRES  using a \pd $\CP$ for $\CK$ obtained by
replacing the two time stepping matrices $B_1$ and $B_2$ with two
circulant matrices of Strang type, i.e.,
$$
\CP:=C_1\otimes I_x-C_2\otimes (\Delta tA),
$$
where
{\small $$
C_1 :=\begin{bmatrix}
a_0 & & a_m &\dots & a_1 & a_0\\
a_1 &a_0 & & & & a_1\\
\vdots &\ddots &\ddots & &\ddots &\vdots\\
a_{m} & &~\ddots &\ddots & & a_m\\
&\ddots & &~~~a_1 &~a_0 &\\
& &a_m &\dots &a_1 &a_0
\end{bmatrix},~ C_2 :=\begin{bmatrix}
b_0 & & b_m &\dots & b_1 & b_0\\
b_1 &b_0 & & & & b_1\\
\vdots &\ddots &\ddots & &\ddots &\vdots\\
b_{m} & &~\ddots &\ddots & & b_m\\
&\ddots & &~~~b_1 &~b_0 &\\
& &b_m &\dots &b_1 &b_0
\end{bmatrix}.
$$} For a theoretical understanding, or if the preconditioner
  $\CP$ is very good (like multigrid for Poisson problems), one can
  use it directly in the stationary iteration
\begin{equation}\label{ParaDiagII}
\CP \Delta {\bm U}^k={\bm r}^k:={\bm b}-\CK{\bm U}^k,~{\bm U}^{k+1}={\bm U}^k+\Delta {\bm U}^k,~k=0,1,\dots,
\end{equation}
and the asymptotic convergence is fast if $\rho(\CP^{-1}\CK)\ll 1$. If
convergence is not fast, this process can be accelerated using the
preconditioner $\CP$ within a Krylov method, i.e. solving the
preconditioned linear system $\CP^{-1}\CK{\bm U}=\CP^{-1}{\bm b}$ with a
Krylov method, see \cite[Section 4.1]{ciaramella2022iterative} for a
simple introduction. This can even work when $\rho(\CP^{-1}\CK)\geq1$,
and is advantageous when the spectrum $\sigma(\CP^{-1}\CK)$ is
clustered.

The first advantage of using the block-circulant matrix $\CP$ as \pd
is that, similar to ParaDiag I, for each iteration, the
preconditioning step $\CP^{-1}{\bm r}^k$ can be solved via the \dg
procedure, because any two circulant matrices
$C_1$ and $C_2$ are commutative and therefore can be diagonalized
simultaneously \cite[Chapter 4]{Ng04}, i.e. 
$$
C_l={\rm F}^* D_l{\rm F}, ~l=1,2,
$$
where ${\rm F}$ is the discrete Fourier matrix defined as
(${\rm F}^*$ is the conjugate transform of ${\rm F}$) 
\begin{equation}\label{MatF}
{\rm F}:=\frac{1}{\sqrt{N_t}}
\begin{bmatrix}
1 &1 & \dots &1\\
1 &\omega & \cdots &\omega^{N_t-1}\\
\vdots &\vdots &\cdots &\vdots\\
1 &\omega^{N_t-1} &\cdots &\omega^{(N_t-1)^2}
\end{bmatrix},~\omega:={\rm exp}\left(\frac{2\pi {\rm i}}{N_t}\right),
\end{equation}
and $D_{l}:={\rm diag}(\lambda_{l,1}, \lambda_{l,2},\dots,\lambda_{l,
  N_t})$ contains the eigenvalues of $C_l$, i.e.,
\begin{equation}\label{MatD}
D_l={\rm diag}\left(\sqrt{N_t}{\rm F} C_l(:,1)\right),~l=1, 2.
\end{equation}
Then, according to the property of the Kronecker product, we can
factor $\CP=({\rm F}^*\otimes I_x)(D_1\otimes I_x-D_2\otimes (\Delta
tA))({\rm F}\otimes I_x)$ and thus we can compute $\CP^{-1}{\bm r}^k$
by performing again three steps:
\begin{equation}\label{ParaDiagII3step}
\begin{cases}
{\bm U}^a=({\rm F}\otimes I_x){\bm r}^k, &\text{(step-a)}\\
(\lambda_{1,n} I_x-\lambda_{2,n}\Delta tA){\bm u}^b_n={\bm u}^a_n, ~n=1,2,\dots, N_t, &\text{(step-b)}\\
{\bm U}=({\rm F}^*\otimes I_x){\bm U}^b. &\text{(step-c)}
\end{cases}
\end{equation}
Here the first and last steps can be computed efficiently using the
Fast Fourier Transform (FFT), with $\CO(N_xN_t\log N_t)$ operations,
and as in all ParaDiag methods, step-b can be computed in parallel,
since all linear systems are completely independent of each other at
different time points.

In \cite[Section 3]{mcdonald2018preconditioning}, an important result
about the clustering of eigenvalues of the preconditioned matrix
$\CP^{-1}\CK$ was obtained when this ParaDiag II techniques is used to
precondition a system for its solve by a Krylov method.
\begin{theorem}\label{ParaDiagII_Pro1}
\emph{When $A\in\mathbb{R}^{N_x\times N_x}$ is symmetric
  negative definite, the preconditioned matrix $\CP^{-1}\CK$ has at most $mN_x$
  eigenvalues not equal to 1.}
\end{theorem}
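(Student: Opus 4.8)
The plan is to show that the preconditioner $\CP$ differs from $\CK$ only by a matrix of rank at most $mN_x$, after which the eigenvalue statement is immediate. First I would record the structural fact about the Strang circulants: $C_1$ is the circulant whose first column coincides with the first column $(a_0,a_1,\dots,a_m,0,\dots,0)^{\top}$ of the banded lower-triangular Toeplitz matrix $B_1$, so $C_1$ and $B_1$ agree on and below the diagonal, and $C_1-B_1$ consists only of the ``wrap-around'' entries $a_1,\dots,a_m$ sitting in the upper-right corner. These occupy only the first $m$ rows, hence ${\rm rank}(C_1-B_1)\le m$; in exactly the same way ${\rm rank}(C_2-B_2)\le m$, and $C_2-B_2$ is supported on the same $m$ rows.

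Next I would write
$$
\CP-\CK=(C_1-B_1)\otimes I_x-(C_2-B_2)\otimes(\Delta tA).
$$
Since both $C_1-B_1$ and $C_2-B_2$ vanish outside their first $m$ rows, the two Kronecker products vanish outside their first $m$ block-rows, i.e. outside their first $mN_x$ scalar rows, and therefore ${\rm rank}(\CP-\CK)\le mN_x$, because a matrix with at most $k$ nonzero rows has rank at most $k$. Then, using that $\CP$ is nonsingular (see below), I would factor $\CP^{-1}\CK=I-\CP^{-1}(\CP-\CK)$. Left multiplication by the invertible matrix $\CP^{-1}$ preserves rank, so $R:=\CP^{-1}\CK-I$ has ${\rm rank}(R)\le mN_x$. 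Consequently $\ker R$ has dimension at least $N_tN_x-mN_x$, and each vector in $\ker R$ is an eigenvector of $\CP^{-1}\CK$ with eigenvalue $1$; equivalently, $\CP^{-1}\CK=I+R$ with $R$ of rank at most $mN_x$, hence $R$ has at most $mN_x$ nonzero eigenvalues. In either formulation $\CP^{-1}\CK$ has at most $mN_x$ eigenvalues different from $1$, which is the assertion.

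The one point that needs care, and the place where the hypothesis that $A$ is symmetric negative definite is used, is the nonsingularity of $\CP$. Diagonalizing the two commuting circulants and $A$ simultaneously (cf. \eqref{MatF} and \eqref{ParaDiagII3step}), the eigenvalues of $\CP$ are $\lambda_{1,n}-\lambda_{2,n}\Delta t\mu$, where $\mu\in\sigma(A)$ is negative and $\lambda_{1,n}=\sum_{l=0}^{m}a_l\zeta_n^{l}$, $\lambda_{2,n}=\sum_{l=0}^{m}b_l\zeta_n^{l}$ are the values of the two generating polynomials of the linear multistep method at the $N_t$-th roots of unity $\zeta_n$. Because $\Delta t\mu$ is real and negative, and a stable linear multistep method is (at least) stable on the negative real axis, the stability polynomial $\sum_l a_l\zeta^{l}-\Delta t\mu\sum_l b_l\zeta^{l}$ has no root on the unit circle, so every such eigenvalue is nonzero; the consistency relations $\sum_l a_l=0$ and $\sum_l b_l\neq0$ cover the mode $\zeta_n=1$ separately. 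Hence $\CP$ is invertible and the rank argument above goes through. I expect this verification of nonsingularity to be the only genuinely delicate part; the rest is bookkeeping on Kronecker products and ranks.
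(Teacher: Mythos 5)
Your proposal is essentially the proof of this result: the survey itself only quotes the theorem from \cite{mcdonald2018preconditioning}, and the argument there is exactly your observation that the Strang circulants agree with the banded Toeplitz time-stepping matrices except in an $m\times m$ wrap-around corner, so that $\CP-\CK=(C_1-B_1)\otimes I_x-(C_2-B_2)\otimes(\Delta t A)$ has rank at most $mN_x$ and hence $\CP^{-1}\CK=I+R$ with ${\rm rank}(R)\le mN_x$, which gives at least $(N_t-m)N_x$ eigenvalues equal to $1$. The one step you should tighten is the nonsingularity of $\CP$: the fact that $\Delta t\mu$ lies in the stability region does not by itself exclude roots of $\sum_l a_l\zeta^l-\Delta t\mu\sum_l b_l\zeta^l$ on the unit circle (stability only forces boundary roots to be simple), so either upgrade your argument to show that for an A-stable method the boundary locus cannot meet the open negative real axis (a perturbation/open-mapping argument around a simple unit-circle root), or simply verify invertibility directly for the implicit integrators used here (backward Euler, trapezoidal rule), where the eigenvalues $\lambda_{1,n}-\lambda_{2,n}\Delta t\mu$ with $\mu<0$ are nonzero by inspection; this is also precisely where the hypothesis that $A$ is symmetric negative definite enters, as you note. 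With that point made precise, your rank-based route is correct and coincides with the cited proof.
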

This implies that GMRES converges in at most \(mN_x+1\) steps for the
all-at-once system \(\CK{\bm U}={\bm b}\) using \(\CP\) as
preconditioner. Note however that when \(N_x\) is large, this result
does not guarantee fast convergence of GMRES, and moreover, if \(A\)
is not symmetric, the clustering of \(\sigma(\CP^{-1}\CK)\) becomes
worse. To illustrate this, we consider three examples: the heat
equation \eqref{heatequation}, the advection-diffusion equation
\eqref{ADE} with two values of the diffusion parameter \(\nu\), and
the second-order wave equation \eqref{WaveEquation1d}. We use
homogeneous Dirichlet boundary conditions and the initial condition
\(u(x,0)=\sin(2\pi x)\) for all PDEs, and for the wave equation, we
set \(\partial_tu(x,0)=0\).

The semi-discrete system of ODEs using centered finite differences is
of the form \eqref{linearODE} for the first order parabolic problems,
and for the second-order wave equation we get
\begin{equation}\label{sec3_6_2ndODE}
{\bm u}''(t)=A{\bm u}(t),~{\bm u}(0)={\bm u}_0,~{\bm u}'(0)=0, ~t\in(0, T],
\end{equation}
where \(A={\rm Tri}[1~~-2~~1]/\Delta x^2\). We solve the first-order
system of ODEs \eqref{linearODE} using the Trapezoidal Rule, and the
second order system of ODEs \eqref{sec3_6_2ndODE} using a parametrized
Numerov-type method \cite{Chawla83},
\begin{equation}\label{Numerov}
\begin{cases}
\tilde{\bm u}_n-{\bm u}_n+\gamma\Delta t^2A({\bm u}_{n+1}-2{\bm u}_n+{\bm u}_{n-1})=0,\\
{\bm u}_{n+1}-2{\bm u}_n+{\bm u}_{n-1}-\frac{\Delta t^2A}{12}({\bm u}_{n+1}+10\tilde{\bm u}_n+{\bm u}_n)=0,
\end{cases}
\end{equation}
where \(\gamma>0\) is a parameter. For \(\gamma = 0\), \eqref{Numerov}
reduces to the classical Numerov method, which is a fourth-order
method but only conditionally stable. With \(\gamma
\geq\frac{1}{120}\), this method is unconditionally stable and still
fourth order.

Let \(T=2\), \(\Delta t=\frac{1}{50}\), \(\Delta x=\frac{1}{100}\),
and \(\gamma=\frac{1}{100}\). We show in Figure
\ref{ParaDiagII_alpha_equal_1}
\begin{figure}
\centering
\includegraphics[width=2.3in,height=1.75in,angle=0]{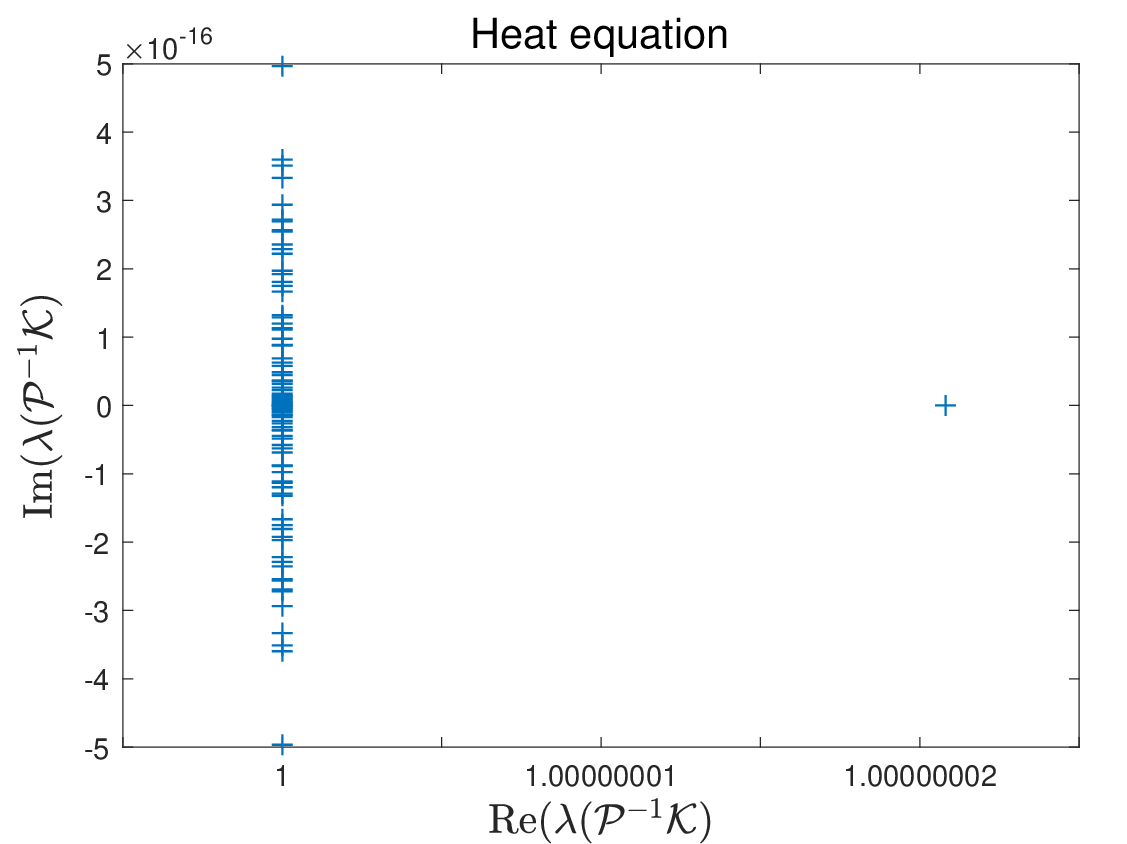}~~
\includegraphics[width=2.3in,height=1.75in,angle=0]{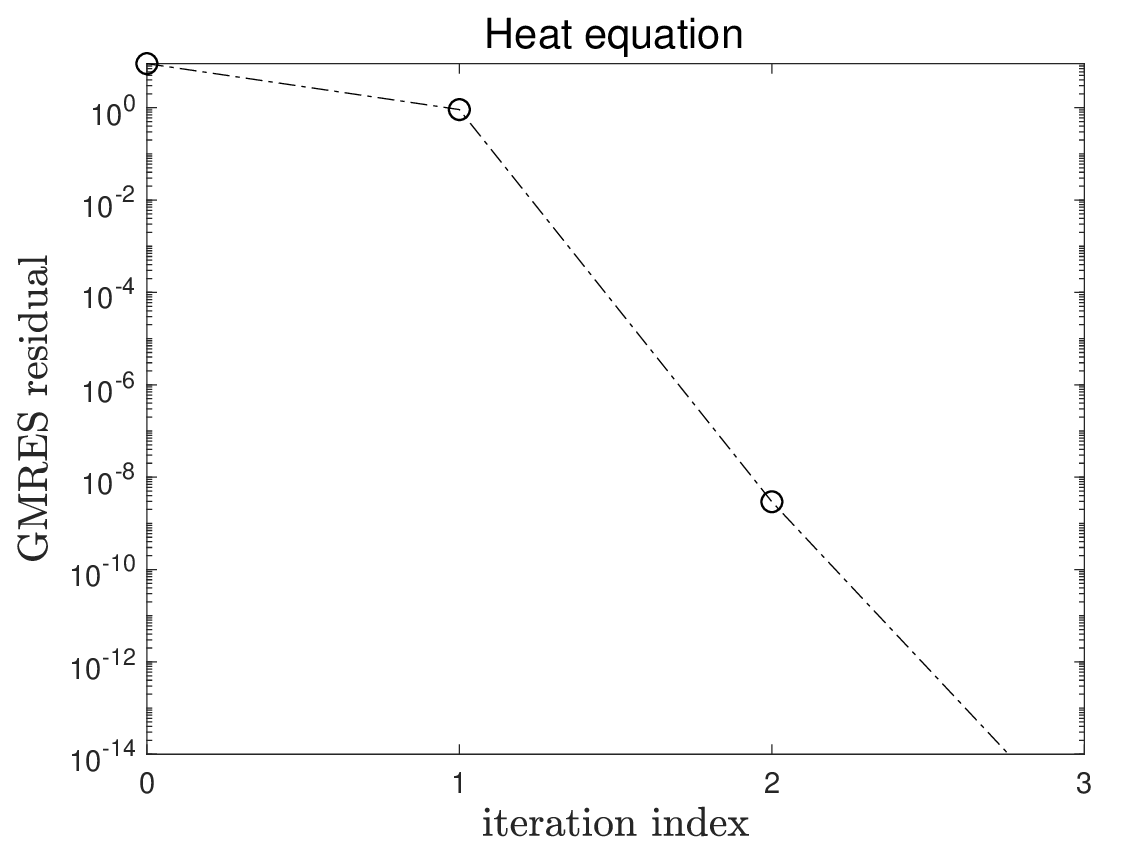} \\
\includegraphics[width=2.3in,height=1.75in,angle=0]{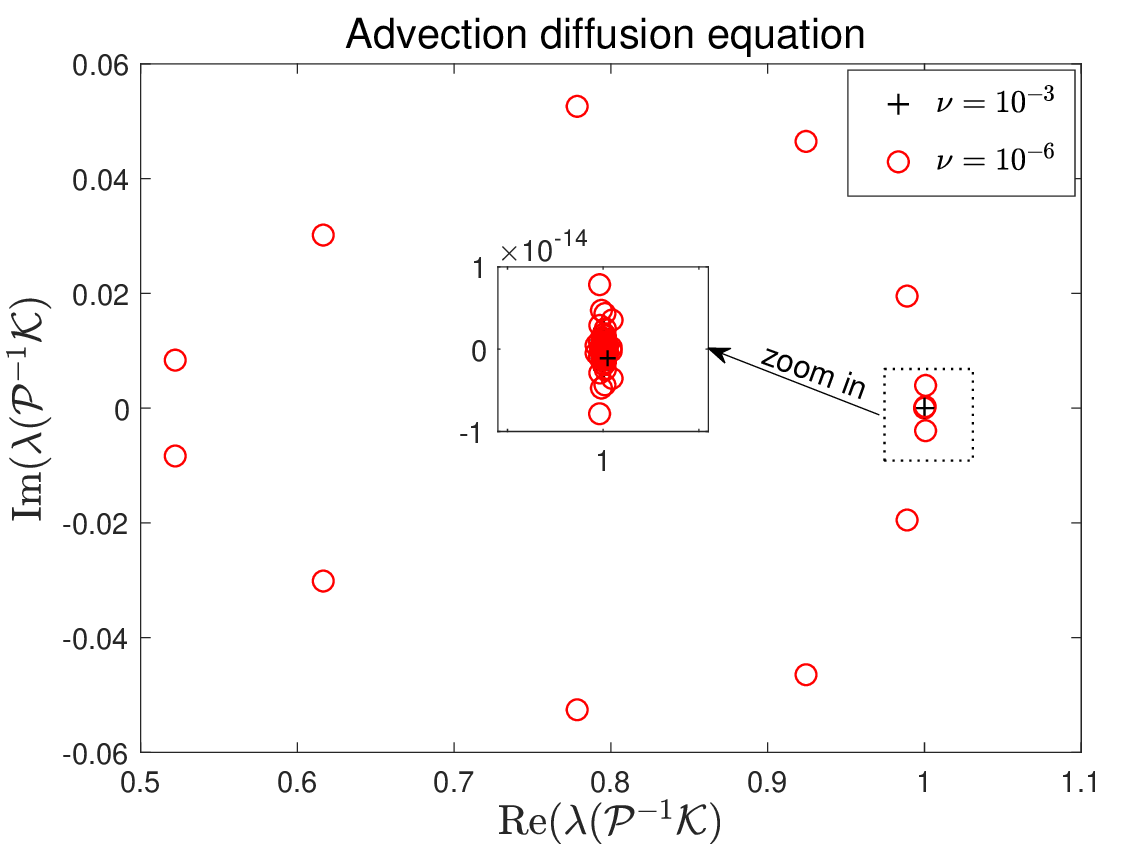}~~
\includegraphics[width=2.3in,height=1.75in,angle=0]{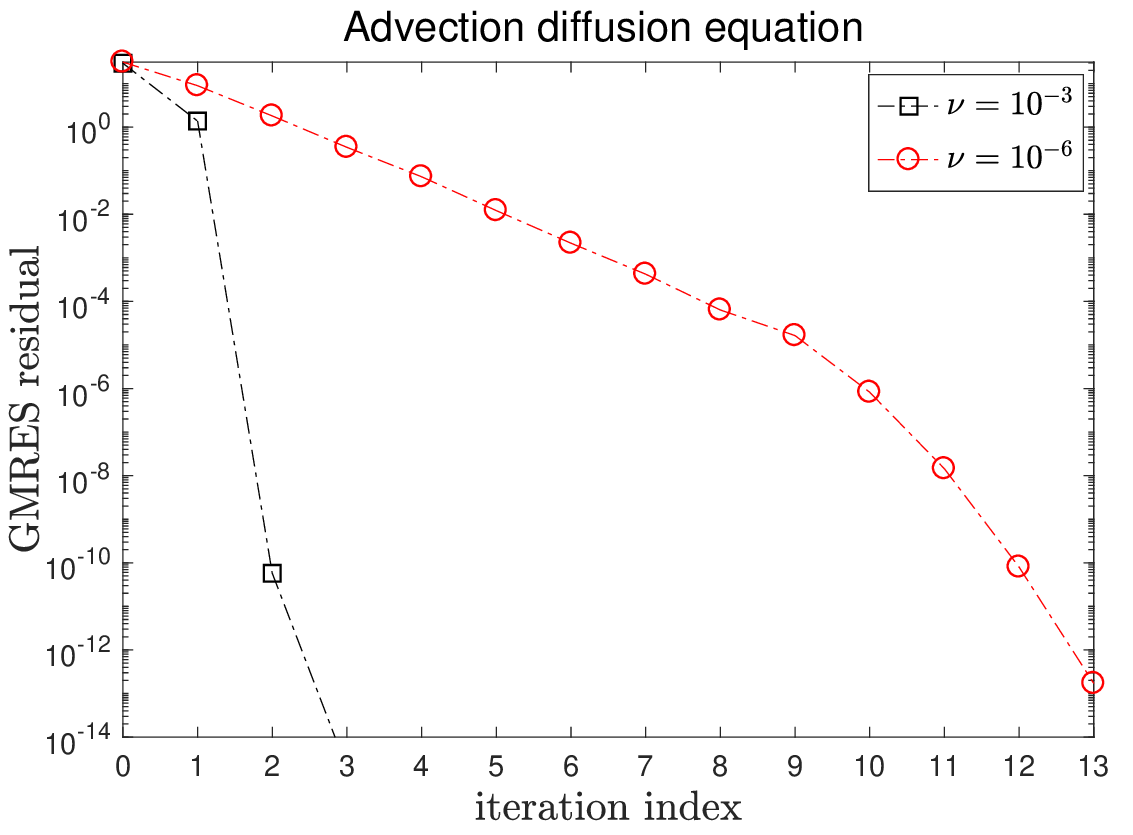} \\
\includegraphics[width=2.3in,height=1.75in,angle=0]{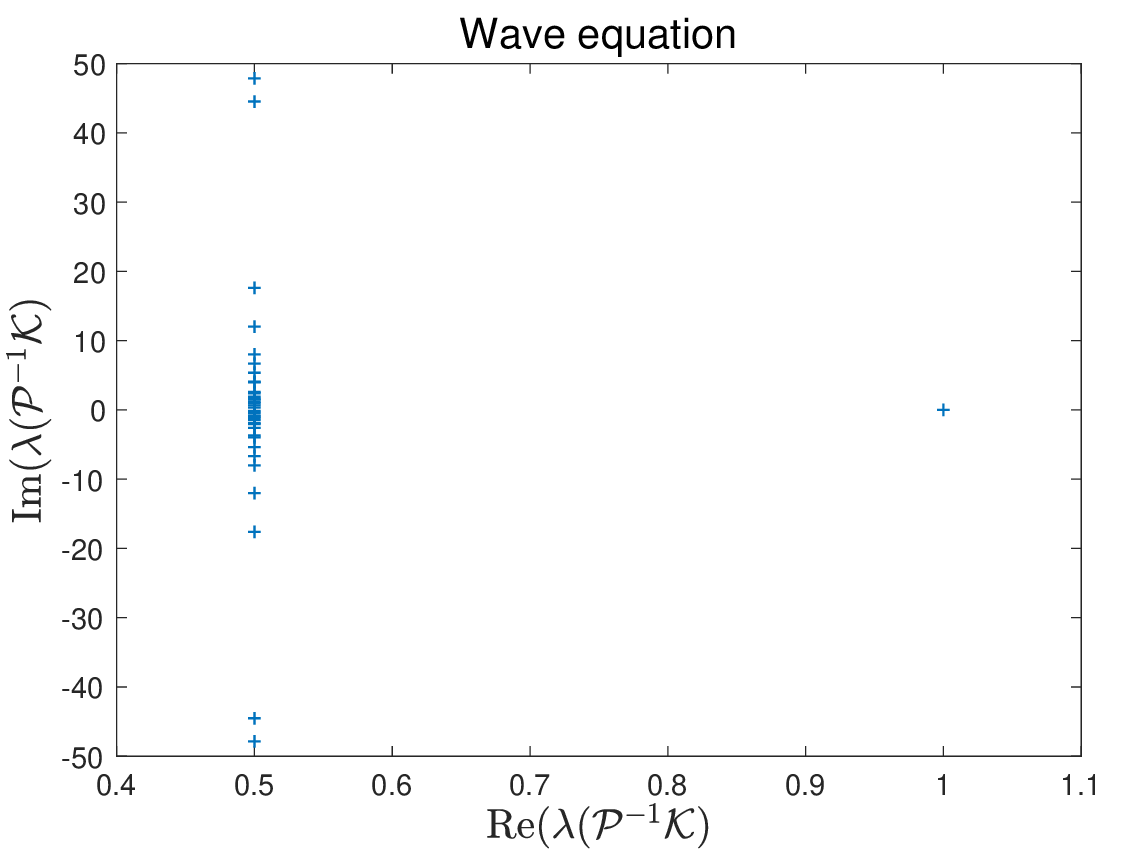}~~
\includegraphics[width=2.3in,height=1.75in,angle=0]{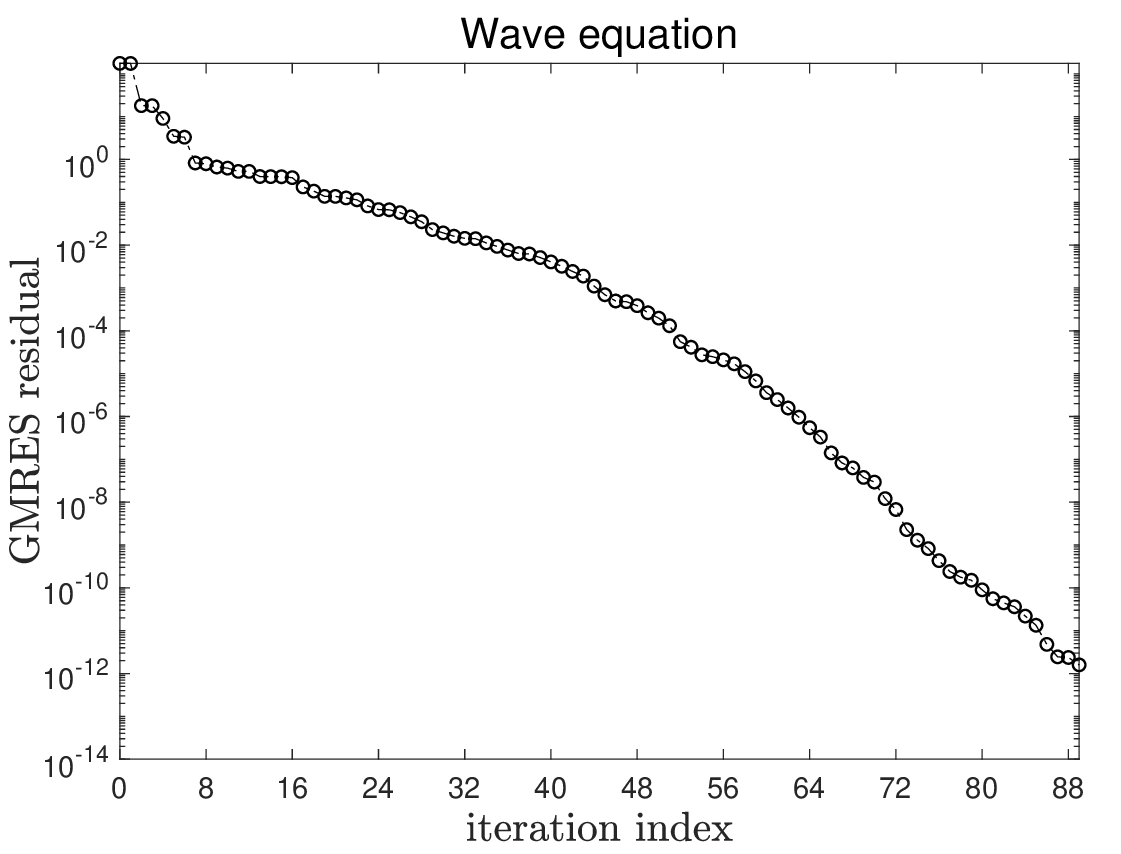}
\caption{Spectra of the preconditioned matrix \(\CP^{-1}\CK\) (left)
  and the measured convergence of preconditioned GMRES (right)
  for the three representative PDEs.}
\label{ParaDiagII_alpha_equal_1}
\end{figure}
the eigenvalues of the preconditioned matrix \(\CP^{-1}\CK\) for these
three PDEs, and the decay of the residual as function of the iteration
number of the preconditioned GMRES solver. We see that for a wide
range of problems, the block-circulant matrix \(\CP\) is a good
preconditioner, even for the advection-dominated diffusion equation
with a small diffusion parameter \(\nu=10^{-3}\). If we continue
however to reduce \(\nu\), the middle row of Figure
\ref{ParaDiagII_alpha_equal_1} shows that the preconditioner \(\CP\)
becomes worse, and ultimately, it loses its power when we switch to
the hyperbolic problem represented by the wave equation.

During the same time, and independently of the work in
\cite{mcdonald2018preconditioning}, another diagonalization-based time
parallel method was proposed in \cite{gander2019convergence} within
the framework of {\em waveform relaxation}. It is constructed at the
continuous level by using a {\em head-tail} coupled condition,
\begin{equation}\label{ParaDiagII_WR}
{\bm u}_t^k(t)=A{\bm u}^k(t)+{\bm g}(t),~{\bm u}^k(0)=\alpha [{\bm u}^k(T)-{\bm u}^{k-1}(T)]+{\bm u}_0,
\end{equation}
where $\alpha\in\mathbb{C}$ is a free parameter. Upon convergence, we
recover the solution to the initial value problem \eqref{linearODE},
i.e., ${\bm u}_t(t)=A{\bm u}(t)+{\bm g}(t)$ with ${\bm u}(0)={\bm
  u}_0$. For the second-order problem \eqref{WaveEquation1d}, the
iteration \eqref{ParaDiagII_WR} can be defined similarly by reducing
the problem to a first order system. The advantage of this iteration
is that we can solve each iterate ${\bm u}^k(t)$ independently for all
the time steps. To see this, we need to take a closer look at the
structure of the discrete system stemming from
\eqref{ParaDiagII_WR}. Suppose we discretize \eqref{ParaDiagII_WR}
using a one-step time-integrator specified by two matrices $r_1(\Delta
tA)$ and $r_2(\Delta tA)$,
\begin{equation}\label{discreteWR}
\begin{cases}
r_1(\Delta tA){\bm u}^k_{n}=r_2(\Delta tA){\bm u}^k_{n-1}+\tilde{\bm g}_n, ~n=1,\dots, N_t,\\
{\bm u}^k_0=\alpha ({\bm u}^k_{N_t}-{\bm u}^{k-1}_{N_t})+{\bm u}_0,
\end{cases}
\end{equation}
where $N_t=T/\Delta t$, ${\bm u}_0$ is a given initial value and
$\tilde{\bm g}_n\in\mathbb{R}^{N_x}$ is a vector coming from the
source term ${\bm g}(t)$. Examples for $r_1(\Delta tA)$ and $r_2(\Delta tA)$
are
\begin{equation}\label{r1r2}
\begin{cases}
r_1=I_x-\Delta tA,~r_2=I_x, &\text{Backward  Euler},\\
r_1=I_x-\frac{1}{2}\Delta tA,~r_2=I_x+\frac{1}{2}\Delta tA, &\text{Trapezoidal Rule}.
\end{cases}
\end{equation}
By replacing ${\bm u}^k_0$ with $\alpha ({\bm u}^k_{N_t}-{\bm
  u}^{k-1}_{N_t})+{\bm u}_0$ for $n=1$, we can unfold
\eqref{discreteWR} as
$$
\begin{cases}
r_1(\Delta tA){\bm u}^k_1-\alpha r_2(\Delta tA) {\bm u}^k_{N_t}=\alpha r_2(\Delta tA){\bm u}^{k-1}_{N_t}+r_2(\Delta tA){\bm u}_0+\tilde{\bm g}_1,\\
r_1(\Delta tA){\bm u}^k_2-r_2(\Delta tA){\bm u}^k_1=\tilde{\bm g}_2,\\
r_1(\Delta tA){\bm u}^k_3-r_2(\Delta tA){\bm u}^k_2=\tilde{\bm g}_3,\\
\vdots\\
r_1(\Delta tA){\bm u}^k_{N_t}-r_2(\Delta tA){\bm u}^k_{N_t-1}=\tilde{\bm g}_{N_t}.
\end{cases}
$$
We see that all the discrete unknowns ${\bm u}_1, {\bm u}_2,\dots,
{\bm u}_{N_t}$ are coupled together and therefore we have to solve
them in one shot. To this end, we represent these $N_t$ equations as
\begin{subequations}
\begin{equation}\label{ParaDiagII_AAA_WR_a}
\CP_{\alpha}{\bm U}^k={\bm b}^k,
\end{equation}
where ${\bm U}^k:=(({\bm u}_1^k)^\top, \dots, ({\bm u}_{N_t}^k)^\top)^\top$ and ${\bm b}^k$ is a vector consisting of
\begin{equation}\label{ParaDiagII_AAA_WR_b}
{\bm b}^k:={\bm b}-\alpha
\begin{bmatrix}
r_2(\Delta tA){\bm u}^{k-1}_{N_t}\\
0\\
\vdots\\
0
\end{bmatrix},
~{\bm b}:=
\begin{bmatrix}
r_2(\Delta tA){\bm u}_0+\tilde{\bm g}_1\\
\tilde{\bm g}_2\\
\vdots\\
\tilde{\bm g}_{N_t}
\end{bmatrix}.
\end{equation}
The matrix $\CP_\alpha$ is given by
\begin{equation}\label{ParaDiagII_AAA_c}
\begin{split}
&\CP_\alpha
:=
\begin{bmatrix}
r_1(\Delta tA) & & &-\alpha  r_2(\Delta tA)\\
- r_2(\Delta tA) & r_1(\Delta tA) & &\\
&\ddots &\ddots &\\
& &- r_2(\Delta tA) & r_1(\Delta tA)
\end{bmatrix}\\
&\hspace{1.5em}=I_t\otimes  r_1(\Delta tA)-C_\alpha\otimes  r_2(\Delta tA),\\
&C_\alpha:=
\begin{bmatrix}
0 & & &\alpha\\
1 &0 & &\\
&\ddots &\ddots &\\
& &1 &0
\end{bmatrix}\in\mathbb{R}^{N_t\times N_t}.
\end{split}
\end{equation}
\end{subequations}
The matrix $C_\alpha$ is known as an $\alpha$-circulant matrix, which,
similarly to the standard circulant matrix where $\alpha=1$, can be
diagonalized by an eigenvector matrix $V_\alpha$ that depends on
$\alpha$ only. Specifically, according to \cite[Theorem 2.10]{BLM05},
for an arbitrary $\alpha$-circulant matrix $C_\alpha$ of Strang type,
we have the spectral decomposition
\begin{subequations}
\begin{equation}\label{diag_Calp_a}
C_\alpha=V_\alpha D_\alpha V^{-1}_\alpha,
\end{equation}
where the diagonal eigenvalue matrix and the eigenvector matrix
$V_\alpha$ are given by
\begin{equation}\label{diag_Calp_b}
\begin{split}
&D_\alpha={\rm diag}(\sqrt{N_t}{\rm F}\Lambda_\alpha C_\alpha(:, 1)),\\
&V_\alpha=\Lambda_\alpha{\rm F}^*,~\Lambda_\alpha:={\rm diag}\left(1, \alpha^{-\frac{1}{N_t}},\dots, \alpha^{-\frac{N_t-1}{N_t}}\right),
\end{split}
\end{equation}
\end{subequations}
and $C_\alpha(:,1)$ represents the first column of
$C_\alpha$. Using the property of the Kronecker product, we can
factor $\CP_\alpha$ as
$$
\CP_\alpha=(V_\alpha\otimes I_x)(I_t\otimes r_1(\Delta tA)-D_\alpha\otimes r_2(\Delta tA))(V^{-1}_\alpha\otimes I_x),
$$
and hence again solve like in all ParaDiag methods for ${\bm U}^k$ in
\eqref{ParaDiagII_AAA_WR_a} using three steps:
\begin{equation}\label{ParaDiagII3step_alp}
\begin{cases}
{\bm U}^a=(V^{-1}_\alpha\otimes I_x){\bm b}^k, &\text{(step-a)}\\
(r_1(\Delta tA)-\lambda_{n}r_2(\Delta tA)){\bm u}^b_n={\bm u}^a_n, ~n=1,2,\dots, N_t, &\text{(step-b)}\\
{\bm U}^k=(V_\alpha\otimes I_x){\bm U}^b. &\text{(step-c)}
\end{cases}
\end{equation}
When $\alpha=1$, the eigenvector matrix becomes the Fourier matrix,
$V_\alpha={\rm F}^*$, and hence this ParaDiag II method obtained from
the discretization of the continuous formulation \eqref{ParaDiagII_WR}
coincides with \eqref{ParaDiagII3step} from
\cite{mcdonald2018preconditioning}, and  since $V_\alpha\otimes
I_x=(\Lambda_\alpha\otimes I_x)({\rm F}^*\otimes I_x)$ and
$V^{-1}_\alpha\otimes I_x=({\rm F}\otimes
I_x)(\Lambda^{-1}_\alpha\otimes I_x)$, one can still use
FFT techniques for the first and last step, also when $\alpha\ne 1$.

In \cite{gander2019convergence}, the convergence of the waveform
relaxation iterations \eqref{ParaDiagII_WR} were examined at the
continuous level, and it was shown that the error ${\bm u}^k(t)-{\bm
  u}(t)$ decays rapidly for both first-order and second-order
problems, with a rate depending on $\alpha$. We illustrate this in
Figure \ref{Fig_ParaDiagII_WR},
\begin{figure}
  \centering
 \includegraphics[width=2.3in,height=1.85in,angle=0]{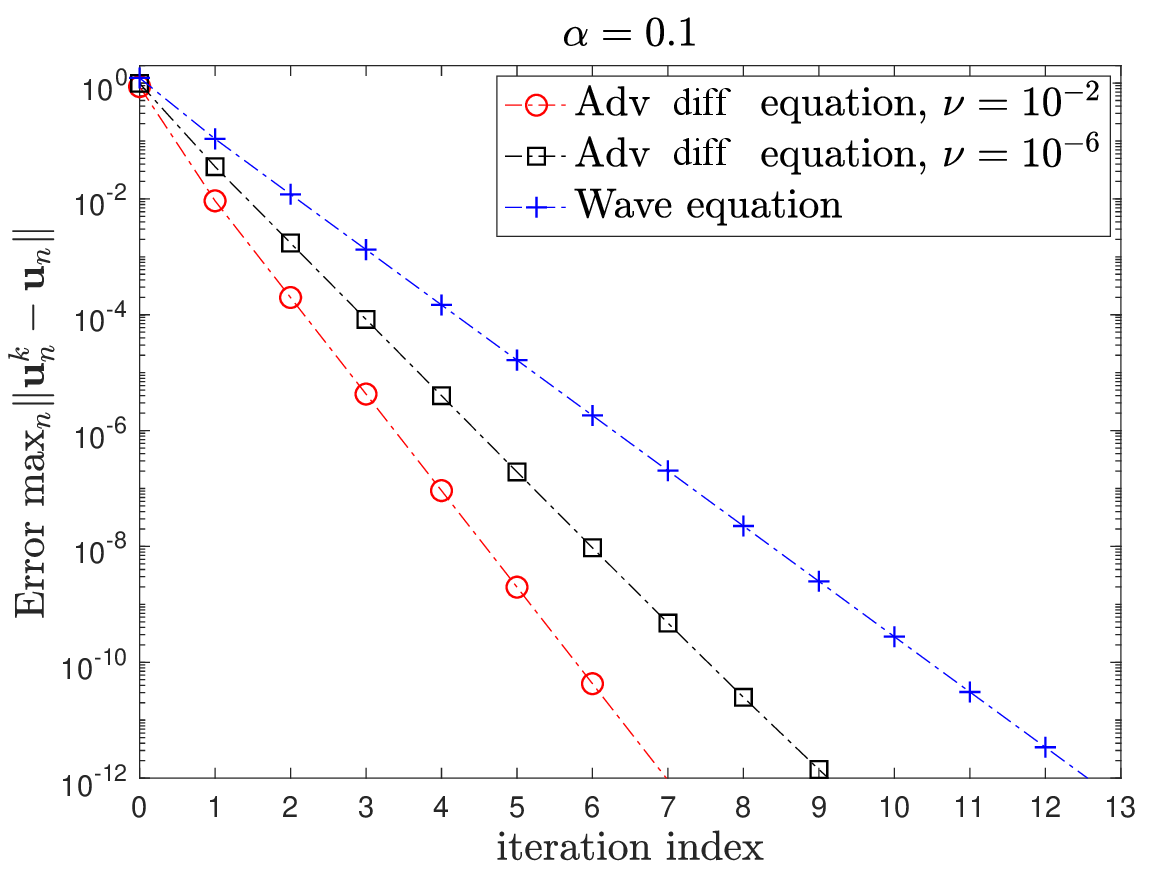}~~
 \includegraphics[width=2.3in,height=1.85in,angle=0]{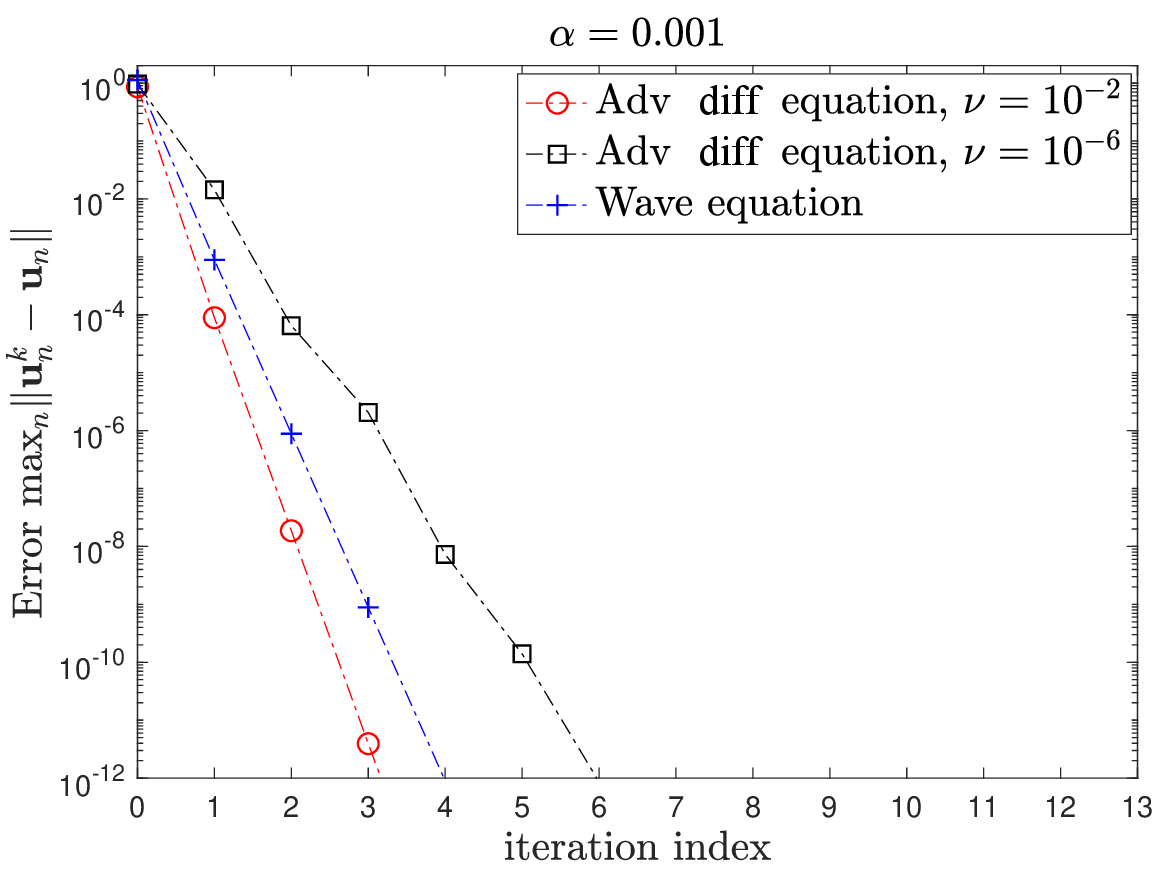}  
  \caption{Error as a function of iteration for the head-tail coupled
    waveform relaxation method \eqref{ParaDiagII_WR} using two values
    of the parameter $\alpha$ and the Trapezoidal Rule.  }
  \label{Fig_ParaDiagII_WR}
\end{figure}
where the data for the two PDEs are the same as those used in Figure
\ref{ParaDiagII_alpha_equal_1}, but with periodic boundary conditions
used here. For this type of boundary conditions, the
  preconditioner $\CP$ with the special choice $\alpha=1$ as
    proposed in \cite{mcdonald2018preconditioning} is singular and
    cannot be used, so we need to use $\alpha<1$. We see that the
introduction of the parameter $\alpha$ makes this ParaDiag II method a
very powerful solver, which works also very well for highly advection
dominated problems, and also the hyperbolic wave equation, and this
without Krylov acceleration!

In \cite{gander2019convergence} two time-integrators were studied,
 Backward  Euler and the Trapezoidal Rule, and it was shown that
the discrete algorithm \eqref{discreteWR} preserves the convergence
rate obtained from the analysis at the continuous level. The proof is
technical and relies on a special representation of $r_1^{-1}(\Delta
tA)r_2(\Delta tA)$ that appears to hold only for these two specific
time-integrators (see \eqref{r1r2} for the formulas of $r_1$ and
$r_2$).

The head-tail coupled waveform relaxation method at the discrete level
\eqref{discreteWR} can be represented as the {\em preconditioned}
stationary iteration
\begin{equation}\label{ParaDiagII_SI}
{\small\begin{split}
\CP_\alpha \Delta {\bm U}^{k-1}={\bm r}^{k-1}:={\bm b}-\CK{\bm U}^{k-1},~{\bm U}^{k}={\bm U}^{k-1}+\Delta {\bm U}^{k-1},~k=1, 2,\dots,
\end{split}}
\end{equation}
where
\begin{subequations}
\begin{equation}\label{ParaDiagII_AAA_a}
\begin{split}
\CK
&:=
\begin{bmatrix}
r_1(\Delta tA) & & & \\
- r_2(\Delta tA) & r_1(\Delta tA) & & \\
&\ddots &\ddots &\\
& &- r_2(\Delta tA) & r_1(\Delta tA)
\end{bmatrix}\\
&=I_t\otimes  r_1(\Delta tA)-B\otimes  r_2(\Delta tA),
\end{split}
\end{equation}
and $B\in\mathbb{R}^{N_t\times N_t}$ is a  Toeplitz  matrix,
\begin{equation}\label{ParaDiagII_AAA_b}
B:=\begin{bmatrix}
0 & & &\\
1 &0 & &\\
&\ddots &\ddots &\\
& &1 &0
\end{bmatrix}.
\end{equation}
\end{subequations}
We thus see that the preconditioned iteration \eqref{ParaDiagII_SI}
with $\alpha=1$ is precisely the method \eqref{ParaDiagII} of
\cite{mcdonald2018preconditioning}. For $\alpha\in(0,1)$ such a preconditioned iteration is the parallel method proposed by Banjai an Peterseim  in 2012 \cite{BP12}. 
To see why the preconditioned
iteration \eqref{ParaDiagII_SI} equals the head-tail coupled waveform
relaxation method \eqref{discreteWR}, we notice that the vector ${\bm
  b}^k$ in \eqref{ParaDiagII_AAA_b} can be represented as ${\bm
  b}^k=(\CP_\alpha-\CK){\bm U}^{k-1}+{\bm b}$, and substituting this
into \eqref{ParaDiagII_AAA_a} gives $\CP_\alpha{\bm
  U}^k=(\CP_\alpha-\CK){\bm U}^{k-1}+{\bm b}$ which leads to
\eqref{ParaDiagII_SI}.

Applying a one-step time-integrator specified by $r_1(\Delta tA)$ and
$r_2(\Delta tA)$ to the initial value problem \eqref{linearODE}, i.e.,
${\bm u}'(t)=A{\bm u}(t)+{\bm g}(t)$ with ${\bm u}(0)={\bm u}_0$,
leads to
\begin{equation}\label{r1r2_1st}
r_1(\Delta tA){\bm u}_{n}=r_2(\Delta tA){\bm u}_{n-1}+\tilde{\bm g}_n, ~n=1,\dots, N_t.
\end{equation}
Therefore, the matrix $\CK$ is an all-at-once representation of
these $N_t$ difference equations, i.e., $\CK{\bm U}={\bm b}$. From
this point of view, $\CP_\alpha$ is a generalized block circulant \pd
for $\CK$.

For second-order problems of the form ${\bm u}''(t)=A{\bm u}(t)+{\bm
  g}(t)$ with initial values ${\bm u}(0)=u_0$ and $\tilde{\bm
  u}(0)=\tilde{\bm u}_0$, we can introduce ${\bm v}(t)={\bm u}'(t)$ to
transform them into a larger first-order system of ODEs, and then
apply ParaDiag II. However, this approach doubles the memory
requirements at each time step, which can be problematic in cases of
very fine spatial mesh sizes or for high-dimensional problems. In that
case, it can be preferable to discretize directly the second order
problem, and we consider the symmetric two-step method
\begin{equation}\label{r1r2_2nd}
{\small
\begin{split}
r_1(\Delta t^2A){\bm u}_{n+1}-r_2(\Delta t^2A){\bm u}_n+r_1(\Delta t^2A){\bm u}_{n-1}=\tilde{\bm g}_n,~n=1,\dots, N_t-1,
\end{split}}
\end{equation}
assuming that the second initial value ${\bm u}_1$ is given.
Examples of the matrices $r_1$ and $r_2$  are
\begin{equation*}
\begin{split}
&r_1(\Delta t^2A)=I_x-\frac{\Delta t^2A}{12}+\frac{10\gamma(\Delta t^2A)^2}{12},\\
&r_2(\Delta t^2A)=
2 I_x+\frac{10\Delta t^2A}{12}+\frac{20\gamma(\Delta t^2A)^2}{12},
\end{split}
\end{equation*}
if we use the Numerov-type method from \eqref{Numerov} as the
time-integrator. For \eqref{r1r2_2nd}, the all-at-once matrix and the
corresponding \pd are
\begin{subequations}
\begin{equation}\label{KP_2nd_a}
\begin{split}
&\CK=\tilde{B}\otimes r_1(\Delta t^2A)-B\otimes r_2(\Delta t^2A),\\
&\CP_\alpha=\tilde{C}_\alpha\otimes r_1(\Delta t^2A)-C_\alpha\otimes r_2(\Delta t^2A),
\end{split}
\end{equation}
where $B$ is the Toeplitz matrix from \eqref{ParaDiagII_AAA_b}, and
$C_\alpha$ is the $\alpha$-circulant matrix of $B$ (see
\eqref{ParaDiagII_AAA_c}). The matrices $\tilde{B}$ and
$\tilde{C}_\alpha$ are defined as
\begin{equation}\label{KP_2nd_b}
\begin{split}
\tilde{B}:=
\begin{bmatrix}
1 & & & &\\
0 &1 & & &\\
1 &0 &1 & &\\
&\ddots &\ddots &\ddots &\\
& &1 &0 &1
\end{bmatrix},\quad
\tilde{C}_\alpha:=
\begin{bmatrix}
1 & & &\alpha &\\
0 &1 & & &\alpha\\
1 &0 &1 & &\\
&\ddots &\ddots &\ddots &\\
& &1 &0 &1
\end{bmatrix}.
\end{split}
\end{equation}
\end{subequations}
According to \eqref{diag_Calp_a}-\eqref{diag_Calp_b}, we can
simultaneously diagonalize $C_\alpha$ and $\tilde{C}_\alpha$. Thus,
for the stationary iteration \eqref{ParaDiagII_SI}, we can solve the
preconditioning step $\CP_\alpha^{-1}{\bm r}^k$ using the
diagonalization procedure (cf. \eqref{ParaDiagII3step_alp}) as well.

The preconditioner $\CP_{\alpha}$ used in the ParaDiag II method
involves substituting the Toeplitz matrix within the all-at-once
matrix $\CK$ with a circulant (or $\alpha$-circulant) matrix, while
keeping the space matrices unchanged. This substitution, which
approximates a pointwise Toeplitz matrix $B$ by a circulant (or
$\alpha$-circulant) matrix $C$, is a natural approach that dates back
to \cite{Strang86}. The spectrum of the preconditioned matrix
$C^{-1}B$ has been extensively examined by researchers over the past
three decades, yielding fruitful results; see the survey paper
\cite{CN96} and the monographs \cite{Ng04,BLM05} for more details.

For blockwise Toeplitz matrices, where all blocks are Toeplitz
(referred to as BTTB matrices), the circulant preconditioner is
obtained by approximating each block by a circulant matrix, analogous
to the approach used in ParaDiag II. Spectral analyses of such
preconditioned matrices can be found in \cite{CN96} and
\cite{Ng04}. However, in the context of ParaDiag II, the blocks (e.g.,
$r_1(\Delta t A)$ and $r_2(\Delta tA)$ in \eqref{r1r2}) are not
Toeplitz. In this scenario, there is a lack of systematic results
regarding the eigenvalues of $\CP_{\alpha}^{-1}\CK$, and the work in
\cite{mcdonald2018preconditioning} explores this for $\alpha=1$.

Since \cite{mcdonald2018preconditioning} and
\cite{gander2019convergence}, a lot of efforts have been put into
analyzing the spectrum of $\CP_{\alpha}^{-1}\CK$. Examples include
\cite{GWJCP20,LN21,WZ21,WZhou21,DSW22,BSM23,HPer2024} for
parabolic problems and \cite{DW21,liu2020fast} for hyperbolic
problems. The analyses are intricate and rely heavily on special
properties of the time-integrator, such as sparsity, Toeplitz
structure, and diagonal dominance of the time stepping matrix.

A comprehensive spectral analysis of the preconditioned matrix
$\CP_{\alpha}^{-1}\CK$ for both first-order and second-order
problems can be found in \cite{WZZ22}, with results that hold for any
stable one-step time-integrator for first order systems of ODEs, and
two-step symmetric time-integrator for second order systems of ODEs.
\begin{theorem}\label{WZZ22}
{\em For the first-order system of ODEs ${\bm u}'(t)=A{\bm
    u}(t)+{\bm g}(t)$, if the one-step time-integrator
  \eqref{r1r2_1st} is stable, i.e., $|r_1^{-1}(z)r_2(z)|\leq 1$ for
  $z\in\sigma(\Delta tA)\subset\mathbb{C}^-$, then the eigenvalues of
  the preconditioned matrix satisfy
\begin{equation}\label{eigBound}
\frac{1}{1-\alpha}\leq|\lambda(\CP_{\alpha}^{-1}\CK)|\leq\frac{1}{1+\alpha},
\end{equation}
where $\CK$ is the all-at-once matrix of the time-integrator
\eqref{r1r2_1st}, and $\CP_{\alpha}$ is the block $\alpha$-circulant
matrix given by \eqref{ParaDiagII_AAA_c} with $\alpha\in(0,
1)$. Similarly, for the second-order system of ODEs ${\bm
  u}''(t)=A{\bm u}(t)+{\bm g}(t)$, if the two-step method
\eqref{r1r2_2nd} is stable, i.e., $|r_1^{-1}(z)r_2(z)|\leq 2$
($\forall z\in\sigma(\Delta t^2A)\subset\mathbb{R}^-$ and
$|r_1^{-1}(z)r_2(z)|=2$ only if $z=0$), then the eigenvalues of the
preconditioned matrix $\CP_{\alpha}^{-1}\CK$ (with $\CP$ and $\CK$
given by \eqref{KP_2nd_a}-\eqref{KP_2nd_b}) also satisfy the bounds in
\eqref{eigBound}.  }
\end{theorem}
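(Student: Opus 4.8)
The two bounds in \eqref{eigBound} are the same statement about the modulus of an eigenvalue $\mu$ of $\CP_\alpha^{-1}\CK$, and the plan is to reduce both to a \emph{scalar} root--location fact about the time--integrator. The key observation is that subtracting $\mu$ times the block $\alpha$--circulant preconditioner from $\CK$ gives, up to the scalar factor $(1-\mu)$, a preconditioner of exactly the same block $\alpha$--circulant form but with a \emph{shifted} circulant parameter. First I would record that $\CP_\alpha$ (and $\CK$) are invertible: after diagonalizing $C_\alpha$, the diagonal blocks of $\CP_\alpha$ are $r_1(\Delta tA)-\lambda r_2(\Delta tA)=r_1(\Delta tA)\bigl(I_x-\lambda r_1^{-1}(\Delta tA)r_2(\Delta tA)\bigr)$ with $|\lambda|=\alpha^{1/N_t}<1$, which is nonsingular because $|r_1^{-1}(z)r_2(z)|\le1$ on $\sigma(\Delta tA)$ and $r_1(z)\neq0$ for $z\in\mathbb{C}^-$; likewise $\CK$ is block--triangular with invertible diagonal block $r_1(\Delta tA)$. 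Hence $\mu\notin\{0\}$ is an eigenvalue of $\CP_\alpha^{-1}\CK$ iff $\det(\CK-\mu\CP_\alpha)=0$, and $\mu=1$ is always such a value (it lies in the claimed annulus, so it can be set aside).

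\textbf{First--order case.} Write $C_\alpha=B+\alpha\,e_1e_{N_t}^\top$ with $B$ the nilpotent lower shift from \eqref{ParaDiagII_AAA_b}. Then
$$
\CK-\mu\CP_\alpha=(1-\mu)\,I_t\otimes r_1(\Delta tA)-\bigl(B-\mu C_\alpha\bigr)\otimes r_2(\Delta tA),
$$
and, for $\mu\neq1$, $B-\mu C_\alpha=(1-\mu)B-\mu\alpha\,e_1e_{N_t}^\top=(1-\mu)\,C_{-\beta}$ with $\beta:=\tfrac{\mu\alpha}{1-\mu}$, where $C_{-\beta}=B-\beta\,e_1e_{N_t}^\top$ is the Strang--type $(-\beta)$--circulant. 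Thus $\CK-\mu\CP_\alpha=(1-\mu)\,\CP_{-\beta}$, so $\mu\neq1$ is an eigenvalue iff $\CP_{-\beta}$ is singular. Since $\mu\neq0$ gives $\beta\neq0$, we may diagonalize $C_{-\beta}=V_{-\beta}D_{-\beta}V_{-\beta}^{-1}$ using \eqref{diag_Calp_a}--\eqref{diag_Calp_b}, with $D_{-\beta}$ holding the $N_t$-th roots of $-\beta$, all of modulus $|\beta|^{1/N_t}$; singularity of $\CP_{-\beta}$ then forces $r_1(\Delta tz)=\lambda\,r_2(\Delta tz)$ for some $z\in\sigma(A)$ and some such root $\lambda$, i.e.\ $r_1^{-1}(\Delta tz)r_2(\Delta tz)=1/\lambda$. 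Stability $|r_1^{-1}(\Delta tz)r_2(\Delta tz)|\le1$ gives $|\lambda|\ge1$, hence $|\beta|=|\lambda|^{N_t}\ge1$, i.e.\ $\alpha|\mu|\ge|1-\mu|$. By the reverse triangle inequality this yields $|\mu|\le\tfrac1{1-\alpha}$ when $|\mu|\ge1$ and $|\mu|\ge\tfrac1{1+\alpha}$ when $|\mu|<1$, which together with $\mu=1$ is precisely \eqref{eigBound}.

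\textbf{Second--order case.} The argument is structurally identical, using $\tilde B=I_t+B^2$ and $\tilde C_\alpha=I_t+C_\alpha^2$; one checks that the two corner entries of $\tilde C_\alpha$ in \eqref{KP_2nd_b} are exactly the off--diagonal terms produced by $C_\alpha^2=B^2+\alpha(e_2e_{N_t}^\top+e_1e_{N_t-1}^\top)$. With the same $\beta$ one gets $\tilde B-\mu\tilde C_\alpha=(1-\mu)(I_t+C_{-\beta}^2)$, whence $\CK-\mu\CP_\alpha=(1-\mu)\bigl((I_t+C_{-\beta}^2)\otimes r_1(\Delta t^2A)-C_{-\beta}\otimes r_2(\Delta t^2A)\bigr)$. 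Diagonalizing $C_{-\beta}$ as before, singularity requires $(1+\lambda^2)r_1(\Delta t^2z)=\lambda\,r_2(\Delta t^2z)$ for some $z\in\sigma(A)$ and some root $\lambda$ of $\zeta^{N_t}=-\beta$, i.e.\ $\lambda+\lambda^{-1}=r_1^{-1}(\Delta t^2z)r_2(\Delta t^2z)=:s$. Since $\sigma(\Delta t^2A)\subset\mathbb{R}^-$ and $r_1,r_2$ have real coefficients, $s$ is real with $|s|\le2$ by the two--step stability hypothesis; hence the two roots of $\lambda^2-s\lambda+1=0$ form a complex--conjugate pair whose product is $1$, so $|\lambda|=1$ (also at the borderline $z=0$, where $\lambda=\pm1$ is a double root). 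Therefore $|\beta|=|\lambda|^{N_t}=1$, i.e.\ $\alpha|\mu|=|1-\mu|$, and $\mu$ lies on the Apollonius circle through $\tfrac1{1+\alpha}$ and $\tfrac1{1-\alpha}$, whose modulus range is again $[\tfrac1{1+\alpha},\tfrac1{1-\alpha}]$.

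\textbf{Main obstacle.} The crux is the identity $\CK-\mu\CP_\alpha=(1-\mu)\,\CP_{-\beta}$: once one sees that the $\mu$--pencil of $(\CK,\CP_\alpha)$ is again a block $\alpha$--circulant preconditioner with parameter $-\tfrac{\mu\alpha}{1-\mu}$, everything collapses to locating roots of $\zeta^{N_t}=-\beta$ on the unit circle. The step I expect to need the most care is the second--order analysis: establishing $\tilde C_\alpha=I_t+C_\alpha^2$ at the boundary rows (so that the diagonalization of $C_{-\beta}$ really diagonalizes the whole pencil), and recognizing that the two--step stability bound $|r_1^{-1}r_2|\le2$ with equality only at $z=0$ is exactly a Schur--Cohn condition forcing $|\lambda|=1$ — including the degenerate case $z=0$ with its repeated root. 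A secondary point is to keep the argument valid without assuming $A$ is diagonalizable, which the determinant formulation handles since $\det\bigl(r_1(\Delta tA)-\lambda r_2(\Delta tA)\bigr)=\prod_{z\in\sigma(A)}\bigl(r_1(\Delta tz)-\lambda r_2(\Delta tz)\bigr)$ regardless of the Jordan structure of $A$.
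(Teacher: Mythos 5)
The paper itself does not prove Theorem \ref{WZZ22}; it only states the bounds and defers to the cited reference, so there is no in-paper argument to compare against. Judged on its own, your proof is correct and self-contained, and the route is attractive: the pencil identity $\CK-\mu\CP_\alpha=(1-\mu)\CP_{-\beta}$ with $\beta=\mu\alpha/(1-\mu)$ (and, in the second-order case, $\tilde B=I_t+B^2$, $\tilde C_\alpha=I_t+C_\alpha^2$, hence $\tilde B-\mu\tilde C_\alpha=(1-\mu)(I_t+C_{-\beta}^2)$ — your corner-entry check of $C_\alpha^2$ is right) reduces everything to locating the $N_t$-th roots of $-\beta$ relative to the unit circle, and the stability hypotheses do exactly that: $|r_1^{-1}r_2|\le1$ forces $|\lambda|\ge1$, hence $\alpha|\mu|\ge|1-\mu|$, while $|r_1^{-1}r_2|\le2$ with real $s$ forces $|\lambda|=1$, hence $\alpha|\mu|=|1-\mu|$; both place $\mu$ in the stated annulus, and $\mu=0,1$ are handled correctly. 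Three small points you should record explicitly. (i) As printed, \eqref{eigBound} has the two bounds interchanged and is empty for $\alpha\in(0,1)$; what you prove is the intended statement $\tfrac1{1+\alpha}\le|\lambda(\CP_\alpha^{-1}\CK)|\le\tfrac1{1-\alpha}$, and in fact the stronger disk inequality $|1-\lambda|\le\alpha|\lambda|$, which is what is actually needed later to conclude $\rho(\CM)\le\tfrac{\alpha}{1-\alpha}$ — the annulus alone would not give that. (ii) In the second-order case you also need $\CP_\alpha$ invertible to equate eigenvalues of $\CP_\alpha^{-1}\CK$ with zeros of $\det(\CK-\mu\CP_\alpha)$; this follows from the same root-location fact, since the eigenvalues of $C_\alpha$ have modulus $\alpha^{1/N_t}<1$ whereas $\lambda+\lambda^{-1}=s$ with $s$ real and $|s|\le2$ forces $|\lambda|=1$, but it should be stated, just as you did for the first-order case. (iii) Your step ``$s$ is real'' genuinely uses that $r_1,r_2$ have real coefficients (together with $\sigma(\Delta t^2A)\subset\mathbb{R}^-$); this is implicit in the paper's setting of real symmetric two-step integrators such as \eqref{Numerov}, but it is not a consequence of the modulus bound alone, since for complex $s$ with $|s|\le2$ the roots of $\lambda^2-s\lambda+1=0$ need not lie on the unit circle, so the assumption deserves a sentence.
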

For the stationary iteration \eqref{ParaDiagII_SI}, the iteration
matrix $\CM$ is given by
\begin{equation}\label{Iter_Mat}
\CM= \CI- \CP^{-1}_{\alpha}\CK,
\end{equation}
and based on \eqref{eigBound}, we get $\rho(\CM) \leq
\frac{\alpha}{1-\alpha}$. This explains the faster convergence of the
ParaDiag II head-tail waveform relaxation method \eqref{ParaDiagII_WR}
when $\alpha$ is small, as we have seen in Figure
\ref{Fig_ParaDiagII_WR}. The stability of the underlying
time-integrator serves as a sufficient condition for the eigenvalue
bounds of the preconditioned matrix $\CP^{-1}_{\alpha}\CK$ in Theorem
\ref{WZZ22}, or equivalently, the iteration matrix
$\CM$. Numerically, we find that stability is also a necessary
condition. We illustrate this now for the Numerov-type method
\eqref{Numerov} applied to a second-order problem with $A$ being a
centered finite difference discretization of the Laplacian with
Dirichlet boundary conditions, i.e., $\frac{1}{\Delta x^2}A \approx
\partial_{xx}$. Setting $\Delta t = \frac{1}{16}$, $\Delta x =
\frac{1}{128}$, and $\alpha = 0.02$, we show in Figure
\ref{Fig_ParaDiagII_WaveStability}
\begin{figure}
  \centering
 \includegraphics[width=1.5in,height=1.15in,angle=0]{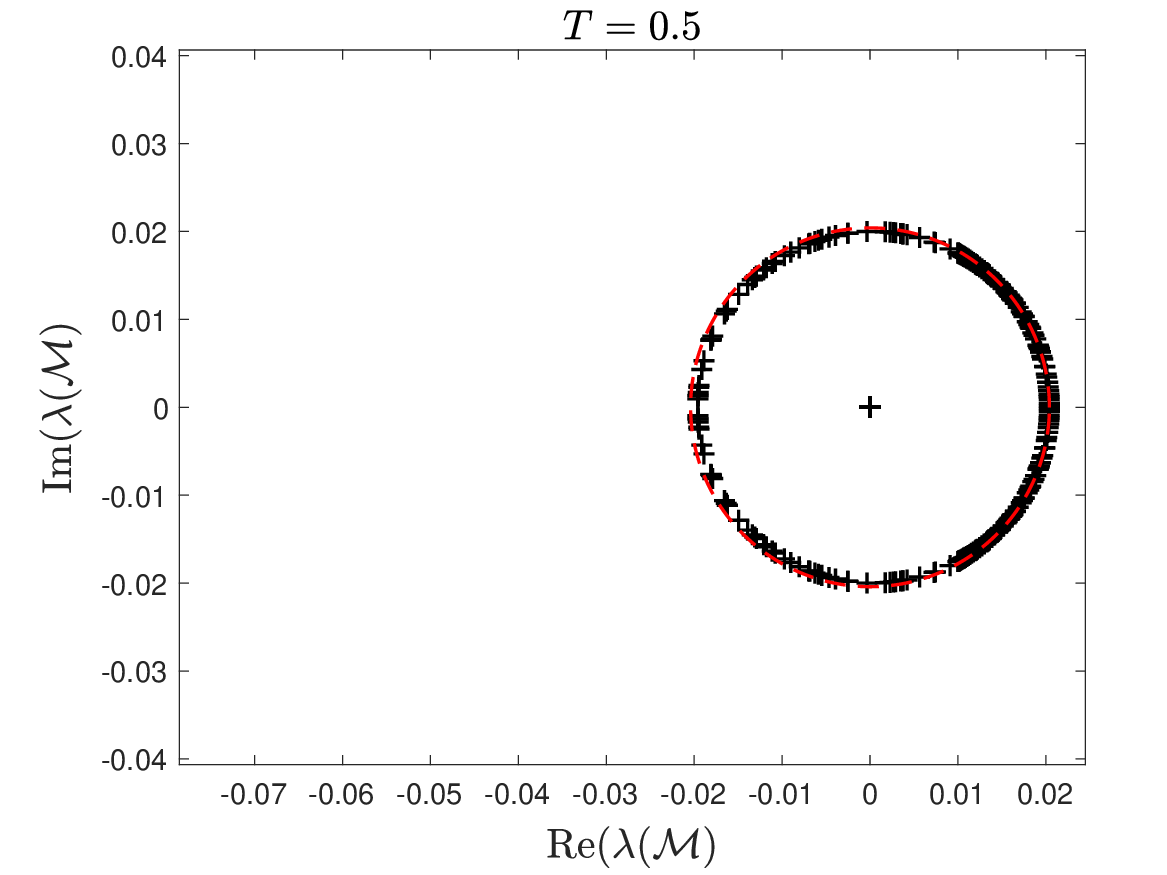}~ \includegraphics[width=1.5in,height=1.15in,angle=0]{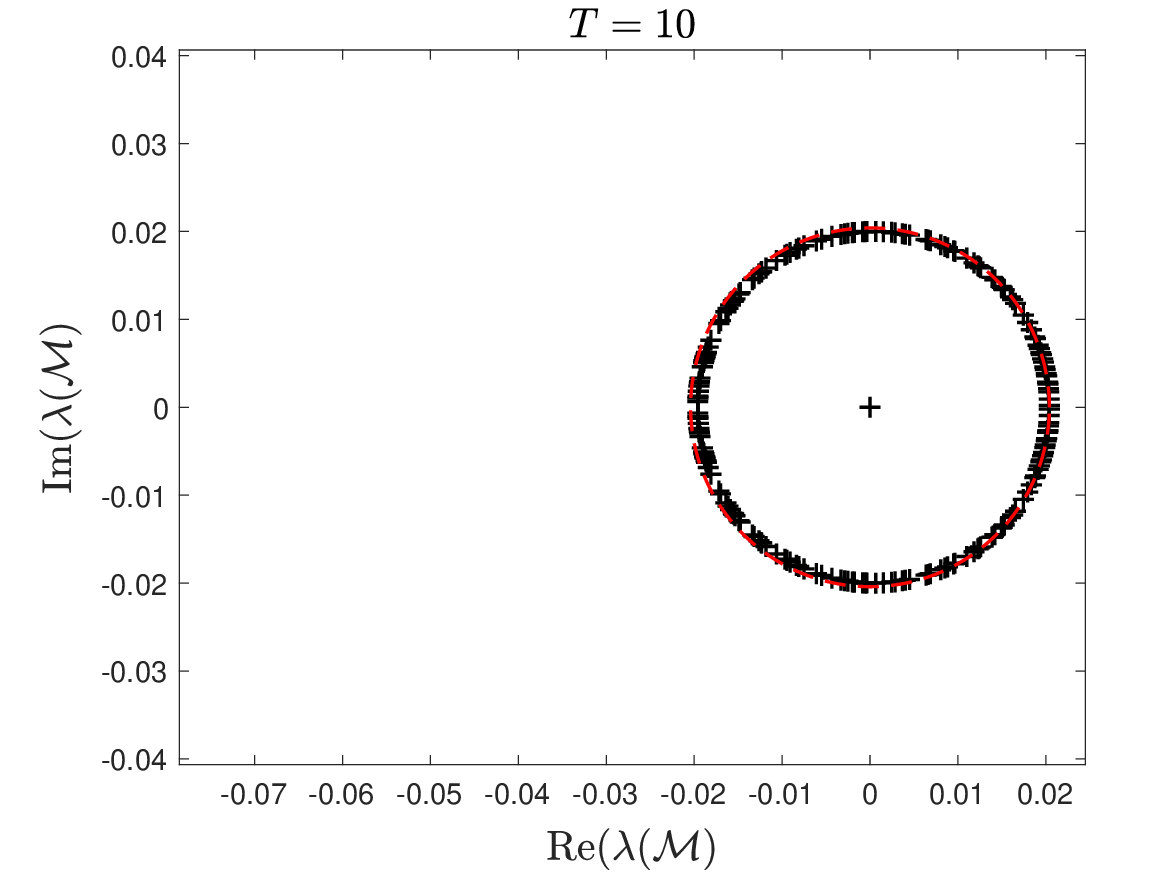}~ \includegraphics[width=1.5in,height=1.15in,angle=0]{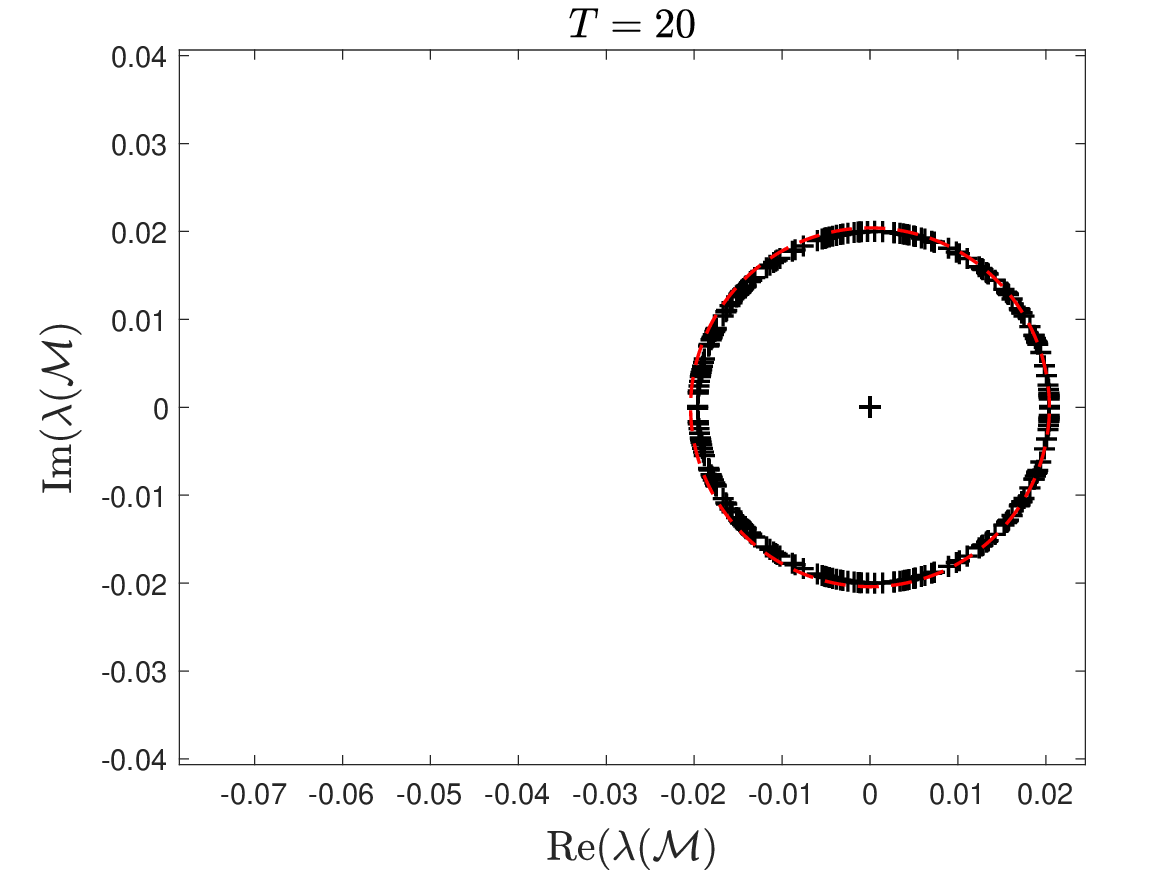}\\
 \includegraphics[width=1.5in,height=1.15in,angle=0]{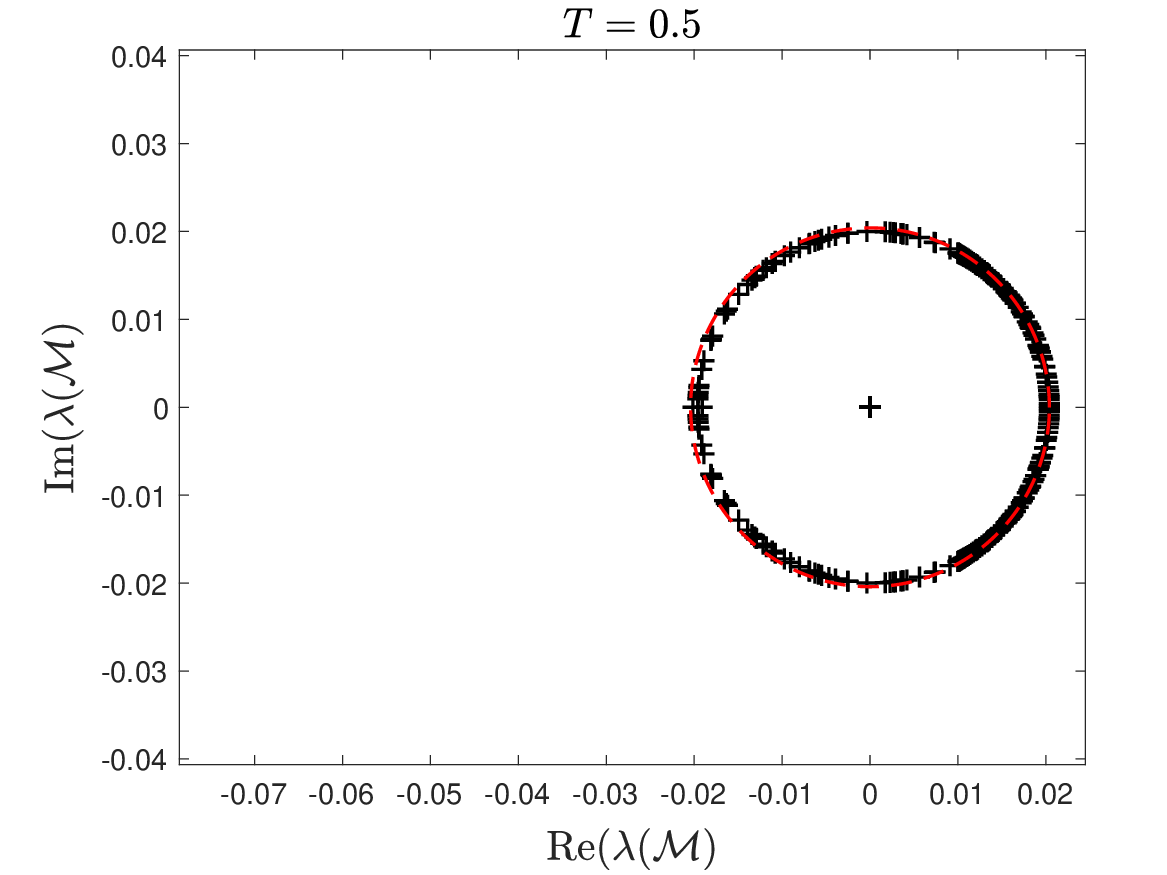}~ \includegraphics[width=1.5in,height=1.15in,angle=0]{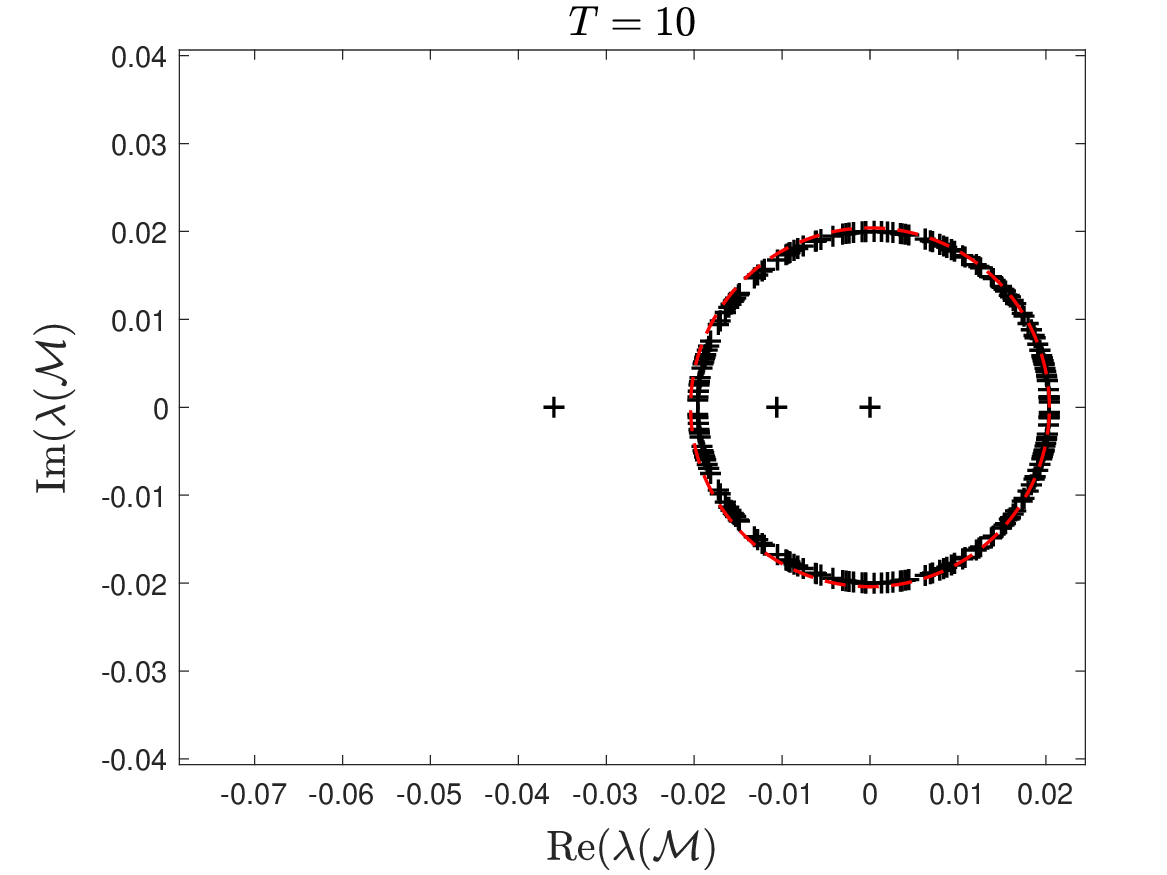}~ \includegraphics[width=1.5in,height=1.15in,angle=0]{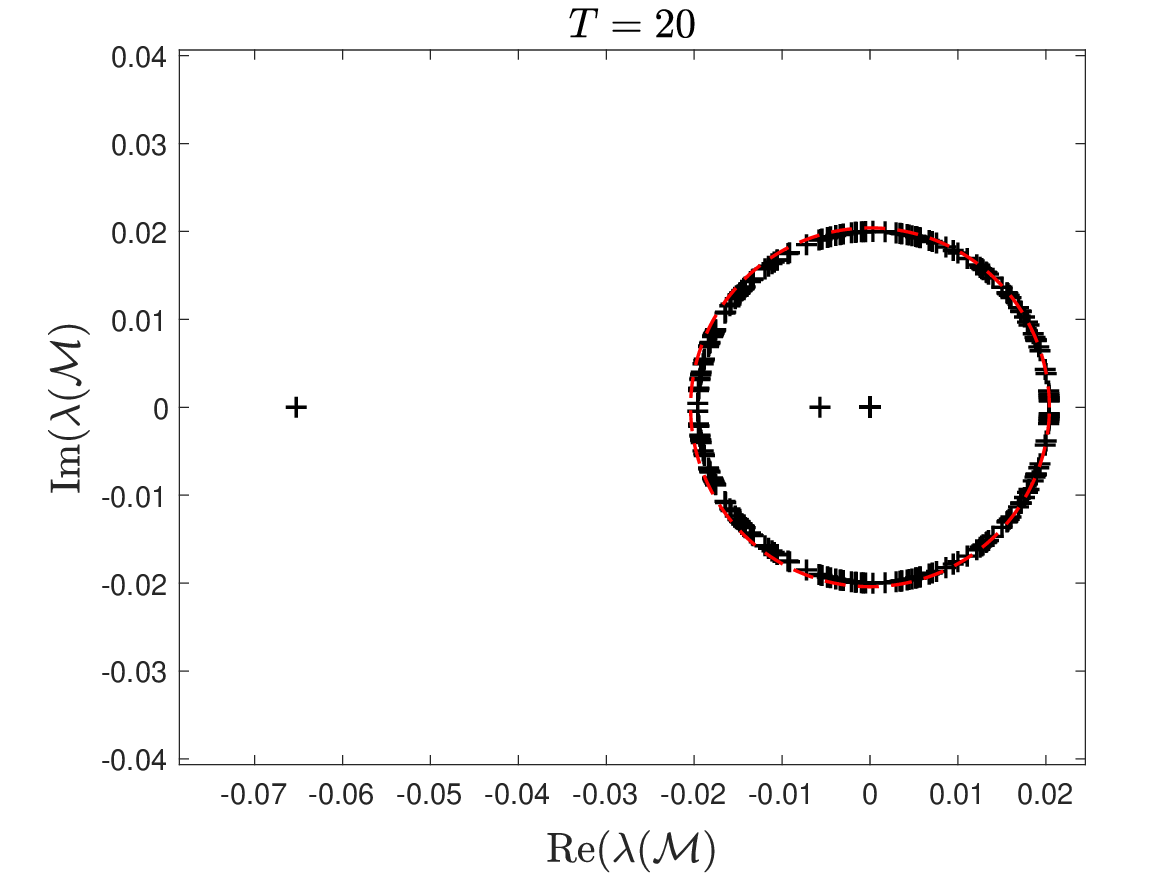}
  \caption{Eigenvalues of the iteration matrix $\CM$
    (cf. \eqref{Iter_Mat}) for the wave equation
    \eqref{WaveEquation1d}. Top row: $\gamma = \frac{1}{120}$. Bottom
    row: $\gamma = \frac{1}{120.01}$. In each panel, the dashed line
    represents the circle with radius $\frac{\alpha}{1-\alpha}$.}
  \label{Fig_ParaDiagII_WaveStability}
\end{figure}
the eigenvalues of the iteration matrix $\CM$ for $T = 0.5, 10,$ and
$20$, using the two values $\gamma = \frac{1}{120}$ and $\gamma =
\frac{1}{120.01}$ for the Numerov-type method, where according to
\cite{Chawla83}, $\gamma = \frac{1}{120}$ represents a stability
threshold for the Numerov-type method. For this threshold value (top
row of Figure \ref{Fig_ParaDiagII_WaveStability}), all eigenvalues of
$\CM$ lie within the theoretically analyzed circle. However, with
$\gamma = \frac{1}{120.01}$ (slightly below the threshold), the
Numerov-type method looses unconditional stability, and the results in
the bottom row clearly indicate that the eigenvalue bounds
\eqref{eigBound} no longer hold for large $T$.

The eigenvalue bounds \eqref{eigBound} indicate that the ParaDiag II
method converges faster when $\alpha$ decreases. This is true within a
certain range of $\alpha$, such as $\alpha\in[10^{-3}, 10^{-1}]$, as
shown earlier (cf. Figure \ref{Fig_ParaDiagII_WR}). However, $\alpha$
cannot be arbitrarily small due to roundoff errors arising from
diagonalizing the $\alpha$-circulant matrix. Specifically, for any
diagonalizable square matrix $P$, floating point operations limit the
precision of its factorization $P \approx VDV^{-1}$. For the
$\alpha$-circulant matrix $C_\alpha$ (cf. \eqref{ParaDiagII_AAA_c}),
even though its eigenvalues and eigenvectors have closed-form
expressions, the difference between $C_\alpha$ and $V_\alpha D_\alpha
V^{-1}_\alpha$ grows linearly as $\alpha$ decreases,
the roundoff error $\text{err}_{\text{ro}}$ behaves like
$$
\text{err}_{\text{ro}} = \mathcal{O}(\epsilon \text{Cond}_2(V_\alpha)) = \mathcal{O}\left(\frac{\epsilon}{\alpha}\right),
$$
where $\epsilon$ is the machine precision (e.g., $\epsilon = 2.2204
\times 10^{-16}$ for double precision), and the equality follows from
\cite{gander2019convergence} and the fact that $V_\alpha =
\Lambda_\alpha \text{F}^*$ (cf. \eqref{diag_Calp_b}) with
$\text{Cond}_2(\text{F}^*) = 1$, implying $\text{Cond}_2(V_\alpha) =
\frac{1}{\alpha}$. Interestingly, this does not necessarily imply a
similar growth in the roundoff error of the ParaDiag II method. The
error behavior depends on the implementation: directly solving for
${\bm U}^{k}$ as in
\eqref{ParaDiagII_AAA_WR_a}-\eqref{ParaDiagII_AAA_WR_b} may lead to
the mentioned growth, while first solving the error equation for
$\Delta {\bm U}^{k-1}$ and then updating ${\bm U}^k$
(cf. \eqref{ParaDiagII_SI}) can significantly mitigate the roundoff
error; see Figure \ref{Fig_ParaDiagII_Roundoff} for illustration.
\begin{figure}
  \centering
 \includegraphics[width=2.3in,height=1.85in,angle=0]{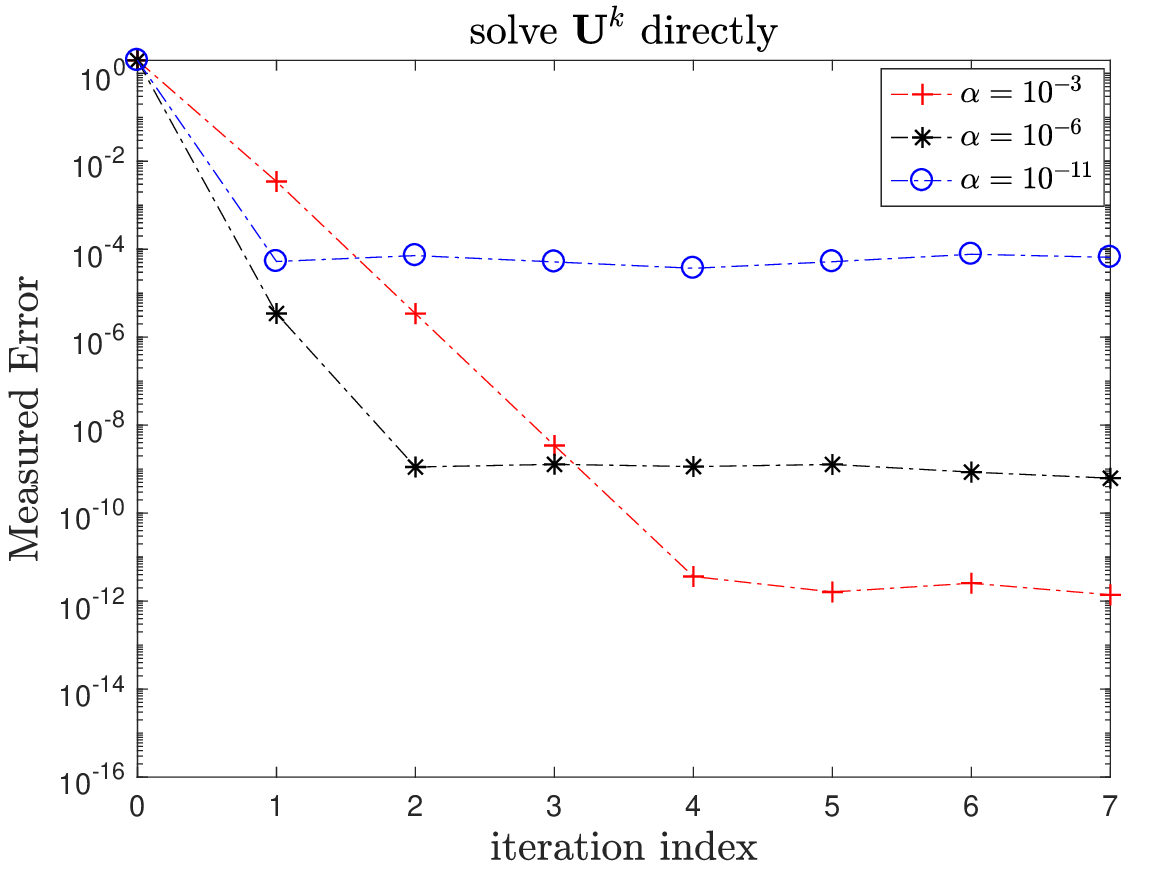}~ \includegraphics[width=2.3in,height=1.85in,angle=0]{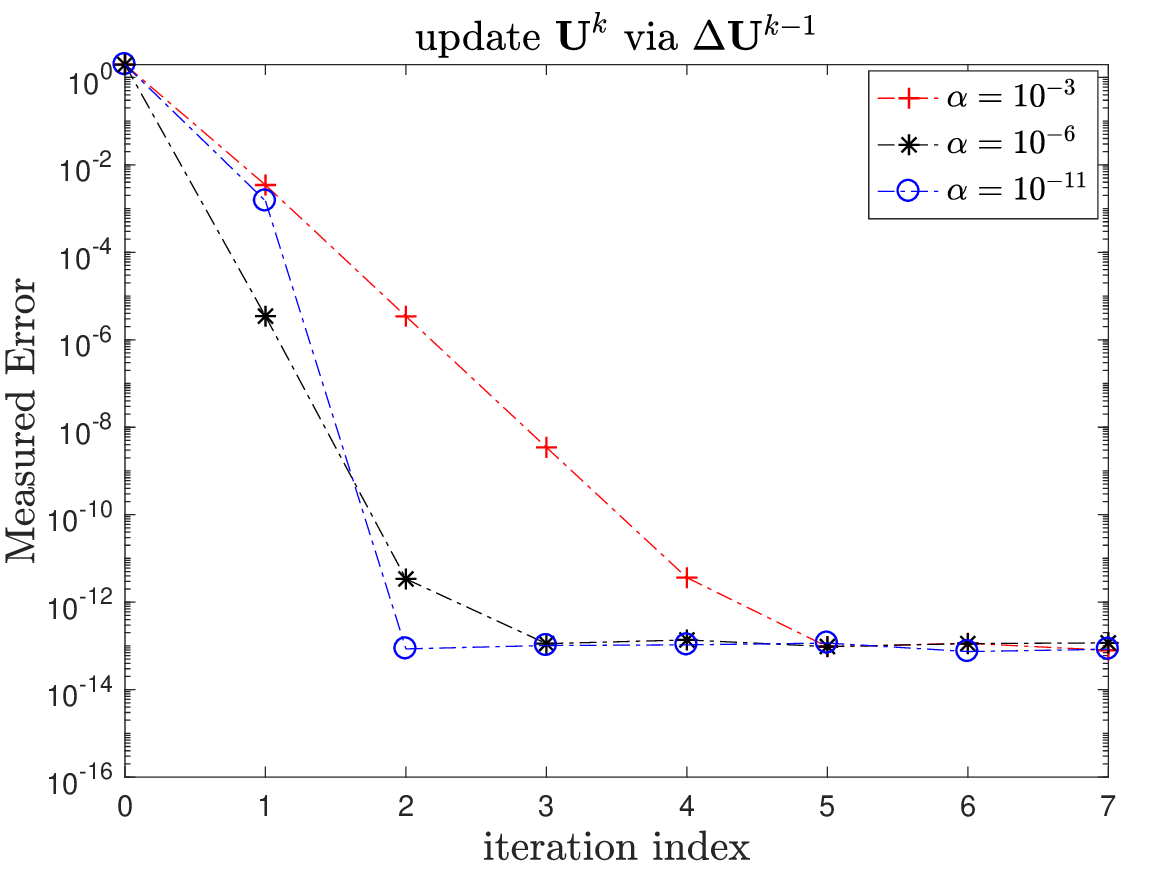} 
 \caption{ Measured error of ParaDiag II for the wave equation
   \eqref{WaveEquation1d} implemented in two modes for three values of
   $\alpha$.  }
  \label{Fig_ParaDiagII_Roundoff}
\end{figure}
A comprehensive study of the roundoff error for ParaDiag II
will appear in \cite{WYZ24}.

So far, we have only considered one-step and symmetric two-step
time-integrators. For general multistep methods, the eigenvalues of
the preconditioned matrix $\CP_\alpha^{-1}\CK$ do not necessarily
satisfy \eqref{eigBound}, yet we can demonstrate analogous
results. For example the case where $\CK=B\otimes I_x-I_t\otimes A$,
with $B$ being a \emph{dense} lower triangular Toeplitz matrix was
studied in \cite{GWJCP20}. This matrix arises in solving Volterra
partial integro-differential equations, and its first column
${\bm\omega}=(\omega_0, \omega_1,\dots, \omega_{N_t})$ is determined
by the quadrature used to handle the integral term. The authors
established a bound for the eigenvalues of the form
$|\lambda(\CP_\alpha^{-1}\CK)|= 1+\mathcal{O}(\alpha)$, provided that
the quantities $\{\omega_n\}$ satisfy certain conditions, such as
positivity and monotonicity.

Turning to nonlinear problems, the application of ParaDiag II closely
resembles that of ParaDiag I. We illustrate this for the first-order
problem ${\bm u}'(t)=f(t,{\bm u}(t))$ discretized using Backward Euler
with a step size $\Delta t$. Initially, we apply Newton's iteration
to the nonlinear all-at-once system,
\begin{equation}\label{ParaDiagII_nonlinear}
\mathcal{J}\Delta{\bm U}^{l}={\bm b}-F({\bm U}^l), \quad {\bm U}^{l+1}={\bm U}^l+\Delta{\bm U}^l,
\end{equation}
where the Jacobian matrix $\mathcal{J} := B\otimes I_x - \nabla F_l$
(cf. \eqref{NewtonIt_a}-\eqref{NewtonIt_b}), with
\begin{equation*}
\begin{split}
\nabla F_l &= {\rm blkdiag}(\nabla f({\bm u}_1^l, t_1), \dots, \nabla f({\bm u}_{N_t}^l, t_{N_t})), \\
B &= \frac{1}{\Delta t}\begin{bmatrix}
1 & & & \\
-1 & 1 & & \\
& \ddots & \ddots & \\
& & -1 & 1
\end{bmatrix}.
\end{split}
\end{equation*}
Then, we solve \eqref{ParaDiagII_nonlinear} with GMRES using
$\CP_\alpha = C_\alpha\otimes I_x - I_t\otimes A_l$ as preconditioner,
where $C_\alpha$ is the $\alpha$-circulant matrix of $B$ and $A_l$ is
the average matrix of $\{\nabla f({\bm u}_n^l, t_n)\}$. In general, we
cannot use the stationary iteration \eqref{ParaDiagII_SI} to solve the
Jacobian system \eqref{ParaDiagII_nonlinear} since
$\rho(\CP_\alpha^{-1}\mathcal{J}) > 1$. However, the eigenvalues of
$\CP_\alpha^{-1}\mathcal{J}$ are clustered, which is good for
GMRES. The eigenvalue distribution of $\CP_\alpha^{-1}\mathcal{J}$ is
influenced by the length of the time interval $T$, with a shorter
$T$ leading to more clustered eigenvalues and thus faster GMRES
convergence. Numerical evidence supporting this aspect can be found in
\cite{gander2019convergence} and \cite{WZZ22}. Additionally, we can
leverage the nearest Kronecker product approximation introduced in
Section \ref{sec3.6.1} to accelerate convergence; see \cite{LWu22}.

\section{PinT methods designed for parabolic problems} \label{Sec4}

We have shown in Section \ref{Sec2} intuitively why realizing PinT
computations for hyperbolic problems is more challenging than for
parabolic problems: parabolic problems tend to have local solutions in
time, except for very low frequency components, whereas hyperbolic
problems have highly non-local solutions in time, and this over all
frequency components, from the lowest to the highest
ones. Nevertheless, we have shown in Section \ref{Sec3} PinT methods
that are effective for hyperbolic problems, and thus tackle all
frequency components in a non-local way in time. Naturally, these
methods perform then often even better when applied to parabolic
problems, since they tackle all frequency components over long time,
which includes the few very low frequency components that are highly
non-local in time in parabolic problems.  The methods we have seen so
far were however often designed for linear problems, where they are
most effective, whereas for nonlinear problems, they all suffer from
certain drawbacks. For example, for OSWR it is not easy to
determine the optimized Robin parameters, and without a reasonable
parameter, the convergence rate can be quite poor. For ParaExp and
ParaDiag I and II, non-linearity also affects the convergence rate of
the Newton iteration used as an outer solver, and in particular,
the Newton iteration may converge slowly or even diverge when the time
interval is large. We show in this section now PinT methods that were
designed for parabolic problems and take advantage of their properties
to be local in time as we have seen in Section \ref{Sec2}, and they work
equally well for linear and non-linear problems. They have entirely
different convergence mechanisms and properties from the methods in
Section \ref{Sec3}, and a direct application of these methods to
hyperbolic problems often leads to slow convergence or even
divergence.

\subsection{Historical development}

The first method we want to introduce is the Parareal algorithm
from \cite{Lions:2001:PTD}, which we have mentioned already in
  Section \ref{Sec3.5} to describe the nonlinear ParaExp variant. Even
  though Parareal was invented independently, it has its roots in
  earlier work on multiple shooting techniques for evolution problems,
  see \cite{Bellen:1989:PAI,Chartier:1993:APS}, and the algorithm was
  presented already in \cite{Saha:1996:API} with a coarse model
  instead of a coarse grid in the context of solar system simulations,
  mentioning a relation to Waveform Relaxation. A very early precursor
  is even \cite{Nievergelt:1964:PMI}, although the method there is not
  iterative.  Parareal, proposed 20 years ago, has attracted
  considerable attention in scientific and engineering
  computations. The convergence of Parareal is very well understood,
  see e.g.
  \cite{gander2007analysis,gander:2008:nca,gander2014analysis,Gander:TPTI:2024}. In
  a sense, Parareal can be regarded as a template for developing more
  efficient PinT methods. There are numerous modifications of Parareal
  in the literature to make it applicable to different problems or for
  different purposes. Interesting examples are the {\em Parallel
    Implicit Time integration Algorithm} (PITA), see
  \cite{farhat2003time,farhat2006time,cortial2009time}, the {\em
    Parallel Full Approximation Scheme in Space-Time} (PFASST), see
    \cite{minion2011hybrid,EM12,minion2015interweaving}, 
    {\em Multigrid Reduction in Time} (MGRiT), see
    \cite{FFK14,dobrev2017two,hessenthaler2020multilevel}, and also
  combinations of Parareal with ParaDiag
  \cite{WSiSC18,gander2020diagonalization}. We present the convergence
  mechanisms and convergence properties of Parareal and its variants
  in this section. The basic feature of PinT methods based on Parareal
  are that they use two grids (or more) for the time discretization,
  while for space \ds they use just one grid.  The idea of using
    multigrid in both space and time is going back to the {\em parabolic
    multigrid method} in \cite{Hackbusch:1984:PMG} with an elegant
    analysis in the form of {\em multigrid waveform relaxation} in
    \cite{Lubich:1987:MGD}. Coarsening in time was not effectively
    possible in this approach, and important improvements using
    multigrid techniques for highly advective problems were proposed
    in
\cite{vandewalle1994space,horton1995space,janssen1996multigrid,van2002multigrid}.
    A new {\em Space-Time MultiGrid} (STMG) method using just standard
    components but as a new main ingredient a block Jacobi smoother in
    time was introduced and analyzed in \cite{Gander:2016:AOANST}, and
    this is currently one of the most powerful PinT algorithms for
    parabolic problems, with excellent strong and weak scalability
    properties, see also \cite{neumueller2019time}. We will introduce
    STMG at the end of this section, and show its effectiveness also
    for non-linear parabolic problems.

\subsection{Parareal}\label{Sec4.1}

The Parareal algorithm proposed in \cite{Lions:2001:PTD} is a
non-intrusive time-parallel solver that is based on multiple shooting,
although it was not invented in this context, but in the context of
virtual control. In Parareal, the Jacobian in Newton's method used to
solve the shooting equations is approximated by a finite difference
across two iterates on a coarser grid or model
\cite{gander2007analysis}. Similar to ParaExp, it is based on a time
decomposition of the interval $(0,T)$ into several smaller time
intervals $0=T_0<T_1<\ldots<T_N=T$, with for example $T_n=T_0+n\Delta
t$. However, in contrast to ParaExp, it uses an iteration, starting
with an initial guess $\vec{U}_n^0$ at $T_n$. For iteration index
$k=0,1,\ldots$, Parareal computes improved approximations using the
update formula
\begin{equation}\label{Parareal}
  {\bm u}_{n+1}^{k+1} =   \CF(T_n, T_{n+1},{\bm u}_n^{k})
  +\CG(T_n, T_{n+1},  {\bm u}_n^{k+1})
  -\CG(T_n, T_{n+1},{\bm u}_n^k).
\end{equation}
Here, $\CF(T_n, T_{n+1}, {\bm u}_n^{k})$ represents an accurate solver
that uses a smaller step size $\Delta t$ for the underlying evolution
problem, with initial condition ${\bm u}_n^{k}$ at time $t=T_n$,
yielding an approximate solution at time $t=T_{n+1}$. Similarly, $\CG$
is a less expensive and less accurate solver that uses a larger step
size, for example $\Delta T$, or a simpler model, and the difference
of the two $\CG$ terms in \eqref{Parareal} represents precisely the
approximation of the Jacobian, see \cite{gander2007analysis}. Note
that in (\ref{Parareal}), all the computationally expensive $\CF$
solves can be performed in parallel, since at iteration $k$, the
approximations ${\bm u}_n^{k}$ are all known. For simplicity, we
consider here uniform fine and coarse time grids, assuming that each
large step size contains $J$ small steps, i.e., $\Delta T/\Delta
t=J\geq2$; see Figure \ref{twoGrids}.
\begin{figure}
  \centering
 \includegraphics[width=4in,height=1in,angle=0]{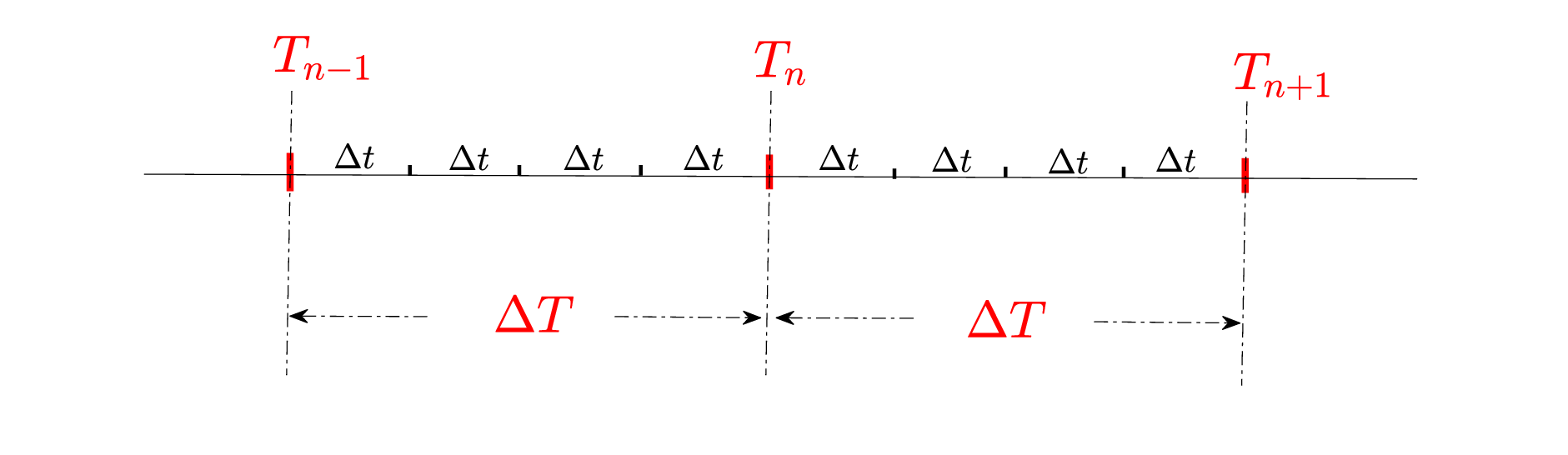} 
  \caption{Parareal uses two time grids, where each large time step size
    $\Delta T$ contains $J$ small time step sizes $\Delta t$.}
  \label{twoGrids}
\end{figure}
However, in principle, it is straightforward to apply
non-uniform time grids in Parareal, see
\cite{gander17,MMjcam20,WZ24}).

The convergence of Parareal is well understood, both for linear
problems \cite{gander2007analysis} and non-linear problems
\cite{gander:2008:nca}: it converges super-linearly on bounded time
intervals and linearly for parabolic problems on arbitrarily long time
intervals. Specifically, for the linear problem \eqref{linearODE},
i.e., ${\bm u}'(t)=A{\bm u}(t)+{\bm g}(t)$ with initial value ${\bm
  u}(0)={\bm u}_0$, and assuming that the fine and coarse solvers
$\CG$ and $\CF$ are  one-step  time integrators with stability
functions ${\rm R}_g(z)$ and ${\rm R}_f(z)$, we have the following
convergence results:

\begin{theorem}\label{PararealLinear}
  {\em Let $\{{\bm u}_{n}\}_{n=1}^{N_t}$ be the solutions computed
    sequentially by the $\CF$ solver, ${\bm u}_{n+1}=\CF(T_n, T_{n+1},
    {\bm u}_n)$. Suppose the matrix $A$ of the linear system of ODEs
    \eqref{linearODE} is diagonalizable, $A=V_ADV_A^{-1}$, and the
    coarse solver $\CG$ is stable, i.e., $|{\rm R}_g(z)|\leq 1$ for
    $z\in\sigma(\Delta TA)$. Then Parareal satisfies the convergence
    estimate
\begin{equation}\label{linearError}
{\small
\begin{split}
\max_{1\leq n\leq N_t}\|V_A({\bm u}_n^k-{\bm u}_n)\|_\infty\leq \max_{z\in\sigma(\Delta TA)}\|M^k(z)\|_\infty \max_{1\leq n\leq N_t}\|V_A({\bm u}_n^0-{\bm u}_n)\|_\infty,
\end{split}}
\end{equation}
where $M(z)$ is a Toeplitz matrix given by two matrices $M_g(z)$ and $M_f(z)$,  
\begin{equation}\label{matMz}
\begin{split}
&M(z):=M_g^{-1}(z)[M_g(z)-M_f(z)],\\
&M_g(z):=
\begin{bmatrix}
1 & & &\\
-{\rm R}_g(z) &1 & &\\
&\ddots &\ddots &\\
& &-{\rm R}_g(z) &1
\end{bmatrix},\\
&M_f(z):=
\begin{bmatrix}
1 & & &\\
-{\rm R}^J_f(z/J) &1 & &\\
&\ddots &\ddots &\\
& &-{\rm R}_f^J(z/J) &1 
\end{bmatrix}. 
\end{split}
\end{equation}
}
\end{theorem}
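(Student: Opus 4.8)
The plan is to turn the vector Parareal iteration into a family of scalar, time-indexed recurrences, one for each eigenvalue of $\Delta T A$, and then to recognize that the propagation of the error in the time-step index is exactly encoded by the bidiagonal Toeplitz matrices $M_g(z)$ and $M_f(z)$. First I would exploit linearity of $\bm u'=A\bm u+\bm g$: a one-step integrator acts as an affine map, so $\CG(T_n,T_{n+1},\bm v)={\rm R}_g(\Delta T A)\bm v+\bm c_g$ and $\CF(T_n,T_{n+1},\bm v)={\rm R}_f^J(\Delta T A/J)\bm v+\bm c_f$ (for $\CF$, the $J$ fine steps of size $\Delta t=\Delta T/J$ produce the $J$-th power of the fine stability function, written ${\rm R}_f^J$), with the affine parts $\bm c_g,\bm c_f$ depending only on $\bm g$ and the time interval. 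Subtracting the Parareal update \eqref{Parareal} from the fine relation $\bm u_{n+1}=\CF(T_n,T_{n+1},\bm u_n)$, the affine parts cancel, so the error $\bm e_n^k:=\bm u_n^k-\bm u_n$ satisfies
$$\bm e_{n+1}^{k+1}={\rm R}_g(\Delta T A)\,\bm e_n^{k+1}+\big({\rm R}_f^J(\Delta T A/J)-{\rm R}_g(\Delta T A)\big)\bm e_n^k,\qquad \bm e_0^k=\bm 0,$$
the last identity because the exact value $\bm u_0$ is used at $T_0$ in every iteration.

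Next I would diagonalize, putting $\tilde{\bm e}_n^k:=V_A^{-1}\bm e_n^k$; since ${\rm R}_g$ and ${\rm R}_f^J$ are rational functions of $A$, the recurrence decouples componentwise, and the $i$-th component obeys the scalar recurrence with $z=z_i\in\sigma(\Delta T A)$. Fixing such a $z$, writing $\hat e_n^k$ for that component, and stacking the $N_t$ values over the time points $T_1,\dots,T_{N_t}$ into $\hat E^k(z)$, the recurrence reads, for $n=1,\dots,N_t$ (using $\hat e_0^k=\hat e_0^{k+1}=0$ since $\bm e_0^k=\bm 0$),
$$\hat e_n^{k+1}-{\rm R}_g(z)\,\hat e_{n-1}^{k+1}=\big({\rm R}_f^J(z/J)-{\rm R}_g(z)\big)\hat e_{n-1}^k.$$
The left-hand side is the $n$-th entry of $M_g(z)\hat E^{k+1}(z)$, and the right-hand side the $n$-th entry of $\big(M_g(z)-M_f(z)\big)\hat E^k(z)$ (for $n=1$ both sides reduce to $\hat e_1^{k+1}=0$, which reflects that Parareal is already exact at $T_1$ after one step). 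Since $M_g(z)$ is lower bidiagonal with unit diagonal it is invertible, hence $\hat E^{k+1}(z)=M(z)\hat E^k(z)$ with $M(z)$ as in \eqref{matMz}, and by induction $\hat E^k(z)=M^k(z)\hat E^0(z)$.

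Finally I would take norms: for each $z$ and each $n$, $|\hat e_n^k(z)|\le\|\hat E^k(z)\|_\infty\le\|M^k(z)\|_\infty\,\|\hat E^0(z)\|_\infty\le\|M^k(z)\|_\infty\max_{1\le n\le N_t}\|V_A(\bm u_n^0-\bm u_n)\|_\infty$, since each entry of $\hat E^0(z)$ is one component of some $V_A(\bm u_n^0-\bm u_n)$. Taking the maximum over $n$ and over the (at most $N_x$) values $z\in\sigma(\Delta T A)$ then gives \eqref{linearError}. I do not expect a genuine obstacle; the argument is an exact algebraic rewriting of the iteration, and the only care needed is the index bookkeeping in the shift, where the vanishing $\bm e_0^k=\bm 0$ is exactly what makes the first rows of $M_g(z)$ and $M_g(z)-M_f(z)$ compatible. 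Note that the stability hypothesis $|{\rm R}_g(z)|\le 1$ is not used in establishing $\hat E^{k+1}(z)=M(z)\hat E^k(z)$; it is the natural assumption ensuring that the factor $\|M^k(z)\|_\infty$ on the right of \eqref{linearError} remains controlled, which in turn yields the familiar superlinear (on bounded intervals) and linear (for parabolic problems) convergence bounds.
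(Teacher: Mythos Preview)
Your proposal is correct and follows essentially the same route as the paper: derive the affine error recurrence, diagonalize via $A=V_A D V_A^{-1}$ to decouple into scalar recurrences indexed by $z\in\sigma(\Delta T A)$, stack the time-indexed scalar errors into a vector and recognize $M_g(z)\hat E^{k+1}=(M_g(z)-M_f(z))\hat E^k$, then take infinity norms and maximize over $z$. The only slip is cosmetic: having set $\tilde{\bm e}_n^k=V_A^{-1}\bm e_n^k$, the entries of $\hat E^0(z)$ are components of $V_A^{-1}(\bm u_n^0-\bm u_n)$, not of $V_A(\bm u_n^0-\bm u_n)$; the paper's proof has the same $V_A$ versus $V_A^{-1}$ inconsistency, so you are matching its statement rather than introducing a new error.
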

\begin{proof}
Applying the Parareal iteration \eqref{Parareal} to the system of ODEs
yields
$$
{\bm u}^{k+1}_{n+1}={\rm R}_f^J(\Delta TA/J){\bm u}^{k}_{n}
+{\rm R}_g(\Delta TA){\bm u}^{k+1}_{n}-{\rm R}_g(\Delta TA){\bm u}^{k}_{n}, \ n=0,1,\dots, N_t-1. 
$$
Since the overall fine solution computed sequentially by the $\CF$ solver
satisfies ${\bm u}_{n+1}=\CF(T_n, T_{n+1}, {\bm
  u}_n)$, it also satisfies by adding and subtracting the same term
$$
{\bm u}_{n+1}={\rm R}_f^J(\Delta TA/J){\bm u}_{n}
+{\rm R}_g(\Delta TA){\bm u}_{n}-{\rm R}_g(\Delta TA){\bm u}_{n}. 
$$
The error ${\bm e}_n^k:= {\bm u}_n-{\bm u}^k_n$ thus satisfies for
$k\geq0$ the error equation
$$
{\bm e}_{n+1}^{k+1}={\rm R}_g(\Delta TA){\bm e}_{n}^{k+1}+[{\rm R}_f^J(\Delta TA/J)-{\rm R}_g(\Delta TA)]{\bm e}_{n}^{k}, \quad n=0,1,\dots, N_t-1.
$$
For $n=0$,  ${\bm e}_0^k=0$, since the initial value is known. 
Since $A=V_ADV_A^{-1}$, we have 
$$
{\rm R}_g(\Delta TA)=V_A{\rm R}_g(\Delta TD)V_A^{-1},~{\rm R}_f^J(\Delta TA/J)=V_A{\rm R}_f^J(\Delta TD/J)V_A^{-1}.
$$
Hence, we obtain the error equations in scalar form,
$$
\xi_{n+1}^{k+1}(z)={\rm R}_g(z)\xi_{n}^{k+1}(z)+[{\rm R}_f^J(z/J)-{\rm R}_g(z)]\xi_{n}^{k}(z), \quad n=0,1,\dots, N_t-1,
$$
where $z=\Delta T\lambda$ with $\lambda$ being an arbitrary eigenvalue
of $A$ and $\xi_n^k(z)$ is the element of $V_A{\bm e}_n^k$
corresponding to $\lambda$. Clearly, it holds that
\begin{equation}\label{e_xi}
\|V_A{\bm e}^k_n\|_{\infty}={\max}_{z\in\sigma(\Delta TA)}|\xi_n^k(z)|. 
\end{equation}
Since $\xi^k_0(z)=0$ for $k\geq0$, we have $M_g(z){\bm
  \xi}^{k+1}(z)=[M_g(z)-M_f(z)]{\bm \xi}^k(z)$, which gives
$$
{\bm \xi}^{k+1}(z)=M^{-1}_g(z)[M_g(z)-M_f(z)]{\bm \xi}^k(z),
$$
where ${\bm \xi}^k(z)=(\xi_1^k(z), \xi_2^k(z), \dots, \xi_{N_t}^k(z))^\top$ for $k\geq0$. From \eqref{e_xi} we have
$$
\max_{1\leq n\leq N_t}\|V_A{\bm e}^k_n\|_{\infty}=\max_{z\in\sigma(\Delta TA)}\max_{1\leq n\leq N_t}|\xi_n^k(z)|=
\max_{z\in\sigma(\Delta TA)}\|{\bm \xi}^k(z)\|_\infty,
$$
which completes the proof of \eqref{linearError}. 
\end{proof}
From \eqref{linearError}, we see that the norm $\|M^k(z)\|_\infty$ represents the
convergence factor of the Parareal algorithm when applied to the
Dahlquist test equation $u'(t)=\lambda u(t)+g(t)$, where $\lambda$ is
an arbitrary eigenvalue of $A$.

\begin{remark}\label{ExplainParareal} 
{\em 
From \eqref{matMz} we can interpret the Parareal algorithm from the
perspective of a preconditioner by observing that
$$
 {M(z)=I_t-M_g^{-1}(z)M_f(z)}.
$$
For the Dahlquist test equation $u'(t)=\lambda u(t)+g(t)$, the matrix
$M_f(z)$ corresponds to the all-at-once matrix of the fine solver
$\CF$,
$$
M_f(z)U=b,
$$
where $U=(u_1,u_2,\dots, u_{N_t})^\top$ and $b$ is an appropriate
vector. Parareal can thus be written as
$$
M_g(z)\Delta U^k=r^k:=b-M_f(z)U^k,~U^{k+1}=U^k+\Delta U^k,
$$
and the parallelization stems from computing the residual $r^k$: given
$U^k$ from the previous iteration, all components of $r^k$ can be
computed simultaneously as $r^k_n=b_n-(u^k_n-\CF(T_{n-1}, T_n,
u_{n-1}^k))=b_n-(u^k_n-{\rm R}_f^J(z/J)u_{n-1}^k)$. This understanding
is valuable for designing new variants of Parareal, and we will
revisit this in Section \ref{Sec4.5}.  }
\end{remark}
Using $\|M^k(z)\|_\infty$ to predict the convergence behavior of
Parareal is not convenient, so we introduce the results given in
\cite{gander2007analysis}, which provide a very useful estimate of the
convergence rate. This involves examining the structure of the matrix
$M(z)$. Since
$$
M_g^{-1}(z)=
\begin{bmatrix}
1  & & &\\
{\rm R}_g(z) &1 & &\\
\vdots &\ddots &\ddots &\\
{\rm R}^{N_t-1}_g(z) &\dots &{\rm R}_g(z)  &1 
\end{bmatrix},
$$
we have
\begin{equation*}
\begin{split}
&M(z)=[{{\rm R}^J_f(z/J)-{\rm R}_g(z)}]\tilde{M}({\rm R}_g(z)),\\
&\tilde{M}(\beta):=
\begin{bmatrix}
0  & & & &\\
1 &0 & & &\\
\beta &1 &0 & &\\
\vdots &\ddots &\ddots &\ddots &\\
\beta^{N_t-2}  &\dots  &\beta&1  &0   
\end{bmatrix}. 
\end{split}
\end{equation*}
This implies that
$$
\|M^k(z)\|_\infty=|{{\rm R}^J_f(z/J)-{\rm R}_g(z)}|^k\|\tilde{M}^k({\rm R}_g(z))\|_\infty.
$$
The infinity norm of the matrix $\tilde{M}^k$ was studied
in \cite[Lemma 4.4]{gander2007analysis}, and the main result is
$$
\|\tilde{M}^k({\rm R}_g(z))\|_\infty\leq 
\begin{cases}
 \min\left\{\left(\frac{1-|{\rm R}_g(z)|^{N_t-1}}{1-|{\rm R}_g(z)|}\right)^k, \begin{pmatrix}N_t-1\\ k
 \end{pmatrix}\right\}, &{\rm if}~|{\rm R}_g(z)|<1,\\
  \begin{pmatrix}N_t-1\\ k
 \end{pmatrix}, &{\rm if}~|{\rm R}_g(z)|=1. 
\end{cases}
$$ 
Substituting this into \eqref{linearError} leads to two different
estimates of the convergence rate of the Parareal algorithm.
\begin{theorem}\label{Parareal_rho}
{\em With the same notation and assumptions used in Theorem
  \ref{PararealLinear}, the error of the $k$-th Parareal iteration
  satisfies
\begin{subequations}
 \begin{equation}\label{Error_rho_a}
\begin{split}
&\max_{1\leq n\leq N_t}\|{\bm e}^k_n\|_\infty\leq \max_{z\in\sigma(\Delta TA)}\varrho_s(J,z,N_t,k)\max_{1\leq n\leq N_t}\|{\bm e}_n^0\|_\infty,\\
&\varrho_s(J,z,N_t,k):=\frac{|{\rm R}_g(z)-{\rm R}^J_f(z/J)|^k}{k!}\prod_{j=1}^k(N_t-j), 
\end{split} 
\end{equation}
where  ${\bm e}^k_n=V_A({\bm u}_n^k-{\bm u}_n)$.  If   $|{\rm R}_g(z)|<1 (\forall z\in\sigma(\Delta TA))$,  it holds that 
 \begin{equation}\label{Error_rho_b}
\begin{split}
&\max_{1\leq n\leq N_t}\|{\bm e}^k_n\|_\infty\leq \max_{z\in\sigma(\Delta TA)}\varrho_l^k(J,z)\max_{1\leq n\leq N_t}\|{\bm e}_n^0\|_\infty,\\
&\varrho_l(J, z):=\frac{|{\rm R}_g(z)-{\rm R}^J_f(z/J)|}{1-|{\rm R}_g(z)|}. 
\end{split} 
\end{equation}
\end{subequations}
}
\end{theorem}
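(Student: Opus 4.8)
The plan is to feed the two known bounds on $\|\tilde{M}^k(\cdot)\|_\infty$ into the estimate \eqref{linearError} of Theorem \ref{PararealLinear}, using the factorization of $M(z)$ already established above. First I would record the elementary consequence of $M(z)=[{\rm R}_f^J(z/J)-{\rm R}_g(z)]\,\tilde{M}({\rm R}_g(z))$ that, since the prefactor is a scalar, $M^k(z)=[{\rm R}_f^J(z/J)-{\rm R}_g(z)]^k\,\tilde{M}^k({\rm R}_g(z))$, hence
\[
\|M^k(z)\|_\infty=|{\rm R}_f^J(z/J)-{\rm R}_g(z)|^k\,\|\tilde{M}^k({\rm R}_g(z))\|_\infty .
\]
Everything then reduces to substituting the bound on $\|\tilde{M}^k\|_\infty$ quoted from \cite[Lemma 4.4]{gander2007analysis} and taking the maximum over $z\in\sigma(\Delta TA)$; note that $\max_{n}\|V_A(\bm u_n^k-\bm u_n)\|_\infty=\max_n\|{\bm e}_n^k\|_\infty$ with ${\bm e}_n^k=V_A(\bm u_n^k-\bm u_n)$, so \eqref{linearError} applies directly to the quantity in the statement.

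For the superlinear bound \eqref{Error_rho_a}, I would use $\|\tilde{M}^k({\rm R}_g(z))\|_\infty\leq\binom{N_t-1}{k}$, which is valid both when $|{\rm R}_g(z)|<1$ (it is one of the two terms in the $\min$) and when $|{\rm R}_g(z)|=1$ (it is the stated bound there). Writing $\binom{N_t-1}{k}=\frac{1}{k!}\prod_{j=1}^k(N_t-j)$ gives $\|M^k(z)\|_\infty\leq\varrho_s(J,z,N_t,k)$, and inserting this into \eqref{linearError} yields \eqref{Error_rho_a}. I would also remark that $\prod_{j=1}^k(N_t-j)=0$ as soon as $k\geq N_t$, consistent with Parareal reproducing the sequential $\CF$-solution in at most $N_t$ iterations.

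For the linear bound \eqref{Error_rho_b}, under the additional assumption $|{\rm R}_g(z)|<1$ for all $z\in\sigma(\Delta TA)$, I would instead take the other term in the $\min$, namely $\|\tilde{M}^k({\rm R}_g(z))\|_\infty\leq\bigl(\tfrac{1-|{\rm R}_g(z)|^{N_t-1}}{1-|{\rm R}_g(z)|}\bigr)^k$, and bound it by $\bigl(\tfrac{1}{1-|{\rm R}_g(z)|}\bigr)^k$ since $1-|{\rm R}_g(z)|^{N_t-1}\leq 1$. This gives $\|M^k(z)\|_\infty\leq\bigl(\tfrac{|{\rm R}_g(z)-{\rm R}_f^J(z/J)|}{1-|{\rm R}_g(z)|}\bigr)^k=\varrho_l^k(J,z)$, and \eqref{linearError} then yields \eqref{Error_rho_b}.

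I do not expect a real obstacle here: the only delicate ingredient, the estimate of $\|\tilde{M}^k\|_\infty$, is taken verbatim from \cite[Lemma 4.4]{gander2007analysis}, and the remainder is bookkeeping. The one point that deserves care is uniformity in $z$: the superlinear estimate must hold including the boundary case $|{\rm R}_g(z)|=1$ (e.g.\ purely imaginary $\Delta T\lambda$ with an energy-conserving coarse integrator), which is precisely why the $\binom{N_t-1}{k}$ form is the right one to propagate, whereas the geometric-type factor $\varrho_l^k$ is only available when the coarse propagator is strictly contractive on the entire spectrum $\sigma(\Delta TA)$.
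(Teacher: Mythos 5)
Your proposal is correct and follows essentially the same route as the paper: the paper also derives the factorization $M(z)=[{\rm R}_f^J(z/J)-{\rm R}_g(z)]\tilde{M}({\rm R}_g(z))$, invokes the bound on $\|\tilde{M}^k\|_\infty$ from \cite[Lemma 4.4]{gander2007analysis}, and substitutes into \eqref{linearError}, exactly as you do. Your added bookkeeping (identifying $\binom{N_t-1}{k}=\frac{1}{k!}\prod_{j=1}^k(N_t-j)$, bounding the geometric factor by $(1-|{\rm R}_g(z)|)^{-k}$, and noting the $|{\rm R}_g(z)|=1$ boundary case for the superlinear estimate) is accurate and only makes explicit what the paper leaves implicit.
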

The estimate presented in \eqref{Error_rho_a} indicates that Parareal
converges super-linearly and completes iterations in at most $N_t$
steps, since $\rho_s=0$ when $k=N_t$. This estimate is particularly
suitable for {\em short} time intervals where $N_t$ is small. For
larger $N_t$, $\rho_s$ may not provide accurate predictions:
initially, $\rho_s$ increases, but the error actually decreases
uniformly. This is illustrated in Figure
\ref{Fig_Parareal_Heat} for the heat equation \eqref{heatequation}
with periodic boundary conditions, $g(x,t)=0$, and initial value
$u(x,0)=\sin^{2}(2 \pi x)$ for $x\in(0, 1)$.
\begin{figure}
  \centering
 \includegraphics[width=2.3in,height=1.85in,angle=0]{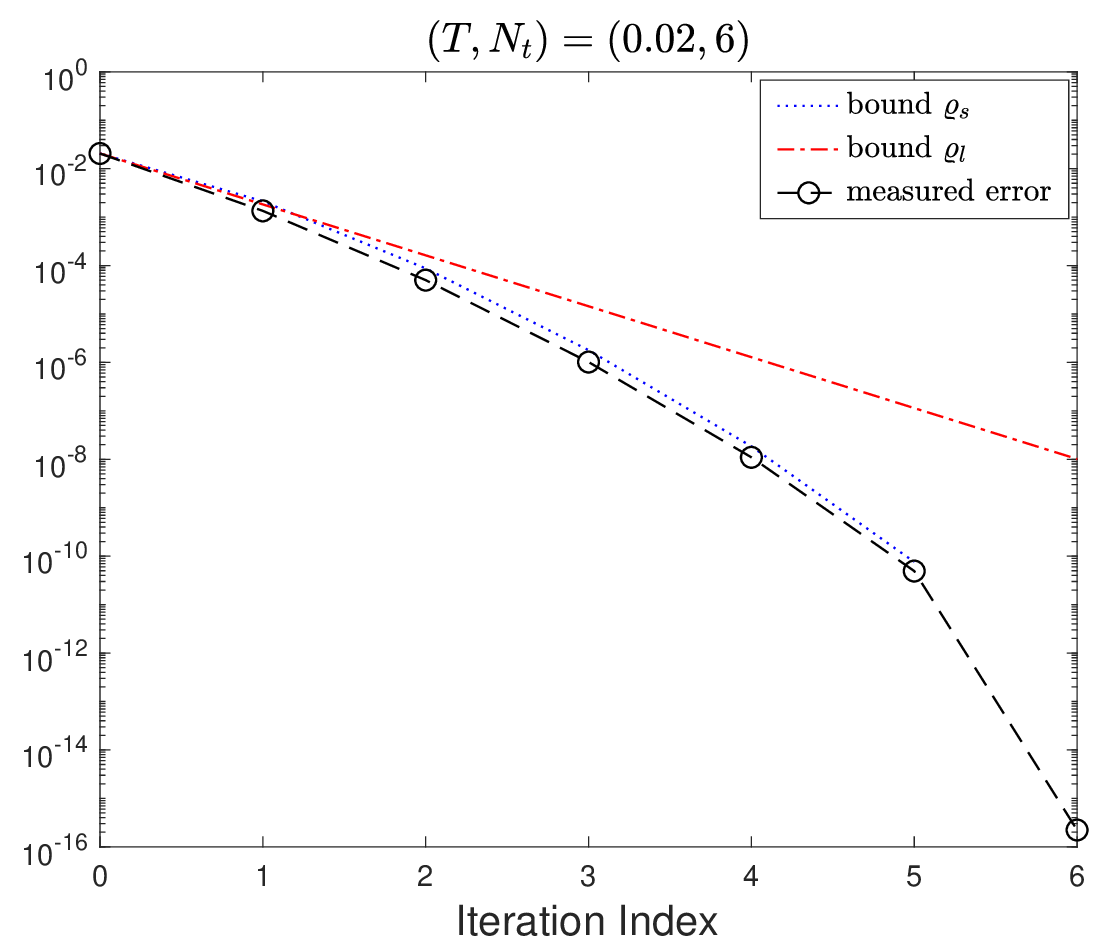}~ \includegraphics[width=2.3in,height=1.85in,angle=0]{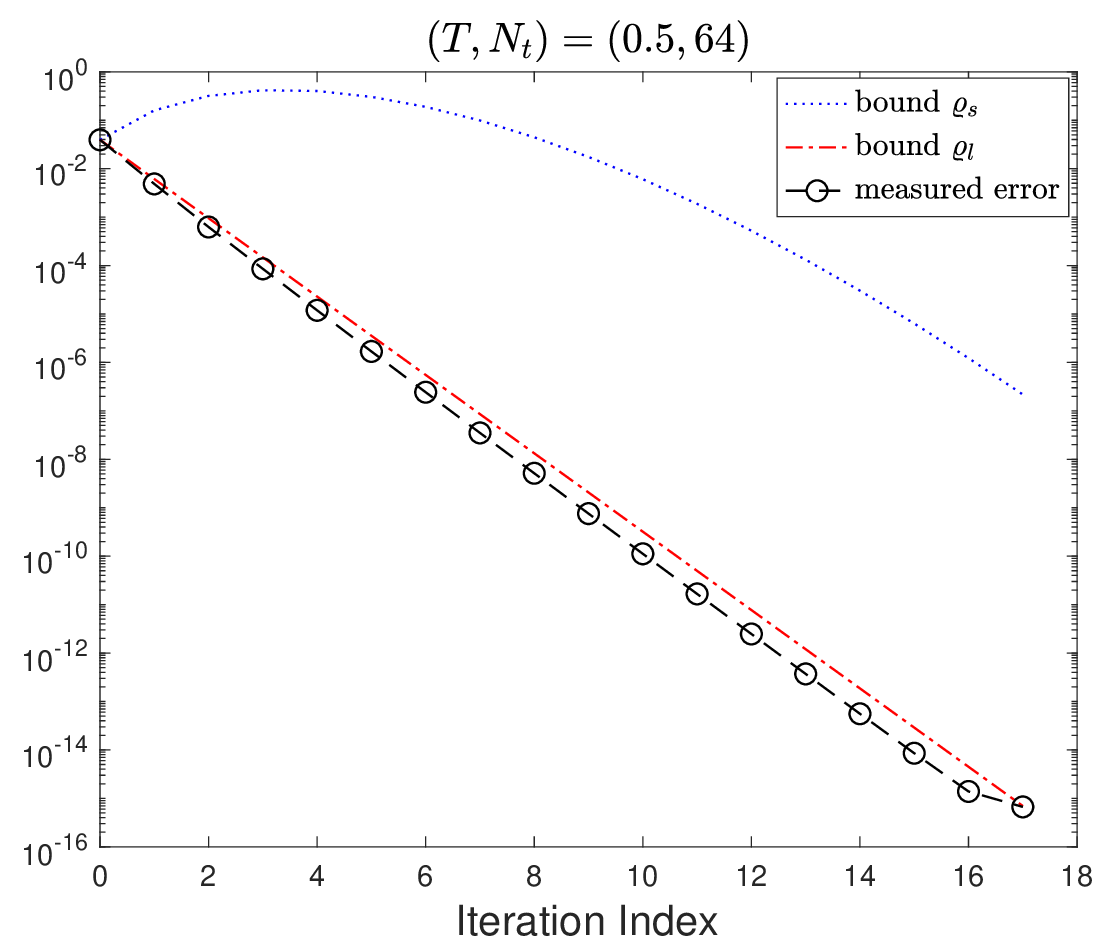} 
  \caption{Parareal convergence for the heat equation showing its
    typical two convergence regimes: superlinear convergence over
    short time intervals and linear convergence over long time
    intervals.}
  \label{Fig_Parareal_Heat}
\end{figure}
We use in space a very large mesh size $\Delta x=\frac{1}{5}$, and
both $\CF$ and $\CG$ use Backward  Euler, with a
coarsening factor $J=10$. For $T=0.02$ and $N_t=6$, the error
decreases at a superlinear rate, and $\varrho_s$ accurately predicts
this decrease. However, for a larger $T$ and $N_t$, the error
decreases linearly, and the prediction by $\varrho_s$ is
inaccurate. Note that for finer meshes in space, (e.g., $\Delta
x=\frac{1}{8}$), Parareal converges linearly.

A convergence analysis of Parareal for nonlinear systems of
ordinary differential equations using generating functions can be
found in \cite[Theorem 1]{gander:2008:nca}, see also \cite[Theorem
  2.6]{Gander:TPTI:2024}.
\begin{theorem}\label{pro_Nonlinear_Parareal}
{\em Let $\CF$ be the exact propagator and $\CG$ be a time-integrator
  of order $p$ with its local truncation error bounded by $C_3\Delta
  T^{p+1}$. Assume that $\CG$ satisfies the Lipschitz condition
$$
\|\CG(T_n, T_n+\Delta T,{\bm v})-\CG(T_n, T_n+\Delta T, {\bm w})\|\leq (1+C_2\Delta T)\|{\bm v}-{\bm w}\|,
$$
and the difference between $\CG$ and $\CF$ can be expressed, for small
$\Delta T$, as
$$
\CF(T_n, T_{n+1}, {\bm v})-\CG(T_n, T_{n+1}, {\bm v})=c_{p+1}({\bm v})\Delta T^{p+1}+c_{p+2}({\bm v})\Delta T^{p+2}+\cdots,
$$
where the coefficients $c_{p+1}, c_{p+2},\ldots$ are continuously
differentiable functions of ${\bm v}$. Then, the error of Parareal
at iteration $k$ is bounded by
\begin{equation}\label{Eq_Parareal_nonlinear}
\|{\bm u}(T_n)-{\bm u}^k_n\|\leq \frac{C_3\Delta T^{p+1}(C_1\Delta T^{p+1})^{k+1}}{(k+1)!}(1+C_2\Delta T)^{n-k-1}
\prod_{j=0}^k(n-j),
\end{equation}
where $n=1,2,\dots, N_t$ and $C_1$ is a constant related to the difference between $\CF$ and $\CG$.
}
\end{theorem}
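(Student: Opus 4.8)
The plan is to reduce the Parareal update \eqref{Parareal} to a scalar error recursion, exactly as in the linear case of Theorem~\ref{PararealLinear}, and then solve that recursion with generating functions, the nonlinearity being controlled entirely through Lipschitz estimates. Since $\CF$ is exact we have ${\bm u}(T_{n+1})=\CF(T_n,T_{n+1},{\bm u}(T_n))$; subtracting \eqref{Parareal} and inserting $\pm\,\CG(T_n,T_{n+1},{\bm u}(T_n))$ gives
\[
{\bm u}(T_{n+1})-{\bm u}_{n+1}^{k+1}=\big[(\CF-\CG)(T_n,T_{n+1},{\bm u}(T_n))-(\CF-\CG)(T_n,T_{n+1},{\bm u}_n^k)\big]+\big[\CG(T_n,T_{n+1},{\bm u}(T_n))-\CG(T_n,T_{n+1},{\bm u}_n^{k+1})\big].
\]
The second bracket is bounded by $(1+C_2\Delta T)\,e_n^{k+1}$ with $e_n^k:=\|{\bm u}(T_n)-{\bm u}_n^k\|$, directly from the assumed Lipschitz property of $\CG$. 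For the first bracket I would use the expansion $\CF-\CG=c_{p+1}(\cdot)\Delta T^{p+1}+c_{p+2}(\cdot)\Delta T^{p+2}+\cdots$: each $c_{p+j}$ is continuously differentiable, hence Lipschitz on compact sets, so the first bracket is bounded by $C_1\Delta T^{p+1}e_n^k$ for $\Delta T$ small, with $C_1$ the Lipschitz constant of $c_{p+1}$ (the higher-order terms being absorbed). With $e_0^k=0$ (the initial value is exact) this yields $e_{n+1}^{k+1}\le C_1\Delta T^{p+1}e_n^k+(1+C_2\Delta T)e_n^{k+1}$, $e_0^k=0$.

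Next I would pass to generating functions. Writing $\gamma:=C_1\Delta T^{p+1}$, $\delta:=1+C_2\Delta T$, and introducing the majorant $\varepsilon_n^k$ defined by equality in this recursion (so that $e_n^k\le\varepsilon_n^k$ by an outer induction on $k$ and an inner induction on $n$), the functions $E_k(\zeta):=\sum_{n\ge0}\varepsilon_n^k\zeta^n$ satisfy, after multiplying the recursion by $\zeta^n$ and summing, $\zeta^{-1}E_{k+1}=\gamma E_k+\delta E_{k+1}$, hence $E_{k+1}=\tfrac{\gamma\zeta}{1-\delta\zeta}E_k$ and therefore $E_k(\zeta)=\big(\tfrac{\gamma\zeta}{1-\delta\zeta}\big)^k E_0(\zeta)$. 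For the standard initial guess ${\bm u}_n^0=\CG(T_{n-1},T_n,{\bm u}_{n-1}^0)$, the accumulated coarse truncation error gives $e_n^0\le C_3\Delta T^{p+1}\sum_{m=0}^{n-1}\delta^m$, i.e. $E_0(\zeta)\le C_3\Delta T^{p+1}\zeta\,[(1-\delta\zeta)(1-\zeta)]^{-1}$, so $E_k$ is an explicit rational function whose $n$-th Taylor coefficient is a weighted sum of binomial coefficients. Extracting $[\zeta^n]$ and simplifying with the hockey-stick identity $\sum_{m=0}^{M}\binom{m+k}{k}=\binom{M+k+1}{k+1}$ together with the crude estimate $\sum_m\delta^m\le(\#\text{terms})\,\delta^{\max}$ collapses everything into the factor $\tfrac{1}{(k+1)!}\prod_{j=0}^k(n-j)=\binom{n}{k+1}$ and the powers $(C_1\Delta T^{p+1})^{k+1}$, $(1+C_2\Delta T)^{n-k-1}$, $C_3\Delta T^{p+1}$ appearing in the claimed estimate; in particular $\binom{n}{k+1}=0$ once $k\ge n$, which recovers the finite-step termination already observed in the linear case.

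The main obstacle is the Lipschitz estimate for the first bracket: it is only valid if every iterate ${\bm u}_n^k$ that occurs remains in one fixed compact neighbourhood of the exact trajectory, so that a single Lipschitz constant $C_1$ for $c_{p+1}$ suffices throughout. This apparent circularity — compactness is needed to prove the bound on $e_n^k$ that in turn guarantees ${\bm u}_n^k$ stays in the compact set — is resolved by running the induction $e_n^k\le\varepsilon_n^k$ and the a priori enclosure of ${\bm u}_n^k$ simultaneously and by choosing $\Delta T$ small enough (a standard continuation argument), while also checking that the neglected terms $c_{p+2}\Delta T^{p+2}+\cdots$ and the gap between $C_3$ and the true size of $c_{p+1}$ are absorbed into the constants without changing the leading powers of $\Delta T$. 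Once the recursion and the initial-guess bound are in place, the remaining generating-function algebra and the combinatorial simplification are routine; the nonlinear version for general (non-exact) $\CF$ would proceed along the same lines with an extra fine-solver perturbation term in the recursion.
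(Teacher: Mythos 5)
Your proposal follows essentially the same route as the proof the paper points to (\cite[Theorem 1]{gander:2008:nca}): the survey does not reprove the result but explicitly attributes it to a generating-function analysis, and that is exactly what you do — the telescoping decomposition into $[(\CF-\CG)({\bm u}(T_n))-(\CF-\CG)({\bm u}_n^k)]+[\CG({\bm u}(T_n))-\CG({\bm u}_n^{k+1})]$, the Lipschitz bound $C_1\Delta T^{p+1}$ on the first bracket via the $c_{p+1}$ expansion, the recursion $e_{n+1}^{k+1}\le C_1\Delta T^{p+1}e_n^k+(1+C_2\Delta T)e_n^{k+1}$, the coarse-propagation bound on $e_n^0$, and the solution of the recursion by generating functions with the hockey-stick identity are all the ingredients of the Gander--Hairer argument, and your handling of the compactness/continuation issue for the nonlinear Lipschitz constants is the standard fix.

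One caveat: the coefficient extraction does not literally produce the powers as printed in the survey's statement. From $E_k(\zeta)\le\bigl(\tfrac{\gamma\zeta}{1-\delta\zeta}\bigr)^kE_0(\zeta)$ with $E_0(\zeta)\le C_3\Delta T^{p+1}\,\zeta\,[(1-\delta\zeta)(1-\zeta)]^{-1}$ you get
\begin{equation*}
e_n^k\;\le\;C_3\Delta T^{p+1}\,(C_1\Delta T^{p+1})^{k}\,\binom{n}{k+1}\,(1+C_2\Delta T)^{n-k-1}
\;=\;\frac{C_3}{C_1}\,\frac{(C_1\Delta T^{p+1})^{k+1}}{(k+1)!}\,(1+C_2\Delta T)^{n-k-1}\prod_{j=0}^{k}(n-j),
\end{equation*}
which is the form in \cite{gander:2008:nca}; the bound \eqref{Eq_Parareal_nonlinear} as written, with both the prefactor $C_3\Delta T^{p+1}$ and the power $(C_1\Delta T^{p+1})^{k+1}$, carries one extra factor of $C_1\Delta T^{p+1}$. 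So your closing claim that the algebra ``collapses'' onto exactly the stated powers is the only place where the assertion outruns the computation — the argument itself is sound and yields the (sharper, correctly normalized) Gander--Hairer estimate.
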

The error estimate \eqref{Eq_Parareal_nonlinear} has a similar
consequence as the linear error estimate for short time intervals and
small $N_t$ (cf. \eqref{Error_rho_a}): the product term includes a
factor of zero, resulting in an error bound of zero, indicating
convergence in at most $N_t$ steps. A detailed convergence analysis
for Parareal applied to Hamiltonian systems using backward error
analysis can be found in \cite{gander2014analysis}.

Parareal is highly effective for {\em diffusive} problems, such as
the heat equation shown in Figure \ref{Fig_Parareal_Heat}.  In
particular, for linear systems of ODEs of the form ${\bm u}'(t)=A{\bm
  u}(t)+{\bm g}(t)$, where $A$ is a negative semi-definite matrix, it
can be shown that Parareal has a constant convergence factor around
0.3 for arbitrarily large $T$ and $N_t$, provided one uses Backward
Euler\footnote[1]{Note that using  Backward  Euler for $\CG$ in
  Parareal is justified due to the need for a cheap and stable coarse
  grid correction.} for $\CG$ and $\CF$ is an L-stable Runge-Kutta
method.
\begin{theorem}\label{pro_rho_03}
{\em If $\CG$ is  Backward  Euler and $\CF$ is an
  L-stable Runge-Kutta method, then
\begin{equation}\label{eq_Lstable}
\max_{z\in\mathbb{R}^-}\varrho_l(J,z)\approx 0.3, \quad \forall J\geq J_{\min},
\end{equation}
where $J_{\min}=\CO(1)$.
}
\end{theorem}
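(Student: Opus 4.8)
The plan is to reduce the statement, via Theorem~\ref{Parareal_rho}, to a scalar estimate and then to a one-variable calculus problem. Since here $\sigma(\Delta TA)\subset\mathbb{R}^-$, it suffices to bound $\varrho_l(J,z)$ uniformly for $z\le 0$. With $\CG$ Backward Euler we have ${\rm R}_g(z)=1/(1-z)$, so putting $x:=-z\ge 0$ gives ${\rm R}_g(z)=1/(1+x)$ and $1-|{\rm R}_g(z)|=x/(1+x)$, whence
\[
\varrho_l(J,z)=\frac{\bigl|1-(1+x)\,{\rm R}_f(-x/J)^{J}\bigr|}{x}.
\]
Because $\CF$ is consistent, ${\rm R}_f(\zeta)=1+\zeta+\CO(\zeta^{2})$, and because it is L-stable, ${\rm R}_f(-x/J)^{J}\to e^{-x}$ as $J\to\infty$; moreover $(1+x)e^{-x}\le 1$ for $x\ge 0$. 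The natural limiting convergence factor is therefore
\[
\varrho_\infty(x):=\frac{1-(1+x)e^{-x}}{x},\qquad \varrho_\infty(0):=0.
\]

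The first step is to compute $\max_{x\ge 0}\varrho_\infty(x)$. Since $1-(1+x)e^{-x}=\tfrac12 x^{2}-\tfrac13 x^{3}+\CO(x^{4})$ we have $\varrho_\infty(0^{+})=0$, and clearly $\varrho_\infty(x)\to 0$ as $x\to\infty$; differentiating, the numerator of $\varrho_\infty'(x)$ is $(x^{2}+x+1)e^{-x}-1$, so the interior critical points solve $e^{x}=x^{2}+x+1$. A short monotonicity argument (the function $g(x)=e^{x}-x^{2}-x-1$ has $g(0)=g'(0)=0$ and $g''$ changes sign once) shows this has a unique positive root $x^{*}\approx 1.793$, with $\varrho_\infty$ increasing on $(0,x^{*})$ and decreasing afterwards, so $\max_{x\ge0}\varrho_\infty(x)=\varrho_\infty(x^{*})\approx 0.298$. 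This is the ``$\approx 0.3$'' of the statement, and it is independent of $J$, of $A$, and of $T$.

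The technical heart is to show that the finite-$J$ factor is uniformly close to its limit. By the triangle inequality,
\[
\bigl|\varrho_l(J,z)-\varrho_\infty(x)\bigr|\le \frac{(1+x)\,\bigl|{\rm R}_f(-x/J)^{J}-e^{-x}\bigr|}{x}=:E(x,J),
\]
and I would bound $\sup_{x\ge0}E(x,J)$ by splitting $[0,\infty)$ into three regimes. On a fixed compact set $[0,x_{1}]$, a Taylor expansion of $\log{\rm R}_f(-x/J)$ (which uses only the order and the stability function of $\CF$) gives ${\rm R}_f(-x/J)^{J}=e^{-x}\bigl(1+\CO(x^{2}/J)\bigr)$, so $E(x,J)=\CO\bigl((1+x)xe^{-x}/J\bigr)=\CO(1/J)$ there, since $(1+x)xe^{-x}$ is globally bounded. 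On $x_{1}\le x\le\delta_0 J$ with $\delta_0$ small, the elementary bound $0<{\rm R}_f(-s)\le e^{-s/2}$ for $0\le s\le\delta_0$ yields ${\rm R}_f(-x/J)^{J}\le e^{-x/2}$, so $E(x,J)\le (1+1/x)e^{-x/2}$, which is as small as we like once $x_{1}$ is moderately large. On $x\ge\delta_0 J$ one invokes L-stability in its strong form, namely $\sup_{s\ge\delta_0}|{\rm R}_f(-s)|=\theta<1$ and ${\rm R}_f(-s)=\CO(1/s)$ as $s\to\infty$; then both $(1+x){\rm R}_f(-x/J)^{J}$ and $(1+x)e^{-x}$ tend to $0$ uniformly and $E(x,J)=\CO(J\theta^{J})$. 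Summing the three contributions gives $\sup_{x\ge0}E(x,J)\le C_{\CF}/J$, so choosing $J_{\min}$ to make this below a prescribed tolerance yields $\max_{z\le0}\varrho_l(J,z)\approx\varrho_\infty(x^{*})\approx 0.3$ for all $J\ge J_{\min}$, with $J_{\min}=\CO(1)$ depending only on the Runge--Kutta method.

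I expect the third regime to be the main obstacle, and it is precisely where L-stability (rather than mere A-stability) is indispensable: for an only A-stable fine solver such as the trapezoidal rule, $|{\rm R}_f(-s)|\to 1$, the quantity $(1+x){\rm R}_f(-x/J)^{J}$ need not decay as $x\to\infty$, and $\sup_x\varrho_l$ can instead approach $1$. A tidy way to package the last two regimes uniformly is to establish a single inequality $|{\rm R}_f(-s)^{m}|\le c\,(1+s)^{-1}$ for all $s\ge 0$ and $m\ge m_0$, from L-stability, and then apply it with $s=x$, $m=J$; pinning down an explicit $m_0$ is the part that needs the most care, but it depends only on $\CF$, which is exactly the assertion $J_{\min}=\CO(1)$.
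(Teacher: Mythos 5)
Your argument is essentially correct, but it is a genuinely different route from the paper: the paper does not prove Theorem~\ref{pro_rho_03} directly at all, it defers to the literature (\cite{MSS10} for $\CF$ Backward Euler, \cite{WuIMA2015,WZhou15} for the Trapezoidal Rule, BDF2 and the two SDIRK schemes, and \cite{YYZ23} for a general L-stable $\CF$), where the bound is obtained by a direct, method-specific optimization of $\varrho_l$ over $z\le 0$ yielding sharp explicit thresholds such as $J_{\min}=2$ for SDIRK22 and $J_{\min}=4$ for SDIRK23. You instead give a self-contained two-step argument: first identify the $J\to\infty$ (equivalently $\CF=\exp$) limit $\varrho_\infty(x)=\bigl(1-(1+x)e^{-x}\bigr)/x$, whose maximum $x^*/(x^{*2}+x^*+1)\approx 0.2984$ at the root of $e^x=x^2+x+1$ is exactly the classical value of \cite{gander2007analysis} quoted later in the paper; then show uniform closeness of the finite-$J$ factor via Taylor expansion near $x=0$, an intermediate bound ${\rm R}_f(-s)\le e^{-s/2}$ for small $s$, and L-stability (strict contraction on $s\ge\delta_0$ plus ${\rm R}_f(-s)=\CO(1/s)$, where the maximum-modulus argument indeed rules out $|{\rm R}_f(-s)|=1$ for $s>0$) to kill the $(1+x)$ growth for $x\ge\delta_0 J$ — precisely the regime where A-stability alone fails, consistent with \eqref{eq_Astable}. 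What your approach buys is uniformity in the Runge--Kutta method (only the stability function enters) and a transparent explanation of where L-stability is used; what it gives up is sharpness: your $J_{\min}$ is only qualitative, whereas the cited proofs pin it down. One small point to tighten: with $x_1$ fixed, your middle-regime bound $(1+1/x_1)e^{-x_1/2}$ does not decay in $J$, so the final claim $\sup_x E(x,J)\le C_{\CF}/J$ does not follow as written; either let $x_1\sim\log J$ (which restores the $\CO(1/J)$ rate since $(1+x)xe^{-x}$ is bounded), or simply state the conclusion as ``for every tolerance there is a $J_{\min}$ depending only on $\CF$'', which is all that \eqref{eq_Lstable} with $J_{\min}=\CO(1)$ requires.
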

\begin{proof}
  For the case where $\CF$ is Backward Euler, this result was
  established in \cite{MSS10}. When $\CF$ is the Trapezoidal Rule or
  BDF2 (i.e., Matlab's \texttt{ode23s} solver) or two singly diagonal
  implicit Runge-Kutta (SDIRK) methods, proofs can be found in
  \cite{WuIMA2015} and \cite{WZhou15}. For a general L-stable $\CF$,
  the proof can be found in \cite{YYZ23}.
\end{proof}
The origins of this result go back to a result already shown in
\cite[Table 5.1]{gander2007analysis} at the continuous level, also for
other coarse propagators, and contraction can even be much better,
e.g. $\approx 0.068$ for Radau IIA.

If $\CF$ is only A-stable (not L-stable), for instance, the
Trapezoidal Rule, Parareal does not always have a constant convergence
factor. However, for large coarsening factors $J$, a similar result
holds, namely
\begin{equation}\label{eq_Astable}
\max_{z\in[0,z_{\max}]}\varrho_l(J,z)\approx 0.3, \quad \forall J\geq J_{\min}  =\CO(\log^2(z_{\max})),
\end{equation}
which was proved for the Trapezoidal Rule and a 4th-order Gauss
Runge-Kutta method in \cite{WZhou15}. This differs significantly from
the scenario where $\CF$ is assumed to be the exact solution
propagator (i.e., $\CF=\exp(\Delta TA)$), where Parareal converges
with a rate around 0.3 for $J\geq2$.

We now illustrate this constant convergence factor by applying
Parareal to the heat equation with periodic boundary conditions and
discretization and problem parameters $\Delta x=\frac{1}{256}$, $\Delta
T=0.1$, $T=4$, and $\nu=0.1$ (the diffusion coefficient). We use for
$\CF$ the Trapezoidal Rule and two SDIRK methods which are given 
by the Butcher tableau
\begin{equation}\label{twoSDIRK}
\underbrace{\begin{array}{r|cc}
\gamma &\gamma &0     \\
1-\gamma &1-\gamma  &\gamma\\
\hline
  &1-\gamma &1-\gamma
\end{array}}_{{\rm SDIRK22}, ~\gamma=\frac{2-\sqrt{2}}{2}},~~~
\underbrace{\begin{array}{r|cc}
\gamma &\gamma &0     \\
1-\gamma &\frac{-1}{\sqrt{3}}   &\gamma\\
\hline
  &\frac{1}{2} &\frac{1}{2}
\end{array}}_{{\rm SDIRK23},~\gamma=\frac{3+\sqrt{3}}{6}}.
\end{equation}
Here, `SDIRK$sp$' denotes an $s$-stage SDIRK method of order $p$. For
SDIRK22, \eqref{eq_Lstable} holds for $J_{\min}=2$ \cite{WuIMA2015},
and for SDIRK23, $J_{\min}=4$ \cite{WZhou15}. In Figure
\ref{Fig_Constant_rho_Heat},
\begin{figure}
  \centering
  \includegraphics[width=1.5in,height=1.25in,angle=0]{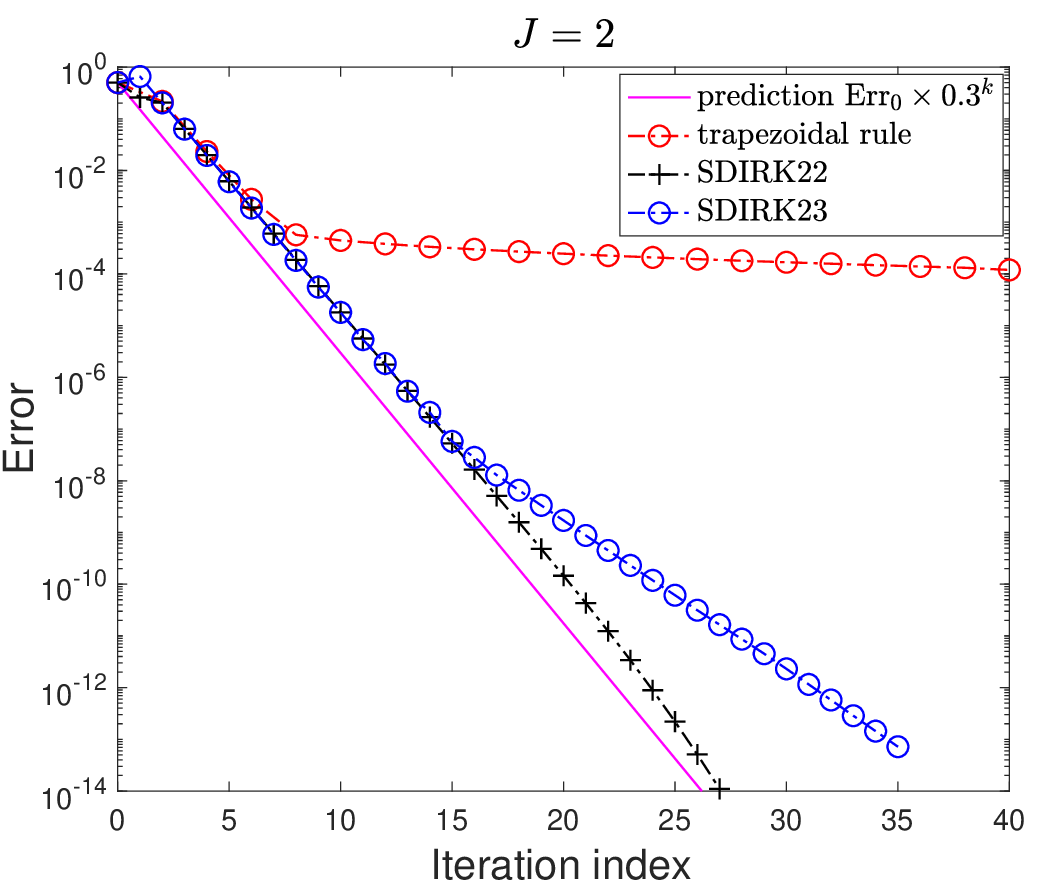}~ 
  \includegraphics[width=1.5in,height=1.25in,angle=0]{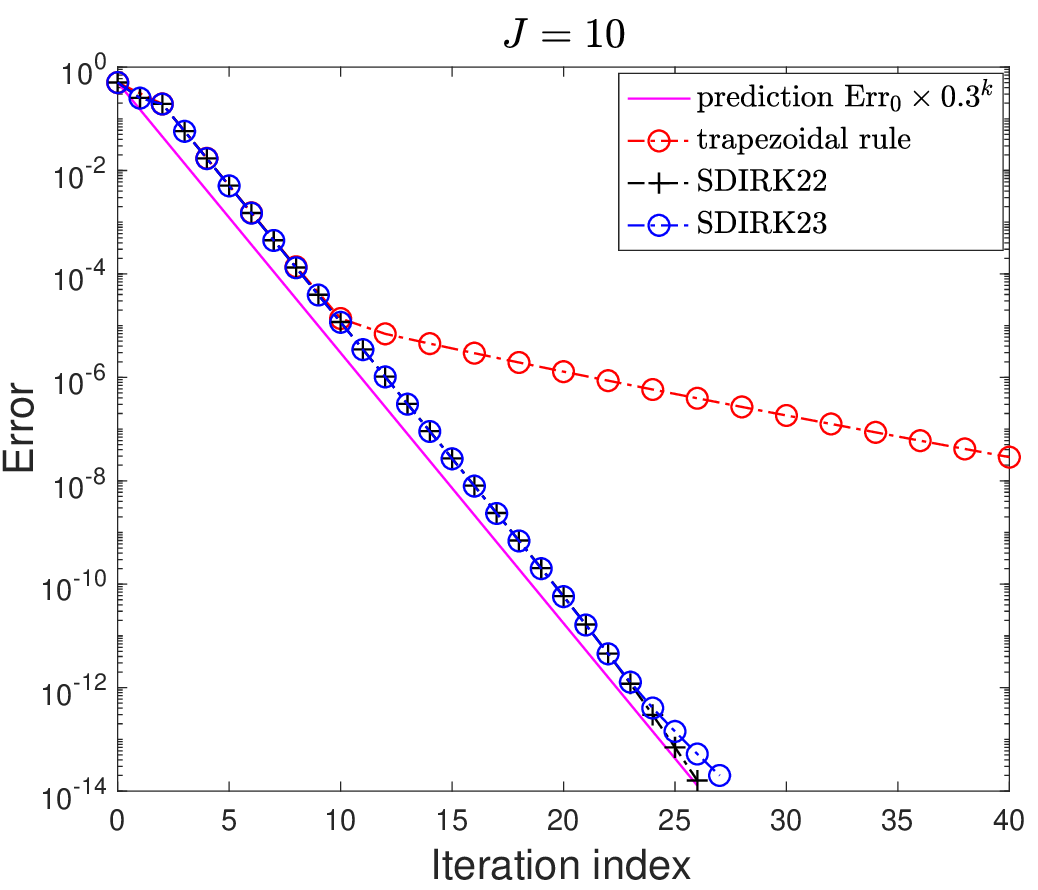}~
  \includegraphics[width=1.5in,height=1.25in,angle=0]{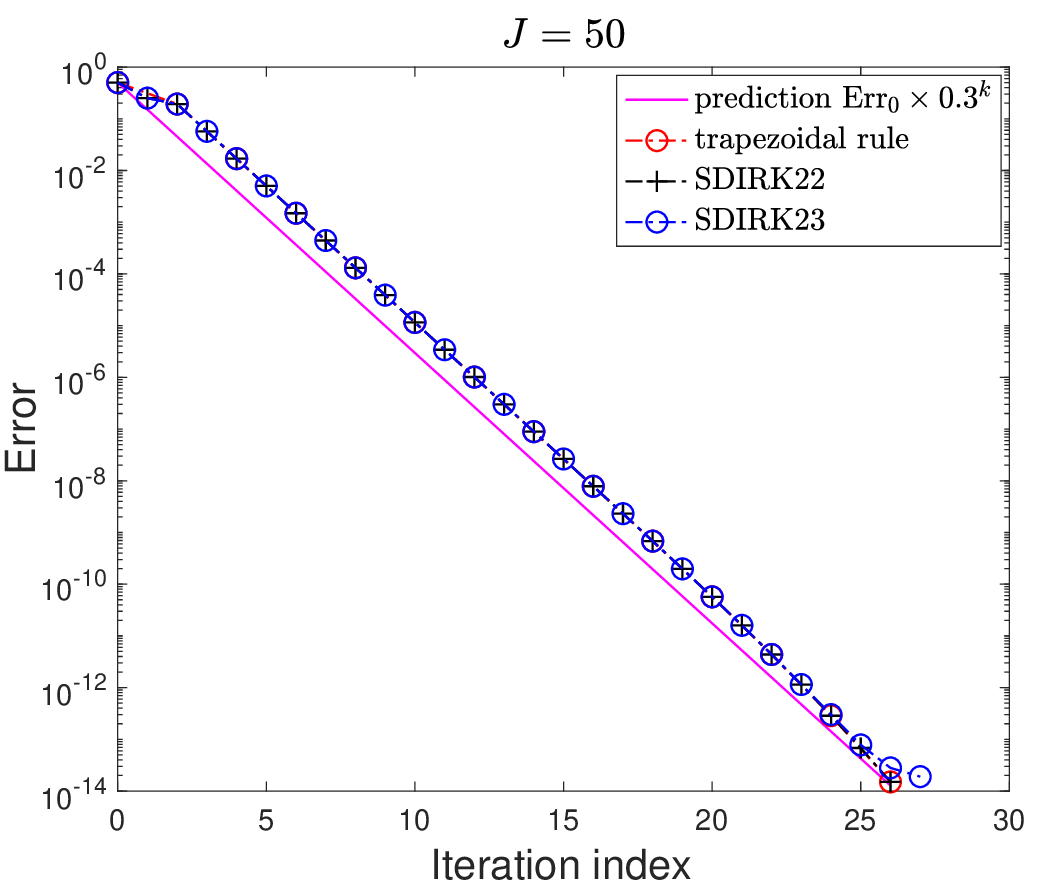}
  \caption{Different choices of the fine solver $\CF$ lead to different convergence rates for Parareal.}
  \label{Fig_Constant_rho_Heat}
\end{figure}
we show the measured error at each iteration for three values of the
coarsening factor $J$. We observe that for small $J$, these three
time-integrators indeed lead to different convergence rates,
especially for the Trapezoidal Rule and the SDIRK23 method, where
Parareal converges more slowly. When $J$ is large, say $J=50$, 
Parareal  converges at the similar rate closed to 0.3 for
all three time-integrators.  This can be intuitively understood by the
fact that for small $J$ the fine integrator which is not $L$-stable is
not accurately enough resolving the physics for high frequencies, and
Parareal tries to converge to this incorrect solution with Backward
Euler as the coarse propagator which represents the correct physics
for high frequencies. 

While Parareal converges very well for the heat equation and more
generally diffusive problems, Parareal is not well-suited for
problems that are only weakly diffusive, since its convergence rate
continuously deteriorates as the diffusion weakens. We illustrate
this for the advection-diffusion equation \eqref{ADE} and Burgers'
equation \eqref{Burgers} with periodic boundary conditions,
$g(x,t)=0$, and initial condition $u(x,0)=\sin(2\pi x)$. We use $T=4$,
$\Delta T=0.1$, $\Delta x=\frac{1}{128}$, and $J=32$ for the problem
and discretization parameters. The coarse solver is  Backward 
Euler, and the fine solver is SDIRK22. For three values of the
diffusion parameter $\nu$, we show in Figure
\ref{Fig_Parareal_rho_ADE}
\begin{figure}
  \centering
  \includegraphics[width=1.5in,height=1.25in,angle=0]{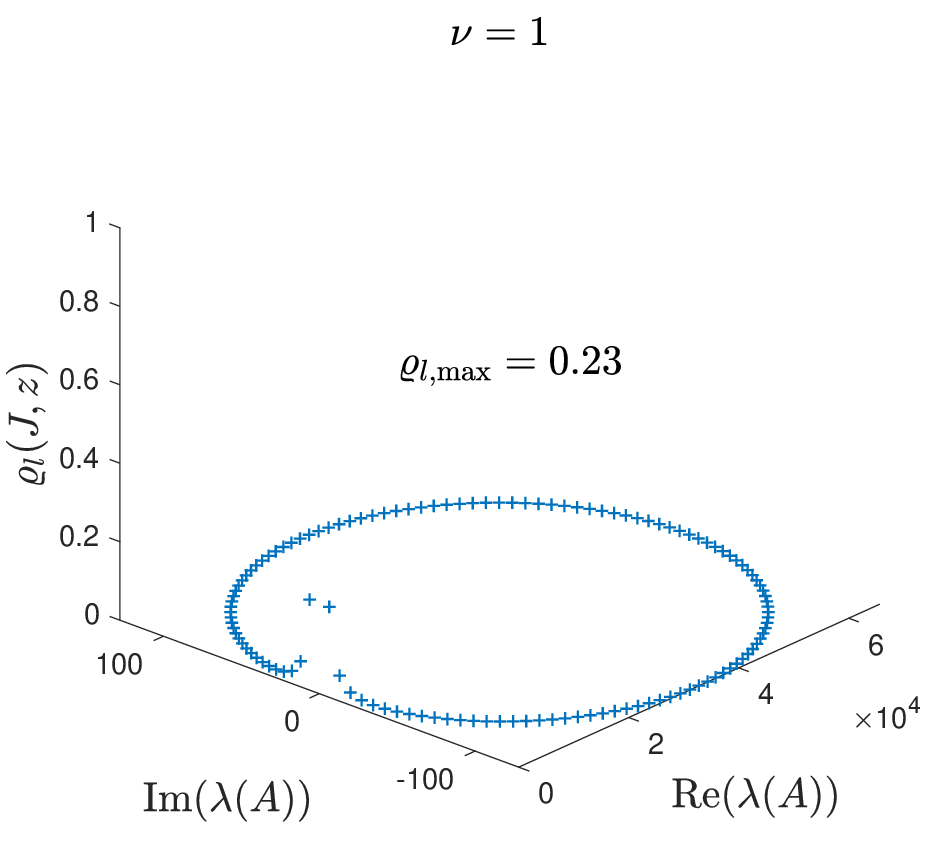}~ 
  \includegraphics[width=1.5in,height=1.25in,angle=0]{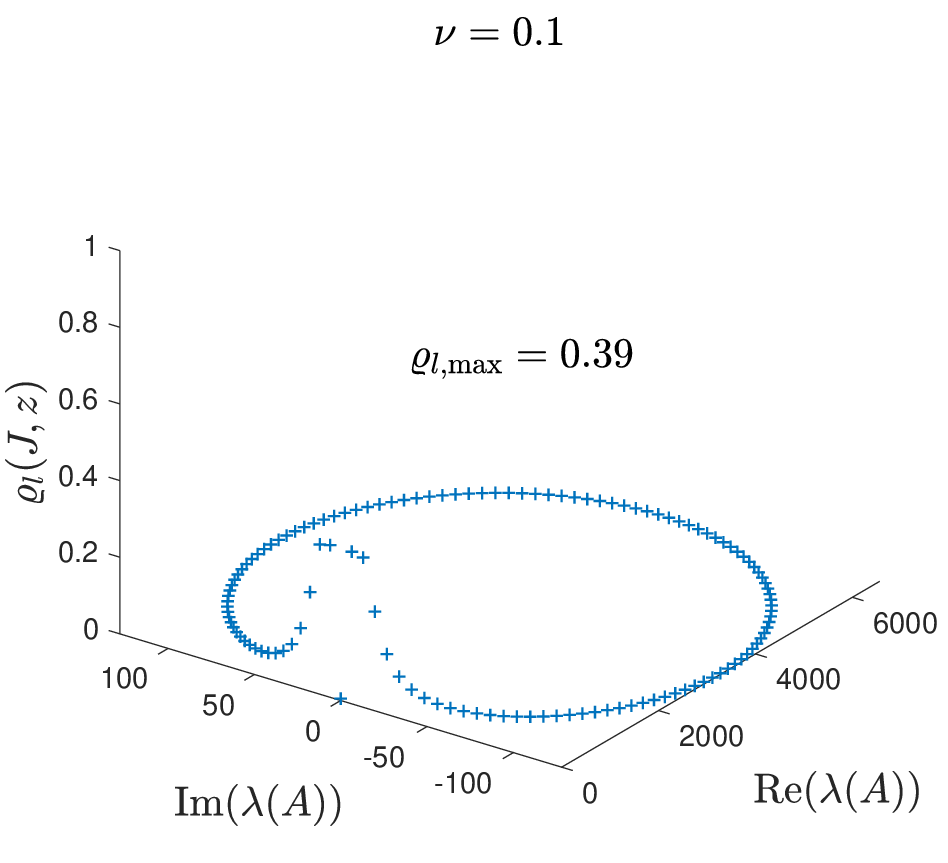}~
  \includegraphics[width=1.5in,height=1.25in,angle=0]{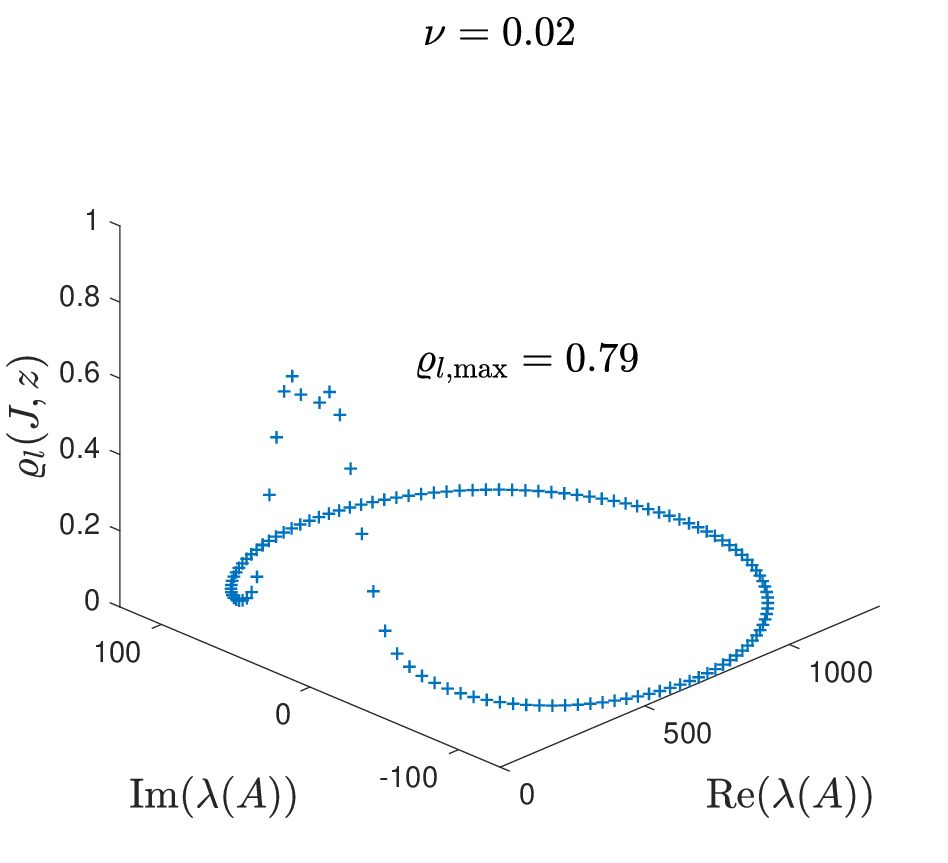}
  \caption{The quantity $\varrho_l(J,z)$ for each $z=\Delta
    T\lambda(A)$ for the advection-diffusion equation with three
    values of the diffusion parameter $\nu$. As $\nu$ decreases, the
    maximum of $\varrho_l$ approaches 1.}
  \label{Fig_Parareal_rho_ADE}
\end{figure}
the quantity $\varrho_l(J,z)$ for each $z=\Delta T\lambda(A)$. As
$\nu$ decreases, the maximum of $\varrho_l$ grows, indicating that
Parareal converges more slowly. This is confirmed in Figure
\ref{Fig_Parareal_err_ADE_Burgers} on the left where we run Parareal
\begin{figure}
  \centering
 \includegraphics[width=2.3in,height=1.85in,angle=0]{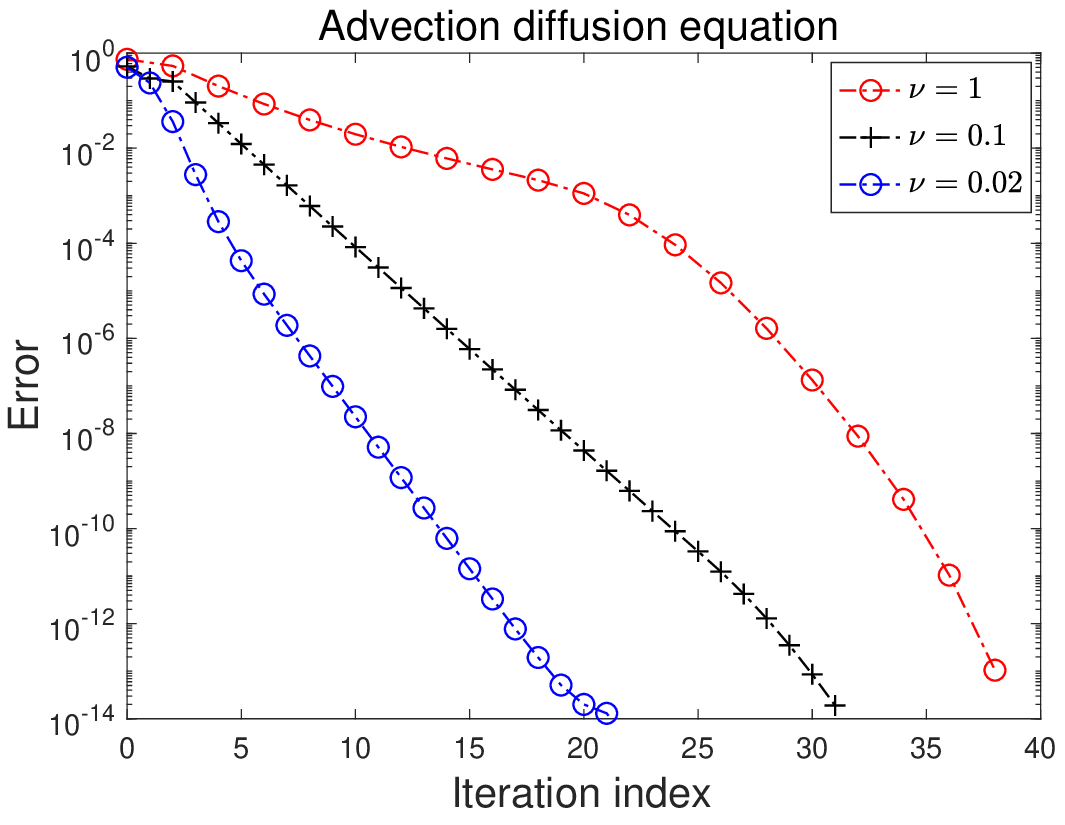} 
  \includegraphics[width=2.3in,height=1.85in,angle=0]{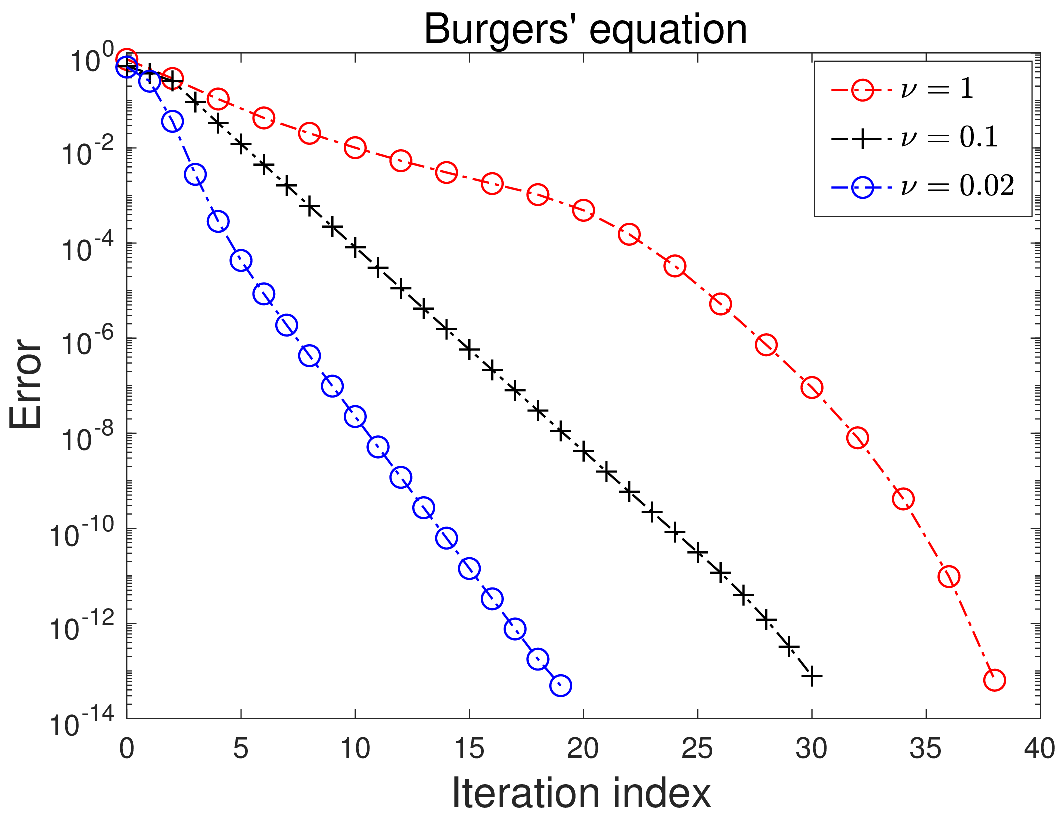} 
  \caption{Convergence of Parareal applied to the
    advection-diffusion equation (left) and Burgers' equation
    (right) with three values of the diffusion parameter $\nu$. }
  \label{Fig_Parareal_err_ADE_Burgers}
\end{figure}
on the corresponding problem. For Burgers' equation, we do not have an
as precise theoretical analysis as for the advection-diffusion
equation in Figure \ref{Fig_Parareal_rho_ADE}, but the results in Figure
\ref{Fig_Parareal_err_ADE_Burgers} on the right show that Parareal
also converges more and more slowly for small $\nu$.  If we continue
to reduce the parameter $\nu$, meaning the advection term becomes
increasingly dominant, Parareal eventually diverges, approximately
when $\nu\leq10^{-3}$, except that the finite step convergence still
holds if one iterates long enough. 

For hyperbolic problems, such as the second-order wave equation
\eqref{WaveEquation1d}, Parareal is also not convergent, as was
already shown in \cite{gander2007analysis}; see also
\cite{gander2020toward,gander2020reynolds,gander2023convergence,gander2023unified}
for more recent and detailed analyses. This degeneration can be
attributed to the fact that for hyperbolic problems, as illustrated in
Figure \ref{WaveExampleFig}, arbitrarily small high frequency
components, i.e. small oscillations, propagate arbitrarily far in both
space and time. Consequently, it becomes very challenging to achieve
high-accuracy solutions in the coarse solver $\CG$ that are comparable
to the fine solver $\CF$, both in space and time. If we strive for
high accuracy in $\CG$, the coarse grid correction becomes rather
time-consuming, and we fail to achieve any speedup.

In the MGRiT community (MGRiT is a multilevel generalization of
Parareal, see Section \ref{Sec4.4}), considerable research effort has
been directed toward making MGRiT work for advection equations; see
\cite{howse2019parallel,de2021optimizing,de2023efficient,de2023fast}
and references therein. The idea is to design an optimized coarse
solver through the so-called semi-Lagrangian discretization. This
technique performs well for linear advection equations, while research
on the nonlinear case is still ongoing, as the semi-Lagrangian
discretization is a characteristics-based method that is not easily
realized for nonlinear problems. Another idea, proposed in
\cite{gander2020diagonalization}, also aims to make Parareal (and
MGRiT) work for hyperbolic problems. In this approach, it is
relatively easy to handle nonlinear problems, as we will see
in Section \ref{Sec4.5}.

\subsection{PFASST}\label{Sec4.3} 

In this and the next two subsections, we present three variants of the
Parareal algorithm. We begin by introducing the Parallel Full
Approximation Scheme in Space-Time (PFASST), which was proposed in
\cite{EM12}. The concept of this method emerged two years earlier
\cite{Min10}, where the author replaced the fine solver with one
iteration of SDC \cite{DGR00}, in order to reduce the computational
cost of one Parareal iteration. PFASST has been successfully applied
to several problems \cite{EM12,SRK12,SRE14}, but a clear description
and theoretical analysis of this method are rather
challenging. Recently, \cite{BMS17} described PFASST as a time
multigrid method based on an algebraic representation of SDC
introduced in \cite{Min15}, and provided a convergence analysis in
\cite{BMS18}.

In the formalism of block iterations, PFASST was precisely described
 and studied for a model problem in Gander et al. (2023b). In
   particular, for the system of ODEs (2.1), a two level variant of
   PFASST can be described as follows: we first partition the time
   interval $(0, T)$ into $N_t$ large subintervals $[T_0,
     T_1]\cup[T_1,T_2]\cup\dots\cup[T_{N_t-1},T_{N_t}]$ with $T_0=0,
   T_{N_t}=T$ and $T_n=n\Delta t$.  For each subinterval, e.g., the
   $n$-th subinterval $[T_n, T_{n+1}]$, we define $M_f$ and $M_c$ time
   points
$$
\begin{cases}
\{t^f_{n,m}:=T_n+\tau^f_m\Delta t\}, ~m=0,1,\dots, M_f ~{\rm and},~ \tau^f_0=0,~\tau^f_{M_f}=1, \\
\{t^c_{n, m}:=T_n+\tau_m^c\Delta t\}, ~m=0,1,\dots, M_c~{\rm and},~\tau^c_0=0,~\tau^c_{M_c}=1, \\
\end{cases}
$$
where $M_f>M_c$.  Here we use the superscript `$f$' and `$c$' to
denote the fine and the coarse time grids.  Then, we solve (2.1) for
$t\in[T_n, T_{n+1}]$ by numerical quadrature as
\begin{equation}\label{quadrature}
\bm u_{n,m}=\bm u_{n,0}+\Delta t{\sum}_{j=1}^Mq_{m,j}(A\bm u_{n,j}+{\bm g}(t_{n,j})),~m=1,2,\dots, M,
\end{equation}
where $\bm u_{n,j}$ is an approximation of $\bm u$ at
$t=t_{n,j}$. Here $M=M_f$ or $M_c$; $t_{n,j}=t^f_{n,j}$ or
$t^c_{n,j}$.  We represent \eqref{quadrature} as
\begin{equation*}
 {\bm u}_n=\Delta t(Q\otimes A){\bm u}_n+{\bm \chi}{\bm u}_{n-1}+\Delta t\bm b_n,
 \end{equation*}
 where $Q:=(q_{m,j})$, ${\bm u}_n:=(u_{n,1}^\top, u_{n,2}^\top, \dots, u_{n, M}^\top)^\top$, $\bm b_n:=(Q\otimes I_x)\bm g_n$   and  ${\bm \chi}$ is the  block `copying' matrix  ${\bm\chi}:=\chi\otimes I_x$ with 
  $$
  \chi :=
  \begin{bmatrix}
  0   &\cdots &0 &1\\
    0   &\cdots &0 &1\\
    \vdots &\cdots &\vdots &\vdots\\
      0   &\cdots &0 &1
  \end{bmatrix}  \in\mathbb{R}^{M\times M},\quad\bm g_n:=\begin{bmatrix}
  {\bm g}(t_{n,1})\\
 {\bm g}(t_{n,2})\\
  \vdots\\
{\bm  g}(t_{n, M})
  \end{bmatrix}. 
  $$
 Hence, for the fine and coarse time grids we have 
\begin{equation*}
\begin{split}
& {\bm u}^f_n={\bm \phi}_f^{-1}({\bm \chi}_f{\bm u}^f_{n-1}+\Delta t\bm b^f_n),~ {\bm u}^c_n={\bm \phi}_c^{-1}({\bm\chi}_c{\bm u}^c_{n-1}+\Delta t\bm b^c_n),\\
&{\bm \phi}_f:={\bm I}_f-\Delta tQ_f\otimes A,~{\bm \phi}_c:={\bm I}_c-\Delta tQ_c\otimes A, 
 \end{split}
\end{equation*}
where $n=1,2,\dots, N_t$, ${\bm I}_c=I_{M_c}\otimes I_x$ and ${\bm
  I}_f=I_{M_f}\otimes I_x$.
 
For PFASST, we need transfer matrices ${\bf T}^{c\rightarrow f}$ and
${\bf T}^{f\rightarrow c}$ which prolongate and restrict vectors
defined on the coarse and fine time grids. These transfer matrices are
defined via Lagrange interpolation,
\begin{equation*}
\begin{split}
&p^{c}(\tau; {\bm u}^c)=\sum_{m=1}^{M_c}u^c_mL_m^{c}(\tau),~L_m^{c}(\tau):=\frac{\prod^{M_c}_{j=1, j\neq m}(\tau-\tau^c_m)}{\prod^{M_c}_{j=1, j\neq m}(\tau^c_j-\tau^c_m)},\\
&p^{f}(\tau; {\bm u}^f)=\sum_{m=1}^{M_f}u^f_mL_m^{f}(\tau),~L_m^{f}(\tau):=\frac{\prod^{M_f}_{j=1, j\neq m}(\tau-\tau^f_m)}{\prod^{M_f}_{j=1, j\neq m}(\tau^f_j-\tau^f_m)},\\
\end{split}
\end{equation*}
where $L_m^{c}$ and $L_m^{f}$ are the $m$-th basis function specified
by the coarse and fine interpolation nodes.  The function $p^{c}$ is
evaluated at the fine nodes $\{\tau^f_m\}$ and the function $p^{f}$ is
evaluated at the coarse nodes $\{\tau^c_m\}$.  Specifically,
      \begin{equation*}
\begin{split}
\begin{bmatrix}
p^{c}(\tau_1^f; {\bm u}^c)\\
p^{c}(\tau_2^f; {\bm u}^c)\\
\vdots\\
p^{c}(\tau_{M_f}^f; {\bm u}^c)\\
\end{bmatrix}=
\underbrace{\left(\begin{bmatrix}
L_1^{c}(\tau^f_1) &L_2^{c}(\tau^f_1) &\cdots &L_{M_c}^{c}(\tau^f_1)\\
L_1^{c}(\tau^f_2) &L_2^{c}(\tau^f_2) &\cdots &L_{M_c}^{c}(\tau^f_2)\\
\vdots &\vdots &\cdots &\vdots \\
L_1^{c}(\tau^f_{M_f}) &L_2^{c}(\tau^f_{M_f}) &\cdots &L_{M_c}^{c}(\tau^f_{M_f})\\
\end{bmatrix}\otimes I_x\right)}_{=:{\bf T}^{c\rightarrow f}\in\mathbb{R}^{M_fN_x\times M_cN_x}}{\bm u}^c.
\end{split}
\end{equation*}
  The matrix ${\bf T}^{f\rightarrow c}\in\mathbb{R}^{M_cN_x\times M_fN_x}$ is defined similarly. 
  
With the above notation, according to Gander et al. (2023b), PFASST can
be written as
$$
\bm u_{n+1}^{k+1}={\bf B}_1^0\bm u_{n+1}^k+{\bf B}_{0}^1 ({\bm \chi}\bm u_{n}^{k+1}+\Delta t\bm b_n^f)+{\bf B}_0^0\bm ({\bm \chi}{\bm u}_n^k+\Delta t\bm b_n^f),
$$
where 
\begin{equation*}
\begin{split}
&{\bf B}_1^0=[{\bm I}_f-{\bf T}^{c\rightarrow f}{\bm \phi}_c^{-1}{\bf T}^{f\rightarrow c}{\bm \phi}_f]({\bm I}_f-\tilde{\bm \phi}_f^{-1}{\bm \phi}_f),\\
&{\bf B}_{0}^1={\bf T}^{c\rightarrow f}{\bm \phi}_c^{-1}{\bf T}^{f\rightarrow c},\\
&{\bf B}_{0}^0=[{\bm I}_f-{\bf T}^{c\rightarrow f}{\bm \phi}_c^{-1}{\bf T}^{f\rightarrow c}{\bm \phi}_f]\tilde{\bm \phi}_f^{-1},
 \end{split}
\end{equation*}
and $\tilde{\bm \phi}_f$  is an approximation of ${\bm \phi}_f$.  In practice, we construct
$\tilde{\bm \phi}_f$ by using an implicit Euler method on the time points $\{t_{n,m}^f\}$: 
 \begin{equation}\label{LUQf}
\frac{{\bm u}_{n, m+1}-{\bm u}_{n,m}}{\Delta t(\tau^f_{m+1}-\tau^f_m)}=A{\bm u}_{n,m+1}+{\bm g}(t^f_{n,m+1}),m=0,1,\dots, M_f-1
 \end{equation}
 i.e., 
 $$
 \tilde{\bm \phi}_f=\begin{bmatrix}
 1 & & &\\
 -1 &1 & &\\
 &\ddots &\ddots &\\
 & &-1 &1
 \end{bmatrix}\otimes I_{x}-\Delta t\begin{bmatrix}
\tau^f_{1}-\tau^f_0 & & &\\
  &\tau^f_2-\tau^f_1 & &\\
 & &\ddots &\\
 & &  &\tau^f_{M_f}-\tau^f_{M_f-1}
 \end{bmatrix}\otimes A. 
 $$

We now apply PFASST to the heat equation (2.3) and the
advection-diffusion equation (2.5) with  $T=3$, periodic BCs and initial value
$u(x,0)=0$. The source term  $g(x,t)$ for the two PDEs is given by (2.4) with $\sigma=1000$ and the space-time mesh size is $\Delta x=\frac{1}{128}, \Delta t=\frac{1}{64}$.
For the numerical quadrature \eqref{quadrature}, we use the Radau IIA
method for both fine and coarse nodes with $M_f=3$ and $M_c=2$. The
nodes are
$$
\{\tau^f_m\}:=\left\{0,\frac{4-\sqrt{6}}{10},~\frac{4+\sqrt{6}}{10},~1\right\},
 \quad \{\tau^c_m\}:=\left\{0,\frac{1}{3}~1\right\}, 
$$
and the corresponding weight matrices $Q_f$ and $Q_c$ are
$$
  Q_f:=
  \begin{bmatrix}
  \frac{88-7\sqrt{6}}{360} &\frac{296-169\sqrt{6}}{1800} &\frac{-2+3\sqrt{6}}{225}\\
   \frac{296+169\sqrt{6}}{1800}  &  \frac{88+7\sqrt{6}}{360}&\frac{-2-3\sqrt{6}}{225}\\
   \frac{16-\sqrt{6}}{36} &   \frac{16+\sqrt{6}}{36} &\frac{1}{9}
  \end{bmatrix},\quad  Q_c:=
  \begin{bmatrix}
\frac{5}{12} &-\frac{1}{12}\\
\frac{3}{4} &\frac{1}{4}
  \end{bmatrix}. 
  $$
Then, the two transfer matrices are 
$$
{\bf T}^{c\rightarrow f}=\begin{bmatrix}
   1.2674   &-0.2674 \\
   0.5325    &0.4674 \\
                   0   &1
\end{bmatrix}\otimes I_x,\quad {\bf T}^{f\rightarrow c}=\begin{bmatrix}
0.5018    &0.6833 &  -0.1851\\
0 &0 &1
\end{bmatrix}\otimes I_x. 
$$
 
In Figure \ref{Fig_PFASST} we show the measured error of PFASST for
the heat equation and the advection-diffusion equation with 3
diffusion parameters. We see that the convergence rate also
deteriorates when the diffusion in the PDE becomes weak, like in
Parareal and MGRiT we have seen before. 
 \begin{figure}
\centering
\includegraphics[width=4in,height=3in,angle=0]{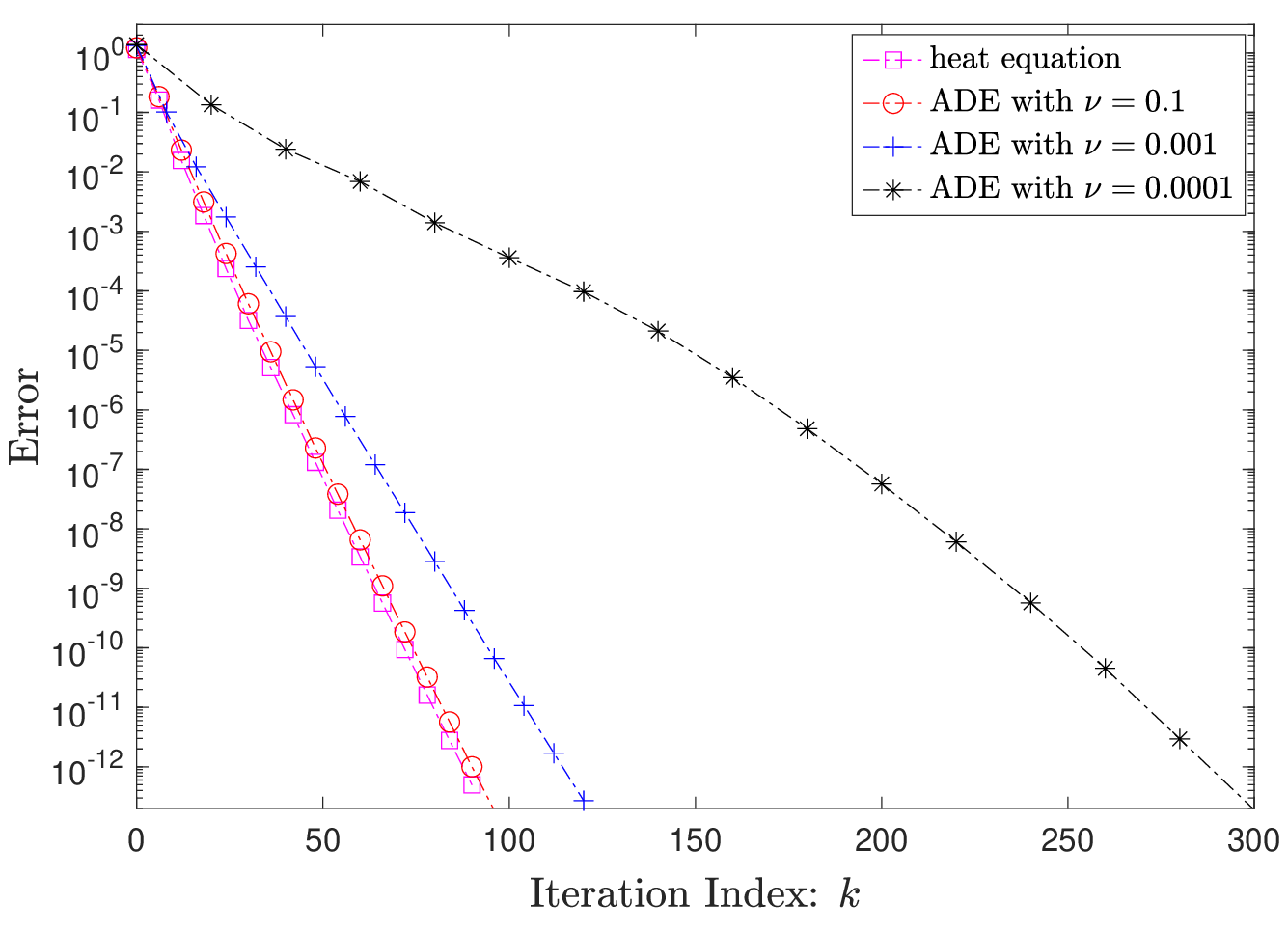} 
 \caption{Measured error of PFASST for the heat equation and the
     advection-diffusion equation (ADE) with 3 diffusion
     parameters.}\label{Fig_PFASST}
\end{figure}

\subsection{MGRiT}\label{Sec4.4}

Multi-grid reduction in time (MGRiT) is another variant of Parareal,
introduced by \cite{FFK14}. MGRiT can be interpreted in different
ways, such as an algebraic multigrid method with the so-called
FCF-relaxation, a block iteration \cite{gander2023unified} and
\cite[Chapter 4.6]{Gander:TPTI:2024}, or as an overlapping Parareal
variant \cite[Theorem 4 and Corollary 1]{GanKZ18}. Here, we present MGRiT as an
overlapping variant of Parareal applied to non-linear
systems of ODEs \eqref{nonlinearODE}, i.e., ${\bm u}'=f({\bm u}, t)$
with initial value ${\bm u}(0)={\bm u}_0$. MGRiT  with two levels
and FCF-relaxation then corresponds to the iteration
\begin{equation}\label{MGRiT}
\begin{array}{rcll}
{\bm u}^{k+1}_0&=&{\bm u}_0,~{\bm u}_1^{k+1}=\CF(T_0, T_1, {\bm u}_0),\\
{\bm u}^{k+1}_{n+1}&=&\CF(T_n, T_{n+1}, \CF(T_{n-1}, T_n, {\bm u}^{k}_{n-1}))+\CG(T_{n}, T_{n+1}, {\bm u}^{k+1}_n)\\
& &-\CG(T_n, T_{n+1}, \CF(T_{n-1}, T_n, {\bm u}^{k}_{n-1})),
\end{array}
\end{equation}
where $n=1, 2, \dots, N_t-1$, and $\CG$ and $\CF$ are the coarse and
fine time propagators used in Parareal (cf. \eqref{Parareal}). We see from
\eqref{MGRiT} that MGRiT with FCF-relaxation costs two fine solves in
each iteration, compared to only one fine solve in Parareal. A geometric
representation of MGRiT with FCF-relaxation is shown in Figure
\ref{Fig_MGRiT},
\begin{figure}
\centering
\includegraphics[width=4.75in,height=0.75in,angle=0]{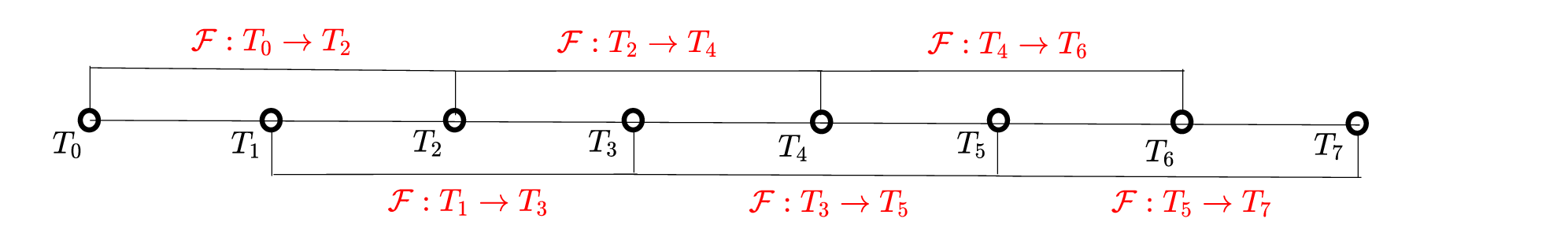}
\caption{Geometric representation of MGRiT with FCF-relaxation as
  an overlapping variant of Parareal. The dark circles represent the
  coarse time points where the coarse solver $\CG$ runs.}
\label{Fig_MGRiT}
\end{figure}
illustrating that two level MGRiT with FCF relaxation is a Parareal
algorithm with overlap size $\Delta T$, and it thus converges also in
a finite number of iterations, i.e., the global error decays to zero
after at most $k=\lceil\frac{N_t}{2}\rceil$ iterations \cite[Theorem
  5]{GanKZ18}.  A superlinear convergence result for two-level MGRiT
applied to non-linear problems with more general
F(CF)$^{\nu}$-relaxation, $\nu=1,2,\ldots$ can be found in
\cite[Theorem 6]{GanKZ18}, and it is shown that this corresponds to
Parareal with $\nu$ coarse time interval $\Delta T$ overlap; see \cite[Corollary
  1]{GanKZ18}.

In the linear case, a linear convergence estimate can be found in
\cite{DKPS17}, which we show now. We consider the linear system of
ODEs \eqref{linearODE}, i.e., ${\bm u}'=A{\bm u}+{\bm g}$ with initial
value ${\bm u}(0)={\bm u}_0$, where we assume that $A$ is
diagonalizable with spectrum $\sigma(A)\subset\mathbb{C}^-$.
\begin{theorem}\cite{DKPS17}\label{MGRiT_Linear}
  {\em With the same notation and assumptions as in Theorem
  \ref{Parareal_rho},  MGRiT with FCF-relaxation 
  satisfies the convergence estimate
\begin{equation}\label{MGRiT_Error_b}
\begin{split}
&\max_{1\leq n\leq N_t}\|{\bm e}^k_n\|_\infty\leq \max_{z\in\sigma(\Delta TA)}\varrho_l^k(J,z)\max_{1\leq n\leq N_t}\|{\bm e}_n^0\|_\infty,\\
&\varrho_l(J, z):=\frac{|{\rm R}^J_f(z/J)| |{\rm R}_g(z)-{\rm R}^J_f(z/J)|}{1-|{\rm R}_g(z)|},
\end{split}
\end{equation}
where ${\rm R}_g$ and ${\rm R}_f$ are the stability functions of the
coarse solver $\CG$ and the fine solver $\CF$, ${\bm
  e}^k_n:=V_A({\bm u}_n^k-{\bm u}_n)$, and the coarse solver $\CG$ is
stable in the sense that $|{\rm R}_g(z)|<1$ for $z\in\sigma(\Delta
TA)$.}
\end{theorem}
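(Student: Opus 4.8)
The plan is to follow the proof of Theorem~\ref{PararealLinear}: diagonalize $A=V_ADV_A^{-1}$, pass to the scalar errors $\xi^k_n(z)$ (the component of $V_A({\bm u}^k_n-{\bm u}_n)$ attached to the eigenvalue $\lambda=z/\Delta T$, $z\in\sigma(\Delta TA)$), establish a scalar error recursion, and bound a power of the resulting iteration matrix; the identity $\|V_A{\bm e}^k_n\|_\infty=\max_{z\in\sigma(\Delta TA)}|\xi^k_n(z)|$ then transfers the scalar estimate to the stated vector estimate. Writing $\beta:={\rm R}_g(z)$ and noting that $\CF$ over one coarse interval acts by multiplication with ${\rm R}^J_f(z/J)$, the only structural novelty compared with Parareal is that FCF-relaxation performs two fine solves. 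Applying \eqref{MGRiT} to the linear problem, and using that the exact fine trajectory also satisfies ${\bm u}_{n+1}=\CF(T_n,T_{n+1},\CF(T_{n-1},T_n,{\bm u}_{n-1}))$, the term $\CF\circ\CF$ contributes $({\rm R}^J_f(z/J))^2\xi^k_{m-2}$, the subtracted $\CG\circ\CF$ term contributes $\beta\,{\rm R}^J_f(z/J)\,\xi^k_{m-2}$, and the $\CG$ term contributes $\beta\,\xi^{k+1}_{m-1}$, so for $m=3,\dots,N_t$ the error obeys
\begin{equation*}
\xi^{k+1}_{m}-\beta\,\xi^{k+1}_{m-1}=c(z)\,\xi^{k}_{m-2},\qquad c(z):={\rm R}^J_f(z/J)\bigl({\rm R}^J_f(z/J)-{\rm R}_g(z)\bigr),
\end{equation*}
while $\xi^{k}_0=0$ for all $k$ (known initial value), $\xi^{k}_1=0$ for $k\ge1$ (the first line of \eqref{MGRiT} reproduces the exact solution at $T_1$), and hence also $\xi^{k}_2=0$ for $k\ge1$. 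The reach-back to $T_{m-2}$ is precisely what produces the extra factor $|{\rm R}^J_f(z/J)|$ in $\varrho_l$ relative to Parareal.

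Next I would recast this in matrix form using the nilpotent subdiagonal shift $\Sigma\in\mathbb{R}^{N_t\times N_t}$, so that $M_g(z)=I_t-\beta\Sigma$ in the notation of \eqref{matMz} and the recursion, including its first two rows, reads $(I_t-\beta\Sigma){\bm\xi}^{k+1}=c(z)\,\Sigma^2{\bm\xi}^k$; iterating gives ${\bm\xi}^k=c(z)^k(I_t-\beta\Sigma)^{-k}\Sigma^{2k}{\bm\xi}^0$, since $\Sigma$ commutes with $(I_t-\beta\Sigma)^{-1}$. Expanding $(I_t-\beta\Sigma)^{-k}=\sum_{j\ge0}\binom{k+j-1}{j}\beta^j\Sigma^j$ (a finite sum, $\Sigma$ being nilpotent), the entries of $(I_t-\beta\Sigma)^{-k}\Sigma^{2k}$ are dominated in modulus by those of $\sum_{j\ge0}\binom{k+j-1}{j}|\beta|^j\Sigma^{j+2k}$, so each of its row sums is at most $\sum_{j\ge0}\binom{k+j-1}{j}|\beta|^j=(1-|\beta|)^{-k}$, where the strict stability assumption $|{\rm R}_g(z)|<1$ is used. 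Hence $\|{\bm\xi}^k\|_\infty\le\bigl(|c(z)|/(1-|\beta|)\bigr)^k\|{\bm\xi}^0\|_\infty=\varrho_l^k(J,z)\|{\bm\xi}^0\|_\infty$, and taking the maximum over $z\in\sigma(\Delta TA)$ yields \eqref{MGRiT_Error_b}. Alternatively one could invoke the interpretation of MGRiT with FCF-relaxation as Parareal with one coarse-interval overlap, \cite[Theorem~4 and Corollary~1]{GanKZ18}, together with Theorem~\ref{Parareal_rho}; the direct argument above is self-contained and makes the role of the extra ${\rm R}^J_f$ factor transparent.

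The step I expect to be most delicate is the bookkeeping for the FCF error recursion: one must check that the two fine propagations and the coarse-grid correction combine to exactly $c(z)\,\xi^k_{m-2}$ with no stray inhomogeneous term, and that the boundary behaviour ($\xi^k_0=\xi^k_1=\xi^k_2=0$ for $k\ge1$) is consistent with the shifted system $(I_t-\beta\Sigma){\bm\xi}^{k+1}=c(z)\,\Sigma^2{\bm\xi}^k$ for all row indices. Once the recursion is in place, the remaining norm estimate is essentially \cite[Lemma~4.4]{gander2007analysis} with a harmless additional factor $\Sigma^k$ (which does not change the row sums when $|\beta|<1$), and the passage from the scalar Dahlquist analysis to the spectrum of $\Delta TA$ is carried out exactly as in the proof of Theorem~\ref{PararealLinear}.
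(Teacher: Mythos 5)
Your proposal is correct: the FCF error recursion $\xi^{k+1}_{m}-{\rm R}_g(z)\,\xi^{k+1}_{m-1}={\rm R}^J_f(z/J)({\rm R}^J_f(z/J)-{\rm R}_g(z))\,\xi^{k}_{m-2}$ with $\xi^k_0=\xi^k_1=\xi^k_2=0$ for $k\geq1$, the matrix form $(I_t-{\rm R}_g(z)\Sigma){\bm \xi}^{k+1}=c(z)\Sigma^2{\bm \xi}^k$, and the row-sum bound $(1-|{\rm R}_g(z)|)^{-k}$ all check out and give exactly $\varrho_l(J,z)$, with the passage from the scalar to the vector estimate as in Theorem \ref{PararealLinear}. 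The paper itself only cites \cite{DKPS17} for this result, and your argument is the natural adaptation of the paper's own Parareal proof (Theorems \ref{PararealLinear}--\ref{Parareal_rho}) to FCF-relaxation, so it is essentially the same approach.
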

The quantity $\varrho_l$ in \eqref{MGRiT_Error_b} is the {\em linear}
convergence factor of MGRiT for long-time computations when applied to
linear problems. Denoting by $\varrho_{l, {\rm parareal}}$ the
convergence factor of Parareal  (cf. \eqref{Error_rho_b})
and $\varrho_{l, {\rm mgrit}}$ the convergence factor of MGRiT, it
holds that
$$
\varrho_{l, {\rm mgrit}}=|{\rm R}^J_f(z/J)|\times \varrho_{l, {\rm parareal}}. 
$$
In practice, the fine solver $\CF$ is stable, i.e., $|{\rm
  R}_f(z)|\leq 1$ $\forall z\in\sigma(\Delta TA)$, and thus each
  additional $CF$ relaxation adds a further contraction to MGRiT
  compared to Parareal, but each such relaxation also costs again an
  expensive fine parallel solve. To illustrate this, we show in
Figure \ref{Fig_Contourf_MGRiT_Parareal_a}
\begin{figure}
  \centering
 \includegraphics[width=1.5in,height=1.25in,angle=0]{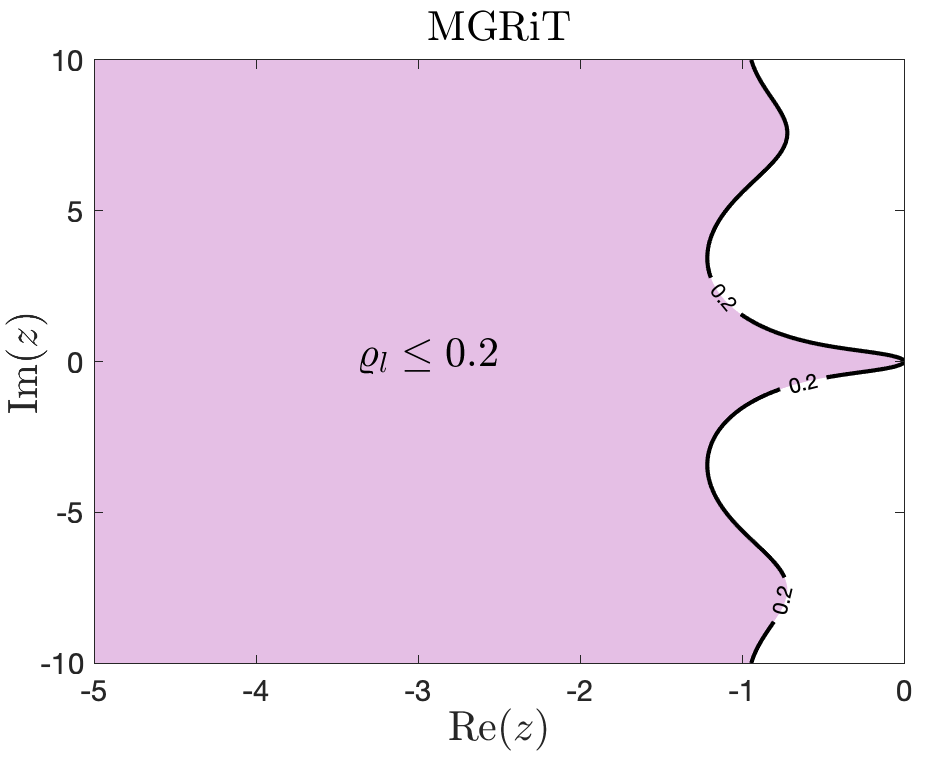}~ \includegraphics[width=1.5in,height=1.25in,angle=0]{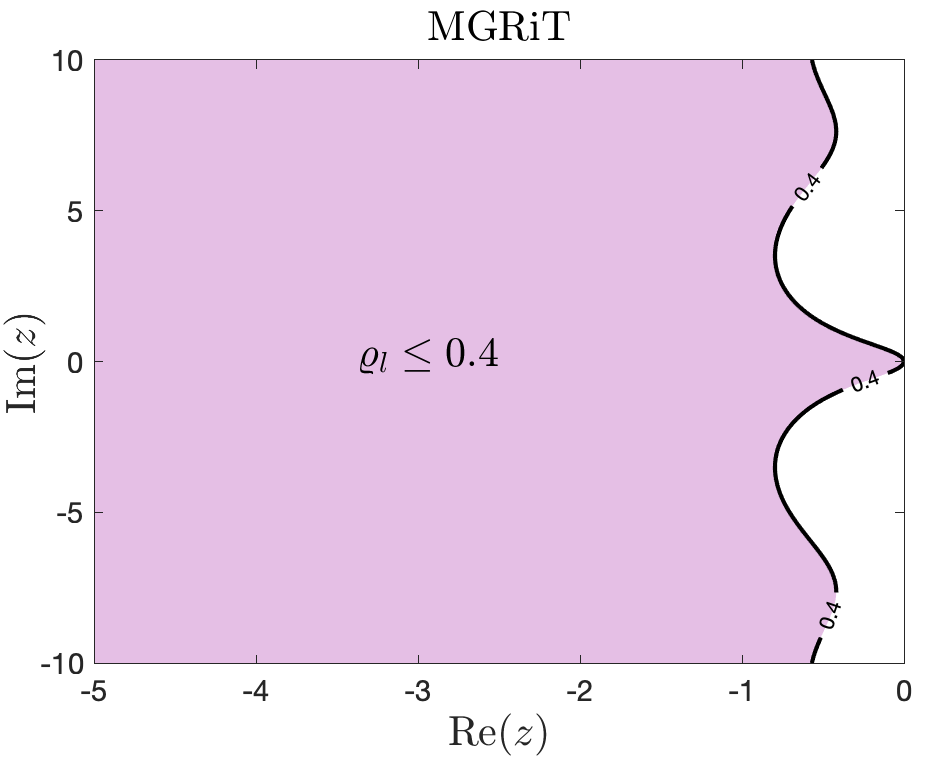} ~
 \includegraphics[width=1.5in,height=1.25in,angle=0]{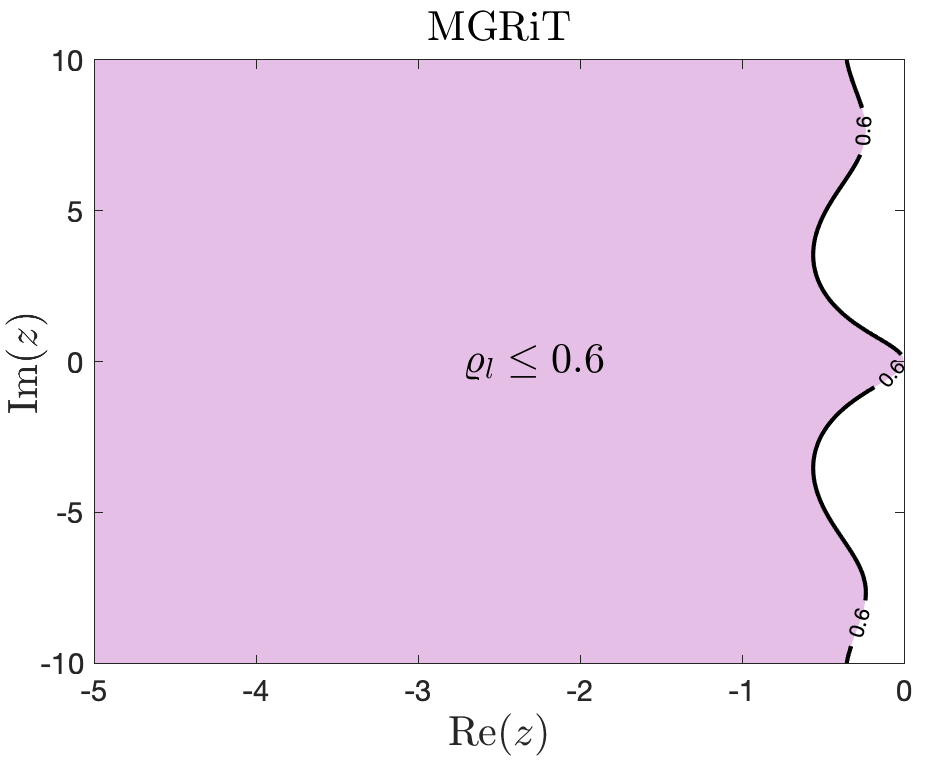}\\
  \includegraphics[width=1.5in,height=1.25in,angle=0]{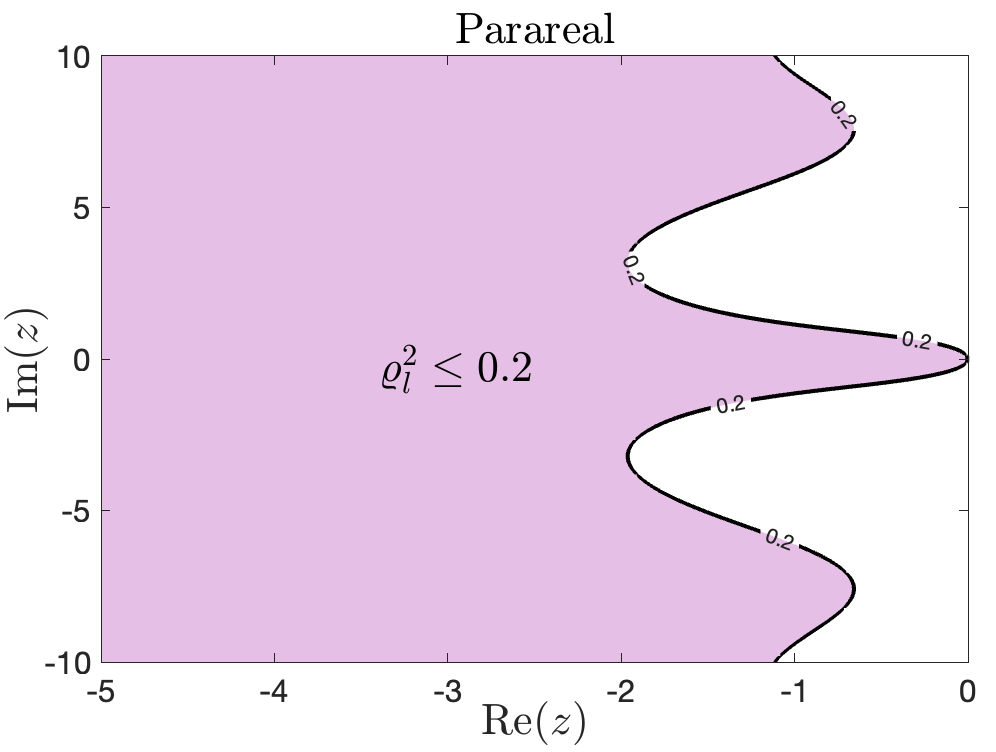}~ \includegraphics[width=1.5in,height=1.25in,angle=0]{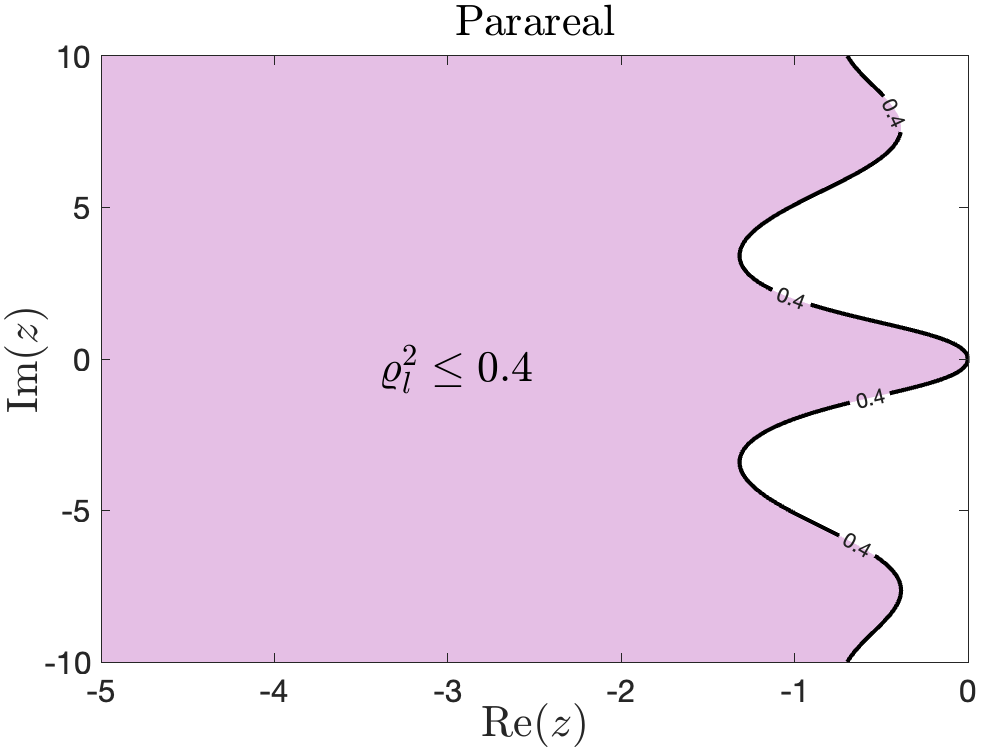} ~
 \includegraphics[width=1.5in,height=1.25in,angle=0]{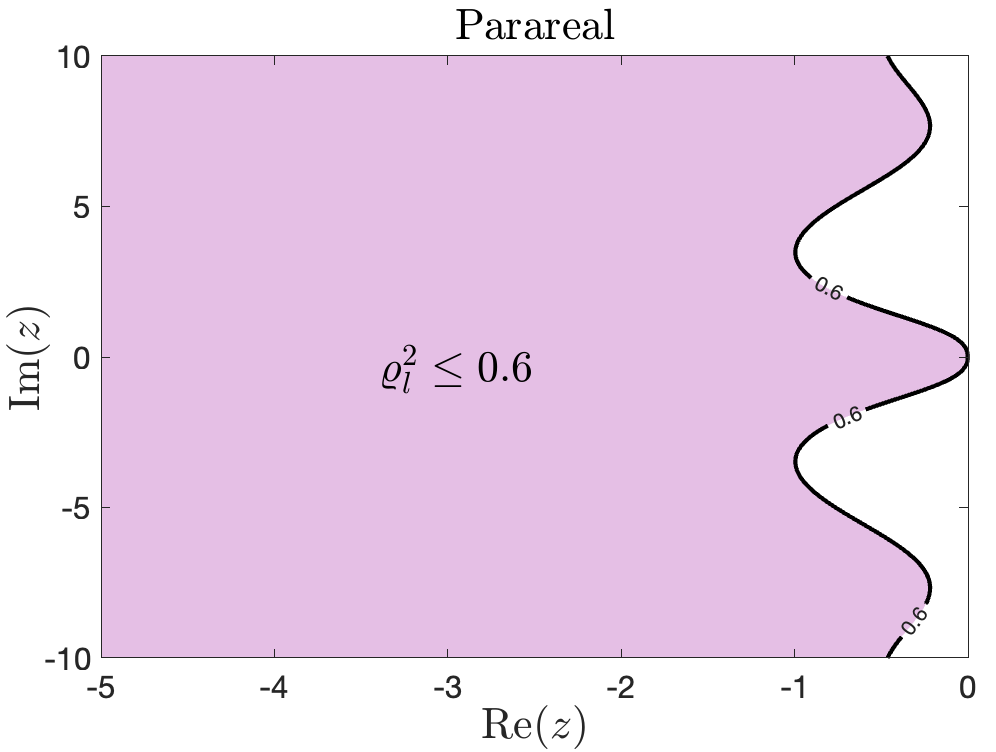}
  \caption{The regions where $\varrho_{l}\leq\hat{\varrho}$ in the
    left half of the complex plane for MGRiT  with FCF-relaxation 
    (top row) and Parareal (bottom row).}
  \label{Fig_Contourf_MGRiT_Parareal_a}
\end{figure}
the regions  where $\varrho_{l,{\rm mgrit}}\leq \hat{\varrho}$ and
  $\varrho_{l,{\rm parareal}}^2\leq \hat{\varrho}$ with
$\hat{\varrho}=0.2, 0.4, 0.6$ in the left half of the complex plane
(i.e., $z\in\mathbb{C}^-$) for Parareal and MGRiT\footnote{For a
    fair comparison, we compare two Parareal iterations with one MGRiT
    iteration with FCF relaxation, since both then use two fine solves.}. Here, we chose
Backward Euler for $\CG$ and the exact time integrator $\CF={\rm
  exp}(\Delta TA)$ for $\CF$. The stability functions of these two
solvers are ${\rm R}_g(z)=\frac{1}{1-z}$ and ${\rm
  R}_f(z)=e^{z}$.  We see that for a given upper bound $\hat{\varrho}$,
the regions where $\varrho_{l,{\rm mgrit}}\leq \hat{\varrho}$ are
comparable to those  where $\varrho_{l,{\rm parareal}}^{s}\leq
\hat{\varrho}$. For other fine time solvers, such as the two SDIRK
methods in \eqref{twoSDIRK}, the results
look similar, and the above conclusion holds as well.

A more quantitative comparison for the case $z\in\mathbb{R}^-$ is
\begin{theorem}\cite{WZSiSC19}\label{MGRiT_rhol_real}
  {\em Suppose we use an L-stable time integrator for $\CF$ and the
    ratio between $\Delta T$ and $\Delta t$, i.e., $J=\frac{\Delta
      T}{\Delta t}$, satisfies $J=\CO(1)$. Then, if we use the
    backward Euler method for $\CG$, it holds that
$$
{\max}_{z\geq0}\varrho_{l}\approx
\begin{cases}
 0.2984, &\mbox{Parareal},\\
 0.1115, &\mbox{MGRiT with FCF-relaxation}. 
\end{cases}
$$
If we use the LIIIC-2 method (i.e., the 2nd-order Lobatto IIIC Runge-Kutta method) for $\CG$,  
$$
{\max}_{z\geq0}\varrho_{l}\approx
\begin{cases}
 0.0817, &\mbox{Parareal},\\
 0.0197, &\mbox{MGRiT with FCF-relaxation}. 
\end{cases}
$$}
\end{theorem}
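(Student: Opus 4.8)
The plan is to reduce the four claimed constants to a single one-parameter maximization on the negative real axis, exactly in the spirit of the proofs of Theorem~\ref{Parareal_rho} and Theorem~\ref{MGRiT_Linear}. By those theorems, both convergence factors depend on the single scalar $z=\Delta T\lambda(A)$ only through the stability functions ${\rm R}_g$ and ${\rm R}_f$, and they are linked by $\varrho_{l,{\rm mgrit}}(J,z)=|{\rm R}_f^J(z/J)|\,\varrho_{l,{\rm parareal}}(J,z)$. So the first step is to insert the two coarse stability functions: ${\rm R}_g(z)=\tfrac{1}{1-z}$ for Backward Euler, and ${\rm R}_g(z)=\tfrac{1}{1-z+z^2/2}$ for the two-stage Lobatto~IIIC method (the latter computed directly from its Butcher tableau). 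For $z<0$ one has $0<{\rm R}_g(z)<1$, and $1-{\rm R}_g(z)$ is a simple rational factor; moreover, since $\CF$ is L-stable, ${\rm R}_f^J(z/J)$ stays below ${\rm R}_g(z)$ on $\mathbb{R}^-$ once $J$ is large enough, so the absolute value in the numerator resolves with a fixed sign. For Backward Euler coarsening this already turns $\varrho_{l,{\rm parareal}}$ into the elementary expression $\bigl(1-(1-z){\rm R}_f^J(z/J)\bigr)/(-z)$.

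The analytical core is the passage $J\to\infty$ together with uniform control over $J\ge J_{\min}=\CO(1)$. Here I would use that an L-stable Runge--Kutta method of order $p\ge1$ has ${\rm R}_f(z)=e^z+\CO(z^{p+1})$, $|{\rm R}_f(z)|<1$ on $\mathbb{C}^-$, and ${\rm R}_f(\infty)=0$. The expansion gives, locally uniformly in $z$, ${\rm R}_f^J(z/J)=e^z\bigl(1+\CO(z^{p+1}/J^{p})\bigr)\to e^z$, while $|{\rm R}_f(\infty)|=0$ forces $|{\rm R}_f^J(z/J)|\to0$ as $|z|\to\infty$ for fixed $J$ (then $|z/J|\to\infty$ drives $|{\rm R}_f(z/J)|$ below any threshold). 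Combined with the fact that $1-|{\rm R}_g(z)|$ is bounded away from $0$ for $|z|$ large, and with $\varrho_l(J,z)\to0$ as $z\to0^-$ (the numerator is $\CO(z^2)$, the denominator only $\CO(z)$), the supremum of $\varrho_l(J,\cdot)$ over $\mathbb{R}^-$ is attained on a fixed compact set and converges to the supremum of the limiting profile with ${\rm R}_f^J(z/J)$ replaced by $e^z$. The main obstacle is to show that this convergence is essentially complete already at $J=J_{\min}$, and to identify $J_{\min}$ for each coarse scheme: this needs a careful estimate that the maximizing frequencies are the low ones $|z/J|\ll1$, where the limiting picture is accurate, and that no larger value appears at moderate $|z|$; the precise thresholds ($J_{\min}=2$ for Backward Euler, cf.\ \cite{MSS10,WuIMA2015}; $J_{\min}=4$ for the Lobatto/SDIRK cases, cf.\ \cite{WZhou15}; $J_{\min}=\CO(1)$ for general L-stable $\CF$, cf.\ \cite{YYZ23}) come from those references.

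What then remains is a one-variable calculus computation. Writing $t=-z>0$ and taking $\CF=\exp$, for Backward Euler coarsening one gets $\varrho_{l,{\rm parareal}}(t)=\bigl(1-(1+t)e^{-t}\bigr)/t$, whose interior critical point solves $e^{t}=t^{2}+t+1$, giving $t^\ast\approx1.79$ and $\varrho_{l,{\rm parareal}}(t^\ast)\approx0.2984$; multiplying by $e^{-t}$ and re-optimizing yields the MGRiT value $\approx0.1115$. For Lobatto~IIIC--2 coarsening one gets $\varrho_{l,{\rm parareal}}(t)=\bigl(1-(1+t+t^2/2)e^{-t}\bigr)/(t+t^2/2)$ and, after multiplication by $e^{-t}$, the corresponding MGRiT expression; maximizing each over $t>0$ (a routine derivative/monotonicity argument) gives $\approx0.0817$ and $\approx0.0197$. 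Finally, the bound $|{\rm R}_f^J(z/J)|\le e^{z}$-type estimate together with the $J$-stabilization above shows these same numbers are reached, up to the indicated precision, for every $J\ge J_{\min}$ and every L-stable $\CF$, which completes the proof.
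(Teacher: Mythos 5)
The paper itself offers no proof of this statement---it is quoted from \cite{WZSiSC19}, just as the companion Parareal result in Theorem~\ref{pro_rho_03} is established only by citation---and your reconstruction follows exactly the route used in those sources: write $\varrho_{l,\rm mgrit}=|{\rm R}_f^J(z/J)|\,\varrho_{l,\rm parareal}$, insert ${\rm R}_g(z)=\tfrac{1}{1-z}$ or ${\rm R}_g(z)=\tfrac{1}{1-z+z^2/2}$ (your Lobatto IIIC-2 stability function is correct), replace ${\rm R}_f^J(z/J)$ by $e^z$, and maximize in $t=-z>0$. Your four one-variable computations check out (e.g.\ the critical equation $e^t=t^2+t+1$, $t^\ast\approx1.79$, value $\approx0.2984$; the MGRiT maxima near $t\approx0.6$ and $t\approx1$ giving $0.1115$ and $0.0197$), and the behaviour at $z\to0^-$ and $|z|\to\infty$ that confines the maximizer to a compact set is argued correctly. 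The only delicate step---that the exponential-limit values are already attained up to the stated precision for every L-stable $\CF$ and every $J\ge J_{\min}$ with $J_{\min}=\CO(1)$---you delegate to \cite{MSS10,WuIMA2015,WZhou15,YYZ23,WZSiSC19}, which is acceptable since the paper does the same; note however that your closing appeal to a bound of the form $|{\rm R}_f^J(z/J)|\le e^{z}$ is not valid in general (for Backward Euler as fine solver, ${\rm R}_f(z)=\tfrac{1}{1-z}>e^z$ on $\mathbb{R}^-$), so the finite-$J$ comparison must instead go through the local error expansion ${\rm R}_f^J(z/J)=e^z\bigl(1+\CO(z^{p+1}/J^{p})\bigr)$ together with the decay of both profiles for large $|z|$, exactly as in the cited analyses.
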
 
Therefore, when using Backward Euler for $\CG$, the convergence factor
of one MGRiT iteration with FCF-relaxation is a bit worse than the
convergence factor of 2 Parareal iterations  ($0.2984^2=0.0890<0.1115$
and $0.0817^2=0.0067<0.0197$), and the cost in fine solves of one
MGRiT iteration with FCF-relaxation is the same as the cost of 2
Parareal iterations. 

We now compare the convergence rates of Parareal and MGRiT by applying
them to the heat equation \eqref{heatequation} and the
advection-diffusion equation \eqref{ADE}. We impose homogeneous
Dirichlet boundary conditions for the heat equation and periodic
boundary conditions for ADE. For both PDEs the initial condition is
$u(x, 0)=\sin^2(8\pi (1-x)^2)$ for $x\in(0, 1)$. The problem and \ds
parameters are $T=5$, $J=20$, $\Delta T=\frac{1}{8}$, and $\Delta
x=\frac{1}{160}$. For $\CG$, we use Backward Euler, and for $\CF$
SDIRK22 from \eqref{twoSDIRK}. We show in Figure
  \ref{Fig_rho_MGRiT_Parareal}
\begin{figure}
  \centering
 \includegraphics[width=1.5in,height=1.25in,angle=0]{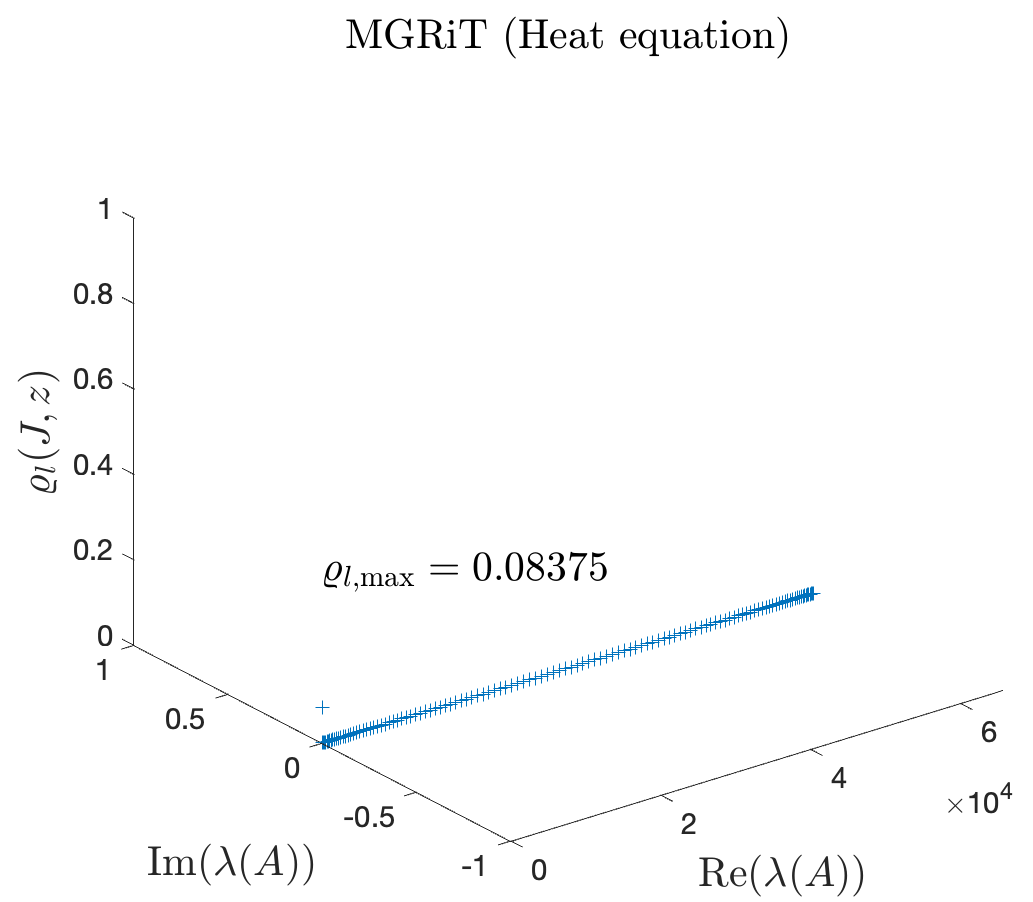}~ \includegraphics[width=1.5in,height=1.25in,angle=0]{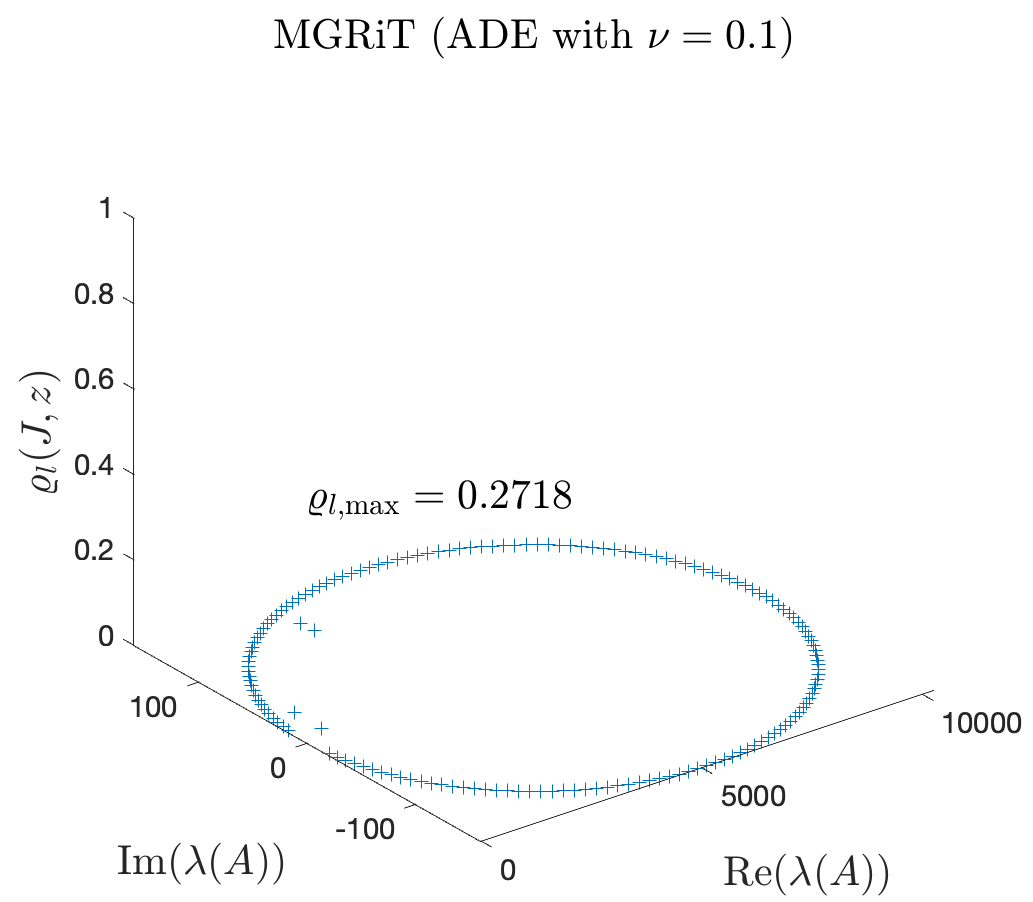} ~
 \includegraphics[width=1.5in,height=1.25in,angle=0]{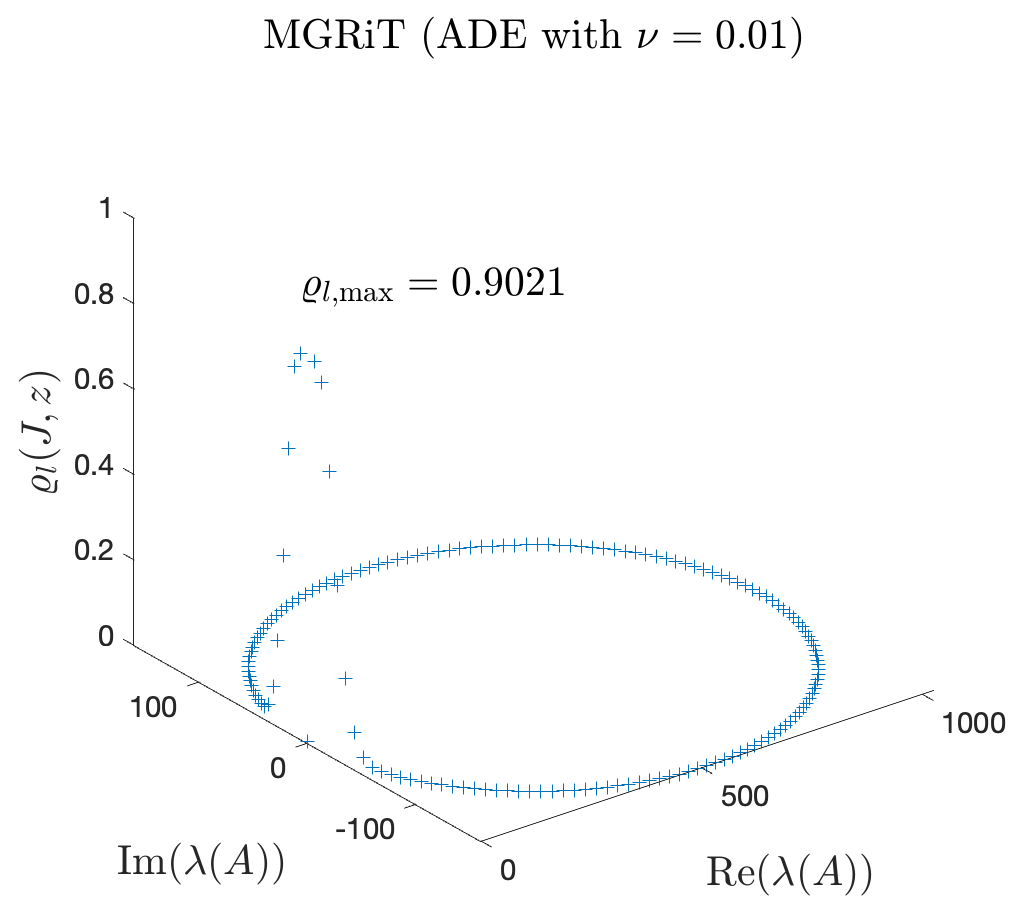}\\
 \includegraphics[width=1.5in,height=1.25in,angle=0]{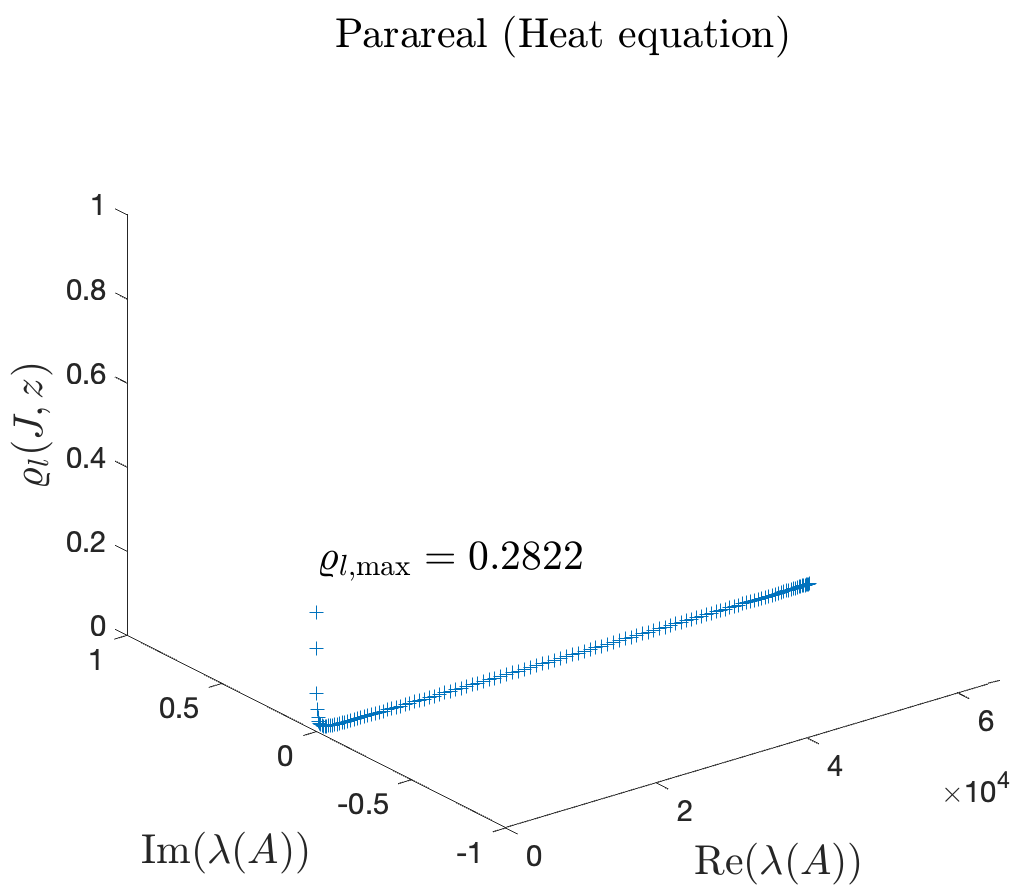}~ \includegraphics[width=1.5in,height=1.25in,angle=0]{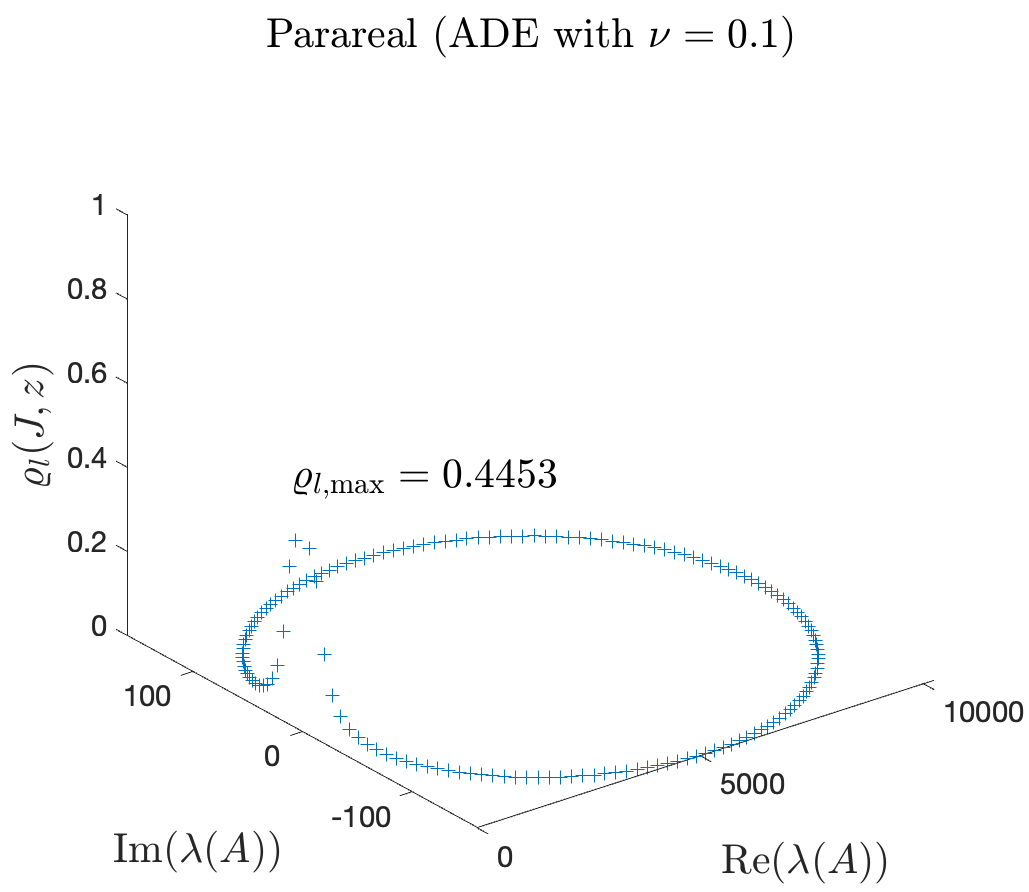} ~
 \includegraphics[width=1.5in,height=1.25in,angle=0]{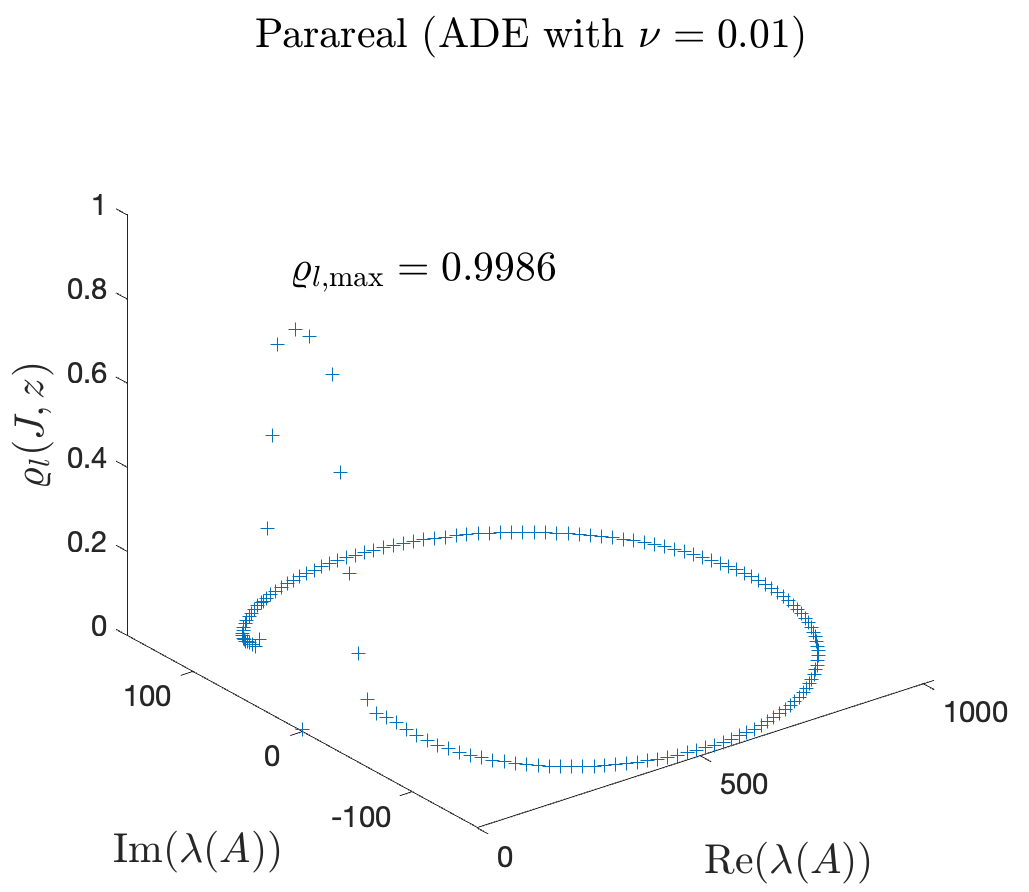}\\
  \caption{The distribution of $\varrho_{l}(J, z)$ for
    $z\in\sigma(\Delta TA)$ for the heat equation and the
    advection-diffusion equation (ADE) with two values of the
    diffusion parameter $\nu$.  Top row: MGRiT; Bottom row:
    Parareal. In each panel, $\varrho_{l,\max}=\max_{z\in\sigma(\Delta
      TA)}\varrho_{l}(z)$.}
  \label{Fig_rho_MGRiT_Parareal}
\end{figure}
for both the heat equation and the advection-diffusion equation with
two different values of the diffusion parameter $\nu$ the quantity
$\varrho_{l}(J, z)$ for $z\in\sigma(\Delta TA)$. The maximum, denoted
by $\varrho_{l,\max}:=\max_{z\in\sigma(\Delta TA)}\varrho_l(J,z)$,
represents the convergence factor for the two methods. It is evident
that for the heat equation and the ADE with $\nu=0.1$ the convergence
factor of MGRiT with FCF-relaxation approximately equals to the square
of that of Parareal, indicating that MGRiT with FCF-relaxation
converges twice as fast as Parareal,  but also at twice the cost,
  since it uses two $\CF$ solves, and Parareal only one, which is
  consistent with Theorem \ref{MGRiT_rhol_real}.  For ADE with
$\nu=0.01$, the convergence factor of both MGRiT and Parareal are
close to 1 indicating very slow converge for both. This is due to
  the fact that the coarse propagator is not good enough any more for
  small $\nu$ when advection dominates. Note also that this has a
  greater impact for Parareal, which uses the coarse propagator after
  each fine solve, see \eqref{Parareal}, whereas MGRiT with
  FCF-relaxation performs two consecutive fine solves without coarse
  solve in between, see \eqref{MGRiT}.  In Figure
\ref{Fig_Error_MGRiT_Parareal},
\begin{figure}
  \centering
 \includegraphics[width=2.3in,height=1.85in,angle=0]{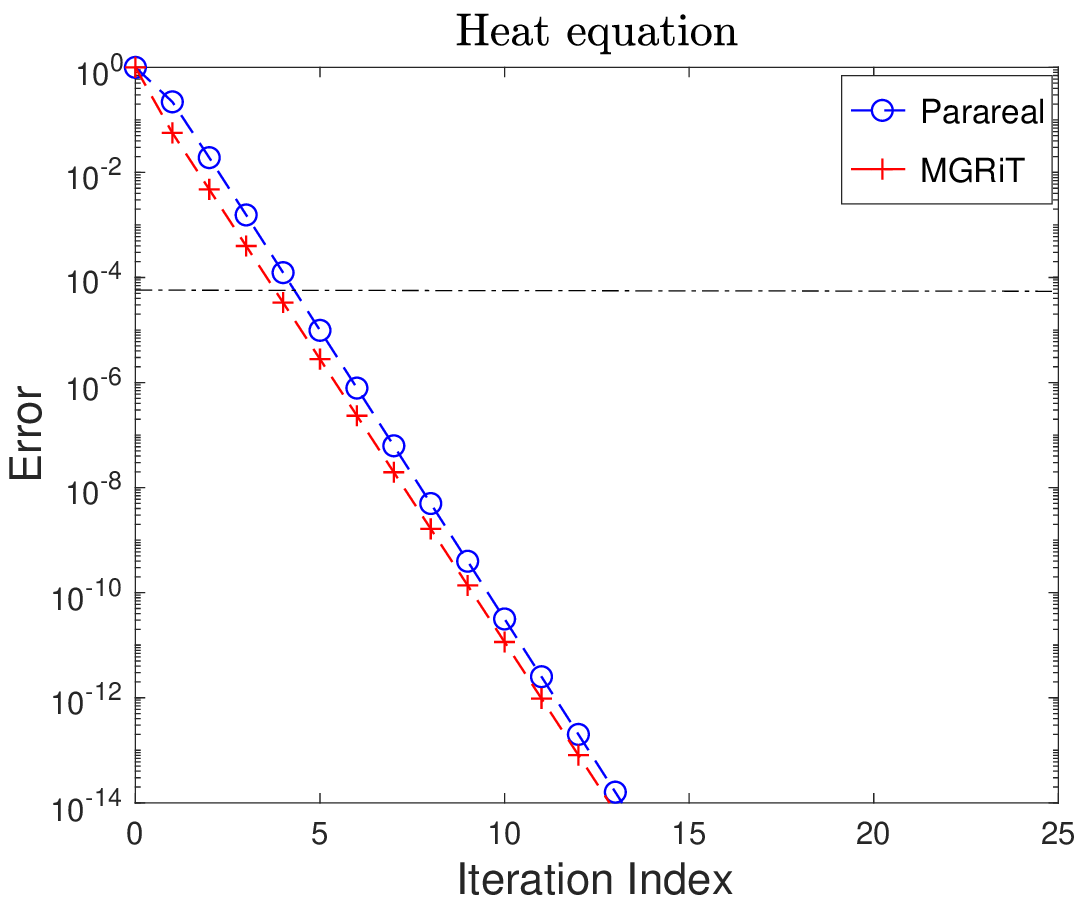}~ \includegraphics[width=2.3in,height=1.85in,angle=0]{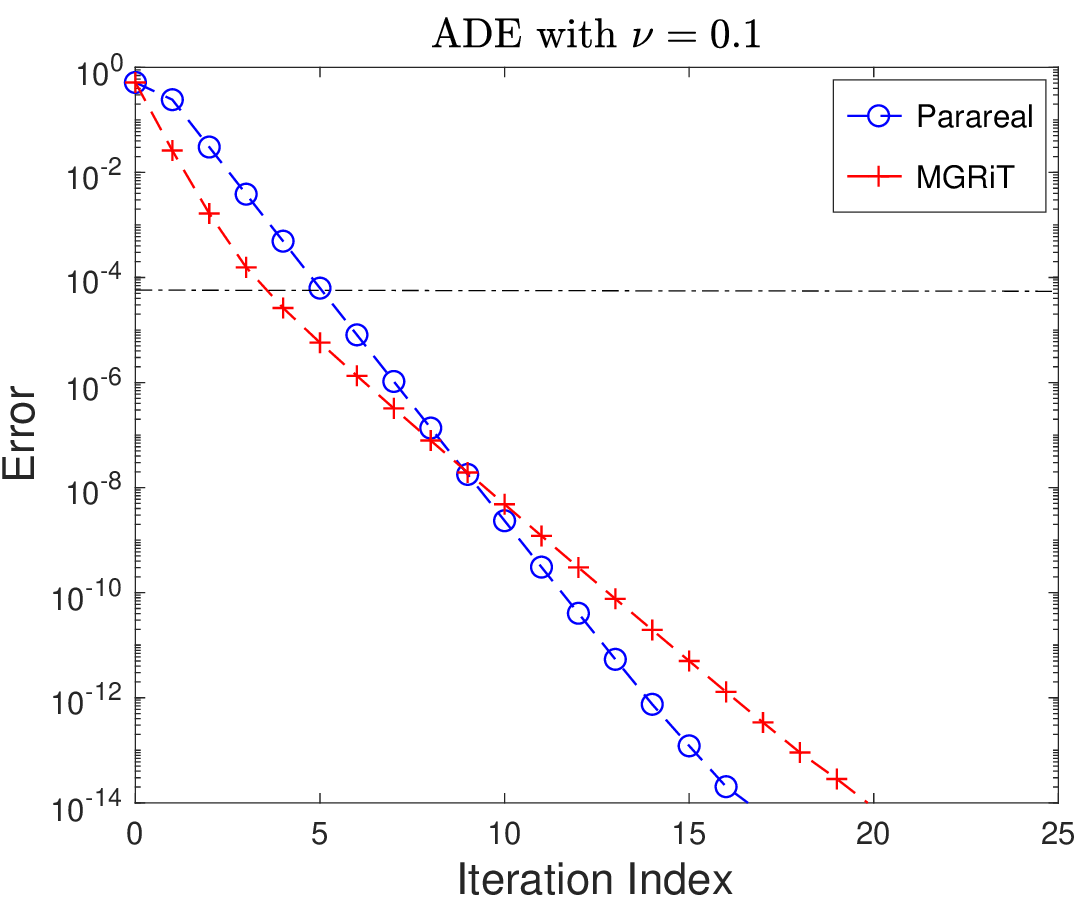} \\
 \includegraphics[width=2.3in,height=1.85in,angle=0]{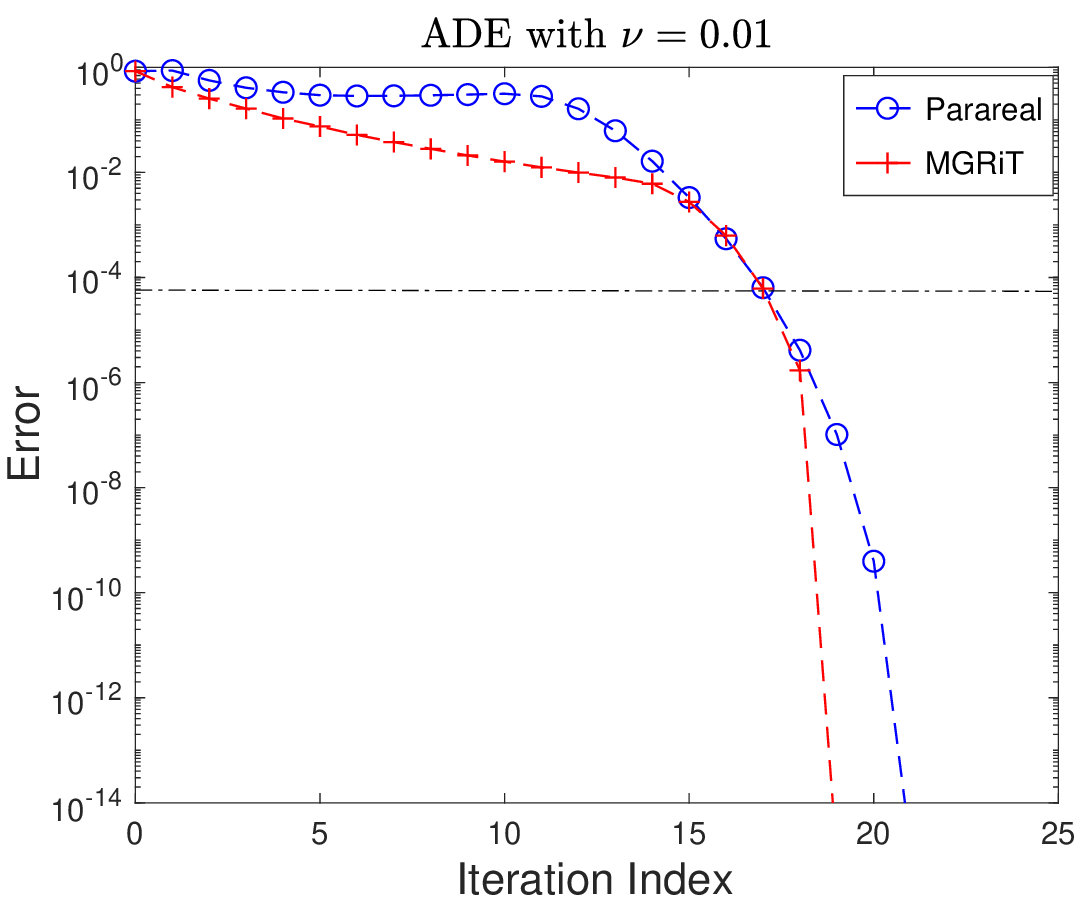}
  \includegraphics[width=2.3in,height=1.85in,angle=0]{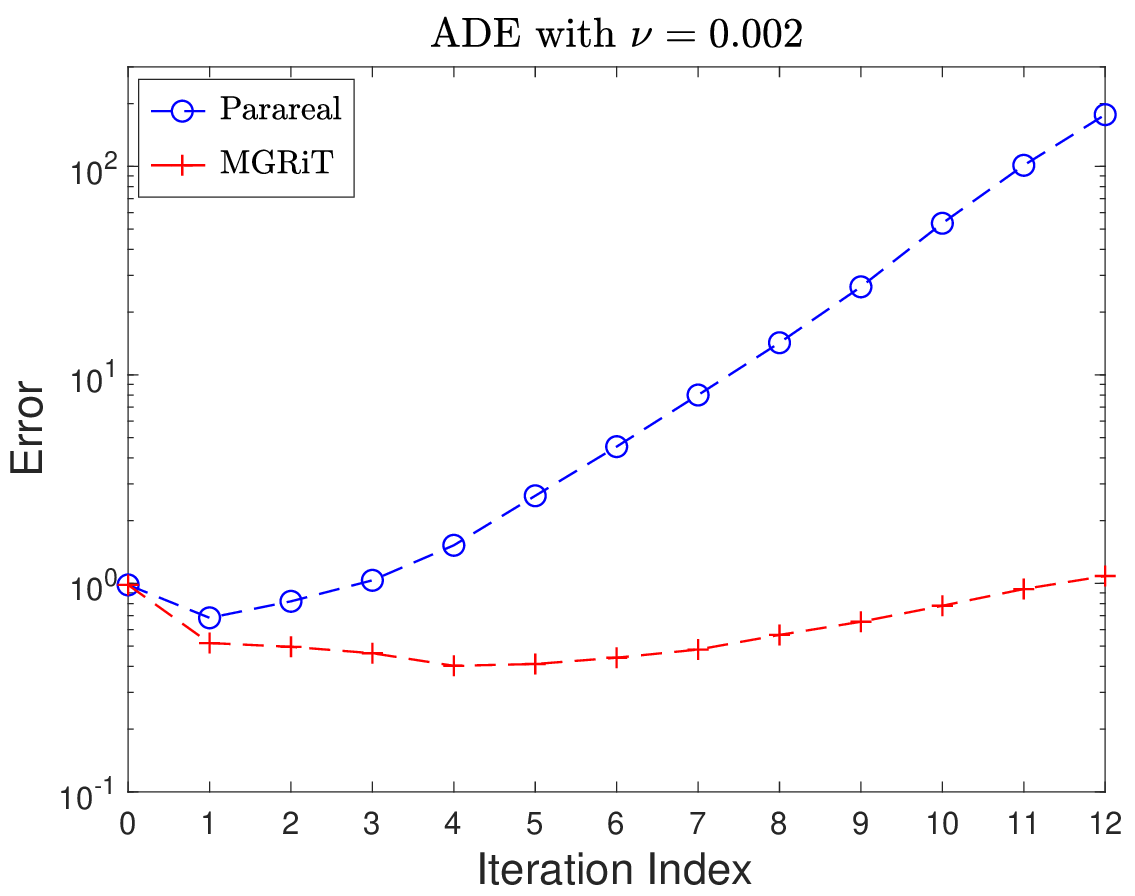}
 \caption{Measured errors for Parareal and MGRiT for the heat equation
   and the advection-diffusion equation (ADE) with three values of the
   diffusion parameter $\nu$. The dash-dotted line in each panel
     indicates the order of the truncation error $\max\{\Delta t^2,
     \Delta x^2\}$ of the discretization, beyond which one would not
     iterate in practice.}
  \label{Fig_Error_MGRiT_Parareal}
\end{figure}
we present the measured errors for the two methods, where we plot
  for Parareal each double iteration as one, in order to to use two
  fine solves, like in MGRiT with FCF-relaxation. We see that for the
  heat equation and ADE with $\nu=0.1$ convergence of Parareal and
  MGRiT is very similar, as expected, but with advection convergence
  is worse. This is confirmed for ADE with $\nu=0.01$, where both
  MGRiT with FCF-relaxation and Parareal now converge very slowly, and
  we observe that Parareal deterioration is more pronounced due to the
  use of the ineffective coarse solve after each fine solve, as
  indicated by the analysis. Finally for $\nu=0.002$ both Parareal and
  MGRiT diverge, the corresponding values are $\varrho_{l,\max}=
  1.4211$ for Parareal and $\varrho_{l,\max}=1.2812$ for MGRiT. These
  results clearly show convergence problems of both methods when
  approaching the hyperbolic regime.
  
Like Parareal, MGRiT can also be applied to nonlinear problems, where
$\CG$ and $\CF$ require the use of some nonlinear solver. A
convergence analysis of MGRiT for nonlinear cases can be found in
\cite{GanKZ18}, under the assumption of certain Lipschitz conditions
for $\CG$, $\CF$, and their difference. The main conclusion is as
follows: one MGRiT iteration with FCF-relaxation (thus using two fine
solves) contracts similarly to two Parareal iterations (also using two
fine solves) as long as the coarse solver $\CG$ is reasonably
accurate. To illustrate this, we apply MGRiT with FCF-relaxation and
Parareal to Burgers' equation \eqref{Burgers} with homogeneous
Dirichlet boundary conditions and initial condition
$u(x,0)=\sin^2(8\pi (1-x)^2)$ for $x\in(0, 1)$, $T=5$ and
discretization parameters $\Delta T=\frac{1}{16}$, $\Delta
x=\frac{1}{160}$, and $J=10$. We use centered finite differences for
the space discretization, and for the time discretization we use
Backward Euler for $\CG$ and SDIRK22 \eqref{twoSDIRK} for $\CF$. In
Figure \ref{Fig_Error_MGRiT_Parareal_Burgers},
\begin{figure}
  \centering
 \includegraphics[width=1.5in,height=1.25in,angle=0]{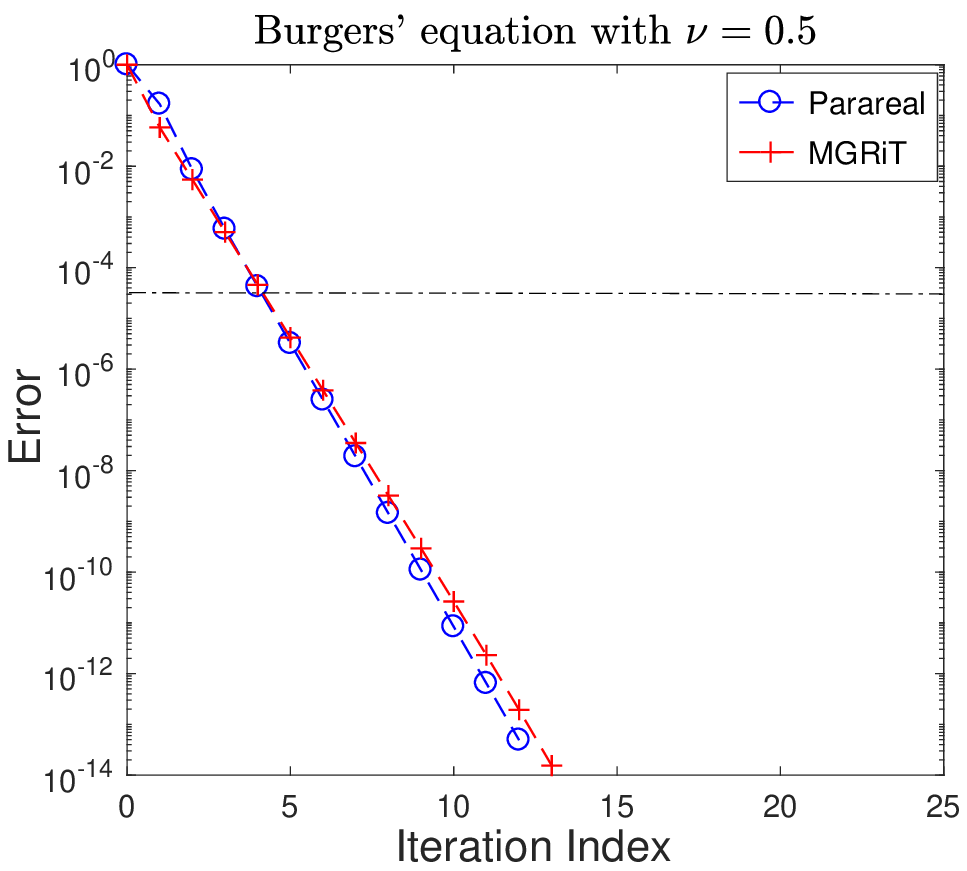}~ 
 \includegraphics[width=1.5in,height=1.25in,angle=0]{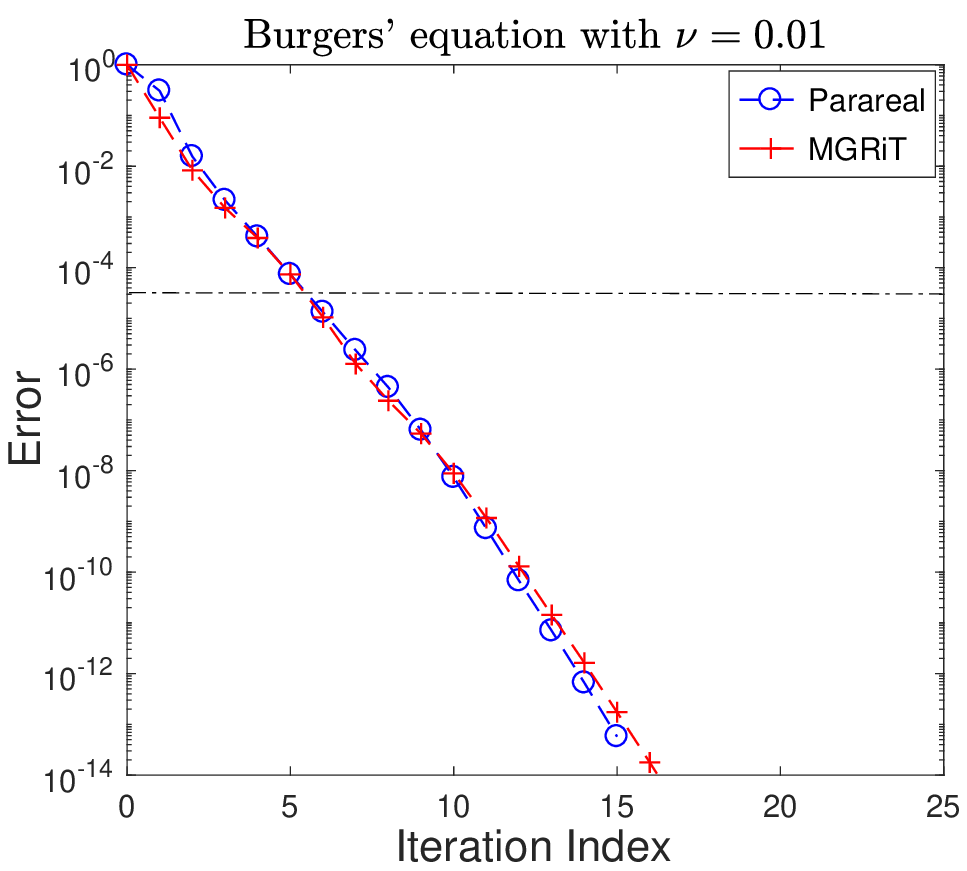} ~
 \includegraphics[width=1.5in,height=1.25in,angle=0]{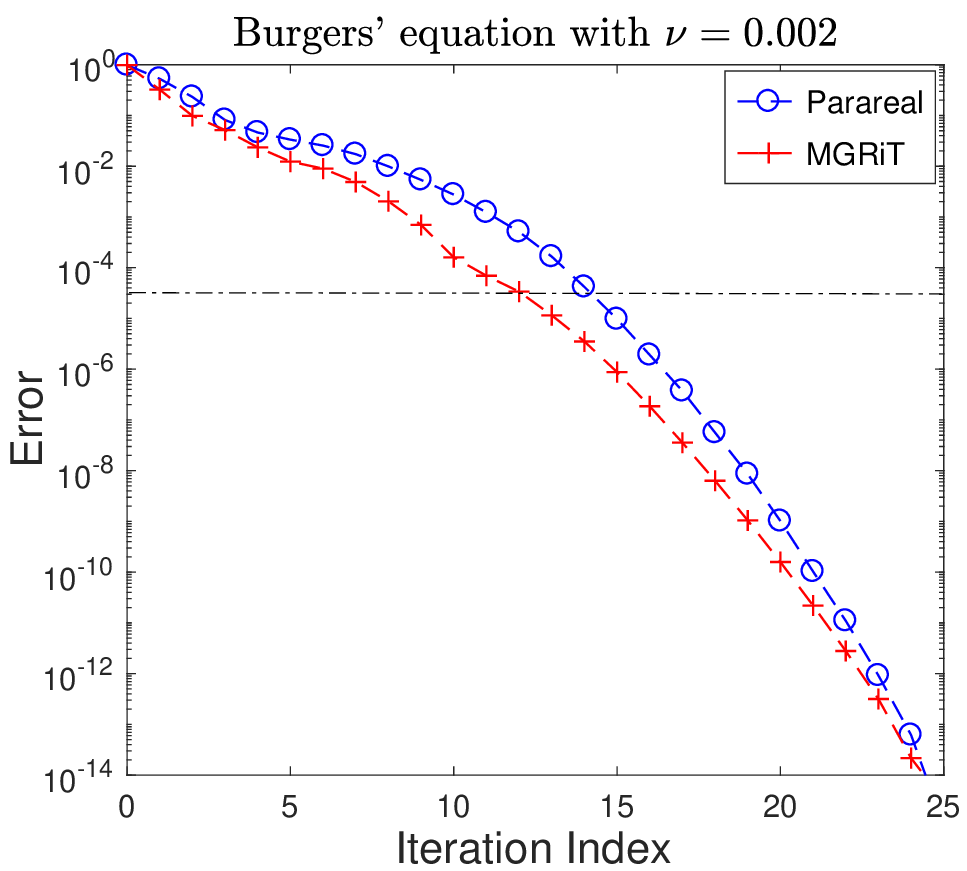} 
  \caption{Error of Parareal and MGRiT for Burgers' equation
    \eqref{Burgers} with three values of the diffusion parameter
    $\nu$.}
  \label{Fig_Error_MGRiT_Parareal_Burgers}
\end{figure}
we show the errors for three values of the diffusion parameter $\nu$,
plotting again a double iteration of Parareal for one iteration of
MGRiT with FCF-relaxation to measure the same number of fine
solves. We see that for each $\nu$, MGRiT converges like two steps of
Parareal again, like for the linear problems in Figure
\ref{Fig_Error_MGRiT_Parareal}.

\subsection{Diagonalization-based Parareal}\label{Sec4.5}

The third variant of Parareal we want to explain uses ParaDiag in the
coarse grid correction (CGC). There are two approaches: the first one
uses a head-tail coupled condition to permit the CGC to be solved in
one shot with ParaDiag; see \cite{WSiSC18} and \cite{WZSiSC19}. The
second one involves designing a special coarse solver that closely
approximates the fine solver, but can be applied at low cost for each
large time interval $[T_n, T_{n+1}]$ using ParaDiag (see
\cite{gander2020diagonalization}). The two Parareal variants have
distinct mechanisms, convergence properties, and scopes of
application.

\subsubsection{Diagonalization-based CGC}\label{Sec4.5.1}

For the initial-value problem ${\bm u}'=f({\bm u})$ with ${\bm
  u}(0)={\bm u}_0$, the standard CGC for Parareal follows a sequential
procedure. Specifically, we solve for $\{{\bm u}_{n}^{k+1}\}$
step-by-step as
\begin{equation}\label{seq_CGC}
{\bm u}_{n+1}^{k+1}=\CG(T_n, T_{n+1}, {\bm u}_n^{k+1})+{\bm b}_{n+1}^k, \quad n=0,1,\dots, N_t-1,
\end{equation}
starting from the initial condition ${\bm u}_0^{k+1}={\bm u}_0$, where
${\bm b}_{n+1}^k=\CF(T_n, T_{n+1}, {\bm u}_n^k)-\CG(T_n, T_{n+1}, {\bm
  u}_n^{k})$ is determined from the previous iteration. The idea in
\cite{WSiSC18} was to impose a  head-tail type coupling condition
${\bm u}_0^{k+1}=\alpha {\bm u}_{N_t}^{k+1}+{\bm u}_0$ for the
CGC, which is different from the more natural head-tail condition
  we have seen in ParaDiag II in \eqref{ParaDiagII_WR} that appeared a
  year later in \cite{gander2019convergence}, but turned out to work
  equally well in the present context. To use this heat-tail type
  coupling condition, we must redefine ${\bm
  b}_{n+1}^k$ as
$$
{\bm b}_{n+1}^k=\CF(T_n, T_{n+1}, \tilde{\bm u}_n^k)-\CG(T_n, T_{n+1}, {\bm u}_n^{k}),
$$
where
$$
\tilde{\bm u}_n^k =
\begin{cases}
{\bm u}_n^k, & \text{if } n\geq1,\\
{\bm u}_0, & \text{if } n=0.
\end{cases}
$$
This redefinition is necessary to ensure that the iterates converge
to the solution of the underlying ODEs. In summary, the Parareal
variant proposed in \cite{WSiSC18}  with this head-tail type coupling
condition is 
\begin{equation}\label{para_CGC}
\begin{split}
&{\bm u}_{n+1}^{k+1}=\CG(T_n, T_{n+1}, {\bm u}_n^{k+1})+\CF(T_n, T_{n+1}, \tilde{\bm u}_n^k)-\CG(T_n, T_{n+1}, {\bm u}_n^{k}), \\
&{\bm u}_0^{k+1}=\alpha {\bm u}_{N_t}^{k+1}+{\bm u}_0,
\end{split}
\end{equation}
where $n=0,\dots, N_t-1$.

We first explain how to implement this variant of Parareal for the
system of linear ODEs ${\bm u}'=A{\bm u}$ with initial condition ${\bm
  u}(0)={\bm u}_0$ and $t\in(0, T)$, the nonlinear case will be
addressed at the end of this section. We use Backward
Euler for the coarse solver $\CG$ and an arbitrary one-step
time integrator for the fine solver $\CF$. By noting that $\CG(T_n,
T_{n+1}, {\bm u}_n^{k+1})=(I_x-\Delta TA)^{-1}{\bm u}_n^{k+1}$, the
new CGC \eqref{para_CGC} involves solving the  $N_t$ linear equations
\begin{equation*}
\begin{cases}
(I_x-\Delta TA){\bm u}_{1}^{k+1}={\bm u}_0^{k+1}+(I_x-\Delta TA){\bm b}^{k}_{1},\\
(I_x-\Delta TA){\bm u}_{2}^{k+1}={\bm u}_1^{k+1}+(I_x-\Delta TA){\bm b}^{k}_{2},\\
\vdots\\
(I_x-\Delta TA){\bm u}_{N_t}^{k+1}={\bm u}_{N_t-1}^{k+1}+(I_x-\Delta TA){\bm b}^{k}_{N_t},\\
{\bm u}_0^{k+1}=\alpha {\bm u}_{N_t}^{k+1}+{\bm u}_0,
\end{cases}
\end{equation*}
where ${\bm b}_{n+1}^k=\CF(T_n, T_{n+1}, \tilde{\bm u}_n^k)-\CG(T_n,
T_{n+1}, {\bm u}_n^{k})$ is known from the previous iteration. Note
that these linear systems cannot be solved one by one due to the
head-tail coupling condition ${\bm u}_0^{k+1}=\alpha {\bm
  u}_{N_t}^{k+1}+{\bm u}_0$. Substituting this condition into the
first equation leads to the all-at-once system
\begin{equation}\label{para_CGC_LinearAAA}
(C_\alpha\otimes I_x- I_t\otimes {\Delta T}A){\bm U}^{k+1}={\bm g}^{k},
\end{equation}
where ${\bm U}^{k+1}=(({\bm u}_1^{k+1})^\top, ({\bm u}_2^{k+1})^\top, \dots, ({\bm u}_{N_t}^{k+1})^\top)^\top$ and
$$
C_\alpha=\begin{bmatrix}
1 & & &-\alpha\\
-1 & 1 & &\\
& \ddots & \ddots &\\
& & -1 & 1
\end{bmatrix}\in\mathbb{R}^{N_t\times N_t}, \quad {\bm g}^k=
\begin{bmatrix}
{\bm u}_0+(I_x-\Delta TA){\bm b}^{k}_{1}\\
(I_x-\Delta TA){\bm b}^{k}_{2}\\
\vdots\\
(I_x-\Delta TA){\bm b}^{k}_{N_t}
\end{bmatrix}.
$$
Similar to the ParaDiag II methods introduced in Section
\ref{sec3.6.2}, we can solve for ${\bm U}^{k+1}$ using three steps,
\begin{equation}\label{Para_CGC_3step}
\begin{cases}
{\bm U}^{a, k+1}=({\rm F}\otimes I_x){\bm g}^k, &\text{(step-a)}\\
(\lambda_{n} I_x-{\Delta T}A){\bm u}^{b,k+1}_n={\bm u}^{a,k+1}_n, ~n=1,2,\dots, N_t, &\text{(step-b)}\\
{\bm U}^{k+1}=({\rm F}^*\otimes I_x){\bm U}^{b,k+1}, &\text{(step-c)}
\end{cases}
\end{equation}
where $\{\lambda_n\}$ are the eigenvalues of $C_\alpha$
(cf. \eqref{MatD}), ${\rm F}$ is the discrete Fourier matrix
(cf. \eqref{MatF}), and ${\bm U}^{a,k+1}:=(({\bm
  u}_1^{a,k+1})^\top,\dots, ({\bm u}_{N_t}^{a,k+1})^\top)^\top$ along
with ${\bm U}^{b,k+1}:=(({\bm u}_1^{b,k+1})^\top,\dots, ({\bm
  u}_{N_t}^{b,k+1})^\top)^\top$. Through the diagonalization procedure
\eqref{Para_CGC_3step}, the new CGC \eqref{para_CGC} can be solved in
parallel across the coarse time grid.

From \eqref{para_CGC} we see that the head-tail coupled CGC simplifies
to the standard CGC \eqref{seq_CGC} when $\alpha\rightarrow0$, and hence,
one can expect that for sufficiently small $\alpha$, this Parareal variant
converges as fast as the original Parareal algorithm. However, due to the
roundoff error stemming from the diagonalization of $C_\alpha$ (as
discussed in Section \ref{sec3.6.2}), an arbitrarily small $\alpha$ is
impractical, particularly when the working precision is low, e.g.,
single or half precision. Fortunately, it is not necessary to use an
extremely small $\alpha$ for the diagonalization-based CGC to achieve
the same convergence rate as the standard CGC.
\begin{theorem}\cite{WSiSC18}\label{pro_Para_CGC}
  {\em Let $\rho$ denote the convergence factor of standard Parareal
    \eqref{seq_CGC}, and $\rho_{\rm new}$ the convergence factor of
    the new Parareal variant \eqref{para_CGC}, where the coarse solver
    $\CG$ is a stable time integrator. Then it holds that
$$
\rho_{\rm new}=\rho,~\text{if}~\alpha\leq \frac{\rho}{1+\rho}. 
$$}
\end{theorem}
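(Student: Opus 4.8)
The plan is to compare the error recursions of the two algorithms at the discrete level, exploiting that the head-tail coupled CGC \eqref{para_CGC} differs from the standard sequential CGC \eqref{seq_CGC} only through the cyclic perturbation $-\alpha$ in the corner of the time-stepping matrix. First I would write both iterations purely in terms of the error ${\bm e}_n^k := {\bm u}_n^k - {\bm u}_n$, where $\{{\bm u}_n\}$ is the reference solution obtained by running $\CF$ sequentially. Since the coarse solver $\CG$ is a one-step integrator, the standard CGC gives an error recursion governed by the lower-bidiagonal Toeplitz matrix $M_g(z)$ from \eqref{matMz}, so that the Parareal error propagation operator is $M(z) = M_g^{-1}(z)[M_g(z)-M_f(z)]$ and, as in Theorem \ref{Parareal_rho}, its norm is controlled by $\rho = \max_{z\in\sigma(\Delta TA)} \varrho_l(J,z)$ (the strictly lower-triangular nilpotent structure of $\tilde M$). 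For the new variant, the head-tail condition replaces $M_g(z)$ by the $\alpha$-circulant $M_g^{\alpha}(z)$ whose only extra entry is $-\alpha {\rm R}_g(z)$ in the top-right corner, so the new error operator is $M_\alpha(z) = (M_g^\alpha(z))^{-1}[M_g^\alpha(z)-M_f(z)]$.

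The key step is to factor out this rank-one (corner) perturbation. I would use the Sherman–Morrison formula to write $(M_g^\alpha)^{-1}$ in terms of $M_g^{-1}$ plus a correction of size $\mathcal{O}(\alpha)$, or — more in the spirit of the paper — observe directly that $M_\alpha = M + \alpha N$, where $N$ collects the terms produced by the extra corner entry acting through $M_g^{-1}$ (which, being the lower-triangular all-ones-power matrix with entries ${\rm R}_g^j(z)$, is uniformly bounded when $|{\rm R}_g(z)|<1$). Then I would track how the iteration error accumulates: because the update in \eqref{para_CGC} couples ${\bm u}_0^{k+1}$ to ${\bm u}_{N_t}^{k+1}$ but the right-hand side ${\bm b}^k$ still carries the previous iterate, one shows that $\|{\bm e}^{k+1}\| \le \rho \|{\bm e}^k\| + \alpha \cdot(\text{contribution of the tail } {\bm u}_{N_t}^{k+1})$. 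The decisive bound to establish is that the tail contribution at iteration $k+1$ is itself controlled by $\|{\bm e}^{k+1}\|$, i.e. $\|{\bm e}_{N_t}^{k+1}\| \le \|{\bm e}^{k+1}\|$ trivially, so the recursion closes as $\|{\bm e}^{k+1}\| \le \rho \|{\bm e}^k\| + \alpha \|{\bm e}^{k+1}\|$, giving $(1-\alpha)\|{\bm e}^{k+1}\| \le \rho\|{\bm e}^k\|$, hence the effective factor $\rho/(1-\alpha)$. Imposing $\rho/(1-\alpha) \le \rho$ is impossible as stated, so one must instead argue more carefully that the tail enters at iteration $k$ with a gain bounded by $\rho$: the condition $\alpha \le \rho/(1+\rho)$ is exactly what makes $\rho + \alpha(1+\rho) \le \rho\cdot\frac{1}{1-\alpha}\cdots$ — more precisely it is the threshold below which the $\alpha$-circulant perturbation does not enlarge the spectral radius of the error operator beyond $\rho$, because the perturbed operator's spectrum is a scaled/rotated copy and $\alpha/(1-\alpha) \le \rho$ iff $\alpha \le \rho/(1+\rho)$.

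Concretely, then, I would argue at the level of the scalar (Fourier-in-space) error $\xi_n^k(z)$ for each eigenvalue $z = \Delta T\lambda$: the new iteration reads $(M_g^\alpha(z)){\bm\xi}^{k+1}(z) = [M_g^\alpha(z)-M_f(z)]{\bm\xi}^k(z)$, and since $M_g^\alpha(z) - M_f(z)$ is the \emph{same} difference matrix as in standard Parareal (the corner entries of $M_g^\alpha$ and $M_f$ do not interact because $M_f$ is bidiagonal Toeplitz with no corner term), one gets ${\bm\xi}^{k+1}(z) = (M_g^\alpha(z))^{-1} [M_g(z) - M_f(z)]\,{\bm\xi}^k(z)$. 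Expanding $(M_g^\alpha(z))^{-1}$ via the Neumann-type series for $\alpha$-circulant matrices, or using the known explicit inverse of an $\alpha$-circulant bidiagonal matrix, one obtains $\|(M_g^\alpha(z))^{-1}[M_g(z)-M_f(z)]\|_\infty \le \frac{|{\rm R}_g(z) - {\rm R}_f^J(z/J)|}{1 - |{\rm R}_g(z)|}\cdot\frac{1}{1-\alpha|{\rm R}_g(z)|^{N_t}}$ — but the cleaner route, and the one I expect the authors take, is to bound $\|(M_g^\alpha)^{-k}\|$ directly and show it does not exceed $\|M_g^{-k}\|$ once $\alpha$ is small enough, using that the added corner entry only \emph{helps} (it wraps contributions that would otherwise be truncated by the nilpotency). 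The main obstacle will be precisely this last quantitative point: showing that the $\alpha$-circulant structure, for $\alpha \le \rho/(1+\rho)$, genuinely recovers $\rho_{\rm new} = \rho$ rather than merely $\rho_{\rm new} \le \rho + \mathcal{O}(\alpha)$ — i.e. extracting the sharp threshold from a careful analysis of the eigenvalues of the $\alpha$-circulant error operator (its spectral radius is $\frac{|{\rm R}_g - {\rm R}_f^J|}{|1 - \alpha^{1/N_t}\zeta\,{\rm R}_g|}$ type expressions over roots of unity $\zeta$, maximized to give exactly the stated bound).
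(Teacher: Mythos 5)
Your setup is the right one, but there are two genuine problems. First, a concrete algebraic error: in your third paragraph you claim that $M_g^\alpha(z)-M_f(z)$ equals $M_g(z)-M_f(z)$ ``because the corner entries do not interact''. The opposite is true: since $M_f$ has no corner entry, the corner $-\alpha{\rm R}_g(z)$ of $M_g^\alpha$ survives in the difference. Equivalently, writing the error recursion of \eqref{para_CGC} for the Dahlquist problem, the first row is $e_1^{k+1}=\alpha{\rm R}_g\,(e_{N_t}^{k+1}-e_{N_t}^{k})$, because ${\bm u}_0^{k}$ entering ${\bm b}_1^{k}$ also carries the head-tail coupling from the previous iteration; the previous iterate's tail therefore appears on the right-hand side, and the iteration operator is $(M_g^\alpha)^{-1}[M_g^\alpha-M_f]$, not $(M_g^\alpha)^{-1}[M_g-M_f]$. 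Relatedly, your closing guess that the spectrum of the new error operator is a ``scaled/rotated copy'' obtained by diagonalizing the $\alpha$-circulant is unfounded: $M_f$ is Toeplitz, not $\alpha$-circulant, so the two factors are not simultaneously diagonalizable; a cofactor expansion shows the eigenvalues instead solve $\lambda^{N_t}+\alpha(1-\lambda){\rm R}_g(z)\,[{\rm R}_f^J(z/J)-(1-\lambda){\rm R}_g(z)]^{N_t-1}=0$, which is not of the form you wrote.

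Second, and more importantly, the claim to be proved --- exact equality $\rho_{\rm new}=\rho$ under $\alpha\le\rho/(1+\rho)$ --- is never established, as you yourself concede (``the main obstacle''). Every estimate you produce is of the form $\rho/(1-\alpha)$, $(\rho+\alpha)/(1-\alpha)$ or $\rho+\mathcal{O}(\alpha)$, all strictly larger than $\rho$; and observing that $\alpha/(1-\alpha)\le\rho$ is equivalent to $\alpha\le\rho/(1+\rho)$ only identifies where the threshold should come from, it does not show that the new convergence factor does not exceed $\rho$ (nor that it does not drop below it, which equality also requires). A workable route is to eliminate the head unknown exactly, $e_1^{k+1}\,(1-\alpha{\rm R}_g^{N_t})=\alpha{\rm R}_g\bigl[\sum_{j=1}^{N_t-1}{\rm R}_g^{N_t-1-j}({\rm R}_f^J-{\rm R}_g)\,e_j^{k}-e_{N_t}^{k}\bigr]$, and then quantify the asymptotic (in $k$) rate of the resulting recursion, e.g.\ via the characteristic equation above, showing that for $\alpha\le\rho/(1+\rho)$ the $\alpha$-terms are dominated and the rate equals the classical $\rho=\max_{z}|{\rm R}_g(z)-{\rm R}_f^J(z/J)|/(1-|{\rm R}_g(z)|)$; a per-iteration sup-norm bound, as in your plan, cannot yield equality. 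Note also that the survey itself gives no proof --- it cites \cite{WSiSC18}, where the result is proved for $A$ with negative real eigenvalues --- so your argument has to stand on its own, and at present its central quantitative step is missing.
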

This result was proved for linear problems ${\bm u}'=A{\bm
  u}+g$ where $A$ has negative real eigenvalues. For other scenarios,
such as when $A$ has complex eigenvalues, numerical results suggest
its validity as well. Since the roundoff error resulting from the
diagonalization procedure grows as $\alpha$ decreases, it is clear
that the optimal choice for $\alpha$ is $\alpha=\frac{\rho}{1+\rho}$.
In practice, $\rho=\CO(10^{-1})$ and hence
$\alpha=\frac{\rho}{1+\rho}=\CO(10^{-1})$ as well. The roundoff error
incurred with a parameter $\alpha=\CO(10^{-1})$ is negligible.

To illustrate the convergence of the new Parareal variant
\eqref{para_CGC}, we consider the heat equation \eqref{heatequation}
and the advection-diffusion equation (ADE) \eqref{ADE} with periodic
boundary conditions and an initial condition ${u}(x,0)=\sin(2\pi x)$
for $x\in(0,1)$. We use Backward Euler as the coarse solver $\CG$ and
SDIRK22 from \eqref{twoSDIRK} for the fine solver $\CF$. The data used
here is $T=4$, $J=10$, $\Delta T=0.1$, and $\Delta x=\frac{1}{128}$.
In Figure \ref{Fig_Parareal_PCGC_Heat_ADE},
\begin{figure}
  \centering
 \includegraphics[width=2.3in,height=1.85in,angle=0]{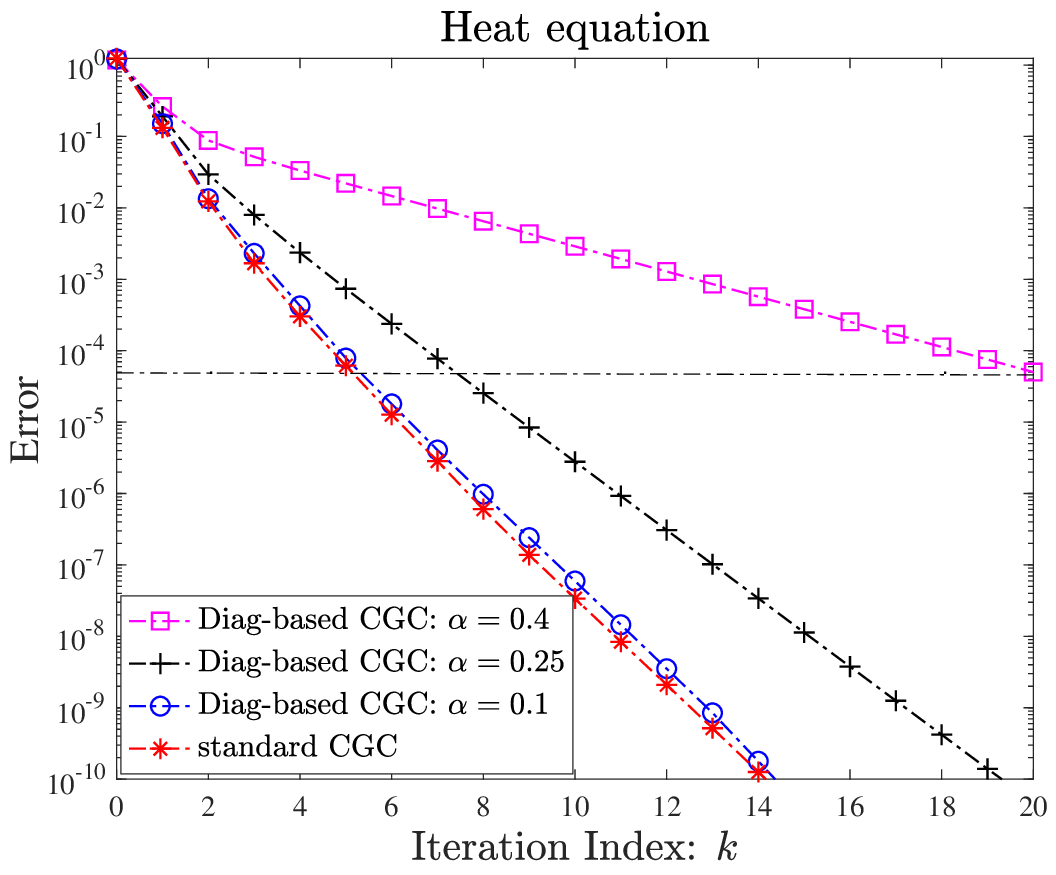}~ \includegraphics[width=2.3in,height=1.85in,angle=0]{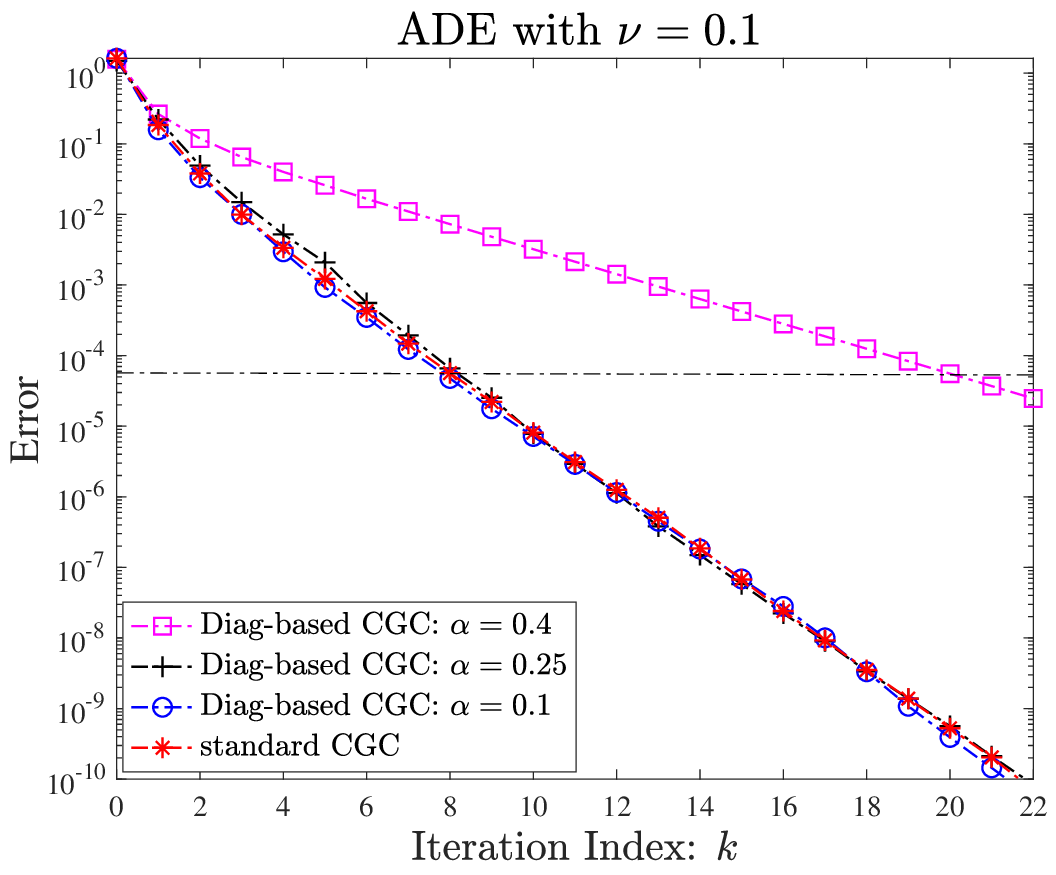}  
  \caption{Measured error of Parareal using the diagonalization-based
    CGC (cf. \eqref{para_CGC}) and the standard CGC (i.e., $\alpha=0$
    in \eqref{seq_CGC}).}
  \label{Fig_Parareal_PCGC_Heat_ADE}
\end{figure}
we present the measured error of Parareal using both the
diagonalization-based and standard CGC. For the diagonalization-based
CGC, we use three values of the parameter $\alpha$ to illustrate how
the convergence rate depends on this parameter. For the heat equation,
the convergence factor of Parareal with standard CGC is
$\rho\approx0.22$, and thus, according to Theorem \ref{pro_Para_CGC},
the threshold for $\alpha$ such that the diagonalization-based CGC
achieves the same convergence rate is
$\frac{\rho}{1+\rho}\approx0.18$. If $\alpha$ exceeds this threshold,
the diagonalization-based CGC results in a slower convergence
rate. Thus the theoretical analysis accurately predicts the numerical
results shown in the left panel. For ADE with $\nu=0.1$,
$\rho\approx0.39$ and the threshold for $\alpha$ is
$\frac{\rho}{1+\rho}\approx0.28$. Hence, as seen in Figure
\ref{Fig_Parareal_PCGC_Heat_ADE} on the right, $\alpha=0.25$ suffices
to let the diagonalization-based Parareal converge as fast as the
standard Parareal.

We next address how to adapt the diagonalization-based CGC to
nonlinear problems of the form ${\bm u}'=f({\bm u})$ with an initial
value ${\bm u}(0)={\bm u}_0$. We continue to use backward Euler for
the coarse solver $\CG$. Then, with ${\bm b}^{k}_{n+1}:=\CF(T_n,
T_{n+1}, \tilde{\bm u}_n^k)-\CG(T_n, T_{n+1}, {\bm u}_n^{k})$ the
initial part of the CGC algorithm \eqref{para_CGC}, i.e., ${\bm
  u}_{n+1}^{k+1}=\CG(T_n, T_{n+1}, {\bm u}_n^{k+1})+{\bm b}^k_{n+1}$,
can be reformulated as
$$
\frac{{\bm u}_{n+1}^{k+1}-{\bm b}_{n+1}^k- {\bm u}_n^{k+1}}{\Delta T}=f({\bm u}_{n+1}^{k+1}-{\bm b}_{n+1}^k),~n=0,1,\dots, N_t-1. 
$$
This, together with the head-tail coupling condition ${\bm
  u}_0^{k+1}=\alpha{\bm u}_{N_t}^{k+1}+{\bm u}_0$, leads to the
nonlinear all-at-once system
\begin{equation}\label{para_CGC_nonLinearAAA}
(C_\alpha\otimes I_x){\bm U}^{k+1}-   \Delta T F({\bm U}^{k+1})= {\bm g}^k, 
\end{equation}  
where the definitions for ${\bm U}^{k+1}$ and $C_\alpha$ remain the same as in \eqref{para_CGC_LinearAAA}, and
$$
 F({\bm U}^{k+1}):=
  \begin{bmatrix}
f({\bm u}_{1}^{k+1}-{\bm b}_{1}^k)\\
f({\bm u}_{2}^{k+1}-{\bm b}_{2}^k)\\
\vdots\\
f({\bm u}_{N_t}^{k+1}-{\bm b}_{N_t}^k)
  \end{bmatrix},\quad{\bm g}^k:=
    \begin{bmatrix}
{\bm b}_{1}^k+{\bm u}_0\\
{\bm b}_2^k\\
\vdots\\
{\bm b}_{N_t}^k
  \end{bmatrix}. 
$$
We solve the nonlinear system \eqref{para_CGC_nonLinearAAA} by a
quasi Newton method as previously for the nonlinear
ParaDiag method (cf. Section \ref{sec3.6.1}),
\begin{subequations}  
\begin{equation}\label{para_CGC_SNI_a}
\begin{split}
&\CP_{\alpha} ^{k+1,l}\Delta {\bm U}^{k+1,l}={\bm g}^k-
(C_\alpha\otimes I_x){\bm U}^{k+1,l}+  \Delta TF({\bm U}^{k+1, l}),\\
&{\bm U}^{k+1,l+1}={\bm U}^{k+1,l}+\Delta{\bm U}^{k+1,l}, 
\end{split}
 \end{equation}
where $l=0,1,\dots, l_{\max}$ denotes the Newton iteration index. The matrix $\CP_{\alpha}^{k+1,l}$ is a block $\alpha$-circulant matrix given by
\begin{equation}\label{para_CGC_SNI_b}
 \CP_{\alpha}^{k+1,l}:=C_\alpha\otimes I_x- I_t\otimes \Delta TA^{k+1, l}, 
 \end{equation} 
\end{subequations} 
where $A^{k+1, l}$ is the average of the Jacobi matrices,
$$
A^{k+1, l}:=\frac{1}{J}{\sum}_{j=1}^J\nabla f({\bm u}_n^{k+1,l}-{\bm b}_n^k). 
$$ 
The Kronecker tensor product $I_t\otimes A^{k+1, l}$ serves as an
approximation of the Jacobi matrix $\nabla F({\bm U}^{k+1,l})$. We
note that the nearest Kronecker product approximation (NKA),
introduced in Section \ref{sec3.6.1}, could also be used to obtain a
better approximation of $\nabla F({\bm U}^{k+1,l})$, but for
simplicity, we do not explore this option here further.

The matrix $\CP_{\alpha}^{k+1,l}$ has the same structure as the
coefficient matrix in \eqref{para_CGC_LinearAAA}, which allows us
to solve for the increment $\Delta {\bm U}^{k+1,l}$ using the
diagonalization procedure outlined in \eqref{Para_CGC_3step}. The
convergence analysis of the new Parareal variant \eqref{para_CGC} in
the nonlinear context is detailed in \cite[Section 4]{WSiSC18}, where
it is shown that the convergence rate mirrors that of Parareal with
standard CGC when $\alpha$ is chosen appropriately small. We illustrate
this by applying Parareal with both CGCs to Burgers'
equation \eqref{Burgers}, using the same problem setup and
discretization parameters as in the previously discussed heat and
advection-diffusion equation case.  In Figure
\ref{Fig_Parareal_PCGC_Burgers},
\begin{figure}
  \centering
 \includegraphics[width=2.3in,height=1.85in,angle=0]{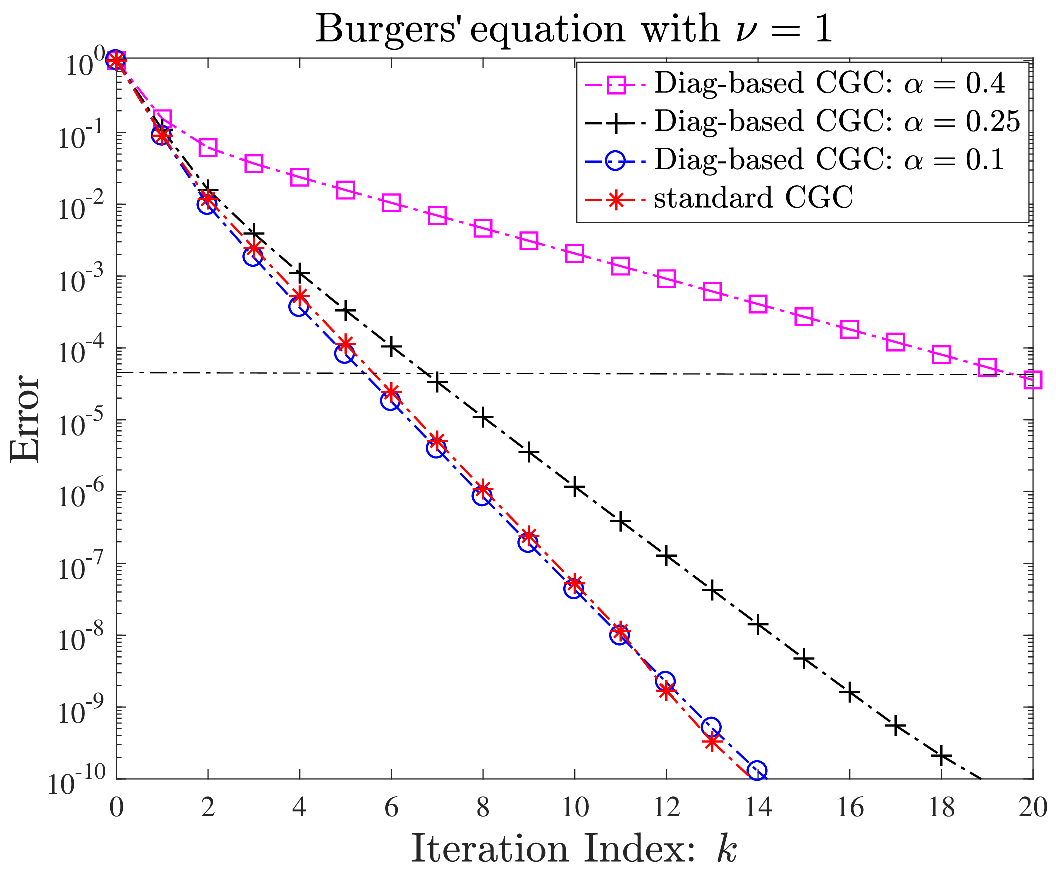}~ \includegraphics[width=2.3in,height=1.85in,angle=0]{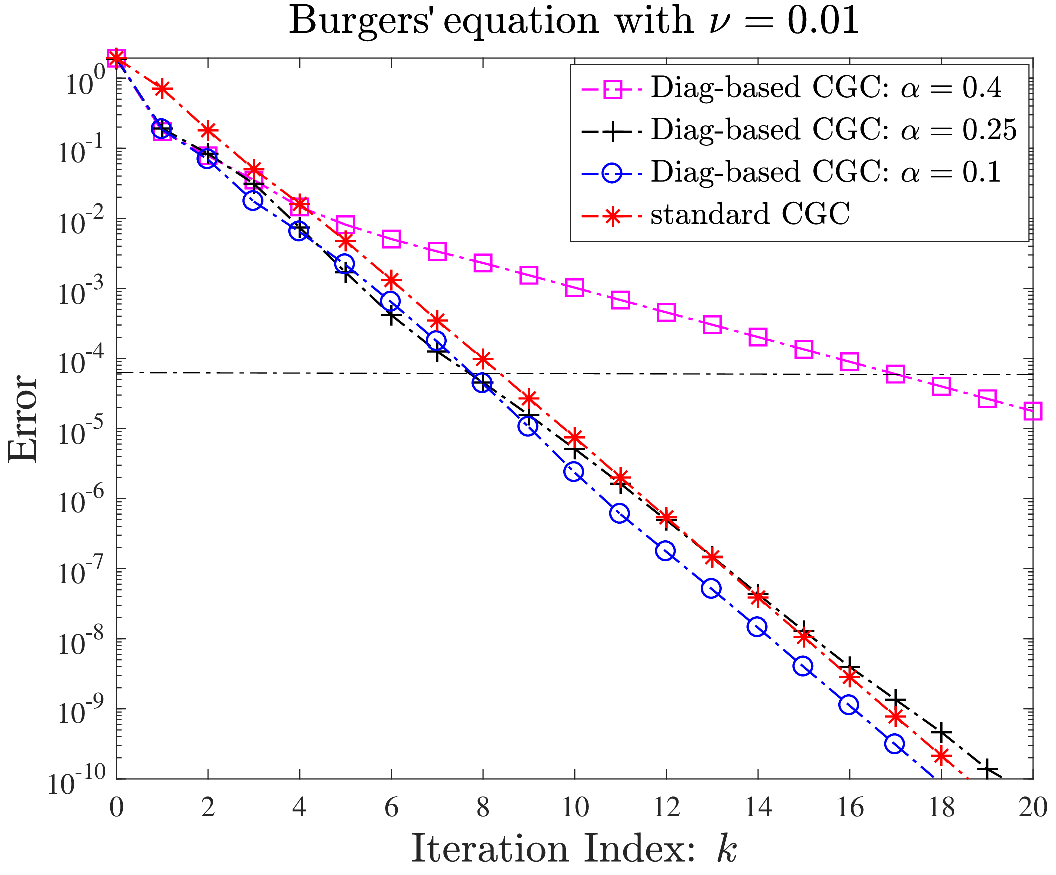}  
  \caption{Measured error of Parareal using the two CGCs for
    Burgers' equation with two values of the diffusion parameter
    $\nu$.}
  \label{Fig_Parareal_PCGC_Burgers}
\end{figure}
we present the measured errors for two values of the diffusion
parameter $\nu$. We see that also for this nonlinear problem, the
influence of $\alpha$ on the convergence rate remains as in the linear
case.

\begin{remark}[Extension to MGRiT]
  \emph{The fundamental mechanism of MGRiT \eqref{MGRiT} aligns with
    that of Parareal, and specifically, the CGC can be similarly
    represented as in \eqref{seq_CGC}. However, a direct extension of
    the diagonalization-based CGC devised for Parareal in
    \eqref{para_CGC} to MGRiT leads to divergence regardless of
      the choice of $\alpha$. This is because the head-tail type
      coupling condition ${\bm u}_1^{k+1}=\alpha {\bm
        u}_{N_t}^{k+1}+{\bm u}_1$ used is less natural than the one
      from \cite{gander2019convergence} that appeared a year later,
      namely ${\bm u}_{1}^{k+1}=\alpha({\bm u}_{N_t}^{k+1}-{\bm
        u}_{N_t}^{k})+{\bm u}_1$. Using this more natural head-tail
      condition which is consistent at convergence for MGRiT was
      proposed in \cite{WZSiSC19}, leading to the convergent MGRiT
    variant
 \begin{equation}\label{MGRiT_para_CGC}
   \begin{split}
&{\bm u}^{k+1}_0={\bm u}_0,~{\bm
       u}_{1}^{k+1}=\alpha({\bm u}_{N_t}^{k+1}-{\bm u}_{N_t}^{k})+{\bm
       u}_1, \\ &{\bm u}^{k+1}_{n+1}=\CG(T_{n}, T_{n+1}, {\bm
       u}^{k+1}_n)+\tilde{\bm b}_{n+1}^k,~n=1,2, \dots, N_t-1,\\
  \end{split}
\end{equation} 
where $\tilde{\bm b}_{n+1}^k=\CF(T_n, T_{n+1}, \tilde{\bm s}^k_n)-\CG(T_n, T_{n+1}, \tilde{\bm s}^k_n)$, $\tilde{s}^k_n:=\CF(T_{n-1}, T_n, \tilde{\bm u}^{k}_{n-1})$ and
$$
\tilde{\bm u}^k_n=
\begin{cases}
{\bm u}_n,~&n=0, 1,\\
{\bm u}_n^k,~&n\geq2. 
\end{cases}
$$
This variant converges at the same rate as the original MGRiT method
\eqref{MGRiT}, provided $\alpha$ is suitably small, Theorem
\ref{pro_Para_CGC} applies to MGRiT in an analogous manner. For
Parareal, one can also use the more natural head-tail condition ${\bm
  u}_{0}^{k+1}=\alpha({\bm u}_{N_t}^{k+1}-{\bm u}_{N_t}^{k})+{\bm
  u}_0$ which is consistent at convergence, instead of
\eqref{para_CGC}, and one obtains the same convergence rate as the one
in Theorem \ref{pro_Para_CGC} shown for the less natural one. }
\end{remark}

\subsubsection{Diagonalization-based Coarse Solver}\label{Sec4.5.2}

A fundamentally distinct idea from the ParaDiag CGC method in Section
\ref{Sec4.5.1} that combines ParaDiag with Parareal was introduced in
\cite{gander2020diagonalization}. The key innovation lies in using the
same time integrator and time step size for both the coarse and fine
solvers, but implementing the coarse solver through a diagonalization
procedure. This approach can work also for hyperbolic problems,
  since it transports all frequency components also in the coarse
  propagator over very long time. 

We illustrate this concept for the nonlinear system of ODEs ${\bm
  u}'=f({\bm u})$ with initial value ${\bm u}(0)={\bm u}_0$. For the
time discretization, we use the linear-$\theta$ method (a
generalization to the $s$-stage Runge-Kutta method can be found in the
Appendix of \cite{gander2020diagonalization}).  On each large time
interval $[T_n, T_{n+1}]$, the fine solver $\CF(T_n, T_{n+1}, {\bm
  u}_n)$ computes the solution at $t=T_{n+1}$ by performing $J$ steps
of the linear-$\theta$ method sequentially, i.e., $\CF(T_n, T_{n+1},
{\bm u}_n)={\bm v}_J$ with ${\bm v}_J$ being the final solution of
\begin{equation}\label{F_solver}
{\bm v}_{j+1}-{\bm v}_{j}=\Delta t[\theta f({\bm v}_{j+1})+(1-\theta)f({\bm v}_{j})],~j=0,1,\dots, J-1,
\end{equation}
with initial condition ${\bm v}_0={\bm u}_n$, where $\Delta
t=\frac{\Delta T}{J}=\frac{T_{n+1}-T_n}{J}$ and $\theta=1$ or
$\theta=\frac{1}{2}$.  For the coarse solver, denoted by
$\CF_\alpha^*(T_n, T_{n+1}, {\bm u}_n)$, we solve the
nonlinear system with a head-tail coupled condition,
\begin{equation}\label{F_solver_alp}
\begin{split}
&{\bm v}_{j+1}-{\bm v}_{j}=\Delta t[\theta f({\bm v}_{j+1})+(1-\theta)f({\bm v}_{j})],~j=0,1,\dots, J-1, \\
&{\bm v}_0=\alpha{\bm v}_J+(1-\alpha){\bm u}_n.
\end{split}
\end{equation}
This system can be recast as the nonlinear all-at-once system
\begin{equation}\label{aaa_F_solver}
\underbrace{(C_\alpha\otimes I_x){\bm V}-\Delta t F({\bm V})}_{:=\CK({\bm V})}={\bm b}({\bm u}_n),
\end{equation}
where ${\bm V}:=({\bm v}^\top_{1}, {\bm v}^\top_{2},\dots, {\bm v}^\top_{J})^\top$, ${\bm b}({\bm u}_n):=((1-\alpha){\bm u}^\top_n, 0,\dots, 0)^\top$, and
\begin{equation}\label{BtB_theta}
\begin{split}
&C_\alpha:= \begin{bmatrix}
  1 & & &-\alpha\\
  -1 &1 & &\\
  &\ddots &\ddots &\\
  & &-1 &1
  \end{bmatrix},\\
  &F({\bm V}):=\begin{bmatrix}
\theta f({\bm v}_{1})+(1-\theta)f(\alpha{\bm v}_{J}+(1-\alpha){\bm u}_n)\\
\theta f({\bm v}_{2})+(1-\theta)f({\bm v}_{1})\\
\vdots\\
\theta f({\bm v}_{J})+(1-\theta)f({\bm v}_{J-1})
  \end{bmatrix}.
\end{split}
\end{equation}
We solve \eqref{aaa_F_solver} with the quasi Newton method 
\begin{subequations}  
\begin{equation}\label{Sec4_4_SNI_a}
\CP_{\alpha}({\bm V}^{l}) \Delta {\bm V}^l = {\bm b}({\bm u}_n) -\CK({\bm V}^l), \quad {\bm V}^{l+1} = {\bm V}^l + \Delta{\bm V}^l, 
\end{equation}
where $l = 0, 1, \dots, l_{\max}$, and the matrix $\CP_{\alpha}({\bm
  V}^{l})$ is a block $\alpha$-circulant matrix serving as an
approximation to the Jacobi matrix $\nabla \CK({\bm V}^l) = C_\alpha
\otimes I_x - \Delta t (\tilde{C}_{\theta, \alpha} \otimes I_x) \nabla
F({\bm V}^l)$. It is defined by
\begin{equation}\label{Sec4_4_SNI_b}
\CP_{\alpha}({\bm V}^{l}) = C_\alpha \otimes I_x - \Delta t \tilde{C}_{\alpha,\theta} \otimes \overline{\nabla \mathbf{f}}({\bm V}^l), 
\end{equation}
where $\tilde{C}_{\alpha, \theta}$ is an $\alpha$-circulant matrix given by
$$
\tilde{C}_{\theta, \alpha} := \begin{bmatrix}
\theta & & & (1-\theta)\alpha \\
1-\theta & \theta & & \\
& \ddots & \ddots & \\
& & 1-\theta & \theta
\end{bmatrix},
$$
and $\overline{\nabla {f}}({\bm V}^l)$ represents the average of the
$J$ Jacobi blocks of $\nabla F({\bm V}^l)$,
$$
\overline{\nabla {f}}({\bm V}^l) := \frac{1}{J} \left[{\sum}_{j=1}^{J-1} \nabla {f}({\bm v}^l_{j}) + \nabla {f}(\alpha{\bm v}^l_{J} + (1-\alpha){\bm u}_n) \right]. 
$$
\end{subequations} 
Using this notation, we can write the parareal algorithm from
\cite{gander2020diagonalization} as
\begin{equation}\label{Diag_parareal_2020}
{\bm u}_{n+1}^{k+1} = \CF^*_\alpha(T_n, T_{n+1}, {\bm u}_n^{k+1}) +
\CF(T_n, T_{n+1}, {\bm u}_n^{k}) -\CF^*_\alpha(T_n, T_{n+1}, {\bm u}_n^{k}), 
\end{equation}
where $n = 0, 1, \dots, N_t - 1$ denotes the time step index. 
 
For linear problems, i.e., $f({\bm u})=A{\bm u}$, the all-at-once
system \eqref{aaa_F_solver} becomes
\begin{equation}\label{Linear_diag_G}
(C_\alpha\otimes I_x- \tilde{C}_{\theta,\alpha}\otimes  \Delta tA){\bm V}={\bm b}({\bm u}_n), 
\end{equation}
with 
$$
 {\bm b}({\bm u}_n)=
([(I_x+\Delta t(1-\theta)A)(1-\alpha){\bm u}_n]^\top, 0,\dots, 0)^\top. 
$$
The coarse solver $\CF^*_\alpha(T_n, T_{n+1}, {\bm u}_n^k)$ is defined by 
$$
\CF^*_\alpha(T_n, T_{n+1}, {\bm u}_n^k):=(H_J\otimes I_x){\bm V}, 
$$
where $H_J:=(0, \dots, 0, 1)\in\mathbb{R}^{1\times J}$. With ${\bm
  V}=({\bm v}^\top_{1}, {\bm v}^\top_{2},\dots, {\bm
  v}^\top_{J})^\top$, it holds that $\CF^*_\alpha(T_n, T_{n+1}, {\bm
  u}_n^k)={\bm v}_{J}$ and the computation of ${\bm V}$ in
\eqref{Linear_diag_G} is equivalent to solving the
all-at-once system
\begin{equation}\label{G_alp_solver}
\begin{split}
&{\bm v}_{j+1}-{\bm v}_{j}=\Delta tA[\theta {\bm v}_{j+1}+(1-\theta){\bm v}_{j}],~j=0,1,\dots, J-1, \\
&{\bm v}_{0}=\alpha {\bm v}_{J}+(1-\alpha){\bm u}^k_n. 
\end{split}
\end{equation}
It is clear that the coarse solver reduces to the fine solver if
$\alpha=0$, and in this limit, Parareal 
\eqref{Diag_parareal_2020} converges in only one iteration, but
without any speedup because we have to solve \eqref{G_alp_solver} for
$\CF^*_\alpha(T_n, T_{n+1}, {\bm u}_n^k)={\bm v}_J$ sequentially. For
$\alpha\in(0, 1)$, we solve \eqref{G_alp_solver} in one shot by
diagonalization, which is parallel for the $J$ fine time points and
thus the computation time is approximately $J$ times less than the
fine solver $\CF(T_n, T_{n+1}, {\bm u}_n^k)$. This Parareal variant
has different convergence rates for parabolic and hyperbolic problems:
\begin{theorem}\cite{gander2020diagonalization}\label{pro_diag_G}
\emph{ For linear initial value problems ${\bm u}'=A{\bm u}+g$ with
  ${\bm u}(0)={\bm u}_0$ and $A\in\mathbb{C}^{N_x\times N_x}$, let
  $\{{\bm u}_n^k\}$ be the $k$-th iterate of the parareal variant
  \eqref{Diag_parareal_2020} and $\{{\bm u}_n\}$ be the converged
  solution. Then, for any stable one-step Runge-Kutta method used for
  $\CF$ and $\CF_\alpha^*$, the global error ${\bm
    e}^k=\max_{n=1,2,\dots, N_t}\|{\bm u}_n-{\bm u}_n^k\|_\infty$
  satisfies the estimate
$$
{\bm e}^k\leq \rho^k{\bm e}^0,
$$
where the convergence factor $\rho$ is given by
\begin{equation}\label{diag_G_rho}
\begin{split}
\rho=
\begin{cases}
\alpha, &\text{if}~\sigma(A)\subset\mathbb{R}^-,\\
\frac{2\alpha N_t}{1+\alpha}, &\text{if}~\sigma(A)\subset {\rm i}\mathbb{R}.
\end{cases}
\end{split}
\end{equation}
}
\end{theorem}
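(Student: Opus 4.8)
The plan is to exploit the error recursion for the parareal variant \eqref{Diag_parareal_2020}. Since $\CF$ and $\CF^*_\alpha$ are both linear in the initial datum when $f(\bm u)=A\bm u$, subtracting the converged relation from \eqref{Diag_parareal_2020} and writing $\bm e_n^k=\bm u_n-\bm u_n^k$ gives a recursion of the form $\bm e_{n+1}^{k+1}=\CF^*_\alpha(T_n,T_{n+1},\bm e_n^{k+1})+[\CF(T_n,T_{n+1},\cdot)-\CF^*_\alpha(T_n,T_{n+1},\cdot)]\bm e_n^{k}$, with $\bm e_0^k=0$. The key object is the difference operator $\CF-\CF^*_\alpha$ acting on one time slab: because the coarse solver differs from the fine solver \emph{only} through the head–tail coupling $\bm v_0=\alpha\bm v_J+(1-\alpha)\bm u_n$ in \eqref{G_alp_solver}, one can solve \eqref{G_alp_solver} explicitly. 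Writing $R(z):=\frac{1+(1-\theta)z}{1-\theta z}$ for the one-step stability function (or the Runge–Kutta stability function in the general case), the sequential fine solver on a slab yields $\bm v_J=R(\Delta tA)^J\bm v_0$, whereas the head–tail system forces $\bm v_0=\alpha R(\Delta tA)^J\bm v_0+(1-\alpha)\bm u_n$, hence $\bm v_0=(1-\alpha)(I-\alpha R^J)^{-1}\bm u_n$ and $\CF^*_\alpha(T_n,T_{n+1},\bm u_n)=(1-\alpha)R^J(I-\alpha R^J)^{-1}\bm u_n$. Therefore $\CF-\CF^*_\alpha = R^J - (1-\alpha)R^J(I-\alpha R^J)^{-1} = \alpha R^J(R^J-I)(I-\alpha R^J)^{-1}$, and $\CF^*_\alpha$ itself diagonalizes simultaneously with $A$.

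Next I would diagonalize $A=V_ADV_A^{-1}$ and reduce everything to the scalar test equation $u'=\lambda u$, $\lambda\in\sigma(A)$, exactly as in the proof of Theorem \ref{PararealLinear}. Writing $r:=R(\Delta t\lambda)^J$ (the fine propagator over one slab), the scalarized recursion reads $\xi_{n+1}^{k+1}=g\,\xi_n^{k+1}+d\,\xi_n^{k}$ with $g:=(1-\alpha)r(1-\alpha r)^{-1}$ and $d:=\alpha r(r-1)(1-\alpha r)^{-1}$; since $\xi_0^k=0$, unrolling in $n$ and then in $k$ gives, just as in Theorem \ref{Parareal_rho}, the bound $\|{\bm\xi}^{k}\|_\infty\le \big(|d|\,\sup_{m}\tfrac{1-|g|^{m}}{1-|g|}\big)^k\|{\bm\xi}^0\|_\infty$ when $|g|<1$, and a finite-step (nilpotent) bound otherwise. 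So the whole problem collapses to estimating $|d|/(1-|g|)$ uniformly over $\lambda\in\sigma(A)$.

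For the two cases in \eqref{diag_G_rho} one now estimates $r$. If $\sigma(A)\subset\mathbb R^-$ and the method is stable, $|r|\le 1$; a short computation shows $|g|=\frac{(1-\alpha)|r|}{|1-\alpha r|}\le\frac{(1-\alpha)|r|}{1-\alpha|r|}<1$ and $\frac{|d|}{1-|g|}=\frac{\alpha|r|\,|r-1|\,/\,|1-\alpha r|}{1-(1-\alpha)|r|/|1-\alpha r|}$, which after simplification is bounded by $\alpha$ (here the monotone dependence on $|r|\in[0,1]$ and $r\in(-1,1]$ does the work, and one checks the extreme cases $r\to 1$ and $r\to -1$). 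Hence $\rho=\alpha$. If $\sigma(A)\subset \mathrm i\mathbb R$, then $|r|=1$ for energy-preserving integrators such as the trapezoidal rule, so $|g|$ need not be $<1$ and the geometric-sum bound fails; instead one falls back on the finite-step estimate $\|\tilde M^k\|_\infty\le\binom{N_t-1}{k}$ from Theorem \ref{Parareal_rho}, combined with $|d|=\frac{\alpha|r-1|}{|1-\alpha r|}\le\frac{2\alpha}{1-\alpha}$ (using $|r|=1$, so $|r-1|\le 2$ and $|1-\alpha r|\ge 1-\alpha$), and with the crude bound $\binom{N_t-1}{k}\le N_t^k$ one gets $\rho\le \frac{2\alpha N_t}{1+\alpha}$ — noting $\frac{2\alpha}{1-\alpha}\cdot\frac{1-\alpha}{1+\alpha}\cdot\tfrac{1}{?}$; more precisely one tracks the constant so that the $N_t$-dependence and the $\frac{2\alpha}{1+\alpha}$ factor appear as stated. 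The main obstacle is the purely imaginary case: there $|r|=1$ prevents any slab-wise contraction, the method is only \emph{marginally} stable, and the $N_t$ factor is genuinely unavoidable; the delicate point is to show the constant in front is exactly $\frac{2}{1+\alpha}$ and that no worse power of $N_t$ creeps in, which requires carefully summing the Toeplitz powers rather than using the loose $\binom{N_t-1}{k}\le N_t^k$ estimate, and invoking that $r$ ranges over the unit circle so that $\sup_{|r|=1}|r-1|/|1-\alpha r|$ is attained and equals $2/(1+\alpha)$.
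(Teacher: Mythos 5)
The paper itself states this theorem only with a citation and gives no proof, so I can only judge your argument on its own terms; your overall route is the natural one and matches the strategy of the cited reference: the error recursion $\bm e_{n+1}^{k+1}=\CF^*_\alpha\bm e_n^{k+1}+(\CF-\CF^*_\alpha)\bm e_n^k$ with $\bm e_0^k=0$, the closed forms $\CF^*_\alpha=(1-\alpha)R^J(I-\alpha R^J)^{-1}$ and $\CF-\CF^*_\alpha=\alpha R^J(I-R^J)(I-\alpha R^J)^{-1}$ (you have a harmless sign slip), the scalar reduction (which, as in Theorem \ref{PararealLinear}, silently assumes $A$ diagonalizable and measures the error after transformation by $V_A$), and the Toeplitz row-sum bounds. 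Your imaginary-spectrum case is essentially complete once tidied: with $|r|\le1$ one has $|g|=\frac{(1-\alpha)|r|}{|1-\alpha r|}\le1$, and by maximum modulus over the unit disk $|d|=\frac{\alpha|r|\,|1-r|}{|1-\alpha r|}\le\frac{2\alpha}{1+\alpha}$ (attained at $r=-1$), so the crude bound $\sum_{j=0}^{N_t-2}|g|^j\le N_t-1$ already gives $\rho\le\frac{2\alpha N_t}{1+\alpha}$; no careful summation of Toeplitz powers is required, and the intermediate estimate $|d|\le\frac{2\alpha}{1-\alpha}$ together with the garbled constant-tracking sentence is an unnecessary detour.

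The genuine gap is in the case $\sigma(A)\subset\mathbb{R}^-$. Your claim that $\frac{|d|}{1-|g|}\le\alpha$ "after simplification", checked at the extremes $r\to\pm1$, fails for negative $r$: setting $s=|r|$ with $r\in(-1,0)$ one finds $1-|g|=\frac{1-s+2\alpha s}{1+\alpha s}$ and $|d|=\frac{\alpha s(1+s)}{1+\alpha s}$, hence $\frac{|d|}{1-|g|}=\frac{\alpha s(1+s)}{1-s+2\alpha s}\to1$ as $s\to1$, far above $\alpha$. The clean identity $\frac{|d|}{1-|g|}=\alpha r\le\alpha$ holds only when $r=R(\Delta t\lambda)^J\in[0,1]$. "Any stable one-step Runge--Kutta method" does not guarantee this: for the trapezoidal rule with $J$ odd, $R(\Delta t\lambda)^J\to-1$ for stiff modes, and then both $\|\mathcal{M}\|_\infty$ and $\|\mathcal{M}^k\|_\infty^{1/k}$ approach $1$ for large $N_t$ (here $\mathcal{M}=d\,M_g^{-1}B$ is the error propagator), so no sharper summation can recover a per-iteration factor $\alpha$ in that regime. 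To close the argument you must either add the hypothesis that the amplification $r$ is nonnegative on $\sigma(\Delta tA)$ — e.g.\ Backward Euler, $J$ even, or a stability function that is nonnegative on the negative real axis, which is effectively the setting of the cited paper — or give a different bound; as written, the first case of \eqref{diag_G_rho} is not established.
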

Here, $\sigma(A)$ denotes the spectrum of $A$. When the matrix $A$
arises from semi-discretizing the heat equation, i.e.,
$A\approx\Delta$, it holds that $\sigma(A)\subset\mathbb{R}^-$. In
this case, the convergence factor $\rho=\alpha$ implies that the
parareal variant \eqref{Diag_parareal_2020} converges with a rate
independent of $N_t$. For wave propagation problems, e.g., the
second-order wave equation \eqref{WaveEquation1d} and the
Schr$\ddot{\rm o}$dinger equation, all the eigenvalues of the discrete
matrix $A$ are imaginary, i.e., $\sigma(A)\subset{\rm
  i}\mathbb{R}$. For this kind of problems, the convergence factor
increases linearly in $N_t$. However, this does not necessarily
imply that the convergence rate deteriorates, especially
when $\alpha$ is relatively small and $N_t$ is not too large.

We now illustrate the convergence of the Parareal variant
\eqref{Diag_parareal_2020} for the heat equation \eqref{heatequation}
with homogeneous Dirichlet boundary conditions and initial condition
$u(x,0)=\sin^2(2\pi x)$ with $x\in(0, 1)$. For both $\CF$ and
$\CF^*_\alpha$, we use the Trapezoidal Rule and the discretization
parameters $ \Delta T=\frac{1}{12}$, $J=10$, $\Delta x=\frac{1}{100}$.
For two values of $N_t$ and three parameters $\alpha$, we show
in Figure \ref{Fig_Parareal_Diag_G_Heat}
\begin{figure}
  \centering
 \includegraphics[width=2.3in,height=1.85in,angle=0]{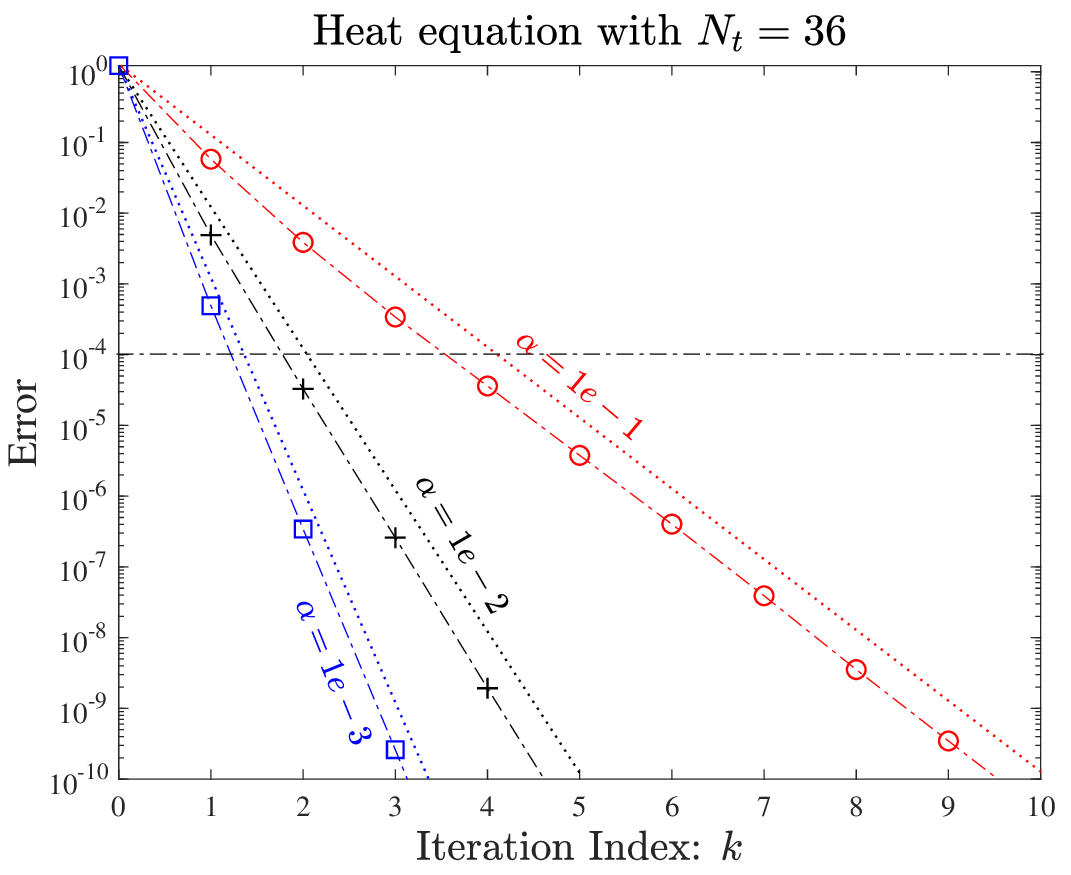}~ \includegraphics[width=2.3in,height=1.85in,angle=0]{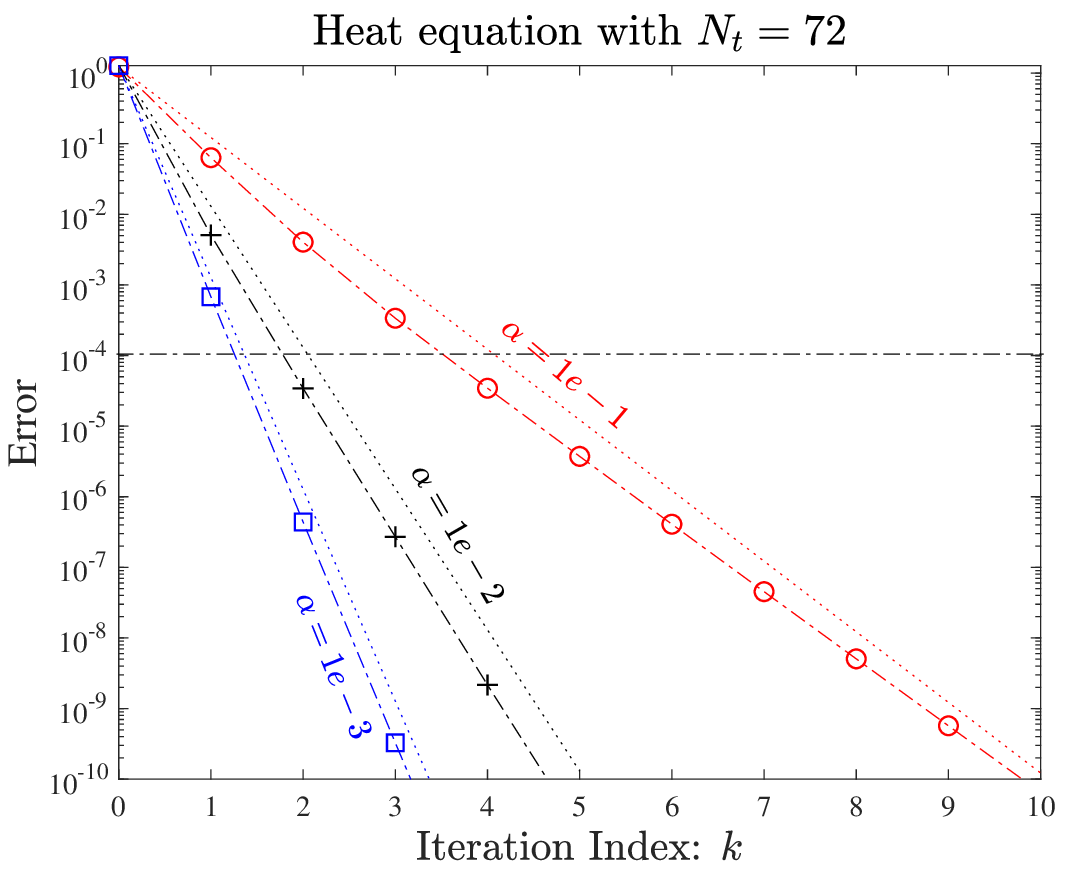}  
  \caption{Error of the Parareal variant \eqref{Diag_parareal_2020} for
    the heat equation. The dotted lines denote the error predicted by
    the theoretical convergence factor $\rho=\alpha$.}
  \label{Fig_Parareal_Diag_G_Heat}
\end{figure}
the measured error, where the error predicted by the convergence
factor $\rho=\alpha$ is plotted as dotted lines. We see that the
theoretical convergence factor is sharp, and the convergence rate is
indeed independent of $N_t$.

We next consider the wave equation \eqref{WaveEquation1d} with
periodic boundary conditions and initial conditions
${u}(x,0)=\sin^2(2\pi x)$ and $\partial_tu(x,0)=0$. After space
discretization, the system of ODEs is given by
$$
{\bm w}'={\bm A}{\bm w},~{\bm w}(0)=
\begin{bmatrix}
\sin^2(2\pi {\bm x}_h)\\0
\end{bmatrix},~{\bm A}:=\begin{bmatrix}
&I_x\\
A &
\end{bmatrix}, 
$$
where ${\bm w}=({\bm u}^\top, ({\bm u}')^\top)^\top$ and
$A\approx\Delta$. All the eigenvalues of ${\bm A}$ are purely
imaginary, and thus according to \eqref{diag_G_rho} the convergence
rate of the Parareal variant \eqref{Diag_parareal_2020} deteriorates
as $N_t$ grows. For relatively large $\alpha$, e.g., $\alpha=0.01$,
this is indeed the case, as shown in Figure
\ref{Fig_Parareal_Diag_G_Wave} (left).
\begin{figure}
  \centering
 \includegraphics[width=2.3in,height=1.85in,angle=0]{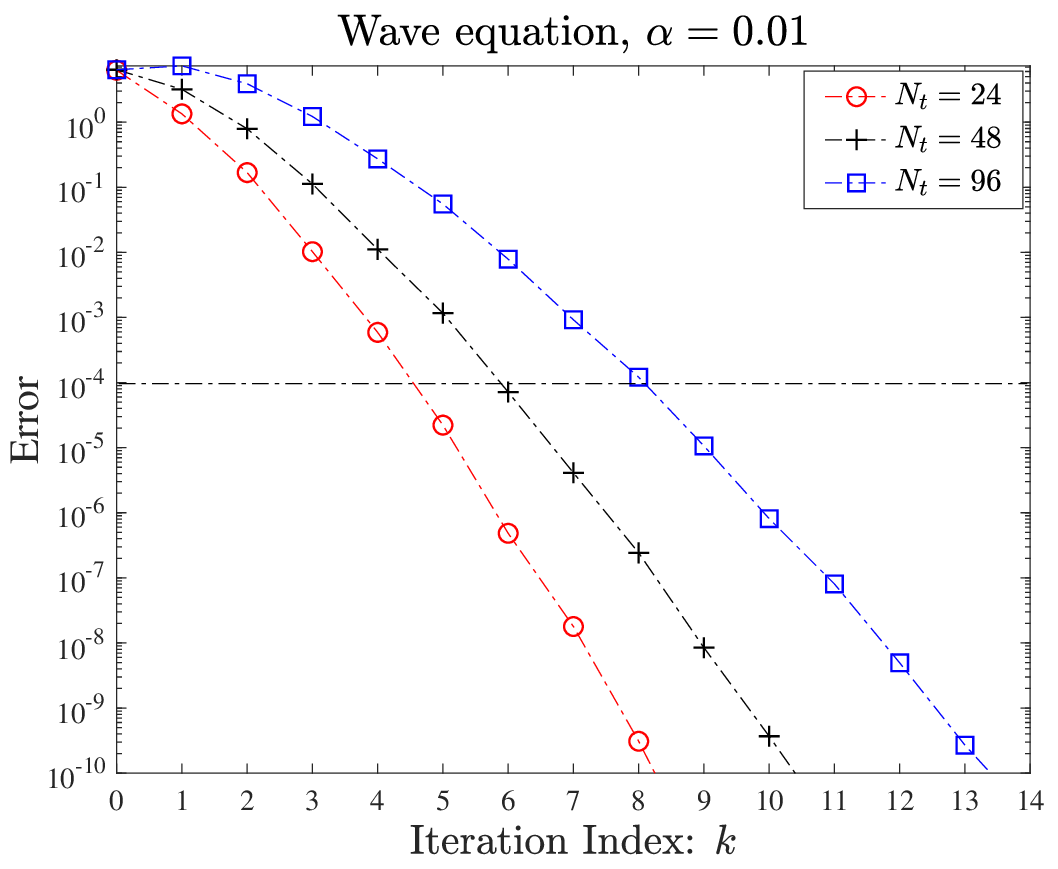}~ \includegraphics[width=2.3in,height=1.85in,angle=0]{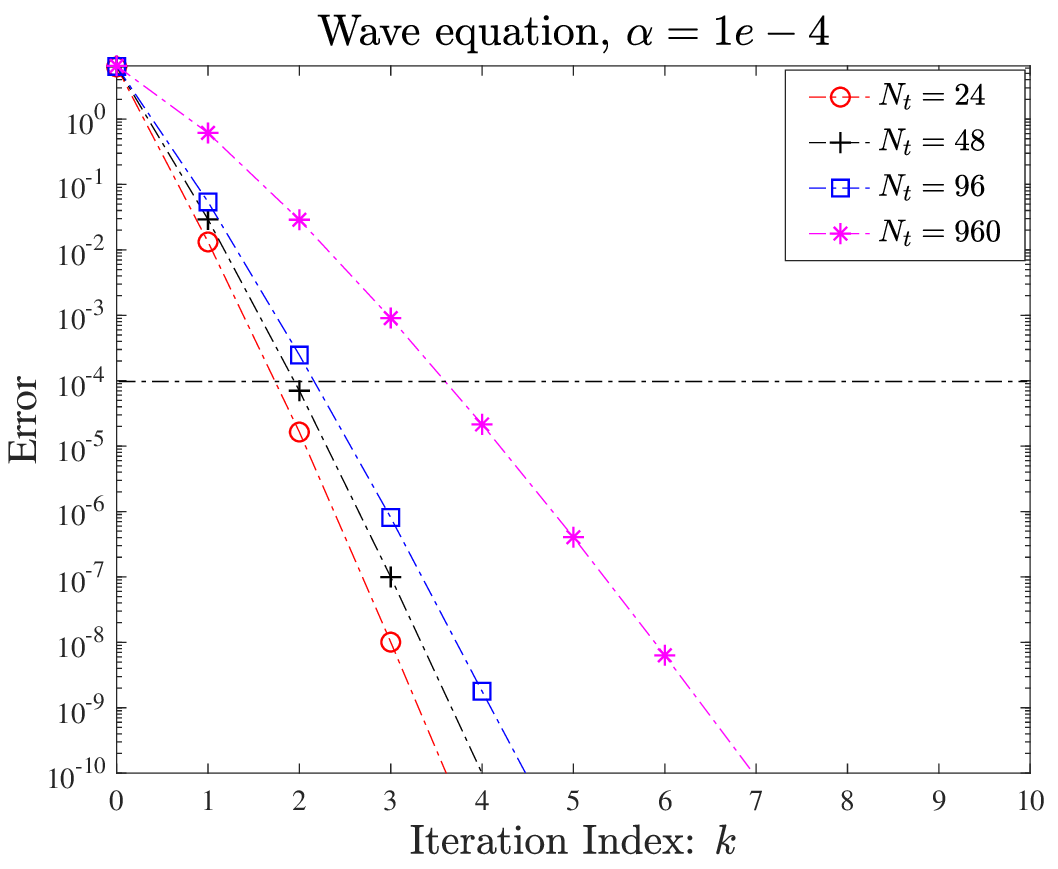}  
  \caption{Measured error of the Parareal variant
    \eqref{Diag_parareal_2020} with two values of the parameter
    $\alpha$ for the wave equation.}
  \label{Fig_Parareal_Diag_G_Wave}
\end{figure}
However, for small $\alpha$, the influence of $N_t$ on the convergence
rate becomes insignificant, as shown in Figure
\ref{Fig_Parareal_Diag_G_Wave} (right). For example, as $N_t$
increases from 24 to 960, we only require an additional 2 iterations
to reach the stopping tolerance, denoted by the horizontal line, i.e.,
the order of the discretization error $\max\{\Delta t^2, \Delta x^2\}$.

In contrast to the heat equation, for wave equations the convergence
factor given in \eqref{diag_G_rho} is not always sharp, depending on
the product $\alpha N_t$. This is illustrated in Figure
\ref{Fig_Parareal_Diag_G_Wave_rho},
\begin{figure}
  \centering
 \includegraphics[width=2.3in,height=1.85in,angle=0]{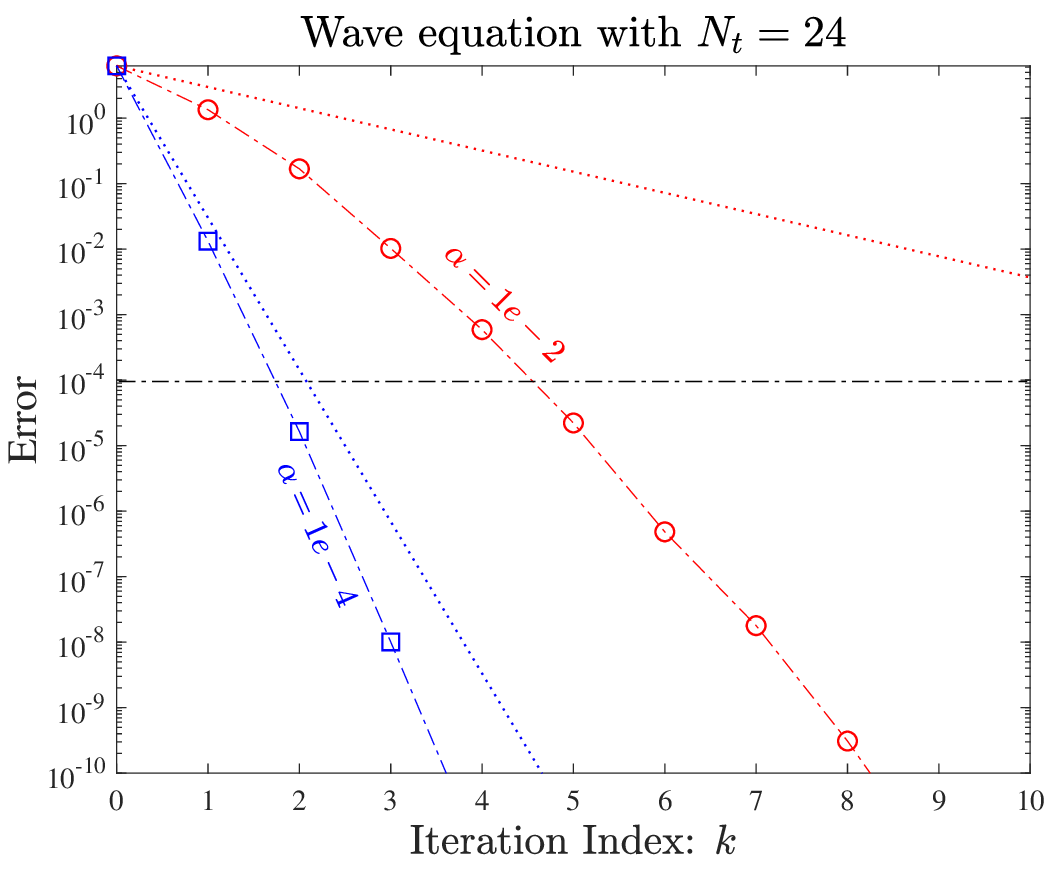}~ \includegraphics[width=2.3in,height=1.85in,angle=0]{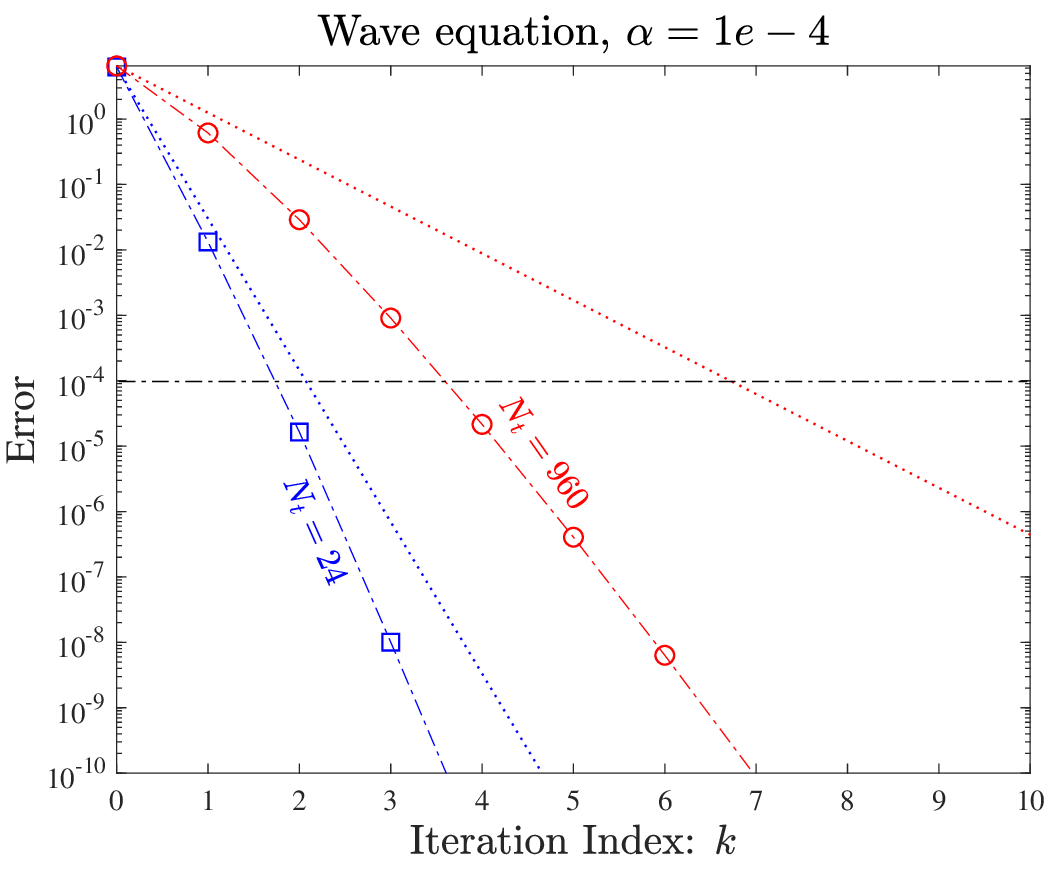}  
  \caption{The convergence factor $\rho$ (dotted line) for the wave
    equation gives quite a sharp bound on the measured error for small
    $\alpha N_t$, while for large $\alpha N_t$ the bound is not
    sharp.}
  \label{Fig_Parareal_Diag_G_Wave_rho}
\end{figure}
where we consider three groups of $(\alpha, N_t)$. For a small
product, i.e., $\alpha=1e-4$ and $N_t=24$, the convergence factor
quite accurately predicts the measured error. For the other two values
of $(\alpha, N_t)$, the linear bound  is not sharp, and we observe
superlinear convergence of the method.

For nonlinear problems, ${\bm u}'=f({\bm u})$, the convergence
analysis of the Parareal variant \eqref{Diag_parareal_2020} can be
found in \cite[Section 4]{gander2020diagonalization}, under the
assumption that the solution of the nonlinear all-at-once system
\eqref{aaa_F_solver} is solved exactly and that the nonlinear function
$f$ satisfies some Lipschitz condition. The main conclusion is that
the method converges with a rate $\rho=\mathcal{O}(\alpha)$ when
$\alpha$ is small, which is similar to the result for the
linear case.  We illustrate this for Burgers' equation
\eqref{Burgers} with periodic boundary conditions and initial condition
$u(x, 0)=\sin^2(2\pi x)$ for $x\in(0, 1)$. Let $\Delta T=0.1$, $J=10$,
and $\Delta x=\frac{1}{100}$. Then, by fixing $N_t=40$, we show in Figure
\ref{Fig_Parareal_Diag_G_Burgers} (left)
\begin{figure}
  \centering
 \includegraphics[width=2.3in,height=1.85in,angle=0]{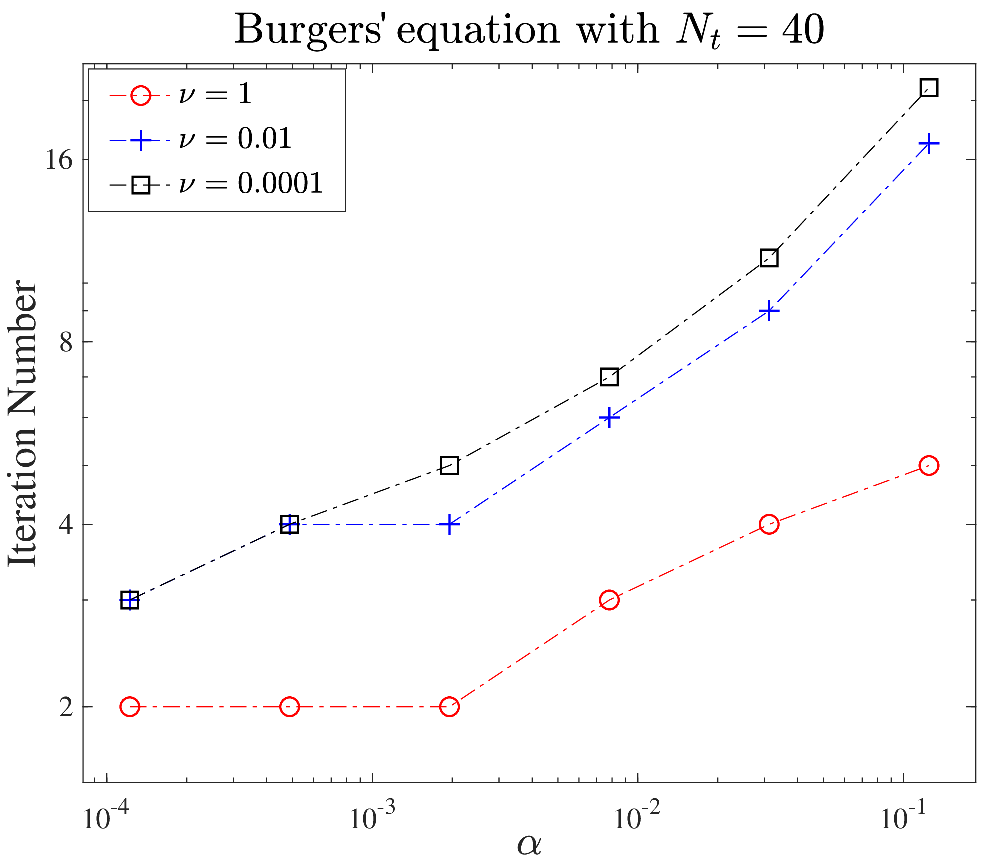}~ \includegraphics[width=2.3in,height=1.85in,angle=0]{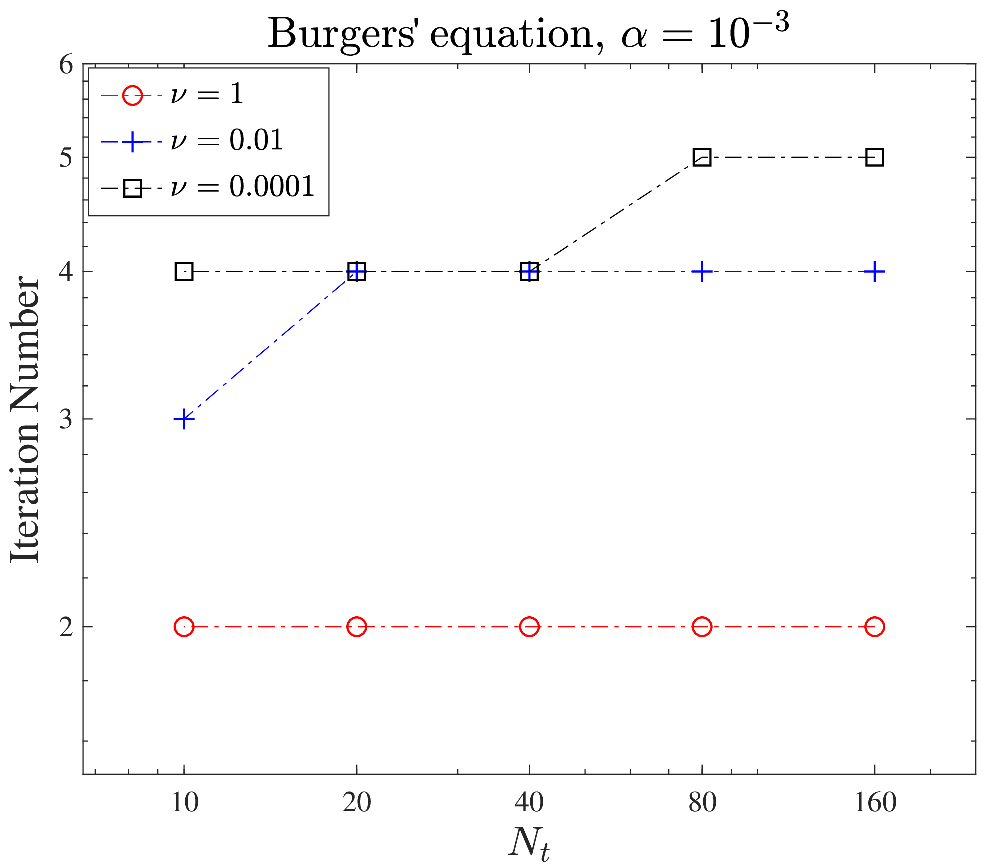}  
  \caption{Iteration numbers of the parareal variant
    \eqref{Diag_parareal_2020} for Burgers' equation when the
    global error reaches $1e-8$, with three values of the diffusion
    parameter $\nu$.}
  \label{Fig_Parareal_Diag_G_Burgers}
\end{figure}
the iteration number for several values of $\alpha$ when the global
error reaches $10^{-8}$. Clearly, a smaller $\alpha$ accelerates
convergence. Concerning the influence of $\nu$, it seems that for
small $\alpha$ it only has a minor influence on the convergence rate,
but for large $\alpha$, the convergence rate deteriorates when $\nu$
decreases.  In the right panel of Figure
\ref{Fig_Parareal_Diag_G_Burgers}, we show the iteration numbers when
$\alpha=10^{-3}$ and $N_t$ varies from 10 to 160, which indicates that the
convergence rate is robust in terms of $N_t$.

The two Parareal variants \eqref{para_CGC} and
\eqref{Diag_parareal_2020} introduced in this section apply ParaDiag
to standard Parareal in different ways. For the former,
diagonalization is used for the $N_t$ coarse time points, changing the
CGC. For the second variant, diagonalization is used for each large
time interval $[T_n, T_{n+1}]$ across the $J$ fine time points,
defining a special coarse solver while keeping the CGC as in the
standard Parareal. These two variants have distinct scopes of
application: the first one, like the standard Parareal, works
primarily for parabolic problems, while the second one is effective
for both parabolic and hyperbolic problems.
 
\subsection{Space-time multigrid (STMG)}\label{Sec4.6}

The final parallel method we wish to introduce is the space-time
multigrid (STMG) method, which is based on using the multigrid (MG)
method both in space and time. After early seminal contributions
\cite{Hackbusch:1984:PMG,horton1995space}, it was recognized that
block Jacobi smoothers in time are a crucial component
\cite{Gander:2016:AOANST}, leading to a method as effective as when MG
is applied to Poisson problems, using only standard
multigrid components in STMG. For the heat equation
\eqref{heatequation} or the advection-diffusion equation \eqref{ADE},
the STMG method can be described as follows:
using a spatial discretization with mesh size $\Delta x$ results in
a system of ODEs ${\bm u}'=A{\bm u}+{\bm f}$, to which we apply a
one-step time-integrator,
\begin{equation}\label{STMG_onestep}
r_1{\bm u}_{n+1}=r_2{\bm u}_n+\tilde{\bm f}_n, \quad n=0,1,\dots, N_t-1, 
\end{equation}
with ${\bm u}_0$ the initial value, and $r_1$ and $r_2$ are
matrix polynomials of $\Delta tA$ (cf. \eqref{r1r2} for Backward
Euler and the Trapezoidal Rule).  The matrix
$A\in\mathbb{R}^{N_{x}\times N_{x}}$ is the discrete matrix of the
Laplacian $\partial_{xx}$ or the advection-diffusion operator
$-\partial_x+\nu\partial_{xx}$ with mesh size $\Delta x$.  As in 
ParaDiag  described in Section \ref{Sec3.6}, we collect the
$N_t$ difference equations in the all-at-once system
\begin{equation}\label{STMG_AAA}
\underbrace{\begin{bmatrix}
r_1 & & &\\
-r_2 &r_1  & &\\
&\ddots  &\ddots  & \\
&  &-r_2 &r_1
\end{bmatrix}}_{:=\CK}\underbrace{\begin{bmatrix}{\bm u}_1\\
{\bm u}_2\\
\vdots\\
{\bm u}_{N_{t}}
\end{bmatrix}}_{:={\bm U}}={\bm b},
\end{equation}
where ${\bm b}$ is a suitable right hand side vector.  

STMG solves for ${\bm U}$ within a multigrid framework, using a damped
block Jacobi iteration as smoother. Starting from an initial
approximation ${\bm U}^{\rm ini}$ of ${\bm U}$, the smoother $\CS$
produces a new approximation ${\bm U}^{\rm new}$ by computing
\begin{equation}\label{STMG_smoother}
{\bm U}^{\rm new}=\CS_\eta({\bm b}, {\bm U}^{\rm ini}, s):
\begin{cases}
{\bm U}^0={\bm U}^{\rm ini},\\
\text{for } j=0, 1, \dots, s-1:\\
\quad (I_{t}\otimes r_1)\Delta{\bm U}^{j}=\eta({\bm b}-\CK{\bm U}^j),\\
\quad {\bm U}^{j+1}={\bm U}^j+\Delta {\bm U}^j,\\
{\bm U}^{\rm new}={\bm U}^{s}, 
\end{cases}
\end{equation}
where $s$ is the number of smoothing iterations and $\eta$ is the
damping parameter. Since $I_t\otimes r_1$ is a block diagonal matrix,
for each smoothing step the $N_t$ subvectors of $\Delta{\bm U}^{j}$
can be solved in parallel, making this a parallel-in-time smoother.
We also need restriction and prolongation operators in space and
time. For illustration, we show these two operators in space with
$N_x=7$,
\begin{equation}\label{STMG_RP}
P_x:=
\begin{bmatrix}
\frac{1}{2} & & \\
1 & &\\
\frac{1}{2} &\frac{1}{2} &\\
& 1 &\\
&\frac{1}{2} &\frac{1}{2}\\
& &1\\
& &\frac{1}{2}
\end{bmatrix}\in\mathbb{R}^{7\times3}, \quad R_x=\frac{1}{2}P_x^\top\in\mathbb{R}^{3\times7}.  
\end{equation}
Similar notations apply to the other two operators $P_t$ and $R_t$ for
the time variable. We can now define the  2-level variant of STMG
from iteration $k$ to $k+1$ as
\begin{equation}\label{STMG}
\begin{cases}
{\bm U}^{k+\frac{1}{3}}=\CS_\eta({\bm b},{\bm U}^k, s_1),\\
{\bm r}={\bm b}-\CK{\bm U}^{k+\frac{1}{3}},~~{\bm r}_{\rm c}=[R_x {\rm Mat}({\bm r})] R_t^\top,\\
{\bm e}_{\rm c}=\CK_{\rm c}^{-1}{\rm Vec}({\bm r}_{\rm c}),~~{\bm e}=[P_x{\rm Mat}({\bm e}_{\rm c})]P_t^\top,\\
{\bm U}^{k+\frac{2}{3}}={\bm U}^{k+\frac{1}{3}}+{\rm Vec}({\bm e}),\\
{\bm U}^{k+1}=\CS_\eta({\bm b}, {\bm U}^{k+\frac{2}{3}}, s_2),
\end{cases}
\end{equation}
where `{Vec}' denotes the vectorization operation from a matrix, and
`{Mat}' the reverse operation, converting a vector to the
corresponding matrix. In practice, we use the \texttt{reshape}
command in Matlab for this. The matrix $\CK_{\rm c}$ is
the all-at-once matrix obtained with larger space and time
discretization parameters $\Delta T=2\Delta t$ and $\Delta X=2\Delta
x$, i.e.,
$$
\CK_{\rm c}=\underbrace{\begin{bmatrix}
r_1^c & & &\\
-r_2^c  &r_1^c  & &\\
&\ddots  &\ddots  & \\
&  &-r_2^c &r_1^c
\end{bmatrix}}_{{N_{t}^c}~\text{blocks}},    
$$
where $r_1^c$ and $r_2^c$ are matrix polynomials of $\Delta TA_c$ with $A_c\in\mathbb{R}^{N_x^c\times N_x^c}$ being the  {\em coarse} discrete matrix of the space derivative(s) with $\Delta X$, e.g.,
$$
\begin{cases}
r_1^c=I_x^c-\Delta TA_c,~r_2^c=I_x^c, &\text{Backward Euler},\\
r_1^c=I_x^c-\frac{1}{2}\Delta TA_c,~r_2^c=I_x^c+\frac{1}{2}\Delta TA_c, &\text{Trapezoidal Rule}.
\end{cases}
$$
In practice, we let $N_x=2^{l_x}-1$ and $N_t=2^{l_t}-1$ with $l_x,
l_t\geq2$ being integers, and thus $N_x^c=2^{l_x-1}-1$ and
$N_t^c=2^{l_t-1}-1$. STMG is obtained naturally by applying the
2-level variant recursively.
 
There is an important difference between STMG
and the parabolic MG method proposed forty years ago
\cite{Hackbusch:1984:PMG}: for parabolic MG, one uses a {\em
  pointwise} Gauss-Seidel iteration as smoother, defined by
\begin{equation}\label{Smoother_GS}
{\bm U}^{\rm new}=\CS_{\rm GS}({\bm b},  {\bm U}^{\rm ini}, s): ~
\begin{cases}
\text{for } n=0, 1, \dots, N_t-1\\
~~~~{\bm u}_{n+1}^0={\bm u}^{\rm ini}_{n+1},\\
~~~~\text{for } j=0, 1, \dots, s-1\\
~~~~~~~~(D+L)\Delta{\bm u}^{j}_{n+1}= \tilde{\bm f}_n+r_2{\bm u}_n^{s}-r_1{\bm u}^j_{n+1},\\
~~~~~~~~{\bm u}^{j+1}_{n+1}={\bm u}^j_{n+1}+\Delta{\bm u}^j_{n+1},\\
~~~~{\bm u}^{\rm new}_{n+1}={\bm u}^{s}_{n+1},
\end{cases}
\end{equation}
where ${\bm u}^{s}_0={\bm u}_0$ and $D$ and $L$ represent the diagonal
and upper triangular parts of $r_1$. Here, ${\bm U}^{\rm ini}:=({\bm
  u}_0^\top, ({\bm u}^{\rm ini}_{1})^\top, \dots, ({\bm u}^{\rm
  ini}_{N_t})^\top)^\top$ and similarly ${\bm U}^{\rm new}$ consists
of the vectors ${\bm u}_0$ and ${\bm u}^{\rm new}_{1}, \dots, {\bm
  u}^{\rm new}_{N_t}$.  This smoother operates sequentially in time:
one must complete the smoothing iteration at time step $n$ to obtain
${\bm u}_n^{s}$, which is necessary for performing the smoothing
iteration at time step $n+1$. After smoothing, one restricts the
residual ${\bm b}-\CK{\bm U}^{\rm new}$ in space-time, like in 
standard multigrid, to a coarser grid. There, one solves a
coarse problem (and repeats this procedure recursively in
practice). Hackbusch in 1984 focused on coarsening in space for this
method and found that for the heat equation, parabolic MG converges
very rapidly. Gander and Lunet \cite{Gander:TPTI:2024} examined the
performance of a 2-level version of the parabolic MG method that
coarsens both in space and time and found that it converges only
slowly then. This slow convergence was already improved upon in
\cite{horton1995space} by using special multigrid components adapted
to the interpretation of the time direction as a strongly advective
term, see also \cite{janssen1996multigrid,van2002multigrid} for
multigrid waveform relaxation variants.
  
Returning to STMG \eqref{STMG}, the smoother plays a crucial role in
achieving good performance. The fundamental concept in designing an
effective smoother is to eliminate as much of the high-frequency error
components as possible within a minimal number of smoothing
iterations. This allows the remaining low-frequency errors to be
well-represented on the coarse grids and then being removed there
through coarse grid correction. A valuable tool for accomplishing this
objective is {\em Local Fourier Analysis} (LFA), which involves
neglecting the initial and boundary conditions of the problem and just
focusing on how the finite difference stencil affects a given Fourier
mode in the error,
\begin{equation}\label{Fourier_Mode}
u_{n,m}^j=C_{\omega, \xi}^je^{{\rm i}\omega n\Delta t} e^{{\rm i}\xi m\Delta x},
\end{equation}
where ${\bm u}_{n}^j:=(u_{n,1}^j, \dots, u_{n,N_x}^j)^\top$ and ${\rm
  i}=\sqrt{-1}$. To apply LFA for the damped Jacobi iteration
\eqref{STMG_smoother}, we consider the 1D heat equation
\eqref{heatequation} discretized using centered finite differences
in space and backward Euler in time. In this case,
$$
A=\frac{1}{\Delta x^2}{\rm tri}(1, -2, 1), \quad r_1=I_x-\Delta tA, \quad r_2=I_x.
$$
For each iteration $j$, the block Jacobi iteration
\eqref{STMG_smoother} consists of the $N_t$ difference equations
\begin{equation}\label{STMG_smoother_Diff}
r_1 ({\bm u}_{n+1}^{j+1}-{\bm u}_{n+1}^{j})=-\eta (r_1{\bm u}_{n+1}^j-r_2{\bm u}_n^j),
\end{equation}
where we set the right-hand side ${\bm b}$ in \eqref{STMG_smoother} to
zero and consider ${\bm U}^j$ as the error at the $j$-th iteration.

To apply LFA to \eqref{STMG_smoother_Diff}, we first consider the
result when the space discrete operator $A$ acts on the Fourier mode
$u_{n+1,m}^{l}$ (with $l=j, j+1$),
\begin{equation}\label{STMG_Au}
\begin{split}
Au_{n+1,m}^{l} &= C_{\omega, \xi}^le^{{\rm i}\omega (n+1)\Delta t} \frac{e^{{\rm i}\xi (m-1)\Delta x}-2e^{{\rm i}\xi m\Delta x}+e^{{\rm i}\xi (m+1)\Delta x}}{\Delta x^2} \\
&= C_{\omega, \xi}^le^{{\rm i}\omega (n+1)\Delta t}e^{{\rm i}\xi m\Delta x} \frac{e^{-{\rm i}\xi\Delta x}-2+e^{{\rm i}\xi\Delta x}}{\Delta x^2} \\
&= \frac{2(\cos(\xi\Delta x)-1)}{\Delta x^2} C_{\omega, \xi}^le^{{\rm i}\omega (n+1)\Delta t}e^{{\rm i}\xi m\Delta x}.
\end{split}
\end{equation}
Hence,
$$
r_1 ({\bm u}_{n+1}^{j+1}-{\bm u}_{n+1}^{j}) = \left(1-\frac{2\Delta t(\cos(\xi\Delta x)-1)}{\Delta x^2}\right)(C_{\omega, \xi}^{j+1}-C_{\omega, \xi}^{j})e^{{\rm i}\omega (n+1)\Delta t}e^{{\rm i}\xi {\bm x}_h},
$$
and
\begin{equation*}
\begin{split}
r_1{\bm u}_{n+1}^j-r_2{\bm u}_n^j &= {\bm u}_{n+1}^j-{\bm u}_n^j-\Delta tA{\bm u}_{n+1}^j \\
&= \left(1-e^{-{\rm i}\omega\Delta t}-\frac{2\Delta t(\cos(\xi\Delta x)-1)}{\Delta x^2}\right)C_{\omega, \xi}^je^{{\rm i}\omega (n+1)\Delta t}e^{{\rm i}\xi {\bm x}_h},
\end{split}
\end{equation*}
where ${\bm x}_h = {\rm Vec}(m\Delta x)$. Now, from \eqref{STMG_smoother_Diff} we have
\begin{equation*}
\begin{split}
&\left(1-\frac{2\Delta t(\cos(\xi\Delta x)-1)}{\Delta x^2}\right)(C_{\omega, \xi}^{j+1}-C_{\omega, \xi}^{j}) \\
&=\eta\left(1-e^{-{\rm i}\omega\Delta t}-\frac{2\Delta t(\cos(\xi\Delta x)-1)}{\Delta x^2}\right)C_{\omega, \xi}^j,
\end{split}
\end{equation*}
i.e., $C_{\omega, \xi}^{j+1} = \rho(\omega,\xi,\eta)C_{\omega, \xi}^j$ with $\rho$ being the convergence factor given by
\begin{equation}\label{STMG_rho_heat}
\rho(\omega,\xi,\eta) = 1-\eta\left(1-\frac{e^{-{\rm i}\omega\Delta t}}{1+\frac{2\Delta t}{\Delta x^2}(1-\cos(\xi\Delta x))}\right),
\end{equation}
where $\omega\Delta t \in (-\pi, \pi)$ and $\xi\Delta x \in (-\pi,
\pi)$. By calculating the maximum of $\rho$ with respect to $\xi$ and
$\omega$ and then minimizing the maximum, the following result was
proved in \cite[Chapter 4]{Gander:TPTI:2024}, see
\cite{Gander:2016:AOANST} for a comprehensive analysis for more
general discretizations:
\begin{theorem}
\emph{For the 1D heat equation discretized with centered finite
  differences and backward Euler, the optimal choice of $\eta$ used in
  the damped Jacobi smoother \eqref{STMG_smoother} always permitting
  time coarsening is $\eta = \frac{1}{2}$. With this choice, all high
  frequencies  in time, $\omega \in \pm \left(\frac{\pi}{2\Delta
      t},\frac{\pi}{\Delta t}\right)$ are damped by at least a factor
    of $\frac{1}{\sqrt{2}}$. If in addition the mesh parameters
    satisfy $\frac{\Delta t}{\Delta x^2} \geq \frac{1}{\sqrt{2}}$,
    then also the high frequencies in space, $\xi \in \pm
    \left(\frac{\pi}{2\Delta x}, \frac{\pi}{\Delta x}\right)$ are
    damped by at least a factor of $\frac{1}{\sqrt{2}}$, and one can
    do space coarsening as well.}
\end{theorem}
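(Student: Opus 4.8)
The plan is to work entirely from the closed-form smoothing factor $\rho(\omega,\xi,\eta)$ in \eqref{STMG_rho_heat} produced by the local Fourier analysis of the damped block-Jacobi smoother \eqref{STMG_smoother}. First I would introduce the abbreviations $\theta:=\omega\Delta t\in(-\pi,\pi)$, $\phi:=\xi\Delta x\in(-\pi,\pi)$, $c:=\frac{\Delta t}{\Delta x^2}$, and
$$
b=b(\phi):=\frac{1}{1+2c\,(1-\cos\phi)}\in(0,1],
$$
so that \eqref{STMG_rho_heat} reads simply $\rho=(1-\eta)+\eta\,b\,e^{-\mathrm i\theta}$. Squaring yields the single identity on which everything rests,
$$
|\rho|^2=(1-\eta)^2+\eta^2 b^2+2\eta(1-\eta)\,b\cos\theta .
$$
Note $b=1$ exactly at the lowest spatial frequency $\phi=0$, and $b$ decreases as $|\phi|$ increases.

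Next I would treat the \emph{time} smoothing factor. For high time frequencies $|\theta|\in(\tfrac{\pi}{2},\pi)$ one has $\cos\theta\in(-1,0)$, and since $2\eta(1-\eta)b>0$ for $\eta\in(0,1)$ the right-hand side above is increasing in $\cos\theta$; hence its supremum over this $\theta$-range is the limiting value $(1-\eta)^2+\eta^2 b^2$ as $\theta\to\pm\tfrac{\pi}{2}$. This bound is increasing in $b$, so its worst case over \emph{all} spatial frequencies is at $b=1$, giving $(1-\eta)^2+\eta^2$. Minimising $g(\eta):=(1-\eta)^2+\eta^2$ on $(0,1)$ (i.e. $g'(\eta)=4\eta-2=0$) gives the unique minimiser $\eta=\tfrac12$ with $g(\tfrac12)=\tfrac12$. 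This is precisely the claim that $\eta=\tfrac12$ is the optimal damping parameter still allowing time coarsening, and that with this choice $\sup|\rho|=\tfrac{1}{\sqrt2}$ over all high time frequencies and all $\xi$, so every such error mode is reduced by at least the factor $\tfrac1{\sqrt2}$.

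For the \emph{space} part I would fix $\eta=\tfrac12$, so $\rho=\tfrac12\bigl(1+b\,e^{-\mathrm i\theta}\bigr)$ and $|\rho|\le\tfrac12(1+b)$ for every $\theta$. For high spatial frequencies $|\phi|\in(\tfrac{\pi}{2},\pi)$ one has $1-\cos\phi>1$, hence $b<\tfrac1{1+2c}$, so $|\rho|<\tfrac12\bigl(1+\tfrac1{1+2c}\bigr)$ uniformly in $\theta$. Requiring this to be at most $\tfrac1{\sqrt2}$ is, after elementary manipulation, equivalent to $1+2c\ge\sqrt2+1$, i.e. $\tfrac{\Delta t}{\Delta x^2}\ge\tfrac1{\sqrt2}$; under this hypothesis the high spatial frequencies are damped by at least $\tfrac1{\sqrt2}$ as well, for all time frequencies, which is what permits simultaneous space coarsening. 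Combining the two estimates over the union of the high-frequency region $\{|\theta|>\tfrac{\pi}{2}\}\cup\{|\phi|>\tfrac{\pi}{2}\}$ completes the proof.

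All the calculations are elementary; the only real care needed is the bookkeeping about which part of the two-dimensional $(\omega,\xi)$ Fourier domain counts as ``high frequency'' for coarsening by two in both variables — one must verify the bound on modes high in time but low in space \emph{and} on modes high in space but low in time, not only on the ``doubly high'' modes — together with the open/closed nature of the frequency intervals, so that the suprema (approached at $\theta=\pm\tfrac{\pi}{2}$ or $\phi=\pm\tfrac{\pi}{2}$) are accounted for correctly. That region-splitting, rather than any individual inequality, is the main thing to get right.
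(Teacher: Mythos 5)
Your argument is correct and follows essentially the same route as the paper, which (after deriving the LFA factor \eqref{STMG_rho_heat}) simply maximizes $|\rho|$ over the high-frequency ranges and minimizes over $\eta$, deferring the elementary details to the cited monograph; your computation of $|\rho|^2=(1-\eta)^2+\eta^2b^2+2\eta(1-\eta)b\cos\theta$, the worst case at $b=1$, $\cos\theta\to0$, and the resulting min-max value $\eta=\tfrac12$, $|\rho|\le\tfrac1{\sqrt2}$, together with the space-frequency bound $\tfrac12\bigl(1+\tfrac1{1+2\Delta t/\Delta x^2}\bigr)\le\tfrac1{\sqrt2}$ iff $\Delta t/\Delta x^2\ge\tfrac1{\sqrt2}$, is exactly that calculation. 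The only cosmetic point is that your optimality claim restricts $\eta$ to $(0,1)$ without noting that values outside this interval give a worst-case factor $\ge1$, but this is immediate from the same identity.
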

A refined analysis concerning the optimality can be found in
\cite{Chaudet2024}.

For the advection-diffusion equation \eqref{ADE}, we can also apply
LFA to  2-level STMG. Here, the discrete matrix obtained
from centered finite differences is $A=\frac{\nu}{\Delta
  x^2}{\rm Tri}(1, -2, 1)+\frac{1}{2\Delta x}{\rm Tri}(-1, 0,
1)$. Similar to \eqref{STMG_Au}, the result when the spatial discrete
operator $A$ acts on the Fourier mode $u_{n+1,m}^{l}$ (with $l=j,
j+1$) is
\begin{equation*} 
\begin{split}
Au_{n+1,m}^{l}&= \left[\frac{2\nu(\cos(\xi\Delta x)-1)}{\Delta x^2}+\frac{e^{{\rm i}\xi  \Delta x}-e^{-{\rm i}\xi  \Delta x}}{2\Delta x}\right]
C_{\omega, \xi}^le^{{\rm i}\omega (n+1)\Delta t}e^{{\rm i}\xi m\Delta x}\\
&= \left[\frac{2\nu(\cos(\xi\Delta x)-1)}{\Delta x^2}+\frac{{\rm i}\sin(\xi\Delta x)}{\Delta x}\right]
C_{\omega, \xi}^le^{{\rm i}\omega (n+1)\Delta t}e^{{\rm i}\xi m\Delta x}.
\end{split}
\end{equation*}
From this, we obtain the convergence factor $\rho$ in Fourier space as 
\begin{equation}\label{STMG_rho_ADE}
\rho(\omega,\xi,\eta)=1-\eta\left(1-\frac{e^{-{\rm i}\omega\Delta t}}{1+\frac{2\nu\Delta t}{\Delta x^2}(1-\cos(\xi\Delta x))+{\rm i}\frac{\Delta t}{\Delta x}\sin(\xi\Delta x)}\right).
\end{equation}
Heuristically, we can still use $\eta=\frac{1}{2}$ as damping
  parameter for coarsening in time;  see the illustration in Figure
\ref{Fig_STMG_ADE_rho}.
\begin{figure}
\centering
\includegraphics[width=1.55in,height=1.4in,angle=0]{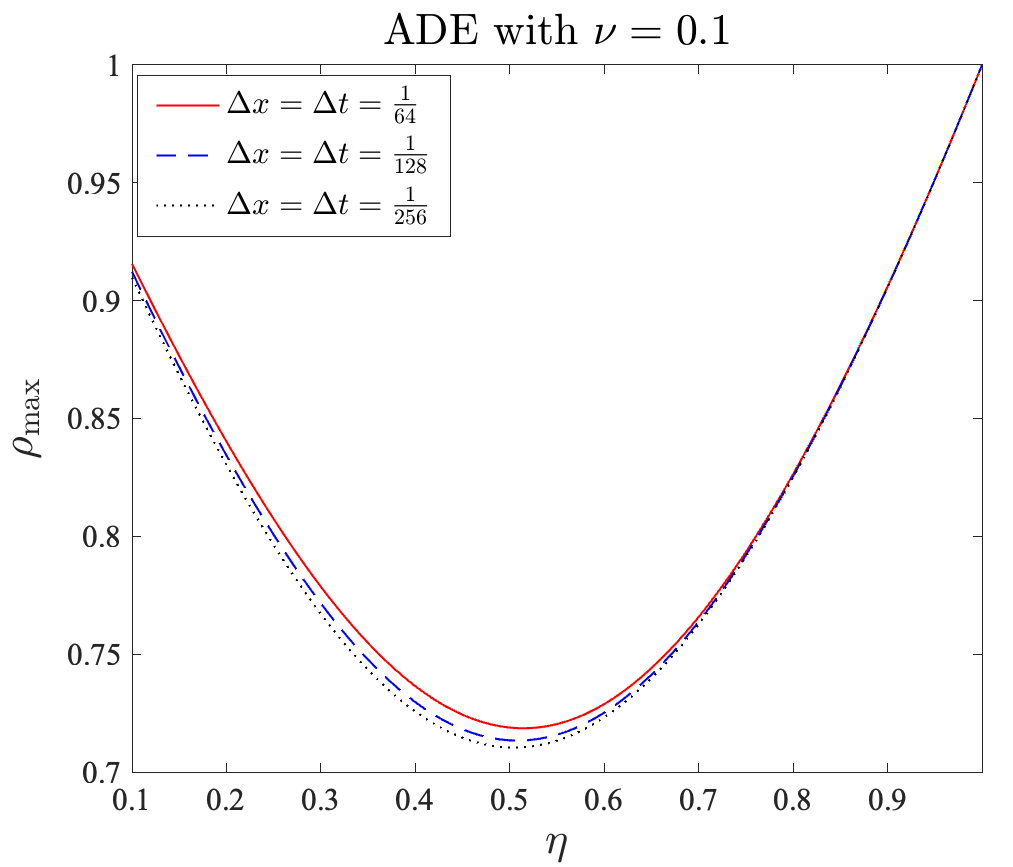} 
\includegraphics[width=1.55in,height=1.4in,angle=0]{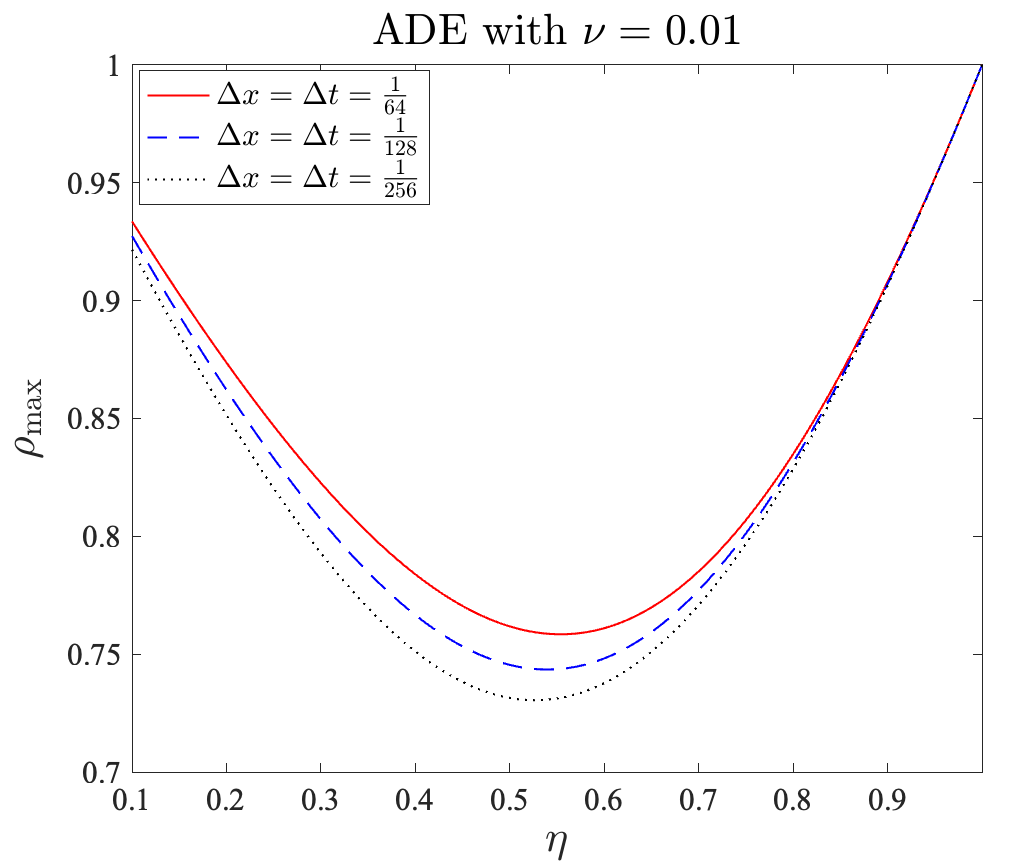}  
\includegraphics[width=1.55in,height=1.4in,angle=0]{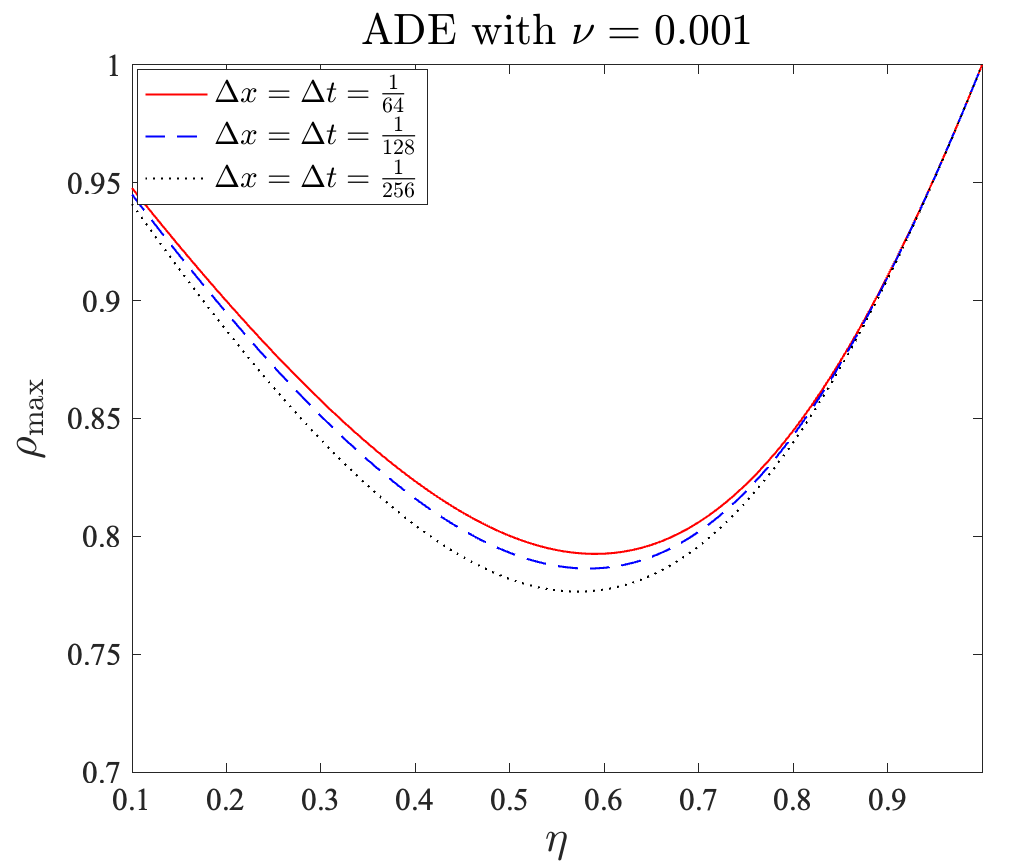} 
\caption{Maximum of the convergence factor over the high frequencies
  for the advection-diffusion equation (ADE) for three values of the
  diffusion parameter $\nu$, i.e., $\rho_{\max}=\max_{(\Delta x\xi,
    \Delta t\omega)\in(-\pi, \pi)\times(\frac{\pi}{2}, \pi)}\rho(\omega,\xi,\eta)$.}
\label{Fig_STMG_ADE_rho}
\end{figure}
The validity of the choice $\eta=\frac{1}{2}$ for the damping
parameter is further illustrated in Figure
\ref{Fig_STMG_heat_ADE_wrt_eta} in a numerical experiment for the
2-level variant of STMG:
\begin{figure}
\centering
\includegraphics[width=2.3in,height=1.85in,angle=0]{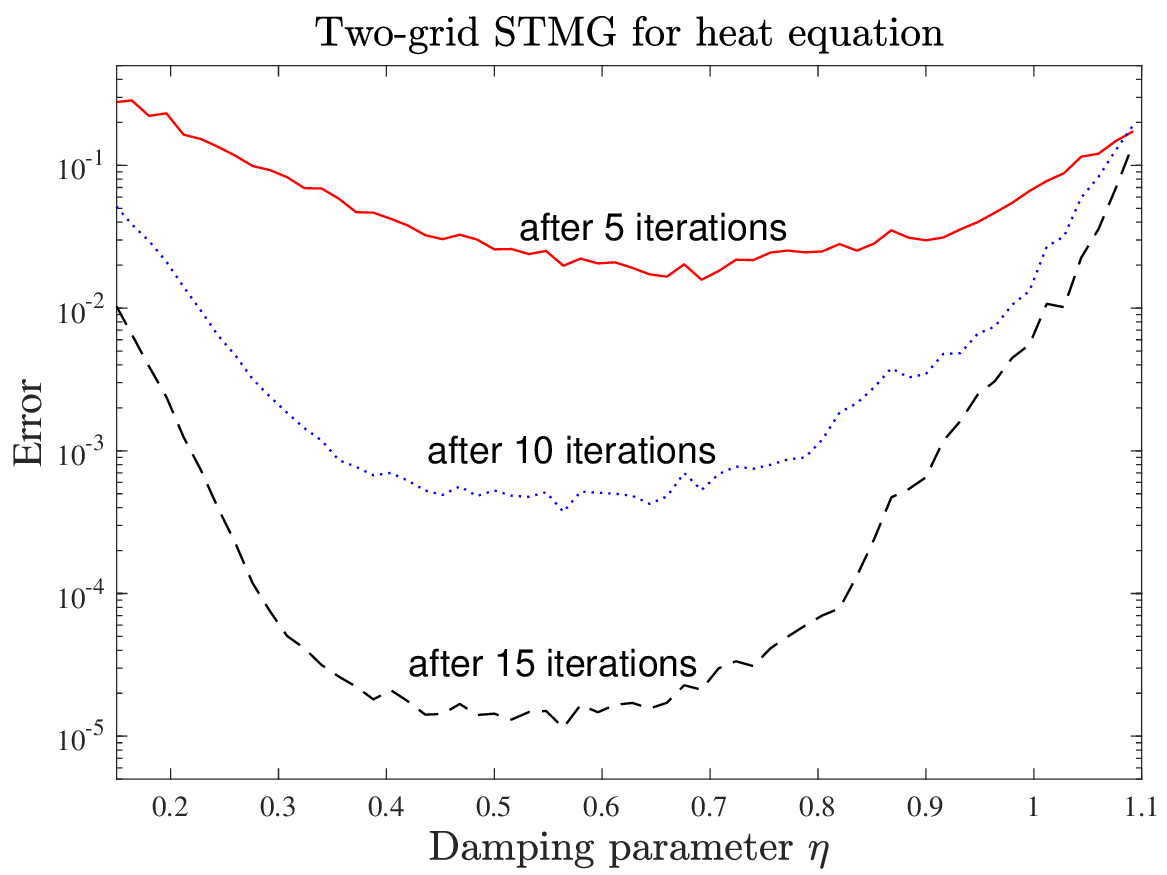} 
\includegraphics[width=2.3in,height=1.85in,angle=0]{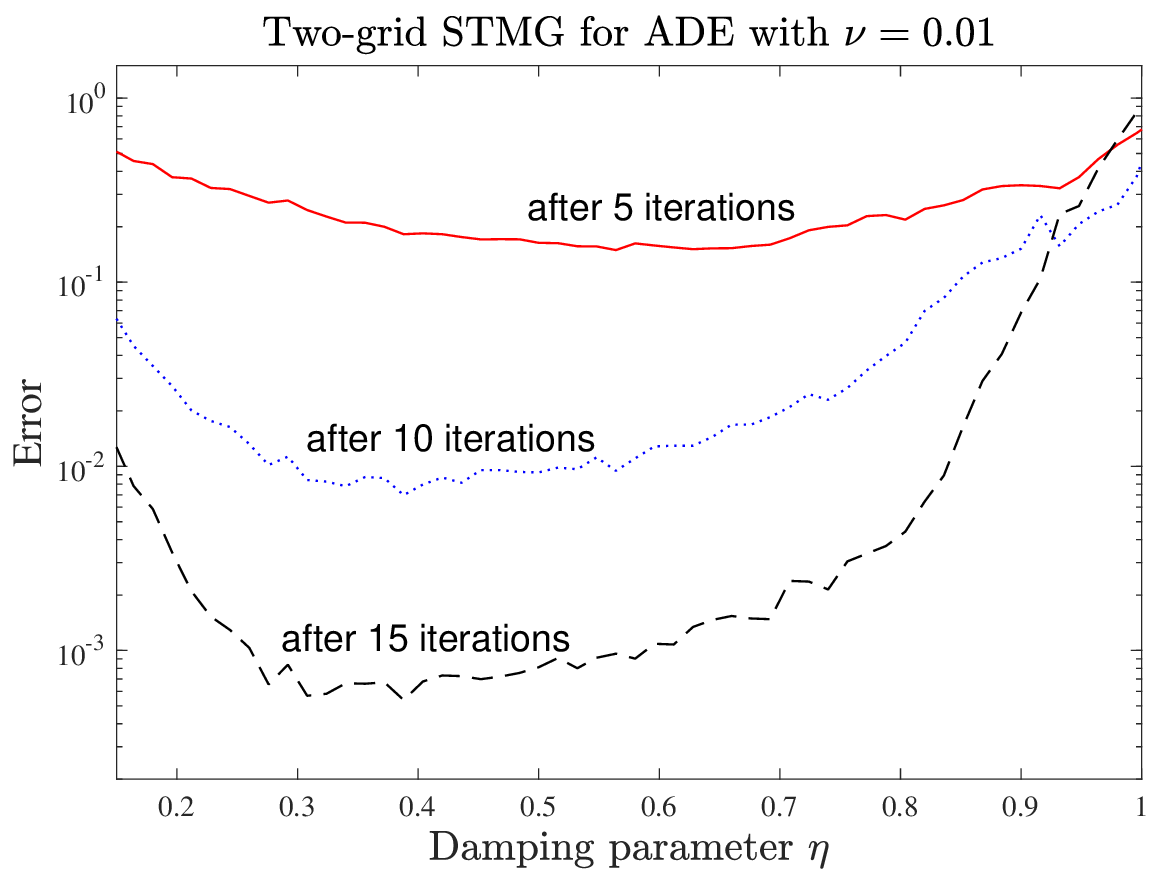}  
\caption{Dependence of the  2-level  STMG error after $k=5, 10, 15$ iterations
  on the choice of the damping parameter $\eta$. Left: heat equation;
  Right: advection-diffusion equation with $\nu=0.01$. Here, we use 1
  block Jacobi smoothing iteration for STMG.}
\label{Fig_STMG_heat_ADE_wrt_eta}
\end{figure}
for both the heat equation and the advection-diffusion equation (ADE),
we show the errors after 5, 10, and 15 iterations
for several values of $\eta$. Clearly, $\eta=\frac{1}{2}$ is a reasonable 
choice to minimize the error in  2-level STMG for both equations.

We show in Figure \ref{Fig_STMG_heat_ADE_error}
\begin{figure}
  \centering 
  \includegraphics[width=2.3in,height=1.85in,angle=0]{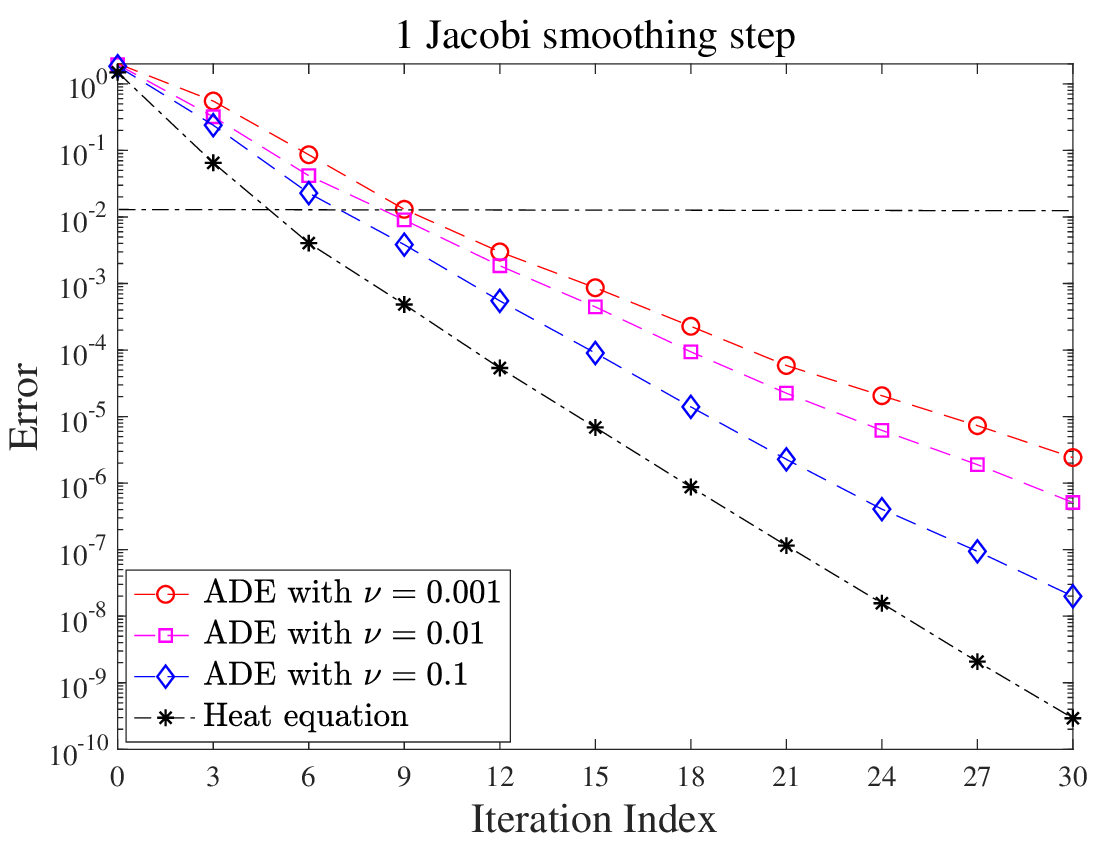}
  \includegraphics[width=2.3in,height=1.85in,angle=0]{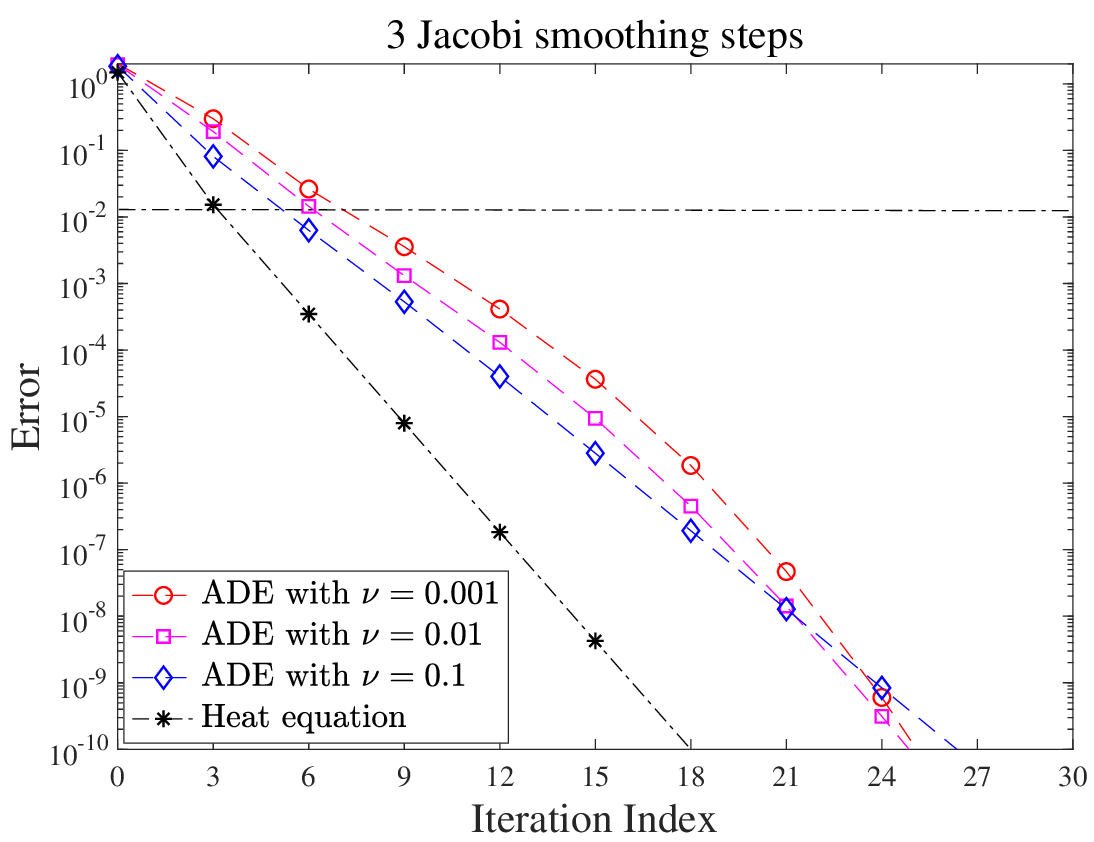} 
  \caption{Measured error of  2-level STMG with 1 and 3  block  Jacobi
    smoothing steps for the advection-diffusion equation (ADE) and the
    heat equation with damping parameter $\eta = \frac{1}{2}$.}
  \label{Fig_STMG_heat_ADE_error}
\end{figure}
the convergence   behavior  of  2-level STMG for both the heat equation
and the advection-diffusion equation with $\eta=\frac{1}{2}$ and three
values of $\nu$. For both equations,  2-level STMG converges
faster when the number of smoothing iterations is increased.  Compared
to the heat equation, the convergence rate is worse for the
advection-diffusion equation, but interestingly it is less sensitive
to $\nu$ when the number of smoothing iterations is large  and a superlinear convergence mechanism sets in, as we see
in the right panel.

The convergence rate of STMG, however, depends on the choice of the
time integrator. The results in Figure \ref{Fig_STMG_Heat_ADE_TR}
\begin{figure}
  \centering
  \includegraphics[width=1.55in,height=1.23in,angle=0]{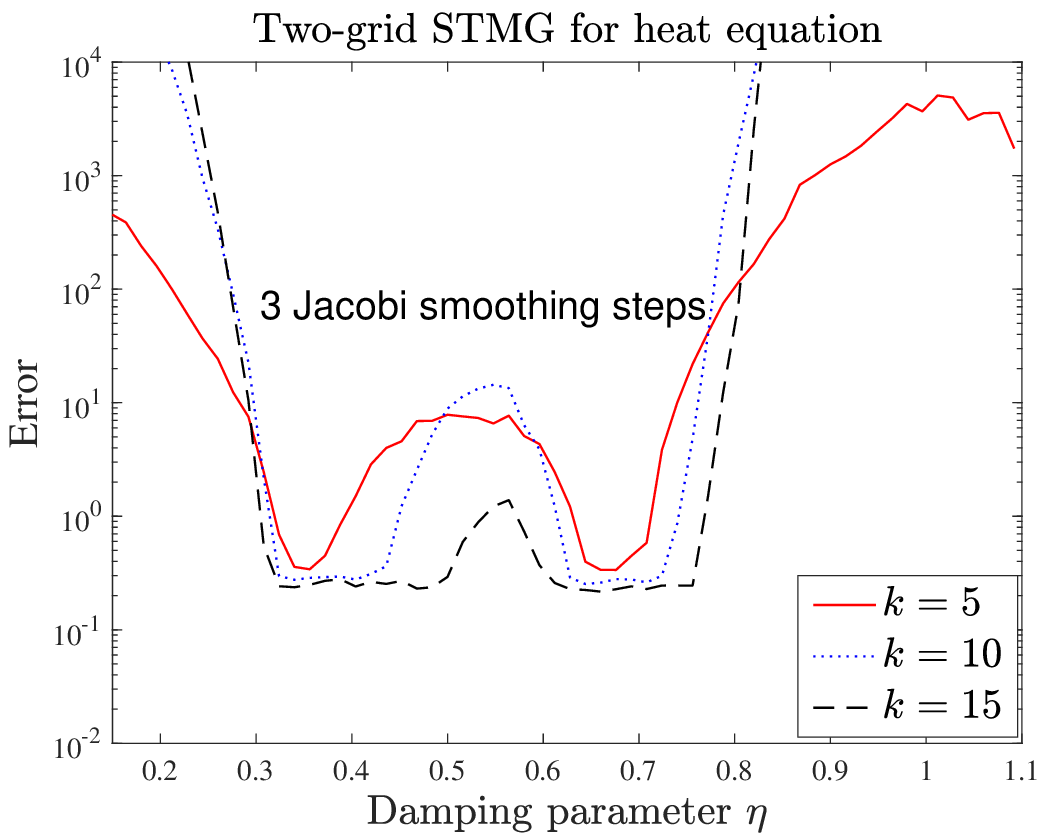} 
  \includegraphics[width=1.55in,height=1.23in,angle=0]{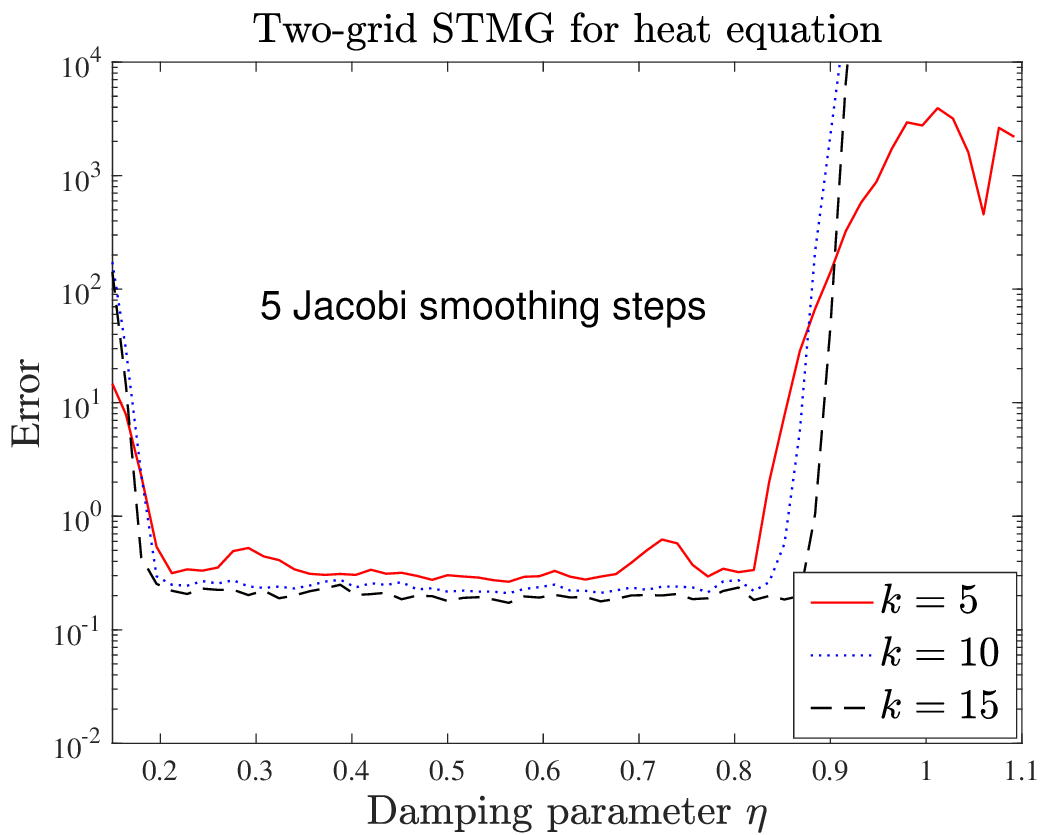}  
  \includegraphics[width=1.55in,height=1.23in,angle=0]{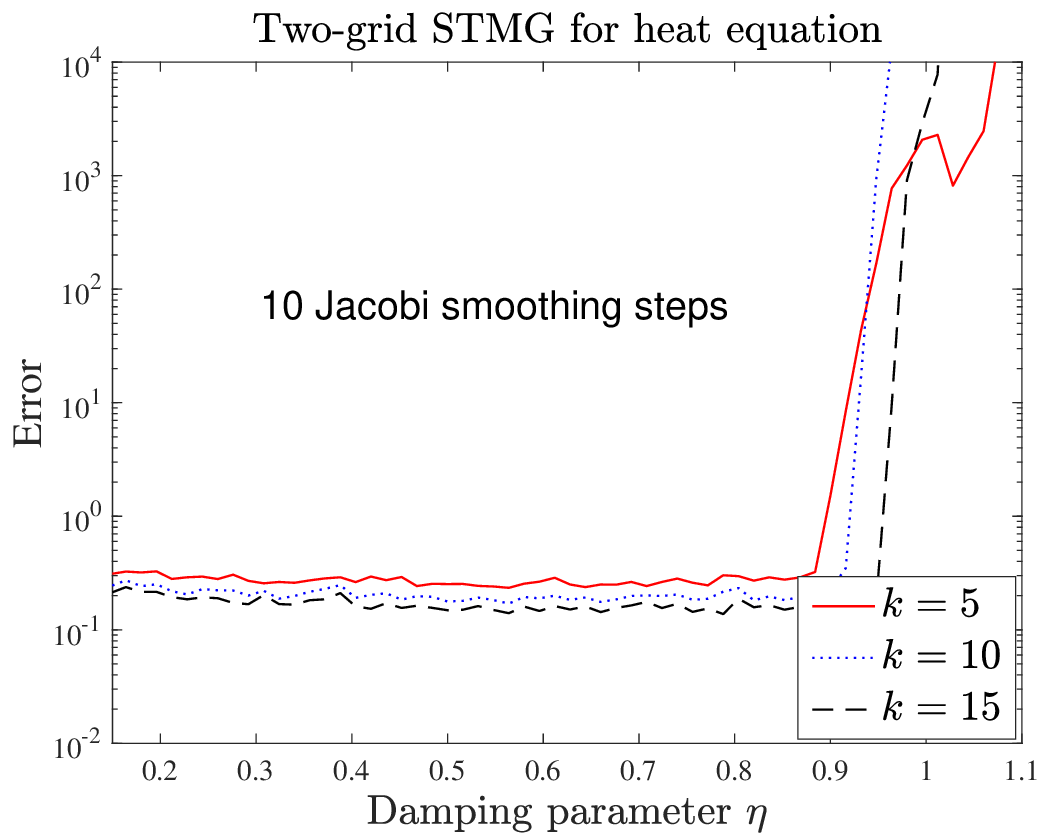} \\
  \includegraphics[width=1.55in,height=1.23in,angle=0]{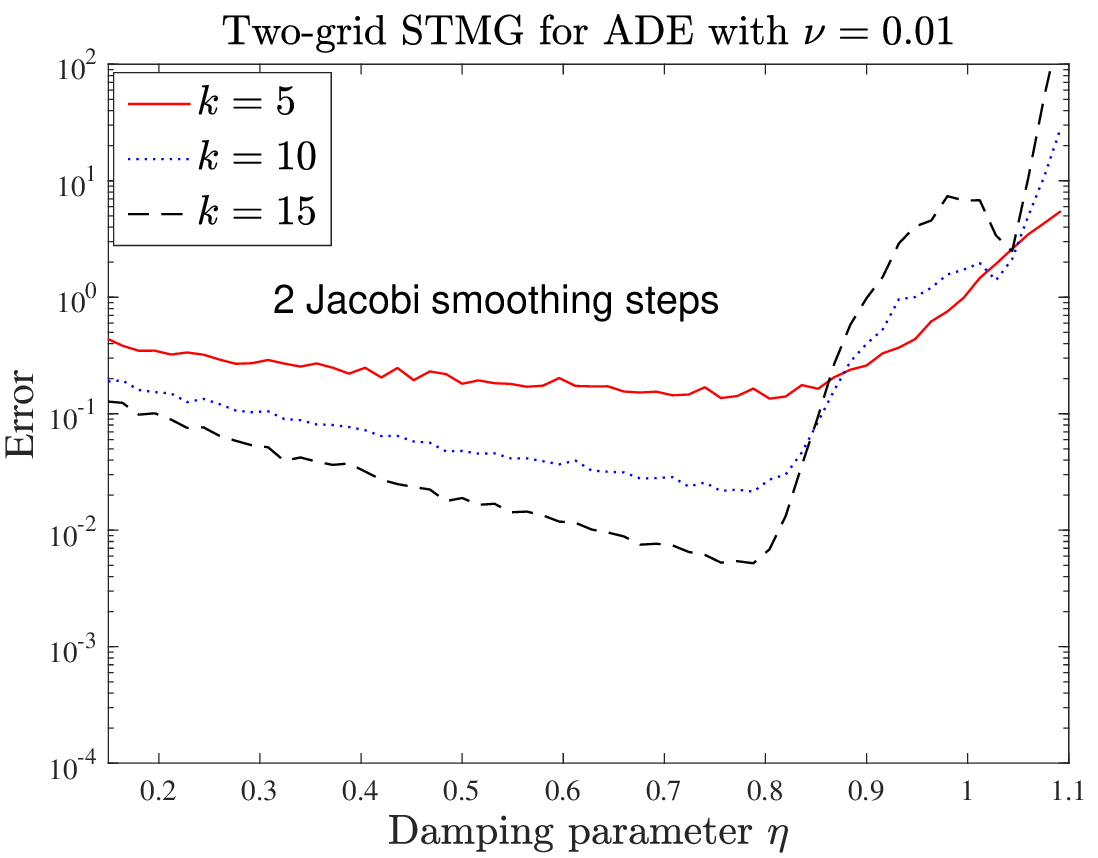} 
  \includegraphics[width=1.55in,height=1.23in,angle=0]{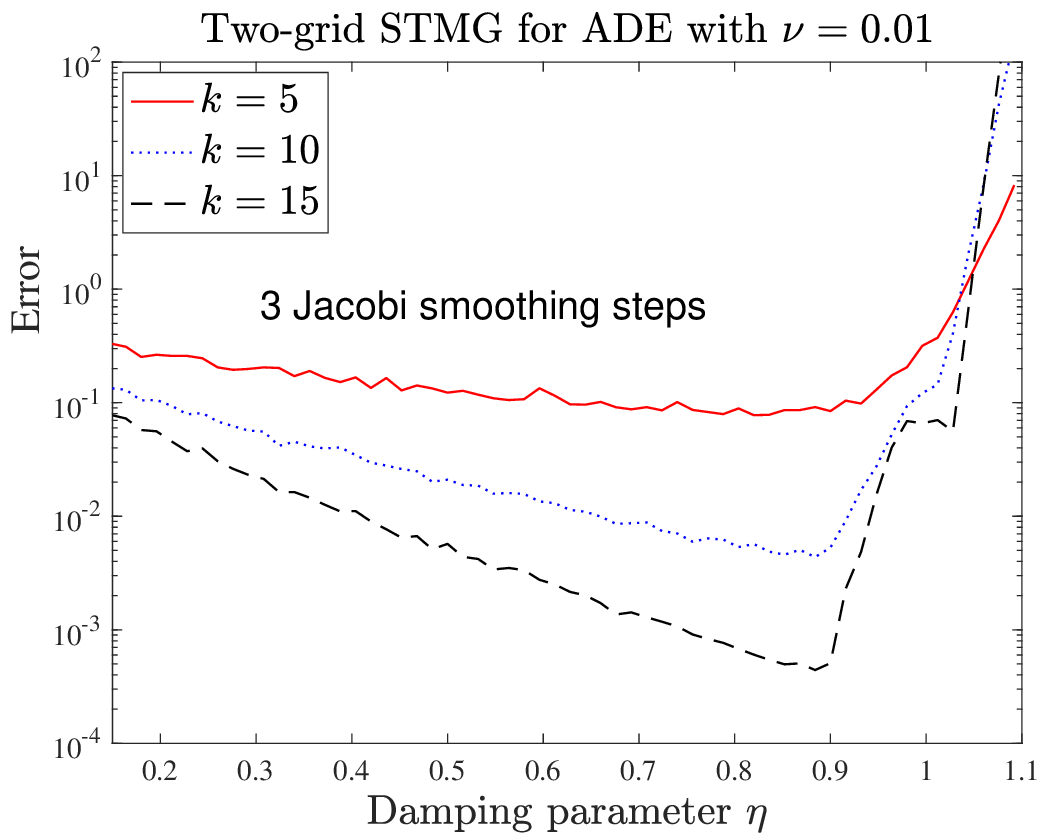}  
  \includegraphics[width=1.55in,height=1.23in,angle=0]{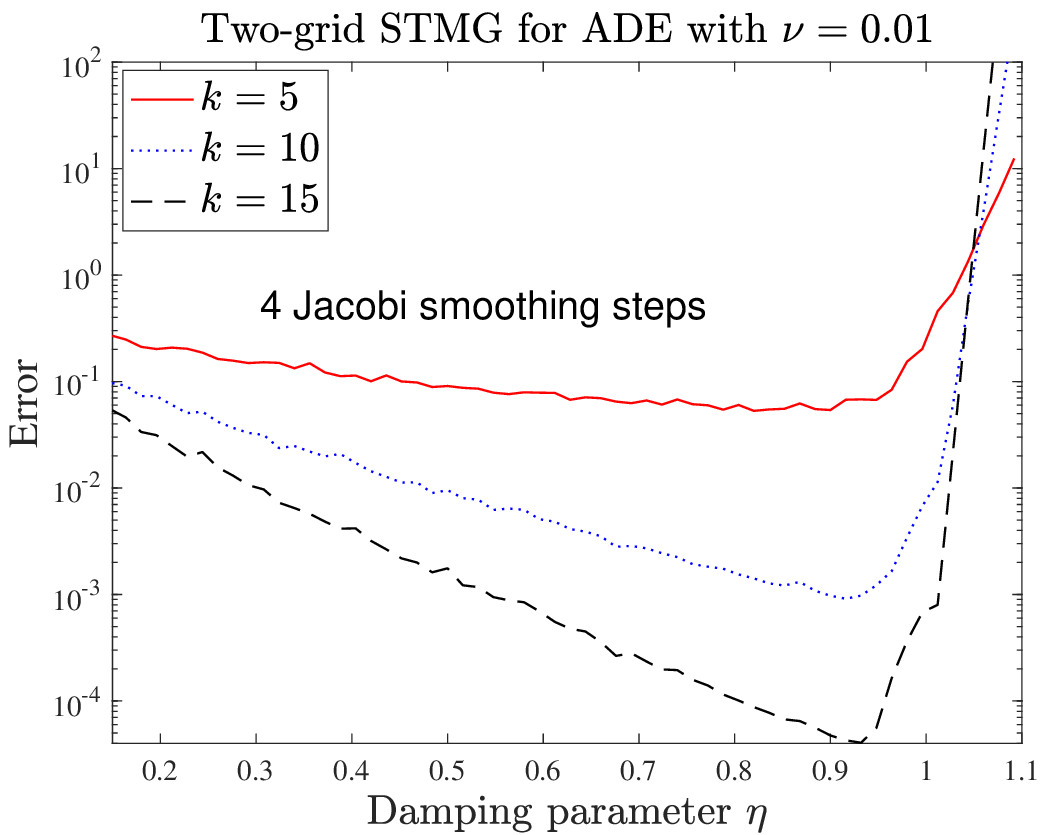} 
  \caption{Error of STMG using the Trapezoidal Rule and different
    numbers of smoothing steps for $\eta \in [0.1, 1.1]$.}
  \label{Fig_STMG_Heat_ADE_TR}
\end{figure}
 for 2-level STMG reveal that using the Trapezoidal Rule as time
 integrator results in a substantially poorer convergence rate
 compared to using Backward Euler. In particular, for the heat
 equation, 2-level STMG appears to have convergence problems 
 regardless of the damping parameter adjustments, even with up to 10
 smoothing steps. Interestingly, 2-level STMG converges for the
 advection-diffusion equation, and doing more smoothing steps enhances
 the convergence rate. Nonetheless, the optimal damping parameter is
 found to be $\eta \approx 0.8$, in contrast to $\eta = \frac{1}{2}$
 we obtained for Backward Euler.

 In real large scale parallel computations on todays super computers,
 excellent weak and strong scaling is achieved with the full STMG
 method, as shown in Table \ref{Tab1}
\begin{table}
  \centering
      \mbox{\scalebox{0.66}{
          \begin{tabular}{|r||r|r|c|r|r|}\hline
        cores & time steps & \multicolumn{1}{c|}{dof} & iter & time & fwd. sub. \\ \hline
        $1$ & $ 2$ & $59\;768$ & $7$ & $28.8$ & $19.0$\\
	$2$ & $ 4$ & $119\;536$ & $7$ & $29.8$ & $37.9$\\
	$4$ & $ 8$ & $239\;072$ & $7$ & $29.8$ & $75.9$\\
	$8$ & $ 16$ & $478\;144$ & $7$ & $29.9$ & $152.2$\\
	$16$ & $ 32$ & $956\;288$ & $7$ & $29.9$ & $305.4$\\
	$32$ & $ 64$ & $1\;912\;576$ & $7$ & $29.9$ & $613.6$\\
	$64$ & $ 128$ & $3\;825\;152$ & $7$ & $29.9$ & $1\;220.7$\\
	$128$ & $ 256$ & $7\;650\;304$ & $7$ & $29.9$ & $2\;448.4$\\
	$256$ & $ 512$ & $15\;300\;608$ & $7$ & $30.0$ & $4\;882.4$\\
	$512$ & $1\;024$ & $30\;601\;216$ & $7$ & $29.9$ & $9\;744.2$\\
	$1\;024$ & $2\;048$ & $61\;202\;432$ & $7$ & $30.0$ & $19\;636.9$\\
	$2\;048$ & $4\;096$ & $122\;404\;864$ & $7$ & $29.9$ & $38\;993.1$\\
	$4\;096$ & $8\;192$ & $244\;809\;728$ & $7$ & $30.0$ & $81\;219.6$\\
	$8\;192$ & $16\;384$ & $489\;619\;456$ & $7$ & $30.0$ & $162\;551.0$\\
	$16\;384$ & $32\;768$ & $979\;238\;912$ & $7$ & $30.0$ & $313\;122.0$\\
	$32\;768$ & $65\;536$ & $1\;958\;477\;824$ & $7$ & $30.0$ & $625\;686.0$\\
	$65\;536$ & $131\;072$ & $3\;916\;955\;648$ & $7$ & $30.0$ & $1\;250\;210.0$\\
	$131\;072$ & $262\;144$ & $7\;833\;911\;296$ & $7$ & $30.0$ & $2\;500\;350.0$\\
	$262\;144$ & $524\;288$ & $15\;667\;822\;592$ & $7$ & $30.0$ & $4\;988\;060.0$\\ \hline
      \end{tabular}
      }
   \scalebox{0.66}{\begin{tabular}{|r|r|c|r|}\hline
        time steps & \multicolumn{1}{c|}{dof} & iter & time \\ \hline
	$512$ & $15\;300\;608$ & $7$ & $7\;635.2$\\
	$512$ & $15\;300\;608$ & $7$ & $3\;821.7$\\
	$512$ & $15\;300\;608$ & $7$ & $1\;909.9$\\
	$512$ & $15\;300\;608$ & $7$ & $954.2$\\
	$512$ & $15\;300\;608$ & $7$ & $477.2$\\
	$512$ & $15\;300\;608$ & $7$ & $238.9$\\
	$512$ & $15\;300\;608$ & $7$ & $119.5$\\
	$512$ & $15\;300\;608$ & $7$ & $59.7$\\
	$512$ & $15\;300\;608$ & $7$ & $30.0$\\
	$524\;288$ & $15\;667\;822\;592$ & $7$ & $15\;205.9$\\
	 $524\;288$ & $15\;667\;822\;592$ & $7$ & $7\;651.5$\\
	 $524\;288$ & $15\;667\;822\;592$ & $7$ & $3\;825.3$\\
	 $524\;288$ & $15\;667\;822\;592$ & $7$ & $1\;913.4$\\
	 $524\;288$ & $15\;667\;822\;592$ & $7$ & $956.6$\\
	 $524\;288$ & $15\;667\;822\;592$ & $7$ & $478.1$\\
	 $524\;288$ & $15\;667\;822\;592$ & $7$ & $239.3$\\
	 $524\;288$ & $15\;667\;822\;592$ & $7$ & $119.6$\\
	 $524\;288$ & $15\;667\;822\;592$ & $7$ & $59.8$\\
	 $524\;288$ & $15\;667\;822\;592$ & $7$ & $30.0$\\ \hline
      \end{tabular}
 }}
      \caption{Weak (left) and strong (right) scaling results of a
        modern Space-Time MultiGrid (STMG) method applied to a 3D heat
        equation. Solution times of classical time stepping with best
        possible parallelization in space only are also shown in the
        column 'fwd. sub.'.}
      \label{Tab1}
\end{table}
for a 3D heat equation model problem, taken from
\cite{Gander:2016:AOANST}.

We next extend STMG to nonlinear problems of the form
\begin{equation}\label{above}
{\bm u}'=f({\bm u}),~{\bm u}(0)={\bm u}_0, ~t\in(0, T), 
\end{equation} 
where ${\bm u}\in\mathbb{R}^{N_x}$ and $f:
\mathbb{R}^{N_x}\rightarrow\mathbb{R}^{N_x}$ is defined by the
discretization of a PDE in space. To describe the idea, we apply the
linear-$\theta$ method to the nonlinear system of ODEs \eqref{above},
leading to the all-at-once system
\begin{equation}\label{STMG_nonlinear_AAA}
\underbrace{(B\otimes I_x){\bm U}-\Delta t(\tilde{B}\otimes I_x)f({\bm U})}_{:=\CK({\bm U})}={\bm b},
\end{equation} 
where ${\bm b}=({\bm u}_0^\top+\Delta t(1-\theta)f^\top({\bm u}_0), 0,\dots, 0)^\top$, ${\bm U}=({\bm u}_1^\top,\dots, {\bm u}_{N_t}^\top)^\top$ and 
$$
B:=
\begin{bmatrix}
1 & & &\\
-1 &1 & &\\
&\ddots &\ddots &\\
& &-1 &1
\end{bmatrix},~\tilde{B}:=
\begin{bmatrix}
\theta & & &\\
1-\theta &\theta & &\\
&\ddots &\ddots &\\
& &1-\theta &\theta
\end{bmatrix},~f({\bm U}):=
\begin{bmatrix}
{\bm u}_1\\
{\bm u}_2\\
\vdots\\
{\bm u}_{N_t}
\end{bmatrix}. 
$$
To formulate STMG \eqref{STMG_nonlinear_AAA}, similar to
\eqref{STMG_smoother}, we first define a nonlinear  block Jacobi 
smoother ${\bm U}^{\rm new}={\CS}_{{\rm non}, \eta}({\bm b}, {\bm
  U}^{\rm ini}, s)$ by
\begin{equation}\label{STMG_smoother_nonlinear}
\begin{split}
\begin{cases}
\tilde{\bm U}^0={\bm U}^{\rm ini},\\
\text{for } j=0, 1, \dots, s-1:\\
~~~~\text{solve } \Delta\tilde{\bm U}^{j}-\Delta t\theta f(\Delta\tilde{\bm U}^j)=\eta({\bm b}-\CK(\tilde{\bm U}^j)),\\
~~~~\tilde{\bm U}^{j+1}=\tilde{\bm U}^j+\Delta \tilde{\bm U}^j,\\
{\bm U}^{\rm new}=\tilde{\bm U}^{s}, 
\end{cases}
\end{split}
\end{equation} 
where the correction term $\Delta\tilde{\bm U}^j$ is solved via an
inner solver, e.g., Newton's iteration.  We can however not obtain a
theoretically optimized estimate  for the damping parameter $\eta$ in
\eqref{STMG_smoother_nonlinear}, since LFA can not be used in
the nonlinear case}.

Following \cite{Brd77}, we now define a non-linear STMG method for
\eqref{STMG_nonlinear_AAA} using the full approximation scheme, 
\begin{equation}\label{STMG_nonlinear}
\begin{cases}
{\bm U}^{k+\frac{1}{3}}=\CS_{{\rm non}, \eta}({\bm b},{\bm U}^k, s_1),\\
{\bm r}={\bm b}-\CK({\bm U}^{k+\frac{1}{3}}),\\
{\bm r}_{\rm c}=[R_x {\rm Mat}({\bm r})]R_t^\top,~{\bm U}^{k+\frac{1}{3}}_{\rm c}=[R_x {\rm Mat}({\bm U}^{k+\frac{1}{3}})]R_t^\top,\\
 {\rm Solve}~ \CK_{\rm c}({\bm U}^{k+\frac{2}{3}}_{\rm c})={\bm r}_{\rm c}+\CK_{\rm c}({\bm U}^{k+\frac{1}{3}}_{\rm c}),\\
{\bm e}_{\rm c}= {\bm U}^{k+\frac{2}{3}}_{\rm c}-{\bm U}^{k+\frac{1}{3}}_{\rm c}, ~{\bm e}=[P_x{\rm Mat}({\bm e}_{\rm c})]P_t^\top,\\
{\bm U}^{k+\frac{2}{3}}={\bm U}^{k+\frac{1}{3}}+{\rm Vec}({\bm e}),\\
{\bm U}^{k+1}=\CS_{{\rm non}, \eta}({\bm b}, {\bm U}^{k+\frac{2}{3}}, s_2).
\end{cases}
\end{equation}
In Figure \ref{Fig_STMG_Burgers}, we show the measured error of this
2-level STMG method for Burgers' equation \eqref{Burgers} with
conditions. We use two values of the diffusion parameter $\nu$, and we
see that the convergence is heavily dependent on this parameter:
  with enough diffusion, STMG works vey well also in the non-linear
  setting, whereas when the diffusion is getting smaller, the
  convergence of STMG deteriorates, as in the linear case. Here, we
used $\eta=\frac{1}{4}$ for the damping parameter, which was found to
be the best choice in our numerical experiments.
\begin{figure}
\centering
\includegraphics[width=2.5in,height=2.1in,angle=0]{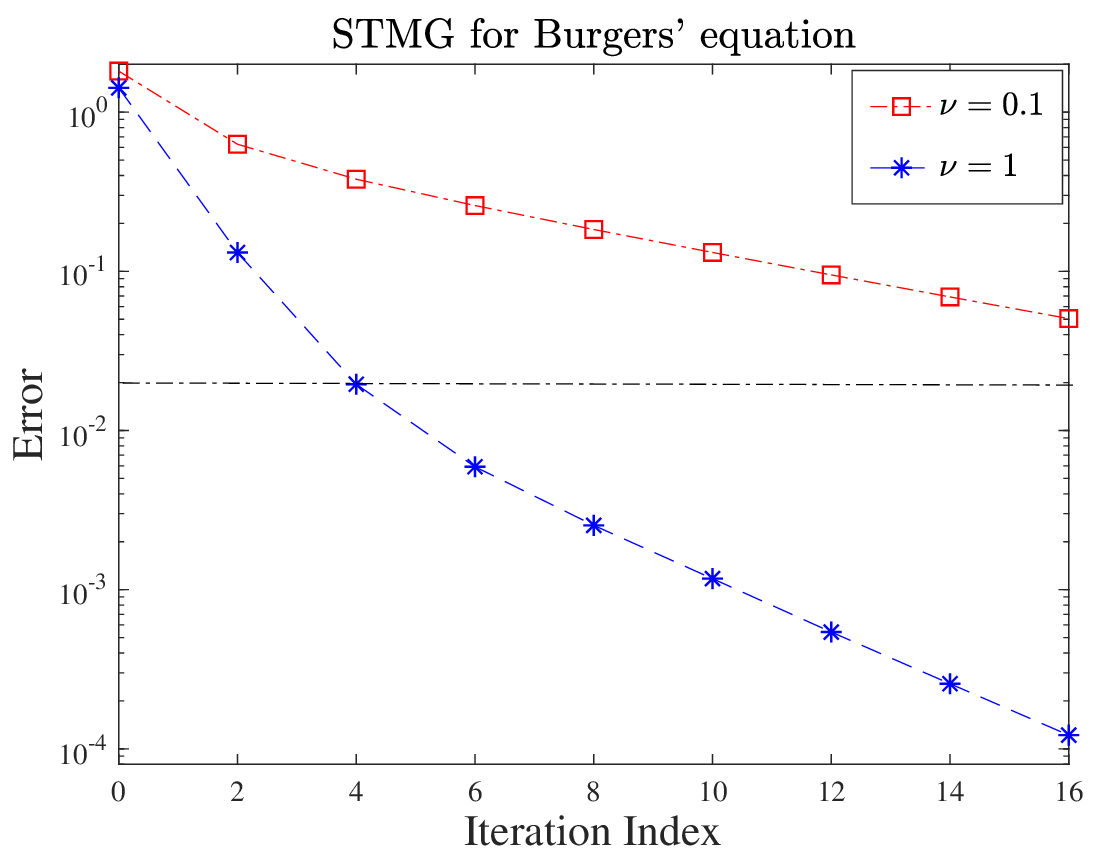}
\caption{Measured error of STMG with  two  block Jacob  smoothing
  steps for Burgers' equation.}
\label{Fig_STMG_Burgers}
\end{figure}

For parabolic problems, STMG presented in this section stands out as
by far the most effective time-parallel solver currently available,
but it is intrusive in nature, unlike the Parareal algorithm. However,
when dealing with hyperbolic problems, as shown in Figures
\ref{Fig_STMG_heat_ADE_error} and \ref{Fig_STMG_Burgers}, STMG
appears to be less efficient, indicating that additional efforts are
required in this domain. Furthermore, as highlighted in the top row of
Figure \ref{Fig_STMG_Heat_ADE_TR}, even for parabolic problems the
convergence rate of STMG depends on the time integrator used, and this
dependency deserves further investigation as well.

\section{Conclusions}
\label{Sec5}

We explained in this paper the important difference for time parallel
time integration, or PinT (Parallel in Time) methods, when applied to
hyperbolic or parabolic problems. For parabolic problems, which tend
to forget a lot of information in time, and thus have solutions that
are local in time, there are many very effective PinT techniques, like
Parareal, Space-Time MultiGrid (STMG), ParaExp and ParaDiag, and
Waveform Relaxation (WR) techniques based on Domain Decomposition
(DD). For hyperbolic problems, which retain very fine solution
features over very long time, only some of these techniques are
effective, like ParaExp, ParaDiag, and Schwarz Waveform Relaxation
(SWR), especially in relation to tent pitching.  For more information,
please take a look at the recent research monograph
\cite{Gander:TPTI:2024}, which contains an up to date treatment of
PinT methods, giving for each the historical content, a simple but
complete and self contained convergence analysis, and also small
Matlab codes that can be directly executed. Codes used to produce the
results in this paper are available from
\url{https://github.com/wushulin/ActaPinT}.
 
\bibliographystyle{actaagsm}

\bibliography{paper.bib}
    
\end{document}